\tikzset{Rightarrow/.style={double equal sign distance,>={Implies},->},
triple/.style={-,preaction={draw,Rightarrow}},
quadruple/.style={preaction={draw,Rightarrow,shorten >=0pt},shorten >=1pt,-,double,double
distance=0.2pt}}
\tikzset{%
    symbol/.style={%
        draw=none,
        every to/.append style={%
            edge node={node [sloped, allow upside down, auto=false]{$#1$}}}
    }
}
\tikzset{%
scalearrow/.style n args={3}{
  decoration={
    markings,
    mark=at position (1-#1)/2*\pgfdecoratedpathlength
      with {\coordinate (#2);},
    mark=at position (1+#1)/2*\pgfdecoratedpathlength
      with {\coordinate (#3);},
    },
  postaction=decorate,
  }
}
\theoremstyle{plain}   
\theoremstyle{definition}
\newtheorem{thm}{Theorem}[section] 
\let\c@thm\c@thm\makeatother
\let\c@cor\c@thm\makeatother
\newtheorem{lem}{Lemma}[section]
\let\c@lem\c@thm\makeatother
\newtheorem{prop}{Proposition}[section]
\let\c@prop\c@thm\makeatother
\let\c@claim\c@thm\makeatother
\let\c@conjecture\c@thm\makeatother
\let\c@wconjecture\c@thm\makeatother
\newtheorem*{unnumberedtheorem}{Theorem}
\newtheorem{defn}{Definition}[section]
\let\c@defn\c@thm\makeatother
\newtheorem{const}{Construction}[section]
\let\c@const\c@thm\makeatother
\newtheorem{notn}{Notation}[section]
\let\c@notn\c@thm\makeatother
\let\c@convention\c@thm\makeatother
\let\c@convention\c@thm\makeatother
\theoremstyle{remark}
\newtheorem{rmk}{Remark}[section]
\let\c@rmk\c@thm\makeatother
\newtheorem{ex}{Example}[section]
\let\c@ex\c@thm\makeatother
\let\c@observation\c@thm\makeatother
\let\c@warning\c@thm\makeatother
\let\c@digression\c@thm\makeatother
\let\c@answ\c@thm\makeatother
\let\c@answ\c@thm\makeatother
\let\c@aside\c@thm\makeatother
\let\c@equation\c@thm
\numberwithin{equation}{section}
\crefname{lem}{Lemma}{Lemmas}
\crefname{thm}{Theorem}{Theorems}
\crefname{defn}{Definition}{Definitions}
\crefname{notn}{Notation}{Notations}
\crefname{const}{Construction}{Constructions}
\crefname{prop}{Proposition}{Propositions}
\crefname{rmk}{Remark}{Remarks}
\crefname{cor}{Corollary}{Corollaries}
\crefname{equation}{Display}{Displays}
\crefname{ex}{Example}{Examples}
\crefname{thmalph}{Theorem}{Theorems}
\crefname{answ}{Answer}{Answers}
\crefname{question}{Question}{Questions}
\newcommand{\@bbify}[1]{
  \ifcsname b#1\endcsname
  \message{WARNING: Overwriting b#1 with blackboard letter!}
  \fi
  \expandafter\edef\csname b#1\endcsname
  {\noexpand\ensuremath{\noexpand\mathbb #1}\noexpand\xspace}}
\newcommand{\@calify}[1]{
  \ifcsname c#1\endcsname
  \message{WARNING: Overwriting c#1 with calligraphic letter!}
  \fi 
  \expandafter\edef\csname c#1\endcsname
  {\noexpand\ensuremath{\noexpand\mathcal #1}\noexpand\xspace}}
\newcommand{\@bfify}[1]{
  \ifcsname bf#1\endcsname
  \message{WARNING: Overwriting c#1 with bold letter!}
  \fi
  \expandafter\edef\csname bf#1\endcsname
  {\noexpand\ensuremath{\noexpand\mathbf #1}\noexpand\xspace}}
\newcounter{@letter}\stepcounter{@letter}
\loop\@bbify{\Alph{@letter}}\@calify{\Alph{@letter}}\@bfify{\Alph{@letter}}
\newcommand{\set}{\cS\!\mathit{et}}
\newcommand{\sset}{\mathit{s}\set}
\DeclareMathOperator{\colim}{colim}
\DeclareMathOperator{\id}{id}
\DeclareMathOperator{\op}{op}
   \def\MR#1{}
\newcommand{\mrt}{\mathfrak M}
\author{Arghan Dutta}
\address{Department of Mathematics and Statistics,
University of Massachusetts Amherst, 
Amherst,
USA
}
\email{arghandutta@umass.edu} 
\author{Stefano Luneia}
\address{Department of Mathematics and Statistics,
University of Massachusetts Amherst, 
Amherst,
USA
}
\email{sluneia@umass.edu} 
\author{Martina Rovelli}
\address{Department of Mathematics and Statistics,
University of Massachusetts Amherst, 
Amherst,
USA
}
\email{mrovelli@umass.edu}
\address{Department of Mathematics and Statistics,
University of Ottawa, 
Ottawa,
Canada
}
\email{mrovelli@uottawa.ca}
\author{Sam Silver}
\address{Department of Mathematics and Statistics,
University of Massachusetts Amherst, 
Amherst,
USA
}
\email{samuelsilver@umass.edu} 
\title[The Morita $(\infty,2)$-category of a monoidal category
as a $2$-complicial set]{The Morita $(\infty,2)$-category of a monoidal category\\
as a $2$-complicial set}
\begin{document}

\maketitle

\begin{abstract}
We provide an explicit and elementary construction of the Morita $(\infty,2)$-category of a monoidal category which satisfies minimal conditions. We construct it as a $3$-coskeletal $2$-complicial set, in which the vertices encode the monoids, the edges encode the bimodules, the triangles encode the bimodule maps out of a balanced tensor product, and tetrahedra encode composition of bimodule maps. The marked edges encode invertible bimodules, and the marked triangles encode bimodule isomorphisms with a balanced tensor product.
\end{abstract}

\tableofcontents

\section*{Introduction}

Two primary examples of non-strict $2$-dimensional categorical structures include the one where the objects, $1$-morphisms and $2$-morphisms are given by rings, bimodules and bimodule maps,
and the one where they are given by sets, cospans and cospan maps. One of the reasons why the two examples do not form a strict $2$-category is that composition of $1$-morphisms, which is given by bimodule tensor product and pushout, respectively, is not strictly associative. Each forms a \emph{bicategory}, which is discussed e.g.~in \cite{SPclassification} in the case of abelian groups and bimodules and in \cite{benabou} for the case of sets and cospans.

The two constructions are special cases of a more general pattern.
Provided that a monoidal category satisfies satisfies specific assumptions, which we indicate in \cref{Axioms} (following \cite{Vitale}), there is a typically non-strict $2$-dimensional category in which the objects, $1$-morphisms and 2-morphisms are monoids, bimodules and bimodule maps.
The two examples are recovered when considering the monoidal category of abelian groups with respect to tensor product, and the category of sets with respect to disjoint union.

This is a well-known construction in the literature, often regarded as a folklore fact. Various constructions exist in greater generality, though they typically rely on significantly more advanced machinery. Alternatively, elementary proofs exist, but they are restricted to specific monoidal categories. We provide in this note a self-contained, explicit, and elementary proof which works for this particular level of generality.

We bypass the verification of the bicategory axioms, and instead we construct a truncated $(\infty,2)$-category, in the form of a $3$-coskeletal $2$-complicial set. A \emph{$2$-complicial set} is a particular model for the notion of an $(\infty,2)$-category due to Verity, based on simplicial sets with marking (see \cite{VerityComplicial,RiehlComplicial,ORms}). The interpretation is that $k$-simplices encode $k$-dimensional morphisms (with appropriate boundary decompositions), and marked $k$-simplices encode $k$-morphisms which are weakly invertible.

Given a monoidal category satisfying appropriate conditions, we introduce in \cref{MonoidalNerve} a $3$-coskeletal marked simplicial set, and we show as \cref{maintheorem} that it is a $2$-complicial set. The construction is completely explicit and does not rely on any prior result.

\begin{unnumberedtheorem}
 Given a monoidal category $\cC^{\otimes}$ which admits a calculus of balanced tensor products, there is a $(\infty,2)$-category in which the objects are the monoids in $\cC^{\otimes}$, the $1$-morphisms are the bimodules and the $2$-morphisms are the bimodule maps.    
\end{unnumberedtheorem}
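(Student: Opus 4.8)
The plan is to deduce the statement from the explicit construction of \cref{MonoidalNerve} together with \cref{maintheorem}, thereby reducing everything to the verification that one concrete marked simplicial set is a $2$-complicial set. Recall that a $2$-complicial set is a model for an $(\infty,2)$-category in the sense of \cite{VerityComplicial,RiehlComplicial,ORms}, and that under this identification its vertices, edges, and triangles recover the objects, $1$-morphisms, and $2$-morphisms. So it suffices to produce a marked simplicial set $N(\cC^{\otimes})$ with the stated low-dimensional simplices and to check the $2$-complicial axioms. First I would set up $N(\cC^{\otimes})$ as a $3$-coskeletal marked simplicial set: a vertex is a monoid in $\cC^{\otimes}$, an edge from $A$ to $B$ is an $(A,B)$-bimodule, a triangle on the edges $M_{01}$, $M_{12}$, $M_{02}$ is a bimodule morphism $M_{12}\otimes_{A_1}M_{01}\to M_{02}$ out of the balanced tensor product, and a tetrahedron witnesses that the two composites of bimodule morphisms read off from its four triangular faces agree after the associativity constraint. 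The degenerate edges are the regular bimodules and the degenerate triangles are the unitors of the balanced tensor product; higher simplices are then forced by $3$-coskeletality. An edge is marked when the bimodule is invertible (a Morita equivalence), a triangle is marked when the bimodule morphism is an isomorphism, and every simplex of dimension at least $3$ is marked. The axioms of \cref{Axioms} for a calculus of balanced tensor products are precisely what makes the simplicial identities hold, the coskeletal extension well defined, and these three classes of marked simplices closed under the structure maps.

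Second, I would verify the $2$-complicial conditions, which is the content of \cref{maintheorem}. Since $N(\cC^{\otimes})$ is $3$-coskeletal, every sphere $\partial\Delta[m]\to N(\cC^{\otimes})$ with $m\geq 4$ admits a unique filler, so all admissible horn-filling and thinness conditions in dimension $\geq 5$ are automatic, and in dimension $4$ they reduce to the coherence statement that every $2$-sphere of bimodule morphisms arising from a $4$-dimensional horn bounds a tetrahedron. The genuine content lies in dimensions $\leq 3$. In dimension $2$, the inner horn $\Lambda^1[2]$ is filled by taking the missing edge to be the balanced tensor product and the missing triangle to be the identity morphism on it, while the admissible outer horns $\Lambda^0[2]$ and $\Lambda^2[2]$, whose relevant edge is marked, are filled using a chosen pseudo-inverse of the corresponding invertible bimodule together with the associator and unitors. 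In dimension $3$, the inner horns $\Lambda^1[3]$ and $\Lambda^2[3]$ are filled by the universal property of the balanced tensor product: the missing triangle is the \emph{unique} bimodule morphism for which the resulting tetrahedron is a valid witness, and the admissible outer horns are handled the same way with pseudo-inverses. The stratification axioms then reduce to the facts that degeneracies and all higher simplices are marked by construction and that invertible bimodules and bimodule isomorphisms are stable under composition and satisfy the relevant two-out-of-three cancellation; saturation, where needed, follows because a simplex that is weakly invertible in the complicial sense unwinds to an honest bimodule isomorphism or Morita equivalence.

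I expect the main obstacle to be the interlocking dimension-$3$ and dimension-$4$ verifications: one must check that the morphisms produced by the universal property of the balanced tensor product are mutually compatible enough that the $2$-spheres coming from $4$-dimensional horns actually bound, which amounts to showing that composition of bimodule morphisms is coherently associative. Concretely this forces one to chase the defining colimit diagrams of the balanced tensor products through the associativity constraint of $\cC^{\otimes}$, and it is here that the hypotheses collected in \cref{Axioms} are used most heavily. A secondary but real difficulty is calibrating the marking convention so that precisely the admissible horns are filled --- marking too few simplices breaks the outer-horn fillers that encode invertibility, while marking too many breaks the thinness and cancellation axioms --- and then confirming that the resulting $2$-complicial set has exactly the objects, $1$-morphisms, and $2$-morphisms asserted in the statement.
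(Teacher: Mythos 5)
Your proposal follows essentially the same route as the paper: construct the monoidal nerve $\mrt^{\natural}(\cC^{\otimes})$ as a $3$-coskeletal marked simplicial set whose simplices are monoids, bimodules, bimodule maps out of balanced tensor products, and coherence witnesses, with the marking by Morita equivalences and bimodule isomorphisms, and then verify the $2$-complicial lifting conditions (\cref{horn,thin,saturation}), using $3$-coskeletality to dispatch the high dimensions. The only substantive imprecision is your attribution of the dimension-$3$ inner horn fillers to ``the universal property of the balanced tensor product'': in the paper (\cref{horn31}) uniqueness of the missing face instead comes from the marked face being a bimodule isomorphism, whose tensor against a bimodule remains invertible (\cref{TensorWithIso}), so the defining tetrahedron equation can be solved for the missing $\varphi_{023}$ by composing with inverses and associators; the coequalizer universal property is not invoked there directly.
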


Our approach essentially repackages the task of verifying the bicategory axioms using the combinatorics of simplicial sets. While the resulting effort is comparable overall, we hope this approach offers a proof of concept for a more efficient formulation of the conditions, and one that can be more readily scaled to higher dimensional structures (for instance when treating braided monoidal categories instead of monoidal categories).

Assuming having checked independently that monoids, bimodules, and bimodule maps form a bicategory (a well-believed fact, albeit one whose verification remains somewhat lengthy),
one could alternatively consider the marked nerve of this bicategory, which is considered and shown to be a complicial set by Gurski in \cite{Gurski} (building on work by Duskin \cite{Duskin}).
We believe that the resulting $2$-complicial set would agree with the one presented here, though the description of the marked simplices appears to be presented differently, and we did not investigate a direct comparison.

In terms of related literature, various groups of authors have already successfully constructed $(\infty,n)$-categories in which the $k$-morphisms are $E_k$-algebras in a monoidal $\infty$-category. For instance, some approaches in this direction include the works by Haugseng \cite{HaugsengMorita}, Johnson-Freyd--Scheimbauer \cite{JFS}, and Gwilliam--Scheimbauer \cite{GS}. Lurie formalizes the construction of the $\infty$-category of bimodules in a monoidal $\infty$-category in \cite[\textsection4.3]{LurieHA}.

\subsubsection*{Acknowledgments}
MR is grateful for support from the National Science Foundation under Grant No.~DMS-2203915. MR would also like to thank the Isaac Newton Institute for Mathematical Sciences, Cambridge, for support and hospitality during the programme \emph{Equivariant homotopy theory in context} (supported by EPSRC grant EP/Z000580/1), where work on this paper was undertaken. We are thankful to Viktoriya Ozornova for valuable conversations on the topic of this note.

\section{Monoids and bimodules in a monoidal category}

We recall the notions of monoidal categories, as well as those of monoids, bimodules and bimodule maps in a monoidal category.

The definition of monoidal category is classical; see e.g.~\cite[\textsection VII.1]{MacLane}:

\begin{defn}
\label{MonoidalCategory}
A \emph{monoidal category} $(\cC,\otimes,I,\alpha,\lambda,\rho)$, often denoted just $\cC^{\otimes}$, consists of a category $\cC$ together with a \emph{tensor product} functor $\otimes\colon \cC\times \cC\to \cC$ and a \emph{unit object} $I\in \cC$, and the following natural isomorphisms:
\begin{itemize}[leftmargin=*]
    \item For $X$ in $\cC$, the \emph{left unitor} and the \emph{right unitor}
\[\lambda_X: I\otimes X\xrightarrow{\cong} X\quad\text{ and }\quad\rho_X: X\otimes I\xrightarrow{\cong} X;\]
    \item For $X,Y,Z$ in $\cC$, the \emph{associator}
\[
\alpha_{X,Y,Z}: (X\otimes Y)\otimes Z\xrightarrow{\cong} X\otimes (Y\otimes Z);
\]
\end{itemize}
satisfying the following coherence conditions:
\begin{itemize}[leftmargin=*]
    \item For $X$ and $Y$ in $\cC$, the the following diagram commutes:
\[
\begin{tikzcd}
(X\otimes I)\otimes Y\arrow[rr,"\alpha_{X,I,Y}"]\arrow[rd,"\rho_X\otimes Y"'] & & X\otimes(I\otimes Y)\arrow[ld,"X\otimes\lambda_Y"] \\
& X\otimes Y &
\end{tikzcd}
\]
\item and, for $X,Y,Z,W$ in $\cC$, the following diagram commutes:
\[
\begin{tikzcd}
& (X\otimes Y)\otimes (Z\otimes W)\arrow[ld,"\alpha_{X,Y,Z\otimes W}" swap] & \\
X \otimes (Y \otimes (Z\otimes W)) & & ((X\otimes Y)\otimes Z)\otimes W\arrow[ul,"\alpha_{X\otimes Y,Z,W}" swap]\arrow[d,"\alpha_{X,Y,Z}\otimes W"] \\
X\otimes ((Y\otimes Z)\otimes W)\arrow[u,"X\otimes \alpha_{Y,Z,W}" ] & & (X\otimes (Y\otimes Z))\otimes W\arrow[ll,"\alpha_{X,Y\otimes Z,W}"]
\end{tikzcd}
\]
\end{itemize}
\end{defn}

\begin{prop}[\cite{KellyCoherence}]
\label{AssociatorAndUnitor}
   Let $\cC^{\otimes}$ be a monoidal category. Given $X$ and $Y$ in $\cC$, the following diagrams commute
   \[
   \begin{tikzcd}
       (I\otimes X)\otimes Y\arrow[rd,"\lambda_X\otimes Y"swap]\arrow[r,"\alpha_{I,X,Y}"]&I\otimes (X\otimes Y)\arrow[d,"\lambda_{X\otimes Y}"]\\
       &X\otimes Y
   \end{tikzcd}
   \quad
   \begin{tikzcd}
       (X\otimes Y)\otimes I\arrow[d,"\rho_{X\otimes Y}"swap]\arrow[r,"\alpha_{X,Y,I}"]&X\otimes (Y\otimes I)\arrow[ld,"X\otimes \rho_{Y}"]\\
       X\otimes Y&
   \end{tikzcd}
   \]
\end{prop}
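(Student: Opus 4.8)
The identity asserted is the classical sharpening of Mac Lane's axioms due to Kelly, so the plan is a finite diagram chase built from the pentagon identity, the triangle identity of \cref{MonoidalCategory}, and naturality of $\alpha$, $\lambda$ and $\rho$. Before touching the two triangles I would assemble the standard toolkit. First, since $\lambda$ is a natural isomorphism $I\otimes(-)\cong\id_{\cC}$, the functor $I\otimes(-)$ is an equivalence, hence faithful (and dually $(-)\otimes I$ is faithful via $\rho$); so an equality of parallel morphisms may be checked after tensoring with $I$ on the appropriate side. Second, instantiating naturality of $\lambda$ at the morphism $\lambda_Z\colon I\otimes Z\to Z$ and cancelling the isomorphism $\lambda_Z$ gives $I\otimes\lambda_Z=\lambda_{I\otimes Z}$, and symmetrically $\rho_Z\otimes I=\rho_{Z\otimes I}$. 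Third, one records the preliminary identity $\lambda_I=\rho_I$, which is a short consequence of the axioms.

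For the first triangle, $\lambda_{X\otimes Y}\circ\alpha_{I,X,Y}=\lambda_X\otimes Y$, I would apply $I\otimes(-)$ to both sides and aim to verify the resulting equality using an instance of the pentagon identity on a quadruple obtained by inserting a copy of $I$ (for instance $(I,I,X,Y)$): this rewrites the associator $I\otimes\alpha_{I,X,Y}$ in terms of associators involving $I\otimes I$, which are then removed using naturality of $\alpha$ at $\lambda_I\colon I\otimes I\to I$ and the identity $\lambda_I=\rho_I$; the remaining cells collapse via the triangle identity and naturality of $\lambda$, and cancelling the isomorphisms $\lambda_I$ (tensored with identities) closes the argument. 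The second triangle, $\rho_{X\otimes Y}=(X\otimes\rho_Y)\circ\alpha_{X,Y,I}$, then follows by the mirror-image computation (faithfulness of $(-)\otimes I$, the pentagon on $(X,Y,I,I)$, naturality of $\rho$, the triangle identity); equivalently, it is exactly the first triangle read in the reversed monoidal category $\cC^{\mathrm{rev}}$, under the dictionary that reversing $\otimes$ replaces $\alpha$ by the reindexed $\alpha^{-1}$ and interchanges $\lambda$ with $\rho$.

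The only real obstacle here is bookkeeping: because $\otimes$ is not cancellable one cannot simplify the intermediate identities by deleting objects, so the whole point is to organize the substitutions (pentagon, then triangle, then naturality) so that every step is an application of an axiom or of naturality, and to use the faithfulness of $I\otimes(-)$ and $(-)\otimes I$ to transport the final tensored equality back to the one asserted. Once this scaffolding is in place the computation contains no further ideas, only a careful walk around the pentagon.
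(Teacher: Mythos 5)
The paper does not prove this proposition; it is quoted as a known fact with a citation to \cite{KellyCoherence}, and no proof appears in the source. Your sketch correctly reconstructs Kelly's classical argument (reduce along the faithful functor $I\otimes(-)$, then chase the pentagon for $(I,I,X,Y)$ together with the triangle axiom, naturality, and the preliminary identities $I\otimes\lambda_Z=\lambda_{I\otimes Z}$ and $\lambda_I=\rho_I$), which is exactly the content of the cited reference, so it is consistent with what the paper is relying on.
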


We recall a variety of contexts of relevance in which monoidal categories arise.

\begin{ex}
The following are known examples of monoidal categories (cf.~e.g.~\cite[\textsection~VII.1]{MacLane}):
\begin{itemize}[leftmargin=*]
    \item The monoidal category $\set^\times$, given by the category $\set$ of sets with the cartesian product $\times\colon\set\times\set\to\set$; more generally, the monoidal category $\cC^\times$ with $\cC$ a category that admits finite products;
    \item The monoidal category $\set^\amalg$, given by the category $\set$ of sets with the disjoint union $\amalg\colon\set\times\set\to\set$; more generally, the monoidal category $\cC^\amalg$ with $\cC$ a category which admits finite coproducts.
    \item The monoidal category $\cA b^\otimes$, given by the category $\cA b$ of abelian groups with the ordinary tensor product $\otimes\colon\cA b\times\cA b\to\cA b$; more generally, the monoidal category $R\cM od^{\otimes_R}$ with $R$ a commutative ring.
\end{itemize} 
\end{ex}

\begin{ex}
New monoidal categories can be obtained from existing ones through standard procedures:
\begin{itemize}[leftmargin=*]
    \item Given a monoidal category $\cC^{\otimes}$, the opposite monoidal category $\cC^{\op}$ can be endowed with the structure of a monoidal category (cf.~e.g.~\cite[Example 2.1.3.4]{LurieKerodon}).
    \item Given monoidal categories $\cC^{\otimes}$ and $\cD^{\boxtimes}$, the product category $\cC\times\cD$ can be endowed with the structure of a monoidal category (cf.~e.g.~\cite[\textsection VII.1]{MacLane}).
\end{itemize}
\end{ex}

\subsection{Monoids}

The definition of monoid in a monoidal category is classical; see e.g.~\cite[\textsection VII.3]{MacLane}:

\begin{defn}
Let $\cC^{\otimes}$ be a monoidal category. A \emph{monoid} $(A,m_A,e_A)$, often denoted just $A$, consists of an object $A$ in $\cC$, together with a \emph{multiplication} $m_A\colon A\otimes A\to A$ and a \emph{unit} $e_A\colon I\to A$ in $\cC$, such that:
\begin{enumerate}[leftmargin=*]
\item The multiplication map is \emph{associative}; that is, the following diagram commutes:
\[
\begin{tikzcd} A \otimes (A \otimes A) \arrow[d, "A \otimes m_A"'] &\arrow[l, "\alpha_{A,A,A}" swap] (A \otimes A) \otimes A \arrow[r, "m_A \otimes A"] & A \otimes A \arrow[d, "m_A"] \\ A \otimes A \arrow[rr, "m_A"'] & & A \end{tikzcd}
\]
\item The multiplication map is \emph{unital}; that is, the following diagram commutes:
\[
\begin{tikzcd}
I\otimes A\arrow[rd,"\lambda_A" swap]\arrow[r,"e_A\otimes A"]&A\otimes A\arrow[d]&A\otimes I\arrow[ld,"\rho_A"]\arrow[l,"A\otimes e_A" swap]\\
&A&
\end{tikzcd}
\]
\end{enumerate}
\end{defn}

The notion of monoid in established monoidal categories recovers known structures (cf.~\cite[\textsection~VII.3]{MacLane}):

\begin{ex}
\begin{enumerate}[leftmargin=*]
        \item A monoid in $\set^{\times}$ is an odinary monoid;
        \item A monoid in $\set^{\amalg}$ is equivalent to a set;
        \item A monoid in $\cA b^{\otimes}$ is a ring, while a monoid in $\mathbb K\cV ect$ is a $\mathbb K$-algebra.
    \end{enumerate}
\end{ex}

\subsection{Bimodules}

The definition of bimodule over monoid in a monoidal category is classical; see e.g.~\cite[\textsection VII.4]{MacLane}:

\begin{defn}
Let $\cC^{\otimes}$ be a monoidal category, and $(A,m_A,e_A)$, $(B,m_B,e_B)$ monoids in $\cC^{\otimes}$. An \emph{$(A,B)$-bimodule} $(A,B,M,\ell_M,r_M)$, often denoted just $M$, consists of an object $M$ in $\cC$, together with a \emph{left action} $\ell_M\colon A\otimes M\to M$, and a \emph{right action} $r_M\colon M\otimes B\to M$, such that:
\begin{enumerate}[leftmargin=*]
    \item The actions are \emph{unital}; that is, the following diagrams commute:
\[
\begin{tikzcd}
I \otimes M \arrow[rd, "\lambda_M" swap] \arrow[r, "e_A\otimes M"] 
  & A \otimes M \arrow[d, "\ell_M"] \\
& M
\end{tikzcd}
\quad\text{ and }\quad
\begin{tikzcd}
M \otimes B \arrow[d, "r_M" swap] 
  & M \otimes I \arrow[l, "M \otimes e_B" swap] \arrow[ld, "\rho_M"] \\
M &
\end{tikzcd}
\]
    \item The left action is associative; that is, the following diagram commutes:
\[
\begin{tikzcd} A \otimes (A \otimes M)  \arrow[d, "A \otimes \ell_M"'] &\arrow[l, "\alpha_{A,A,M}" swap] (A \otimes A) \otimes M \arrow[r, "m_A \otimes M"] & A \otimes M \arrow[d, "\ell_M"] \\ A \otimes M \arrow[rr, "\ell_M"'] & & M \end{tikzcd}\]
and the right action is associative; that is, the following diagram commutes:
\[
\begin{tikzcd} (M \otimes B) \otimes B \arrow[r, "\alpha_{M,B,B}"] \arrow[d, "r_M \otimes B"'] & M \otimes (B \otimes B) \arrow[r, "M \otimes m_B"] & M \otimes B \arrow[d, "r_M"] \\ M \otimes B \arrow[rr, "r_M"'] & & M \end{tikzcd}
\]

\item The actions are compatible; that is, the following diagrams commute:
\[
\begin{tikzcd} A \otimes (M \otimes B) \arrow[d, "A \otimes r_M"'] & \arrow[l, "\alpha_{A,M,B}"  swap] (A \otimes M) \otimes B \arrow[r, "\ell_M \otimes B"] & M \otimes B \arrow[d, "r_M"] \\ A \otimes M \arrow[rr, "\ell_M"'] & & M \end{tikzcd}
\]
\end{enumerate}
\end{defn}

The following constructions allow one to produce bimodules:

\begin{rmk}
Let $\cC^{\otimes}$ be a monoidal category.
\begin{itemize}[leftmargin=*]
\item
\label{IdentityBimodule}
Given a monoid $A$ in $\cC$, the map
\[
m_A\colon A\otimes A\to A
\]   
endows $A$ with an $(A,A)$-bimodule structure.
 \item \label{FreeModule}
 Given monoids $A$ and $C$, a left $A$-module $M$, and a right $C$-module $N$,
 the maps
 \[
 \ell_{M\otimes N}\colon A\otimes (M\otimes N)\xrightarrow{\alpha^{-1}_{A,M,N}}(A\otimes M)\otimes N\xrightarrow{\ell_M\otimes N}M\otimes N\]
 \[
 \quad\text{ and }\quad
 r_{M\otimes N}\colon (M\otimes N)\otimes C\xrightarrow{\alpha_{M,N,C}}M\otimes(N\otimes C)\xrightarrow{M \otimes r_N} M\otimes N
 \]
 endow $M\otimes N$ with the structure of an $(A,C)$-bimodule.
\end{itemize}
\end{rmk}

The notion of bimodule in established monoidal categories recovers known structures:

\begin{ex}
\begin{enumerate}[leftmargin=*]
    \item Given monoids $A$ and $B$, an $(A,B)$-bimodule $M$ in $\set^{\times}$ is equivalent to an $A\times B^{\op}$-set;
    \item
    Given sets $A$ and $B$, an $(A,B)$-bimodule $M$
    in $\set^{\amalg}$ is equivalent to a span
        \[A\xrightarrow{f}M\xleftarrow{g}B;
        \]
        \item
        Given rings $A$ and $B$, an $(A,B)$-bimodule $M$
        in $\cA b^{\otimes}$ is an $A\otimes B^{\op}$-module, or an $(A,B)$-bimodule in the traditional sense.
    \end{enumerate}
\end{ex}

\subsection{Bimodule maps}
The definition of bimodule map in a monoidal category is classical:

\begin{defn}
Let $\cC^{\otimes}$ be a monoidal category, $A$, $B$ be monoids and $M$, $N$ be $(A,B)$-bimodules. An $(A,B)$-\emph{bimodule map} consists of a morphism $\varphi\colon M\to N$ in $\cC$ that is compatible with the right actions and with the left actions of $M$ and $N$; that is, the following diagrams commute:
\[
\begin{tikzcd}
    A\otimes M\arrow[r,"A\otimes\varphi"]\arrow[d,"\ell_M" swap]&A\otimes N\arrow[d,"\ell_{N}"]\\
    M\arrow[r,"\varphi" swap]&N
\end{tikzcd}
\quad\text{ and }\quad
\begin{tikzcd}
    M\otimes B\arrow[r,"\varphi\otimes B"]\arrow[d,"r_M" swap]&N\otimes B\arrow[d,"r_N"]\\
    M\arrow[r,"\varphi" swap]&N
\end{tikzcd}
\]
\end{defn}

The following constructions allow one to produce bimodule maps:

\begin{rmk}
Let $\cC^{\otimes}$ be a monoidal category.
\begin{itemize}[leftmargin=*]
    \item Given monoids $A$ and $B$, three $(A,B)$-bimodules $M$, $N$ and $P$, and $(A,B)$-bimodule maps $\varphi\colon M\to N$ and $\psi\colon N\to P$, the composite\footnote{We use the convention $\varphi\bullet\psi\coloneqq\psi\circ\varphi$.} morphism $\varphi\bullet\psi\colon M\to N$ is an $(A,B)$-bimodule map.
    \item Given a monoid $A$, the identity map $\id_A\colon A\to A$ is an $(A,A)$-bimodule map.
    \item Given monoids $A$ and $B$, two $(A,B)$-bimodules $M$ and $N$, and an $(A,B)$-bimodule map $\varphi\colon M\to N$, if $\varphi$ is an isomorphism in $\cC$ then the inverse map $\varphi^{-1}\colon N\to M$ is an $(A,B)$-module map.
    \item 
    \label{ActionIsmap}
    Given monoids $A,B$ and $M$ an $(A,B)$-bimodule, the left action map $\ell_M\colon A\otimes M\to M$
is a map of $(A,B)$-bimodules with respect to the bimodule structure from \cref{FreeModule}. Similarly, the right action map $r_M\colon M\otimes B\to M$ is a map of $(A,B)$-bimodules.
\end{itemize}
\end{rmk}

\section{Monoids and Bimodules in a monoidal category with balanced tensor products}

\begin{defn}
\label{BalancedTensor}
Let $\cC^{\otimes}$ be a monoidal category.
Suppose we are given monoids $A$, $B$, $C$, an $(A,B)$-bimodule $M$ and a $(B,C)$-bimodule $N$. The \emph{balanced tensor product} of $M$ and $N$ over $B$ is the coequalizer
    \[M\otimes_BN\coloneqq\mathrm{coeq}
    \left[
    \begin{tikzcd}
    {(M \otimes B )\otimes N} && {M \otimes N}
    \arrow["r_M \otimes N", shift left=2, from=1-1, to=1-3]
    \arrow["\alpha_{M,B,N}\bullet(M \otimes \ell_N)"', shift right=2, from=1-1, to=1-3]
    \end{tikzcd}
    \right]
    \]
\end{defn}

In presence of all balanced tensor products, we can perform various constructions on bimodules.

\begin{prop}
\label{TensorFunctorial}
Let $\cC^{\otimes}$ be a monoidal category admitting all balanced tensor products. Suppose we are given monoids $A$, $B$, $C$, two $(A,B)$-bimodules $M$, $M'$, two $(B,C)$-bimodules $N$, $N'$, a map of $(A,B)$-bimodules $\varphi\colon M\to M'$ and a map of $(B,C)$-bimodules $\psi\colon N\to N'$. Then there is an induced map in $\cC$
\[\varphi\otimes_B\psi\colon M\otimes_BN\to M'\otimes_BN'.\]
\end{prop}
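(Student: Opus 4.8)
The plan is to obtain $\varphi \otimes_B \psi$ from the universal property of the coequalizer defining $M' \otimes_B N'$. First I would note that the tensor product functor $\otimes \colon \cC \times \cC \to \cC$ gives a morphism $\varphi \otimes \psi \colon M \otimes N \to M' \otimes N'$ in $\cC$, which I would then postcompose with the canonical quotient map $q' \colon M' \otimes N' \to M' \otimes_B N'$ to get a morphism $f \coloneqq (\varphi \otimes \psi) \bullet q' \colon M \otimes N \to M' \otimes_B N'$. The claim is that $f$ coequalizes the parallel pair $r_M \otimes N$ and $\alpha_{M,B,N}\bullet(M \otimes \ell_N)$ appearing in \cref{BalancedTensor}, so that $f$ factors uniquely through $q \colon M \otimes N \to M \otimes_B N$ via a map which we name $\varphi \otimes_B \psi$.

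To verify that $f$ coequalizes the pair, I would build the evident ladder diagram whose left square uses $\varphi \otimes (B \otimes \psi)$-type maps: precisely, naturality of $\alpha$ together with the hypotheses that $\varphi$ is a map of right $B$-modules (so $(\varphi \otimes B)\bullet r_{M'} = r_M \bullet \varphi$, i.e.\ $\varphi$ commutes with the right action) and that $\psi$ is a map of left $B$-modules (so $\psi$ commutes with $\ell_N$) show that the two composites $(M\otimes B)\otimes N \rightrightarrows M\otimes N \xrightarrow{\varphi\otimes\psi} M'\otimes N'$ become equal after the two composites $(M'\otimes B)\otimes N' \rightrightarrows M'\otimes N'$ are each composed with $q'$. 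More concretely: $(r_M \otimes N) \bullet (\varphi \otimes \psi) = ((\varphi\otimes B)\otimes \psi)\bullet(r_{M'}\otimes N')$ by functoriality of $\otimes$ and right-linearity of $\varphi$, and $\alpha_{M,B,N}\bullet(M\otimes\ell_N)\bullet(\varphi\otimes\psi) = ((\varphi\otimes B)\otimes\psi)\bullet\alpha_{M',B,N'}\bullet(M'\otimes\ell_{N'})$ by naturality of $\alpha$ and left-linearity of $\psi$; then composing both with $q'$ and using that $q'$ coequalizes the primed pair yields $(r_M\otimes N)\bullet f = \alpha_{M,B,N}\bullet(M\otimes\ell_N)\bullet f$. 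Uniqueness of $\varphi\otimes_B\psi$ is then immediate from the epimorphism property of $q$.

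The main obstacle is purely bookkeeping: carefully matching the associator instances and the placement of $\otimes$ versus the bullet composition so that the two rectangles genuinely commute in $\cC$, since the parallel pair in \cref{BalancedTensor} is not symmetric in its two legs (one leg carries an associator). I expect no conceptual difficulty — every needed compatibility is either functoriality of $\otimes$, naturality of $\alpha$, or one of the defining bimodule-map axioms — but one must be disciplined about which square each hypothesis fills. I would present this as a single commuting diagram with the two quotient maps on the right and invoke the universal property once.
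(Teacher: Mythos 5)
Your proposal is correct and follows essentially the same route as the paper: verify that $\varphi\otimes\psi$ post-composed with the quotient $q'$ coequalizes the defining parallel pair for $M\otimes_B N$ — using bifunctoriality of $\otimes$, the bimodule-map axioms for $\varphi$ and $\psi$, and naturality of $\alpha$ on $(\varphi,B,\psi)$ — and then invoke the universal property of the coequalizer. The paper organizes this as two commuting ladder squares and a rewriting by naturality, which is precisely the bookkeeping you describe.
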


\begin{proof}
Since $\varphi$ and $\psi$ are bimodule maps there are commutative diagrams in $\cC$
\[
\begin{tikzcd}
(M\otimes B)\otimes N\arrow[rr,"(\varphi\otimes B)\otimes\psi"]\arrow[d,"r_M\otimes N" swap]&&(M'\otimes B)\otimes N'\arrow[d,"r_{M'}\otimes N'"]\\
M\otimes N\arrow[rr,"\varphi\otimes\psi" swap]&&M'\otimes N'
\end{tikzcd}
\quad
\begin{tikzcd}
M\otimes (B\otimes N)\arrow[rr,"\varphi\otimes( B\otimes\psi)"]\arrow[d,"M\otimes \ell_N" swap]&&M'\otimes( B\otimes N')\arrow[d,"M'\otimes \ell_{N'}"]\\
M\otimes N\arrow[rr,"\varphi\otimes\psi" swap]&&M'\otimes N'
\end{tikzcd}
\]
By naturality of the associator $\alpha$ on the triple $(\varphi,B,\psi)$, the second diagram can be rewritten as follows
\[
\begin{tikzcd}
(M\otimes B)\otimes N\arrow[rr,"(\varphi\otimes B)\otimes\psi"]\arrow[d,"r_M\otimes N" swap]&&(M'\otimes B)\otimes N'\arrow[d,"r_{M'}\otimes N'"]\\
M\otimes N\arrow[rr,"\varphi\otimes\psi" swap]&&M'\otimes N'
\end{tikzcd}
\quad
\begin{tikzcd}
(M\otimes B)\otimes N\arrow[rr,"(\varphi\otimes B)\otimes\psi"]\arrow[d,"\alpha_{M,B,N}\bullet(M\otimes \ell_N)" swap]&&(M'\otimes B)\otimes N'\arrow[d,"\alpha_{M',B,N'}\bullet(M'\otimes \ell_{N'})"]\\
M\otimes N\arrow[rr,"\varphi\otimes \psi" swap]&&M'\otimes N'
\end{tikzcd}
\]
By the universal property of coequalizers we then obtain an induced map in $\cC$
    \[
\varphi\otimes_B\psi\colon M\otimes_BN\to M'\otimes_BN',
    \]
as desired.
\end{proof}

\begin{rmk}
\label{CompatibilityTensorOnMaps}
    In the context of \cref{TensorFunctorial}, there is a commutative diagram in $\cC$:
    \[
    \begin{tikzcd}
        M\otimes N\arrow[r,"\varphi\otimes\psi"]\arrow[d,"\pi_{M,N}"swap]&M'\otimes N'\arrow[d,"\pi_{M',N'}"]\\
        M\otimes_B N\arrow[r,"\varphi\otimes_B\psi"swap]&M'\otimes_B N'
    \end{tikzcd}
    \]
\end{rmk}

\begin{lem}
\label{RmkEpi}
Let $\cC^{\otimes}$ be a monoidal category admitting all balanced tensor products. Suppose we are given monoids $A$, $B$, $C$, an $(A,B)$-bimodule $M$, three $(B,C)$-bimodules $N$, $N'$, $N''$, and maps of $(B,C)$-bimodules $\gamma\colon N\to N'$,  $\gamma'\colon N'\to N''$, $\gamma''\colon N\to N''$. If the left diagram commutes then so does the right diagram.
\[
\begin{tikzcd}
    N\arrow[rd,"\gamma" swap]\arrow[rr,"\gamma''"]&& N''\\
    & N'\arrow[ru,"\gamma'" swap]&
\end{tikzcd}
\quad\rightsquigarrow\quad
\begin{tikzcd}
    M\otimes_B N\arrow[rd,"M\otimes_B \gamma" swap]\arrow[rr,"M\otimes_B \gamma''"]&&M\otimes_B N''\\
    &M\otimes_B N'\arrow[ru,"M\otimes_B \gamma'" swap]&
\end{tikzcd}
\]
\end{lem}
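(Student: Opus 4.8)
The plan is to reduce the claimed commutativity of the right-hand triangle to that of the left-hand one by exhibiting both as equalities of two maps out of the object $M\otimes_B N$, and then to use the fact that the canonical map $\pi_{M,N}\colon M\otimes N\to M\otimes_B N$ is an epimorphism (being a coequalizer). Concretely, I would first apply \cref{TensorFunctorial} three times — to the pairs $(\id_M,\gamma)$, $(\id_M,\gamma')$, $(\id_M,\gamma'')$ — to obtain the three horizontal and slanted arrows $M\otimes_B\gamma$, $M\otimes_B\gamma'$, $M\otimes_B\gamma''$ appearing in the target diagram; here one should note that $\id_M$ is an $(A,B)$-bimodule map, as recorded in the earlier remark on bimodule maps.

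Next I would show $(M\otimes_B\gamma')\bullet\cdots$ — more precisely, that $M\otimes_B\gamma'$ precomposed with $M\otimes_B\gamma$ equals $M\otimes_B\gamma''$. By \cref{CompatibilityTensorOnMaps}, each of these induced maps fits into a commuting square with the projections $\pi$; stacking the squares for $\gamma$ and $\gamma'$ gives
\[
(M\otimes_B\gamma')\bullet(M\otimes_B\gamma)\bullet\pi_{M,N}
= \pi_{M,N''}\bullet\big((\id_M\otimes\gamma')\bullet(\id_M\otimes\gamma)\big)
= \pi_{M,N''}\bullet(\id_M\otimes\gamma''),
\]
where the last equality uses the hypothesis that $\gamma'\bullet\gamma=\gamma''$ in $\cC$ together with the (bi)functoriality of $-\otimes-$ on morphisms in $\cC$. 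On the other hand, \cref{CompatibilityTensorOnMaps} applied to the pair $(\id_M,\gamma'')$ gives $\pi_{M,N''}\bullet(\id_M\otimes\gamma'') = (M\otimes_B\gamma'')\bullet\pi_{M,N}$. Combining the two displays yields
\[
(M\otimes_B\gamma')\bullet(M\otimes_B\gamma)\bullet\pi_{M,N} = (M\otimes_B\gamma'')\bullet\pi_{M,N}.
\]

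Finally, since $\pi_{M,N}$ is a coequalizer it is an epimorphism in $\cC$, so we may cancel it on the right to conclude $(M\otimes_B\gamma')\bullet(M\otimes_B\gamma) = M\otimes_B\gamma''$, which is exactly the commutativity of the right-hand triangle. The only mild subtlety — and the step I would be most careful about — is the bookkeeping in the middle display: one must check that $(\id_M\otimes\gamma')\bullet(\id_M\otimes\gamma) = \id_M\otimes(\gamma'\bullet\gamma) = \id_M\otimes\gamma''$ as honest morphisms in $\cC$, which is just functoriality of $\otimes$ in the second variable applied to the composable pair $\gamma,\gamma'$, but it is the place where the hypothesis is actually consumed. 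Everything else is a formal diagram chase using the universal property of the coequalizer.
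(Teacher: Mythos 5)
Your argument is correct and is essentially the paper's proof, repackaged: the paper draws a prism whose top face is the functoriality equality $(\id_M\otimes\gamma)\bullet(\id_M\otimes\gamma')=\id_M\otimes\gamma''$, whose three rectangular faces are the $\pi$-naturality squares of \cref{CompatibilityTensorOnMaps}, and then cancels the epimorphism $\pi_{M,N}$ to deduce that the front face commutes; you have simply written the same chase out as a chain of equations. One small caution: the paper fixes the convention $\varphi\bullet\psi:=\psi\circ\varphi$, so in the paper's notation the composites in your middle display should read in the opposite order (e.g.\ $(M\otimes_B\gamma)\bullet(M\otimes_B\gamma')$ for the two slanted edges), but this is purely a notational mismatch and does not affect the mathematics.
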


\begin{proof}
Consider the diagram in $\cC$ given by
\[\begin{tikzcd}
	{M\otimes N} && {M\otimes N''} \\
	&&& {M\otimes_BN} && {M\otimes_BN''} \\
	& {M\otimes N'} \\
	&&&& {M\otimes_BN'}
	\arrow["{M\otimes\gamma''}", from=1-1, to=1-3]
	\arrow["{M\otimes\gamma'}"', from=3-2, to=1-3]
	\arrow["{\pi_{M,N}}"{description, pos=0.3}, from=1-1, to=2-4,crossing over]
	\arrow["{M\otimes\gamma}"', from=1-1, to=3-2]
	\arrow["{\pi_{M,N''}}"{description}, from=1-3, to=2-6]
	\arrow["{M\otimes_B\gamma''}"', from=2-4, to=2-6]
	\arrow["{M\otimes_B\gamma}"', from=2-4, to=4-5]
	\arrow["{\pi_{M,N'}}"{description}, from=3-2, to=4-5]
	\arrow["{M\otimes_B\gamma'}"', from=4-5, to=2-6]
\end{tikzcd}\]
The top triangular face commutes by functoriality of the tensor product in $\cC$, and the three square faces commute by \cref{TensorFunctorial}.
By \cite[Proposition~1.3]{EH2} or \cite[Proposition~3.19]{awodeyBook},
the map $\pi_{M,N}\colon M\otimes N\to M\otimes_BN$
is an epimorphism in $\cC$, so the front triangle commutes too, as desired.
\end{proof}

\begin{prop}
\label{InducedAlpha}
Let $\cC^\otimes$ be a monoidal category. Suppose we are given monoids $A,B,C,D$, an $(A,B)$-bimodule $M$, a $(B,C)$-bimodule $N$ and a $(C,D)$-bimodule $P$.
 \begin{enumerate}[leftmargin=*]
 \item When considering $M\otimes N$ with the $(A,C)$-bimodule structure from \cref{FreeModule}, the composite
 \[
 (M\otimes N)\otimes P\xrightarrow{\alpha_{M,N,P}}M\otimes(N\otimes P)\xrightarrow{M\otimes\pi_{N,P}} M\otimes (N\otimes_CP)\]
 induces an map in $\cC$
 \[
\overline\alpha_{M,N|P}\colon(M\otimes N)\otimes_CP\xrightarrow{}M\otimes(N\otimes_CP).
 \]
   \item When considering $N\otimes P$ with the $(B,D)$-bimodule structure from \cref{FreeModule}, the map
 \[ M\otimes (N\otimes P)\xrightarrow{\alpha^{-1}_{M,N,P}}(M\otimes N)\otimes P\xrightarrow{\pi_{M,N}\otimes P}(M\otimes_BN)\otimes P\]
 induces a map in $\cC$
        \[\overline{\alpha^{-1}}_{M|N,P}\colon M\otimes_B(N\otimes P)
        \xrightarrow{}(M\otimes_BN)\otimes P.
        \] 
 \end{enumerate}
\end{prop}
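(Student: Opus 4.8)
The plan is to obtain both maps from the universal properties of the coequalizers appearing in \cref{BalancedTensor}; the two items are formally symmetric, so I would spell out~(1) in detail and only indicate the modifications for~(2). For~(1), writing $\pi_{M\otimes N,P}\colon (M\otimes N)\otimes P\to (M\otimes N)\otimes_CP$ for the canonical projection, it suffices to check that the composite
\[
f\colon (M\otimes N)\otimes P\xrightarrow{\alpha_{M,N,P}}M\otimes(N\otimes P)\xrightarrow{M\otimes\pi_{N,P}}M\otimes(N\otimes_CP)
\]
coequalizes the parallel pair $r_{M\otimes N}\otimes P$ and $\alpha_{M\otimes N,C,P}\bullet\bigl((M\otimes N)\otimes\ell_P\bigr)$ out of $\bigl((M\otimes N)\otimes C\bigr)\otimes P$, where, by \cref{FreeModule}, the right $C$-action on $M\otimes N$ is $r_{M\otimes N}=\alpha_{M,N,C}\bullet(M\otimes r_N)$. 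Then $\overline\alpha_{M,N|P}$ is defined as the unique map with $\overline\alpha_{M,N|P}\circ\pi_{M\otimes N,P}=f$, and item~(2) is obtained the same way.

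The equality $f\circ(r_{M\otimes N}\otimes P)=f\circ\bigl(\alpha_{M\otimes N,C,P}\bullet((M\otimes N)\otimes\ell_P)\bigr)$ is a diagram chase on the four objects $M,N,C,P$. Starting from $f\circ(r_{M\otimes N}\otimes P)$ one expands $r_{M\otimes N}$, applies naturality of $\alpha$ in its middle slot along $r_N\colon N\otimes C\to N$, and then applies the pentagon identity from \cref{MonoidalCategory} for $(M,N,C,P)$; this rewrites the composite as $\bigl(M\otimes\bigl(\pi_{N,P}\circ(r_N\otimes P)\circ\alpha^{-1}_{N,C,P}\bigr)\bigr)\circ\alpha_{M,N,C\otimes P}\circ\alpha_{M\otimes N,C,P}$. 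The defining relation of the coequalizer $N\otimes_CP$ then replaces $\pi_{N,P}\circ(r_N\otimes P)$ by $\pi_{N,P}\circ(N\otimes\ell_P)\circ\alpha_{N,C,P}$, whereupon the pair $\alpha_{N,C,P}\circ\alpha^{-1}_{N,C,P}$ cancels. On the other side, starting from $f\circ\bigl(\alpha_{M\otimes N,C,P}\bullet((M\otimes N)\otimes\ell_P)\bigr)$ and applying naturality of $\alpha$ in its last slot along $\ell_P\colon C\otimes P\to P$ yields the very same expression $\bigl(M\otimes(\pi_{N,P}\circ(N\otimes\ell_P))\bigr)\circ\alpha_{M,N,C\otimes P}\circ\alpha_{M\otimes N,C,P}$, so the two agree and $f$ descends.

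Item~(2) is handled by the symmetric argument applied to $g\colon M\otimes(N\otimes P)\xrightarrow{\alpha^{-1}_{M,N,P}}(M\otimes N)\otimes P\xrightarrow{\pi_{M,N}\otimes P}(M\otimes_BN)\otimes P$: one uses that the left $B$-action on $N\otimes P$ is $\ell_{N\otimes P}=\alpha^{-1}_{B,N,P}\bullet(\ell_N\otimes P)$ by \cref{FreeModule}, naturality of $\alpha$ (equivalently $\alpha^{-1}$) in its first and middle slots, the pentagon for $(M,B,N,P)$, and the defining relation of the coequalizer $M\otimes_BN$, to show that $g$ coequalizes the pair $r_M\otimes(N\otimes P)$ and $\alpha_{M,B,N\otimes P}\bullet(M\otimes\ell_{N\otimes P})$ out of $(M\otimes B)\otimes(N\otimes P)$; then $\overline{\alpha^{-1}}_{M|N,P}$ is the induced map out of $M\otimes_B(N\otimes P)$.

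I expect the main obstacle to be purely organizational: keeping track of which instance of the pentagon and which naturality square is being invoked at each step, and in particular reconciling the ``inner'' associator built into the free-module structures of \cref{FreeModule} (namely $\alpha_{M,N,C}$ inside $r_{M\otimes N}$, respectively $\alpha^{-1}_{B,N,P}$ inside $\ell_{N\otimes P}$) with the associator appearing explicitly in \cref{BalancedTensor}. Once the coherences are lined up, the conclusion is forced by the universal property, and no independence-of-choices check is needed since the coequalizer projections $\pi_{M\otimes N,P}$ and $\pi_{M,N\otimes P}$ are epimorphisms.
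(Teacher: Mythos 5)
Your proposal is correct and takes essentially the same approach as the paper: verify that the composite $(M\otimes\pi_{N,P})\circ\alpha_{M,N,P}$ coequalizes the two parallel maps defining $(M\otimes N)\otimes_CP$, using naturality of $\alpha$ in the middle and last slots, the pentagon identity for $(M,N,C,P)$, and the defining relation of the coequalizer $N\otimes_CP$, then invoke the universal property; the paper records the same argument as a single chain of equalities. (You correctly include the $\alpha^{-1}_{N,C,P}$ produced by the pentagon when matching against the coequalizer relation, a factor the paper's displayed chain elides between its third and fourth lines.)
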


\begin{proof}
We prove (1), (2) being analogous. To this end, using the naturality of the associator $\alpha$, the pentagon identity, and the module structure from \cref{FreeModule}, we observe that we have the following equality:
\[
\begin{array}{lll}
 \alpha_{M \otimes N, C, P} \bullet (M \otimes N) \otimes \ell_{P} \bullet \alpha_{M,N,P} \bullet M \otimes \pi_{N,P}\\
 \quad\quad = \alpha_{M \otimes N, C, P} \bullet \alpha_{M,N, C \otimes P} \bullet M \otimes (N \otimes \ell_{P}) \bullet M \otimes \pi_{N,P}\\
 
 \quad\quad = \alpha_{M \otimes N, C, P} \bullet \alpha_{M,N, C \otimes P} \bullet M \otimes ((N \otimes \ell_{P}) \bullet \pi_{N,P}) \\
 \quad\quad = \alpha_{M \otimes N, C, P} \bullet \alpha_{M,N, C \otimes P} \bullet M \otimes ((r_N\otimes P) \bullet \pi_{N,P}) \\

 \quad\quad = \alpha_{M \otimes N, C, P} \bullet \alpha_{M,N, C \otimes P} \bullet M \otimes \alpha_{N,C,P}^{-1} \bullet M \otimes (r_{N} \otimes P) \bullet M \otimes \pi_{N,P} \\
 \quad\quad= \alpha_{M \otimes N, C, P} \bullet \alpha_{M,N, C \otimes P} \bullet M \otimes \alpha_{N,C,P}^{-1} \bullet \alpha_{M, N \otimes C, P}^{-1} \bullet (M \otimes r_{N}) \otimes P \bullet \alpha_{M,N,P} \bullet M \otimes \pi_{N,P} \\
\quad\quad= \alpha_{M,N,C} \otimes P \bullet (M \otimes r_{N}) \otimes P \bullet \alpha_{M,N,P} \bullet M \otimes \pi_{N,P} &\\
 \quad\quad= r_{M \otimes N} \otimes P \bullet \alpha_{M,N,P} \bullet M \otimes \pi_{N,P}
\end{array}
\]
Hence, there is an induced map
\[
\overline\alpha_{M,N|P}\colon(M\otimes N)\otimes_CP\xrightarrow{}M\otimes(N\otimes_CP),\] 
as desired.  
\end{proof}

\begin{rmk}
\label{CompatibilityOverlineAlpha}
    In the context of \cref{InducedAlpha},
    there are commutative diagrams
    \[
    \begin{tikzcd}
        (M\otimes N)\otimes P\arrow[r,"\alpha_{M,N,P}"]\arrow[d,"\pi_{M\otimes N,P}"swap]&M\otimes (N\otimes P)\arrow[d,"M\otimes\pi_{N,P}"]\\
        (M\otimes N)\otimes_C P\arrow[r,"\overline\alpha_{M,N|P}"swap]&M\otimes (N\otimes_C P)
    \end{tikzcd}
    \quad\text{ and }\quad
    \begin{tikzcd}
   M\otimes (N\otimes P)\arrow[r,"\alpha^{-1}_{M,N,P}"]\arrow[d,"\pi_{M,N\otimes P}"swap]&(M\otimes N)\otimes P\arrow[d,"\pi_{M,N}\otimes P"]\\
   M\otimes_B (N\otimes P) \arrow[r,"\overline{\alpha^{-1}}_{M|N,P}"swap]&(M\otimes_B N)\otimes P
    \end{tikzcd}
    \]
\end{rmk}

We impose conditions on the monoidal category $\cC^{\otimes}$ which guarantee that bimodules can be composed over monoids. These conditions are essentially from \cite[Axioms~1.3]{Vitale}.

\begin{defn}
Let $\cC^{\otimes}$ be a monoidal category. We say that $\cC^{\otimes}$
\emph{admits a calculus of balanced tensor products}
if the following conditions are satisfied for all monoids and for all $(A,B)$-bimodule $M$, $(B,C)$-bimodule $N$, and $(C,D)$-bimodule $P$.
\begin{enumerate}[leftmargin=*]
    \item[(0)] The coequalizer
    \[M\otimes_BN\coloneqq\mathrm{coeq}
    \left[
    \begin{tikzcd}
    {(M \otimes B )\otimes N} && {M \otimes N}
    \arrow["r_M \otimes N", shift left=2, from=1-1, to=1-3]
    \arrow["\alpha_{M,B,N}\bullet(M \otimes \ell_N)"', shift right=2, from=1-1, to=1-3]
    \end{tikzcd}
    \right]
    \]
    exists in $\cC$. Let $\pi_{M,N}$ denote the canonical map $\pi_{M,N}\colon M\otimes N\to M\otimes_BN$.
    \item[($1'$)] The functor $M\otimes(-)\colon\cC\to\cC$ preserves the coequalizer of diagrams of the form
 \[
    \begin{tikzcd}
    {(N \otimes C )\otimes P} && {N \otimes P}
    \arrow["r_N \otimes P", shift left=2, from=1-1, to=1-3]
    \arrow["\alpha_{N,C,P}\bullet(N \otimes \ell_P)"', shift right=2, from=1-1, to=1-3]
    \end{tikzcd}
    \]
    \item[($2'$)] The functor $(-)\otimes P\colon\cC\to\cC$ preserves the coequalizer of diagrams of the form
 \[
    \begin{tikzcd}
    {(M \otimes B )\otimes N} && {M \otimes N}
    \arrow["r_M \otimes N", shift left=2, from=1-1, to=1-3]
    \arrow["\alpha_{M,B,N}\bullet(M \otimes \ell_N)"', shift right=2, from=1-1, to=1-3]
    \end{tikzcd}
    \]
\end{enumerate}
\end{defn}

\begin{prop}
\label{Axioms}
In presence of (0), Axioms ($1'$) and ($2'$) imply the following:
\begin{enumerate}[leftmargin=*]
    \item The map from \cref{InducedAlpha} is an isomorphism in $\cC$
 \[
\overline\alpha_{M,N|P}\colon(M\otimes N)\otimes_CP\xrightarrow{\cong}M\otimes(N\otimes_CP).
 \]
   \item The map from \cref{InducedAlpha} is an isomorphism in $\cC$
        \[\overline{\alpha^{-1}}_{M|N,P}\colon M\otimes_B(N\otimes P)
        \xrightarrow{\cong}(M\otimes_BN)\otimes P.
        \] 
\end{enumerate}
\end{prop}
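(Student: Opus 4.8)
The plan is to exhibit both the domain and the codomain of $\overline\alpha_{M,N|P}$ as coequalizers of parallel pairs that are carried onto one another by the associator isomorphisms, and then appeal to the uniqueness of coequalizers. Concretely, by Axiom~$(0)$ the object $(M\otimes N)\otimes_C P$ is by definition the coequalizer of the pair
\[
r_{M\otimes N}\otimes P,\qquad \alpha_{M\otimes N,C,P}\bullet\bigl((M\otimes N)\otimes\ell_P\bigr)\ \colon\ \bigl((M\otimes N)\otimes C\bigr)\otimes P\longrightarrow (M\otimes N)\otimes P,
\]
with structure map $\pi_{M\otimes N,P}$. On the other hand, Axiom~$(1')$ says precisely that $M\otimes(-)$ sends the coequalizer diagram defining $N\otimes_C P$ to a coequalizer, so $M\otimes(N\otimes_C P)$ is the coequalizer of the pair
\[
M\otimes(r_N\otimes P),\qquad M\otimes\bigl(\alpha_{N,C,P}\bullet(N\otimes\ell_P)\bigr)\ \colon\ M\otimes\bigl((N\otimes C)\otimes P\bigr)\longrightarrow M\otimes(N\otimes P),
\]
with structure map $M\otimes\pi_{N,P}$.

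Next I would compare the two parallel pairs. The associator supplies isomorphisms
\[
\alpha_{M,N,P}\colon (M\otimes N)\otimes P\xrightarrow{\ \cong\ }M\otimes(N\otimes P),\qquad \theta\coloneqq(\alpha_{M,N,C}\otimes P)\bullet\alpha_{M,N\otimes C,P}\colon \bigl((M\otimes N)\otimes C\bigr)\otimes P\xrightarrow{\ \cong\ }M\otimes\bigl((N\otimes C)\otimes P\bigr),
\]
and I claim these fit into an isomorphism of parallel pairs. Using the identity $r_{M\otimes N}=\alpha_{M,N,C}\bullet(M\otimes r_N)$ from \cref{FreeModule} together with functoriality of $(-)\otimes P$, compatibility with the two ``action'' legs reduces, after cancelling the isomorphism $\alpha_{M,N,C}\otimes P$, to naturality of $\alpha$ in its middle argument applied to $r_N\colon N\otimes C\to N$. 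Compatibility with the two ``multiplication'' legs reduces, after one application of naturality of $\alpha$ in its last argument applied to $\ell_P\colon C\otimes P\to P$, to the instance of the pentagon identity with vertices $M,N,C,P$. This is the same sort of coherence bookkeeping already performed in the proof of \cref{InducedAlpha}.

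Since an isomorphism of parallel pairs induces an isomorphism of their coequalizers, we obtain an isomorphism $\phi\colon (M\otimes N)\otimes_C P\xrightarrow{\cong}M\otimes(N\otimes_C P)$ uniquely determined by $\pi_{M\otimes N,P}\bullet\phi=\alpha_{M,N,P}\bullet(M\otimes\pi_{N,P})$. As $\pi_{M\otimes N,P}$ is an epimorphism (\cref{RmkEpi}) and this very identity is the characterizing property of $\overline\alpha_{M,N|P}$ recorded in \cref{CompatibilityOverlineAlpha}, we conclude $\phi=\overline\alpha_{M,N|P}$, which proves part~(1). Part~(2) is handled symmetrically: one uses Axiom~$(2')$ so that $(-)\otimes P$ preserves the coequalizer defining $M\otimes_B N$, replaces $\alpha$ by $\alpha^{-1}$ throughout (the relevant naturality and the pentagon relation hold verbatim for $\alpha^{-1}$), and invokes the second square of \cref{CompatibilityOverlineAlpha}.

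I expect the main obstacle to be purely the coherence bookkeeping in the middle step: choosing the correct composite of associators for $\theta$ and keeping track of which naturality squares and which orientation of the pentagon are needed to make the two squares of the parallel-pair isomorphism commute. A secondary point requiring care is checking that the isomorphism of coequalizers produced abstractly is literally $\overline\alpha_{M,N|P}$, rather than merely some isomorphism between the same two objects; this is where the epimorphism property from \cref{RmkEpi} and the compatibility square of \cref{CompatibilityOverlineAlpha} are used.
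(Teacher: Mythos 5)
Your proposal is correct and takes essentially the same approach as the paper: exhibit $(M\otimes N)\otimes_C P$ and $M\otimes(N\otimes_C P)$ as coequalizers of parallel pairs (the latter via Axiom~$(1')$), use the associator to build an isomorphism of those parallel pairs (verified by naturality of $\alpha$ and the pentagon identity), and conclude that the induced map on coequalizers is an isomorphism, identified with $\overline\alpha_{M,N|P}$ via the epimorphism $\pi_{M\otimes N,P}$ and \cref{CompatibilityOverlineAlpha}. The paper draws the same comparison of coequalizer diagrams and is slightly terser on the final identification, but the underlying argument is the same.
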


\begin{proof}
We briefly describe how to address the case of $\overline\alpha$, the case of $\overline\alpha^{-1}$ being analogous.
Consider the following diagram:
\[\begin{tikzcd}
	{((M\otimes N)\otimes C)\otimes P} &&& {M\otimes ((N\otimes C)\otimes P)} \\
	&&& {} \\
	{(M\otimes N)\otimes P} &&& {M\otimes (N\otimes P)}
	\arrow["{\alpha_{M,N,C}\otimes P\bullet\alpha_{M,N\otimes C,P}}", from=1-1, to=1-4]
	\arrow["{(\alpha_{M,N,C}\bullet M\otimes r_N)\otimes P}"', dashed, from=1-1, to=3-1]
	\arrow["{\alpha_{M\otimes N,C,P}\bullet(M\otimes N)\otimes\ell_P}", shift left=3, dotted, from=1-1, to=3-1]
	\arrow["{M\otimes(r_N\otimes P)}"', dashed, from=1-4, to=3-4]
	\arrow["{M\otimes(\alpha_{N,C,P}\bullet N\otimes \ell_P)}", shift left=3, dotted, from=1-4, to=3-4]
	\arrow["{\alpha_{M,N,P}}", from=3-1, to=3-4]
\end{tikzcd}\]
One can see, using repeatedly the pentagon axiom from \cref{MonoidalCategory}(2), the naturality of the associator $\alpha$, and the bifunctoriality of the tensor product $\otimes$, that the square obtained using the dashed arrows and the square obtained using the dotted arrows commute individually, so there is an induced arrow at the level of their coequalizers. Given that the two horizontal maps are isomorphisms, it follows that the induced map, which is by assumption ($1'$) can be identified with $\overline\alpha_{M,N|P}$, is also one.
\end{proof}

Many examples of interest satisfy these properties:

\begin{ex}
\begin{itemize}[leftmargin=*]
     \item If $\cC^{\otimes}$ is a cocomplete closed monoidal category, then $\cC^{\otimes}$ admits a calculus of balanced tensor products. In particular, the monoidal categories $\set^{\times}$ and $\cA b^{\otimes}$ admit a calculus of balanced tensor products.
     \item If $\cC$ is a cocomplete category, then $\cC^{\amalg}$ admits a calculus of balanced tensor products. In particular, the monoidal category $\set^{\amalg}$ admits a calculus of balanced tensor products.
     \item If $\cC^{\otimes}$ and $\cD^{\boxtimes}$ admit a calculus of balanced tensor products, then their product is a monoidal category which admits a calculus of balanced tensor products when endowed with the usual monoidal product.
 \end{itemize}   
\end{ex}

\begin{prop}
\label{InducedAlpha2}
Let $\cC^\otimes$ be a monoidal category. Suppose we are given monoids $A,B,C,D$, an $(A,B)$-bimodule $M$, a $(B,C)$-bimodule $N$ and a $(C,D)$-bimodule $P$. Then the maps from \cref{InducedAlpha} induce inverse isomorphisms in $\cC$
\[
\overline\alpha_{M|N|P}\colon(M\otimes_BN)\otimes_CP\xrightarrow{\cong} M\otimes_B(N\otimes_CP).
\]
and
\[
\overline{\alpha^{-1}}_{M|N|P}\colon M\otimes_B(N\otimes_CP)\xrightarrow{\cong}(M\otimes_BN)\otimes_CP .
\]
\end{prop}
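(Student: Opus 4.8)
The plan is to build $\overline\alpha_{M|N|P}$ and $\overline{\alpha^{-1}}_{M|N|P}$ by descending the two-variable associators of \cref{InducedAlpha} --- which \cref{Axioms} shows to be isomorphisms --- along canonical quotient maps, and then to verify directly that the two resulting morphisms are mutually inverse. Throughout one uses that $M\otimes_BN$ and $N\otimes_CP$ carry the evident $(A,C)$- and $(B,D)$-bimodule structures (inherited from the free structures of \cref{FreeModule}), with respect to which the coequalizer maps $\pi_{M,N}$ and $\pi_{N,P}$ are bimodule maps; this is what makes the functors $(-)\otimes_CP$ and $M\otimes_B(-)$ of \cref{TensorFunctorial} applicable to them.

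The key preliminary observation is that $\pi_{M,N}\otimes_CP$ is not merely an epimorphism but a coequalizer. Indeed, the endofunctor $X\mapsto X\otimes_CP$ of $\cC$ is, naturally in $X$, the coequalizer of the two natural transformations $(-\otimes C)\otimes P\rightrightarrows(-)\otimes P$, and each of $(-)\otimes P$ and $(-\otimes C)\otimes P\cong(-)\otimes(C\otimes P)$ preserves the defining coequalizer of $M\otimes_BN$ by Axiom~$(2')$, applied respectively to the $(C,D)$-bimodules $P$ and $C\otimes P$. Since colimits commute with colimits, $(-)\otimes_CP$ preserves that coequalizer, so $\pi_{M,N}\otimes_CP$ exhibits $(M\otimes_BN)\otimes_CP$ as the coequalizer of the pair obtained by applying $(-)\otimes_CP$ to the two maps $r_M\otimes N$ and $\alpha_{M,B,N}\bullet(M\otimes\ell_N)$. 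Symmetrically, using Axiom~$(1')$, the map $M\otimes_B\pi_{N,P}$ exhibits $M\otimes_B(N\otimes_CP)$ as a coequalizer. One then defines $\overline\alpha_{M|N|P}$ to be the unique morphism with $\overline\alpha_{M|N|P}\circ(\pi_{M,N}\otimes_CP)=\pi_{M,N\otimes_CP}\circ\overline\alpha_{M,N|P}$, and $\overline{\alpha^{-1}}_{M|N|P}$ the unique morphism with $\overline{\alpha^{-1}}_{M|N|P}\circ(M\otimes_B\pi_{N,P})=\pi_{M\otimes_BN,P}\circ\overline{\alpha^{-1}}_{M|N,P}$.

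Since the left-hand maps are now known to be coequalizers, the existence of $\overline\alpha_{M|N|P}$ reduces to checking that $\pi_{M,N\otimes_CP}\circ\overline\alpha_{M,N|P}$ coequalizes the relevant parallel pair. Precomposing with the epimorphism $\pi_{(M\otimes B)\otimes N,P}$ and using \cref{CompatibilityOverlineAlpha} and \cref{CompatibilityTensorOnMaps} to unwind $\overline\alpha_{M,N|P}$ and the two maps of the pair, this becomes an identity between morphisms out of $((M\otimes B)\otimes N)\otimes P$; there, the naturality of $\alpha$ and a single application of the pentagon of \cref{MonoidalCategory} rewrite $\alpha_{M,N,P}$ postcomposed with (the images of) $r_M\otimes N$ and $\alpha_{M,B,N}\bullet(M\otimes\ell_N)$ as $r_M\otimes(N\otimes P)$ and $\alpha_{M,B,N\otimes P}\bullet(M\otimes\ell_{N\otimes P})$, each precomposed with the isomorphism $\alpha_{M\otimes B,N,P}$ --- and these are precisely the two maps in the coequalizer diagram defining $M\otimes_B(N\otimes P)$; the desired equality then follows from $\pi_{N,P}$ being a bimodule map together with the coequalizer property of $\pi_{M,N\otimes_CP}$. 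The construction of $\overline{\alpha^{-1}}_{M|N|P}$ is entirely symmetric.

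It remains to check that $\overline{\alpha^{-1}}_{M|N|P}\circ\overline\alpha_{M|N|P}=\id$ and $\overline\alpha_{M|N|P}\circ\overline{\alpha^{-1}}_{M|N|P}=\id$, after which both maps are isomorphisms. Since $\pi_{M,N}\otimes_CP$ is an epimorphism, for the first identity it suffices to precompose with it and then once more with the epimorphism $\pi_{M\otimes N,P}$; chaining the defining identity of $\overline\alpha_{M|N|P}$, the instance $\pi_{M,N\otimes_CP}\circ(M\otimes\pi_{N,P})=(M\otimes_B\pi_{N,P})\circ\pi_{M,N\otimes P}$ of \cref{CompatibilityTensorOnMaps}, the defining identity of $\overline{\alpha^{-1}}_{M|N|P}$, and the two squares of \cref{CompatibilityOverlineAlpha}, the whole string collapses to $\alpha^{-1}_{M,N,P}\circ\alpha_{M,N,P}=\id$ composed with $\pi_{M\otimes_BN,P}\circ(\pi_{M,N}\otimes P)$, which equals $(\pi_{M,N}\otimes_CP)\circ\pi_{M\otimes N,P}$ by \cref{CompatibilityTensorOnMaps} again; cancelling the two epimorphisms gives the claim, and the other composite is treated identically. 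The main obstacle is the existence step: recognizing $\pi_{M,N}\otimes_CP$ (and $M\otimes_B\pi_{N,P}$) as genuine coequalizers --- which is what forces Axioms~$(1')$ and $(2')$ to be invoked for auxiliary bimodules such as $C\otimes P$ rather than only for $P$ --- and then carrying out the coherence bookkeeping that verifies the coequalizing condition without losing track of the many instances of $\alpha$, of the various $\pi$ maps, and of the action maps; the remaining steps are then formal consequences of the compatibility remarks already recorded.
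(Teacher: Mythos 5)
Your argument is correct and rests on the same core idea as the paper's: both proofs ultimately invoke the commutation of colimits with colimits (Fubini), applied to the double coequalizer diagram indexed by $\cA\times\cA$ where $\cA$ is the walking parallel pair. The paper applies Fubini once, globally, to identify the two iterated coequalizers directly, while you unpack the same step into: first showing (via Axiom~$(2')$ applied to $P$ and to $C\otimes P$, plus Fubini) that $\pi_{M,N}\otimes_CP$ is itself a coequalizer, then descending $\pi_{M,N\otimes_CP}\bullet\overline\alpha_{M,N|P}$ along it, and finally verifying that the two descended maps are mutually inverse by cancelling epimorphisms through the chain of compatibility squares. The one small imprecision is calling $X\mapsto X\otimes_CP$ an endofunctor of $\cC$ — it is only defined on (right) $C$-modules, since the parallel pair $(X\otimes C)\otimes P\rightrightarrows X\otimes P$ depends on the $C$-action — and, like the paper's proof, the four commutativity conditions needed to make the $\cA\times\cA$ diagram well-defined (so that Fubini applies) are the place where the pentagon and naturality of $\alpha$ actually get used, which you correctly flag as the ``coherence bookkeeping'' without writing it out.
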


\begin{proof}
There are commutative squares in $\cC$
\[\begin{tikzcd}
	{((M \otimes B) \otimes N) \otimes (C \otimes P)} &&& {(M \otimes N) \otimes (C \otimes P)} \\
	{(M \otimes B) \otimes (N \otimes (C \otimes P))} &&& {M \otimes (N \otimes (C \otimes P))} \\
	{(M \otimes B) \otimes (N \otimes P)} &&& {M\otimes (N \otimes P)}
	\arrow["{(r_M \otimes N) \otimes (C \otimes P)}", shift left=2, from=1-1, to=1-4]
	\arrow["{\alpha_{M \otimes B,N,C \otimes P}}"', from=1-1, to=2-1]
	\arrow["{\alpha_{M,N, C \otimes P}}", from=1-4, to=2-4]
	\arrow["{(M \otimes B) \otimes (\alpha_{N,C,P}^{-1} \bullet r_N \otimes P)}"', from=2-1, to=3-1]
	\arrow["{M \otimes (\alpha_{N,C,P}^{-1} \bullet r_N \otimes P)}", from=2-4, to=3-4]
	\arrow["{r_M \otimes (N \otimes P)}"', from=3-1, to=3-4]
\end{tikzcd}\]

\bigskip 

\[\begin{tikzcd}
	{((M \otimes B) \otimes N) \otimes (C \otimes P)} &&& {(M \otimes N) \otimes (C \otimes P)} \\
	{(M \otimes B) \otimes (N \otimes (C \otimes P))} &&& {M \otimes (N \otimes (C \otimes P))} \\
	{(M \otimes B) \otimes (N \otimes P)} &&& {M\otimes (N \otimes P)}
	\arrow["{(r_M \otimes N) \otimes (C \otimes P)}", from=1-1, to=1-4]
	\arrow["{\alpha_{M \otimes B,N,C \otimes P}}"', from=1-1, to=2-1]
	\arrow["{\alpha_{M,N, C \otimes P}}", from=1-4, to=2-4]
	\arrow["{(M \otimes B) \otimes (N \otimes \ell_P)}"', from=2-1, to=3-1]
	\arrow["{M \otimes (N \otimes \ell_P)}", from=2-4, to=3-4]
	\arrow["{r_M \otimes (N \otimes P)}"', from=3-1, to=3-4]
\end{tikzcd}\]

\bigskip   

\[\begin{tikzcd}
	{((M \otimes B) \otimes N) \otimes (C \otimes P)} &&& {(M \otimes N) \otimes (C \otimes P)} \\
	{(M \otimes B) \otimes (N \otimes (C \otimes P))} &&& {M \otimes (N \otimes (C \otimes P))} \\
	{(M \otimes B) \otimes (N \otimes P)} &&& {M\otimes (N \otimes P)}
	\arrow["{(\alpha_{M,B,N} \bullet M \otimes \ell_N) \otimes (C \otimes P)}", from=1-1, to=1-4]
	\arrow["{\alpha_{M \otimes B,N,C \otimes P}}"', from=1-1, to=2-1]
	\arrow["{\alpha_{M,N, C \otimes P}}", from=1-4, to=2-4]
	\arrow["{(M \otimes B) \otimes (N \otimes \ell_P)}"', from=2-1, to=3-1]
	\arrow["{M \otimes (N \otimes \ell_P)}", from=2-4, to=3-4]
	\arrow["{\alpha_{M,B,N \otimes P} \bullet M \otimes (\alpha_{B,N,P}^{-1} \bullet \ell_N \otimes P)}"', from=3-1, to=3-4]
\end{tikzcd}\]

which commute by naturality of $\alpha$, pentagon axiom and bifunctoriality of $- \otimes -$, and
\[\begin{tikzcd}
	{((M \otimes B) \otimes N) \otimes (C \otimes P)} &&& {(M \otimes N) \otimes (C \otimes P)} \\
	{(M \otimes B) \otimes (N \otimes (C \otimes P))} &&& {M \otimes (N \otimes (C \otimes P))} \\
	{(M \otimes B) \otimes (N \otimes P)} &&& {M\otimes (N \otimes P)}
	\arrow["{(\alpha_{M,B,N} \bullet M \otimes \ell_N) \otimes (C \otimes P)}", from=1-1, to=1-4]
	\arrow["{\alpha_{M \otimes B,N,C \otimes P}}"', from=1-1, to=2-1]
	\arrow["{\alpha_{M,N, C \otimes P}}", from=1-4, to=2-4]
	\arrow["{(M \otimes B) \otimes (\alpha_{N,C,P}^{-1} \bullet r_N \otimes P)}"', from=2-1, to=3-1]
	\arrow["{M \otimes (\alpha_{N,C,P}^{-1} \bullet r_N \otimes P)}", from=2-4, to=3-4]
	\arrow["{\alpha_{M,B,N \otimes P} \bullet M \otimes (\alpha_{B,N,P}^{-1} \bullet \ell_N \otimes P)}"', from=3-1, to=3-4]
\end{tikzcd}\]
which commutes by $N$ being a $(B,C)$-bimodule, naturality of $\alpha$ and pentagon axiom.

Then, if $\mathcal{A}$ is the category with two objects and two parallel morphisms, these four diagrams determine the datum of a diagram $F\colon\cA\times\cA\to\cC$.
By Fubini's theorem for colimits (see \cite[\textsection IX.2]{MacLane}) we have
\[
\colim_{j\in\cA}\colim_{i\in\cA} F_{i,j}\cong\colim_{(i,j)\in\cA\times\cA} F_{i,j}\cong\colim_{i\in\cA}\colim_{j\in\cA} F_{i,j}.
\]
Following linear algebra convention ($i$ for row index and $j$ for column index), the object $\colim_{j\in\cA}\colim_{i\in\cA} F_{i,j}$ should be computed by taking the colimit of each column and then the colimit of the result, which gives $M\otimes_B(N\otimes_CP)$. Similarly, the object $\colim_{i\in\cA}\colim_{j\in\cA} F_{i,j}$ is computed by taking the colimit of each row and then the colimit of the result, which gives $(M\otimes_BN)\otimes_CP$.

So there is a canonical isomorphism induced by $\alpha_{M,N,P}$ in $\cC$:
\[
\overline\alpha_{M|N|C}\colon (M\otimes_BN)\otimes_CP\xrightarrow\cong M\otimes_B(N\otimes_CP),
\]
as desired. With a similar argument one can construct $\overline{\alpha^{-1}}_{M|N|P}$, as well as verify that $\overline{\alpha^{-1}}_{M|N|P}$ and $\overline{\alpha}_{M|N|P}$ are inverse to each other.
\end{proof}

\begin{prop}
\label{NaturalityAssociatorTensor}
Let $\cC^{\otimes}$ be a monoidal category which admits a calculus of balanced tensor products. Suppose we are given monoids $A$, $B$, $C$, $D$, a map of $(A,B)$-bimodules $\varphi\colon M\to M'$, a map of $(B,C)$-bimodules $\psi\colon N\to N'$, and a map of $(C,D)$-bimodules $\rho\colon P\to P'$. Then the following diagram commutes:
\[
\begin{tikzcd}
   (M\otimes_BN)\otimes_CP \arrow[r,"\overline\alpha_{M|N|P}"]\arrow[d,"(\varphi\otimes_B\psi)\otimes_C\rho" swap]&M\otimes_B(N\otimes_CP)\arrow[d,"\varphi\otimes_B(\psi\otimes_C\rho)"]\\
    (M'\otimes_BN')\otimes_CP'\arrow[r,"\overline\alpha_{M'|N'|P'}"swap]&M'\otimes_B(N'\otimes_CP')
\end{tikzcd}
\]
The same holds when considering the analog diagram in which any of the balanced tensor products ($\otimes_B$, $\otimes_C$) is replaced with an ordinary tensor product $\otimes$, and replacing $\overline\alpha$ appropriately.
\end{prop}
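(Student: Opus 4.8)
The plan is to deduce the naturality of the balanced associator $\overline\alpha_{M|N|P}$ from that of the ordinary monoidal associator $\alpha$, exploiting that the canonical maps onto balanced tensor products are epimorphisms. Write $q_{M,N,P}$ for the composite
\[
(M\otimes N)\otimes P\xrightarrow{\pi_{M,N}\otimes P}(M\otimes_B N)\otimes P\xrightarrow{\pi_{M\otimes_B N,P}}(M\otimes_B N)\otimes_C P
\]
and $q'_{M,N,P}$ for its mirror image
\[
M\otimes(N\otimes P)\xrightarrow{M\otimes\pi_{N,P}}M\otimes(N\otimes_C P)\xrightarrow{\pi_{M,N\otimes_C P}}M\otimes_B(N\otimes_C P).
\]
First I would record that $q_{M,N,P}$ is an epimorphism in $\cC$: the second factor is a coequalizer projection, hence epi by \cite[Proposition~1.3]{EH2} (cf.\ \cref{RmkEpi}), and the first factor is the image of the coequalizer projection $\pi_{M,N}$ under $(-)\otimes P$, which preserves that coequalizer by Axiom~($2'$), so it too is a coequalizer projection and hence epi. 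Next I would establish two recognition identities. The first,
\[
\overline\alpha_{M|N|P}\circ q_{M,N,P}=q'_{M,N,P}\circ\alpha_{M,N,P},
\]
expresses that $\overline\alpha_{M|N|P}$ is, by its construction in \cref{InducedAlpha2}, the map induced by $\alpha_{M,N,P}$ on the iterated coequalizers; in practice it is obtained by stacking the two squares of \cref{CompatibilityOverlineAlpha} against those of \cref{CompatibilityTensorOnMaps}. The second, relating the induced maps of \cref{TensorFunctorial} to the comparison maps, reads
\begin{gather*}
\bigl((\varphi\otimes_B\psi)\otimes_C\rho\bigr)\circ q_{M,N,P}=q_{M',N',P'}\circ\bigl((\varphi\otimes\psi)\otimes\rho\bigr),\\
\bigl(\varphi\otimes_B(\psi\otimes_C\rho)\bigr)\circ q'_{M,N,P}=q'_{M',N',P'}\circ\bigl(\varphi\otimes(\psi\otimes\rho)\bigr),
\end{gather*}
and follows from \cref{CompatibilityTensorOnMaps} (applied twice) together with bifunctoriality of $\otimes$.

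Granting these, the proof is immediate. Precomposing the two legs of the target square with the epimorphism $q_{M,N,P}$ and using the recognition identities, the leg through the bottom-left corner becomes $q'_{M',N',P'}\circ\alpha_{M',N',P'}\circ\bigl((\varphi\otimes\psi)\otimes\rho\bigr)$ and the leg through the top-right corner becomes $q'_{M',N',P'}\circ\bigl(\varphi\otimes(\psi\otimes\rho)\bigr)\circ\alpha_{M,N,P}$. These coincide by naturality of $\alpha$ on the triple $(\varphi,\psi,\rho)$; since $q_{M,N,P}$ is an epimorphism, the target square commutes. For the variants in which one or both of $\otimes_B$, $\otimes_C$ is replaced by the ordinary $\otimes$, and $\overline\alpha$ by the corresponding partial associator of \cref{InducedAlpha}, the same argument applies verbatim, now using the appropriate partial comparison epimorphism in place of $q_{M,N,P}$ — for instance $\pi_{M,N}\otimes P$, which is epi by Axiom~($2'$), or $M\otimes\pi_{N,P}$, which is epi by Axiom~($1'$) — together with \cref{CompatibilityOverlineAlpha}; these cases are somewhat simpler, as fewer coequalizers intervene.

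The step I expect to be the main obstacle is the first recognition identity: one must match the total projection maps implicit in the construction of \cref{InducedAlpha2} (which there appear dressed with extra associators, since the objects are of the form $((M\otimes B)\otimes N)\otimes(C\otimes P)$ rather than $((M\otimes B)\otimes N)\otimes P$) with $q_{M,N,P}$ and $q'_{M,N,P}$, assemble the compatibility squares of \cref{CompatibilityOverlineAlpha} and \cref{CompatibilityTensorOnMaps} in precisely the right order, and be scrupulous about which of Axioms~($1'$), ($2'$) makes each intervening map an epimorphism. Alternatively, one could bypass this bookkeeping by noting that the diagram $F\colon\cA\times\cA\to\cC$ of \cref{InducedAlpha2} is functorial in $(M,N,P)$, that $\varphi,\psi,\rho$ induce a natural transformation between two such diagrams, and that $\overline\alpha_{M|N|P}$ is the component at $(M,N,P)$ of the natural comparison isomorphism furnished by Fubini's theorem for colimits; the claim then follows from naturality of $\alpha$ and of that comparison isomorphism.
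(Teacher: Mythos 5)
Your argument is essentially the paper's proof, rearranged: the paper draws a cube whose back face is naturality of $\alpha$, whose top/bottom faces are your first recognition identity (via \cref{CompatibilityOverlineAlpha}), whose left/right faces are your second (via \cref{TensorFunctorial} and \cref{CompatibilityTensorOnMaps}), and whose front face is then forced to commute because the map you call $q_{M,N,P}$ is an epimorphism by \cite[Proposition~1.3]{EH2}. You are right to flag the first recognition identity as the step where the paper's citation of \cref{CompatibilityOverlineAlpha} (which concerns the one-variable $\overline\alpha_{M,N|P}$, not the two-variable $\overline\alpha_{M|N|P}$ of \cref{InducedAlpha2}) does not literally apply without unwinding the Fubini construction, and your proposed detour through naturality of the Fubini comparison isomorphism is a clean way to close that gap.
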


\begin{proof}
  Consider the diagram in $\cC$:
\[\begin{tikzcd}
	{(M \otimes N) \otimes P} && {M \otimes (N \otimes P)} \\
	\\
	{(M' \otimes N') \otimes P'} && {M' \otimes (N' \otimes P')} \\
	& {(M \otimes_B N) \otimes_C P} && {M \otimes_B (N \otimes_C P)} \\
	\\
	& {(M' \otimes_B N') \otimes_C P'} && {M' \otimes_B (N' \otimes_C P')}
	\arrow["{\alpha_{M,N,P}}", from=1-1, to=1-3]
	\arrow["{(\varphi\otimes\psi)\otimes\rho}"', from=1-1, to=3-1]
	\arrow["{(\pi_{M,N} \otimes P) \bullet \pi_{M\otimes_B N, P}}", from=1-1, to=4-2]
	\arrow["{\varphi\otimes(\psi\otimes\rho)}"{description}, from=1-3, to=3-3]
	\arrow["{(M \otimes \pi_{N,P}) \bullet \pi_{M, N \otimes_C P}}", from=1-3, to=4-4]
	\arrow["{\alpha_{M',N',P'}}"{description}, from=3-1, to=3-3]
	\arrow["{(\pi_{M',N'} \otimes P') \bullet \pi_{M'\otimes_B N', P'}}"{description, pos=0.2}, from=3-1, to=6-2]
	\arrow["{(M' \otimes \pi_{N',P'}) \bullet \pi_{M', N' \otimes_C P'}}"{description}, from=3-3, to=6-4]
	\arrow["{\overline{\alpha}_{M|N|P}}"{description}, from=4-2, to=4-4]
	\arrow["{(\varphi\otimes_B\psi)\otimes_C\rho}"{description, pos=0.3}, from=4-2, to=6-2]
	\arrow["{\varphi\otimes_B(\psi\otimes_C\rho)}"{description, pos=0.4}, from=4-4, to=6-4]
	\arrow["{\overline{\alpha}_{M'|N'|P'}}"', from=6-2, to=6-4]
\end{tikzcd}\]
The back square commutes because of the naturality of $\alpha$ on $(\varphi,\psi,\rho)$, the top square and bottom square commute by construction of $\overline\alpha$ (cf.~\cref{CompatibilityOverlineAlpha}), the the left and right squares commute by \cref{TensorFunctorial}.
By \cite[Proposition~1.3]{EH2},
the map $(\pi_{M,N} \otimes P) \bullet \pi_{M\otimes_B N, P}: (M \otimes N) \otimes P \to (M \otimes_B N) \otimes_C P$ is an epimorphism in $\cC$, so the front square commutes too, as desired.
\end{proof}

\begin{prop}
\label{PentagonTensor}
Let $\cC^\otimes$ be a monoidal category. Suppose we are given monoids $A,B,C,D,E$, an $(A,B)$-bimodule $M$, a $(B,C)$-bimodule $N$, a $(C,D)$-bimodule $P$ and a $(D,E)$-bimodule $Q$.
Then the following diagram commutes:
\[
\begin{tikzcd}
& (M\otimes_B N)\otimes_C (P\otimes_D Q)\arrow[ld,"\overline{\alpha}_{M|N|P\otimes_D Q}" swap] & \\
M \otimes_B (N \otimes_C (P\otimes_D Q)) & & ((M\otimes_B N)\otimes_C P)\otimes_D Q\arrow[ul,"\overline{\alpha}_{M\otimes_B N|P|Q}" swap]\arrow[d,"\overline{\alpha}_{M|N|P}\otimes_D Q"] \\
M\otimes_B ((N\otimes_C P)\otimes_D Q)\arrow[u,"M\otimes_B \overline{\alpha}_{N|P|Q}" ] & & (M\otimes_B (N\otimes_C P))\otimes_D Q\arrow[ll,"\overline{\alpha}_{M|N\otimes_C P|Q}"]
\end{tikzcd}
\]
The same holds when considering the analog diagram in which any of the balanced tensor products ($\otimes_B$, $\otimes_C$, $\otimes_D$) is replaced with an ordinary tensor product $\otimes$, and replacing $\overline\alpha$ appropriately.
\end{prop}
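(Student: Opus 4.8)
The plan is to deduce this pentagon from the pentagon axiom of \cref{MonoidalCategory} for the ordinary associator $\alpha$, by lifting the whole diagram along the canonical projection maps and then descending along an epimorphism, exactly in the spirit of the proofs of \cref{RmkEpi} and \cref{NaturalityAssociatorTensor}. Concretely, I would first record the ``fully unbalanced'' pentagon for $\alpha$ on the quadruple $(M,N,P,Q)$ — with vertices $((M\otimes N)\otimes P)\otimes Q$, $(M\otimes N)\otimes(P\otimes Q)$, $M\otimes(N\otimes(P\otimes Q))$, $M\otimes((N\otimes P)\otimes Q)$ and $(M\otimes(N\otimes P))\otimes Q$ — which commutes by \cref{MonoidalCategory}(2).

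Next I would produce, for each of the five vertices of the balanced pentagon, a canonical comparison map from the corresponding vertex of the unbalanced pentagon, built as a composite of projections $\pi_{-,-}$ (each possibly tensored on the left or right with one of $M,N,P,Q$ or with an already-formed balanced tensor product, as in the proof of \cref{NaturalityAssociatorTensor}). Using \cite[Proposition~1.3]{EH2} iteratively, together with conditions $(1')$ and $(2')$, which guarantee that the relevant tensor functors preserve the defining coequalizers, each of these comparison maps is an epimorphism in $\cC$; in particular the comparison map $((M\otimes N)\otimes P)\otimes Q\to((M\otimes_B N)\otimes_C P)\otimes_D Q$ is epi.

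Then I would assemble the prism-shaped diagram whose ``outer face'' is the unbalanced pentagon, whose ``inner face'' is the balanced pentagon to be proved, and whose five connecting quadrilaterals each have two opposite sides a comparison projection map and the other two sides an instance of $\alpha$ together with the corresponding $\overline\alpha$. Every connecting quadrilateral commutes by a combination of the naturality of $\alpha$ (used to slide a projection $\pi_{M,N}\colon M\otimes N\to M\otimes_B N$, viewed merely as a morphism of $\cC$, across an instance of $\alpha$) and the defining compatibility of $\overline\alpha$ with $\alpha$ along the projections (\cref{CompatibilityOverlineAlpha}, and its iterates as already exploited in the proof of \cref{NaturalityAssociatorTensor}), possibly after applying a tensor functor $M\otimes_B(-)$ or $(-)\otimes_D Q$ to such a square; for the vertices involving $M\otimes_B N$ or $N\otimes_C P$ as a single tensor factor one also invokes \cref{CompatibilityTensorOnMaps}. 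Chasing this diagram shows that the two composites around the inner pentagon become equal after precomposition with the epimorphism $((M\otimes N)\otimes P)\otimes Q\to((M\otimes_B N)\otimes_C P)\otimes_D Q$, hence are equal, which is the claim. The mixed variants — where one or more of $\otimes_B,\otimes_C,\otimes_D$ is an ordinary $\otimes$ — are handled by the identical argument, replacing the fully balanced compatibility squares by the partially balanced ones of \cref{CompatibilityOverlineAlpha} (equivalently, by shrinking the index category in the Fubini description of the relevant objects), the case of no balancing at all being \cref{MonoidalCategory}(2) itself.

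I expect the main obstacle to be purely organizational: at each of the five balanced vertices the bracketing is different, so the comparison projection is a different composite, and no single previously established lemma covers all five connecting quadrilaterals — each must instead be recognized as a concatenation of a naturality square for $\alpha$ with (a tensor-functor image of) a construction-of-$\overline\alpha$ square. There is no genuinely new content beyond the ordinary pentagon; all the effort goes into the bookkeeping of the epimorphism descent. As a less computational alternative, one could instead observe that, as in \cref{InducedAlpha2}, all five balanced vertices are canonically the iterated colimit over $\mathcal{A}\times\mathcal{A}\times\mathcal{A}$ of a single functor to $\cC$ (where $\mathcal{A}$ is again the category with two parallel morphisms), and that the pentagon then follows from Fubini for colimits together with the pentagon for $\alpha$ on $((M\otimes N)\otimes P)\otimes Q$; setting up that functor, however, requires verifying an analogue of the commuting squares in the proof of \cref{InducedAlpha2} one dimension higher.
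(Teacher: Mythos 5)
Your proposal follows the paper's proof essentially exactly: both set up a prism whose outer face is the ordinary pentagon for $\alpha$ (commuting by \cref{MonoidalCategory}(2)), whose inner face is the balanced pentagon, whose connecting faces commute by the compatibility of $\overline\alpha$ with $\alpha$ along the $\pi$'s (the mechanism underlying \cref{CompatibilityOverlineAlpha} and, iterated, the back-to-front squares in the proof of \cref{NaturalityAssociatorTensor}), and then both descend along the same composite epimorphism $((M\otimes N)\otimes P)\otimes Q\twoheadrightarrow ((M\otimes_B N)\otimes_C P)\otimes_D Q$ granted by \cite[Proposition~1.3]{EH2}. Your alternative via Fubini over $\cA\times\cA\times\cA$ is also consistent with the spirit of \cref{InducedAlpha2}, but the main argument is the one the paper actually uses.
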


\begin{proof}
Consider the following diagram in $\cC$, which is built using (various iterations of) \cref{InducedAlpha2}:
\[
\adjustbox{width=\textwidth,center}{
\begin{tikzcd}
	& {(M \otimes N) \otimes (P \otimes Q)} & {((M\otimes N)\otimes P)\otimes Q} \\
	{M \otimes (N \otimes (P\otimes Q)) } \\
	&& {(M\otimes (N\otimes P))\otimes Q} \\
	{M\otimes ((N\otimes P)\otimes Q)} && {(M \otimes _B N) \otimes_C (P \otimes_D Q)} & {((M\otimes_B N)\otimes_C P)\otimes_D Q} \\
	& {M \otimes_B (N \otimes_C (P\otimes_D Q)) } \\
	&&& {(M\otimes_B (N\otimes_C P))\otimes_D Q} \\
	& {M\otimes_B ((N\otimes_C P)\otimes_D Q)}
	\arrow["{\alpha_{M,N,P \otimes Q}}"', from=1-2, to=2-1]
	\arrow[from=1-2, to=4-3]
	\arrow["{\alpha_{M \otimes N, P, Q}}"', from=1-3, to=1-2]
	\arrow["{\alpha_{M,N,P} \otimes Q}"', from=1-3, to=3-3]
	\arrow[from=1-3, to=4-4]
	\arrow[crossing over, from=2-1, to=5-2]
	\arrow["{\alpha_{M,N \otimes P, Q}}", from=3-3, to=4-1]
	\arrow[from=3-3, to=6-4]
	\arrow["{M \otimes \alpha_{N,P,Q}}", from=4-1, to=2-1]
	\arrow[from=4-1, to=7-2]
	\arrow["{\overline{\alpha}_{M|N|P \otimes_D Q}}"{description}, from=4-3, to=5-2]
	\arrow["{\overline{\alpha}_{M \otimes_B N| P| Q}}"', from=4-4, to=4-3]
	\arrow["{\overline{\alpha}_{M|N|P} \otimes Q}", from=4-4, to=6-4]
	\arrow["{\overline{\alpha}_{M|N \otimes_C P| Q}}", from=6-4, to=7-2]
	\arrow["{M \otimes \overline{\alpha}_{N|P|Q}}"', from=7-2, to=5-2]
\end{tikzcd}
}
\]

The back square commutes by \cref{MonoidalCategory}(2), and
all the ``back-to-front'' squares commute as a consequence of (various iterations of) \cref{NaturalityAssociatorTensor}.

By \cite[Proposition~1.3]{EH2},
the map
\[(\pi_{M,N} \otimes P) \otimes Q \bullet \pi_{M \otimes_{B} N,P} \otimes Q \bullet \pi_{(M \otimes_{B} N) \otimes_{C } P, Q} : ((M \otimes N) \otimes P) \otimes Q \to ((M \otimes_B N) \otimes_C P) \otimes_D Q\]
is an epimorphism in $\cC$, so the front square commutes too, as desired.
\end{proof}

\begin{prop}
\label{AssociatorAndUnitorTensor}
Let $\cC^\otimes$ be a monoidal category. Suppose we are given monoids $A,B,C$, an $(A,B)$-bimodule $M$ and a $(B,C)$-bimodule $N$.
Then the following diagrams commute
   \[
   \begin{tikzcd}
       (I\otimes M)\otimes_B N\arrow[rd,"\lambda_M\otimes_B N"swap]\arrow[r,"\overline{\alpha}_{I,M|N}"]&I\otimes (M\otimes_B N)\arrow[d,"\lambda_{M\otimes_B N}"]\\
       &M\otimes_B N
   \end{tikzcd}
   \quad
     \begin{tikzcd}
       (M\otimes_B N)\otimes I\arrow[d,"\rho_{M\otimes_B N}"swap]&M\otimes_B (N\otimes I)\arrow[ld,"M\otimes_B \rho_{N}"]\arrow[l,"\overline{\alpha^{-1}}_{M|N,I}"']\\
       M\otimes_B N&
   \end{tikzcd}
   \]
\end{prop}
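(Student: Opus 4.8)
The plan is to reduce each of the two triangles to the corresponding coherence statement at the level of ordinary tensor products, namely \cref{AssociatorAndUnitor}, by resolving the balanced tensor products through the canonical projection maps, which are epimorphisms. I treat the left triangle; the right one is entirely analogous (using $\overline{\alpha^{-1}}$ in place of $\overline{\alpha}$ and the right-unitor identity from \cref{AssociatorAndUnitor}). First I would record that, by \cref{InducedAlpha}(1) (with the degenerate choice $P = N$, or more precisely with the monoid $B$, the $(I,B)$-bimodule $I\otimes M$ and the $(B,C)$-bimodule $N$), the map $\overline{\alpha}_{I,M|N}$ fits into the commuting square of \cref{CompatibilityOverlineAlpha}, so that
\[
\overline{\alpha}_{I,M|N}\bullet \pi_{I\otimes M, N} = \pi_{I,M\otimes_B N}\bullet\bigl(\text{suitable }\alpha\bigr)\,,
\]
and similarly $\lambda_M\otimes_B N$ is characterized via \cref{TensorFunctorial} and \cref{CompatibilityTensorOnMaps} by $(\lambda_M\otimes_B N)\bullet\pi_{I\otimes M, N} = \pi_{M,N}\bullet(\lambda_M\otimes N)$.

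Next I would build the relevant prism in $\cC$: the top face is the triangle from \cref{AssociatorAndUnitor} (left diagram), applied to the objects $M$ and $N$, asserting
\[
\begin{tikzcd}
(I\otimes M)\otimes N\arrow[rd,"\lambda_M\otimes N"swap]\arrow[r,"\alpha_{I,M,N}"]&I\otimes (M\otimes N)\arrow[d,"\lambda_{M\otimes N}"]\\
&M\otimes N
\end{tikzcd}
\]
commutes; the bottom face is the target triangle involving balanced tensor products; and the three rectangular side faces are the naturality/compatibility squares just recalled (one being \cref{CompatibilityOverlineAlpha}, one being the characterization of $\lambda_M\otimes_B N$, and one being the naturality square for $\lambda$ at the map $\pi_{M,N}$, i.e.\ $\lambda_{M\otimes_B N}\bullet(I\otimes\pi_{M,N}) = \pi_{M,N}\bullet\lambda_{M\otimes N}$). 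Chasing the diagram, precomposition of the bottom triangle's two legs with the epimorphism $\pi_{I\otimes M, N}\colon (I\otimes M)\otimes N\to (I\otimes M)\otimes_B N$ agrees, using the top face and the three side faces; since $\pi_{I\otimes M, N}$ is an epimorphism in $\cC$ by \cite[Proposition~1.3]{EH2}, the bottom triangle commutes.

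The step I expect to require the most care is bookkeeping the identification of the domain and codomain bimodule structures: one must check that the instance of \cref{InducedAlpha} being invoked is legitimate, i.e.\ that $I\otimes M$ carries the $(I,B)$-bimodule structure of \cref{FreeModule} built from the trivial left $I$-module structure on $I$ and the left $B$-module—wait, rather the right—structure, and that under the unitor $\lambda$ this is compatible with the $(A,B)$-bimodule structure on $M$ in the precise sense needed to make $\lambda_M$ a bimodule map (which is the content of \cref{ActionIsmap} combined with the first coherence diagram of \cref{MonoidalCategory}); only then is $\lambda_M\otimes_B N$ defined. Once these identifications are in place, every face of the prism is one of the already-established naturality or compatibility squares, and the proof is a formal diagram chase closed off by the epimorphism property of the canonical projection. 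For the right-hand triangle one runs the mirror-image argument, invoking \cref{InducedAlpha}(2), \cref{CompatibilityOverlineAlpha} (right square), and the right-unitor triangle of \cref{AssociatorAndUnitor}, with $\pi_{M,N\otimes I}$ playing the role of the epimorphism.
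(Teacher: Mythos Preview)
Your proposal is correct and follows essentially the same approach as the paper: both build the prism whose back face is the unitor triangle from \cref{AssociatorAndUnitor}, whose three side faces are the compatibility square of \cref{CompatibilityOverlineAlpha}, the square from \cref{CompatibilityTensorOnMaps} characterizing $\lambda_M\otimes_B N$, and the naturality square of $\lambda$ at $\pi_{M,N}$, and then conclude by the epimorphism property of $\pi_{I\otimes M,N}$. Your extra care about verifying that $\lambda_M$ is a bimodule map (so that $\lambda_M\otimes_B N$ is defined) is a point the paper leaves implicit; one minor slip is that the map you write as ``$\pi_{I,M\otimes_B N}$'' should be $I\otimes\pi_{M,N}$, since there is no balanced tensor involved on that side.
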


\begin{proof}    
Consider the diagram in $\cC$.
\[\begin{tikzcd}[ampersand replacement=\&,cramped]
	{(I \otimes M) \otimes N} \&\& {I  \otimes (M \otimes N)} \\
	\&\& {M \otimes N} \\
	\& {(I \otimes M) \otimes_B N} \&\& {I \otimes (M \otimes_B N)} \\
	\&\&\& {M \otimes_BN}
	\arrow["{\alpha_{I,M,N}}", from=1-1, to=1-3]
	\arrow["{\lambda_M \otimes N}"', from=1-1, to=2-3]
	\arrow[from=1-1, to=3-2]
	\arrow["{\lambda_{M \otimes N}}"', from=1-3, to=2-3]
	\arrow[from=1-3, to=3-4]
	\arrow[{pos=0.3}, from=2-3, to=4-4]
	\arrow["{\overline{\alpha}_{I, M | N}}"{description},crossing over, from=3-2, to=3-4]
	\arrow["{\lambda_M \otimes_B N}"', from=3-2, to=4-4]
	\arrow["{\lambda_{M \otimes_B N}}", from=3-4, to=4-4]
\end{tikzcd}\]
   
The back triangle commutes by \cref{{AssociatorAndUnitor}}, the top square commute by \cref{CompatibilityOverlineAlpha}, the right square because of the naturality of $I \otimes -$, the bottom square \cref{CompatibilityTensorOnMaps}.

By \cite[Proposition~1.3]{EH2},
the map  $\pi_{I \otimes M,N} : (I \otimes M) \otimes N \rightarrow (I \otimes M) \otimes_{B} N$ is an epimorphism in $\cC$, so the front triangle commutes too, as desired.
\end{proof}

\subsection{Relevant bimodules}

\begin{prop}
\label{CompositeBimodule}
Let $\cC^{\otimes}$ be a monoidal category which admits a calculus of balanced tensor products. Given $A$, $B$, $C$ monoids, $M$ an $(A,B)$-bimodule and $N$ a $(B,C)$-bimodule, the maps from \cref{FreeModule}
induce maps in $\cC$    \[
   \ell_{M\otimes_BN}\colon A\otimes(M\otimes_BN)\to M\otimes_BN\quad\text{ and }\quad
   r_{M\otimes_BN}\colon (M\otimes_BN)\otimes C\to M\otimes_BN
    \]
   that endow $M\otimes_BN$ with an $(A,C)$-bimodule structure.
\end{prop}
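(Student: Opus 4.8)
The plan is to construct the two structure maps separately---the left $A$-action using Axiom~($1'$) of the calculus of balanced tensor products, the right $C$-action using Axiom~($2'$)---and then to verify the five diagrams defining an $(A,C)$-bimodule structure by precomposing each with a suitable epimorphism and reducing it to the corresponding already-established identity for the $(A,C)$-bimodule $M\otimes N$ of \cref{FreeModule}.

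For the left action I would regard $A$ as an $(A,A)$-bimodule via its multiplication (\cref{IdentityBimodule}); then Axiom~($1'$) says that the functor $A\otimes(-)$ carries the coequalizer presenting $M\otimes_BN$ to a coequalizer, so that $\id_A\otimes\pi_{M,N}\colon A\otimes(M\otimes N)\to A\otimes(M\otimes_BN)$ is the coequalizer of the image under $A\otimes(-)$ of the parallel pair $r_M\otimes N$ and $\alpha_{M,B,N}\bullet(M\otimes\ell_N)$. I would then check that the composite $\ell_{M\otimes N}\bullet\pi_{M,N}\colon A\otimes(M\otimes N)\to M\otimes_BN$ coequalizes this pair, where $\ell_{M\otimes N}=\alpha^{-1}_{A,M,N}\bullet(\ell_M\otimes N)$ is the left action of \cref{FreeModule}: the essential inputs are the compatibility axiom for the bimodule $M$, which lets one interchange the left $A$-action and the right $B$-action on $M$, together with the defining relation of $\pi_{M,N}$, while naturality of $\alpha$ and the pentagon identity of \cref{MonoidalCategory}(2) handle the bookkeeping. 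The universal property of the coequalizer then yields a map
\[
\ell_{M\otimes_BN}\colon A\otimes(M\otimes_BN)\to M\otimes_BN, \qquad (\id_A\otimes\pi_{M,N})\bullet\ell_{M\otimes_BN}=\ell_{M\otimes N}\bullet\pi_{M,N}.
\]
Dually, Axiom~($2'$) and the compatibility axiom for $N$ produce $r_{M\otimes_BN}\colon(M\otimes_BN)\otimes C\to M\otimes_BN$ with $(\pi_{M,N}\otimes\id_C)\bullet r_{M\otimes_BN}=r_{M\otimes N}\bullet\pi_{M,N}$.

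To verify the bimodule axioms for $(\ell_{M\otimes_BN},r_{M\otimes_BN})$ I would check each of the five defining diagrams after precomposition with an epimorphism obtained by tensoring $\pi_{M,N}$ with fixed objects on one or both sides: $\id_I\otimes\pi_{M,N}$ and $\pi_{M,N}\otimes\id_I$ for the two unitality axioms, $\id_{A\otimes A}\otimes\pi_{M,N}$ and $\pi_{M,N}\otimes\id_{C\otimes C}$ for the two associativity axioms, and $(\id_A\otimes\pi_{M,N})\otimes\id_C$ for the compatibility axiom. That each of these maps is an epimorphism follows from Axioms~($1'$) and~($2'$): in each case the object tensored onto is either $I$---where $I\otimes(-)$ and $(-)\otimes I$ are invertible---or a bimodule assembled via \cref{IdentityBimodule,FreeModule}, namely $A$, $A\otimes A$, $C$, or $C\otimes C$, so that the relevant functor preserves the standard-shape coequalizer presenting $M\otimes_BN$; for the doubly tensored map one first uses the commuting square of \cref{CompatibilityOverlineAlpha} to rewrite $\id_A\otimes\pi_{M,N}$ as a composite of an associator, the standard-shape coequalizer $\pi_{A\otimes M,N}$, and the map $\overline\alpha_{A,M|N}$ of \cref{InducedAlpha}---invertible by \cref{Axioms}---so that $(\id_A\otimes\pi_{M,N})\otimes\id_C$ becomes a composite of isomorphisms with $\pi_{A\otimes M,N}\otimes\id_C$, which is an epimorphism by Axiom~($2'$). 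After precomposition, each of the five identities unwinds---using the two displayed compatibility squares, naturality of $\alpha$, $\lambda$ and $\rho$, and bifunctoriality of $\otimes$---to the corresponding axiom for the $(A,C)$-bimodule $M\otimes N$ postcomposed with $\pi_{M,N}$.

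The step I expect to be the main obstacle is the compatibility axiom. It is the only one of the five whose source, $A\otimes\big((M\otimes_BN)\otimes C\big)$, does not itself present as one of the coequalizers controlled by Axioms~($1'$) and~($2'$), so one cannot precompose at the source and must instead work at the corner $(A\otimes(M\otimes_BN))\otimes C$, checking separately---through the $\overline\alpha$-factorization above---that the map onto it is an epimorphism; moreover it is the one place where the two structure maps interact, so its reduction goes through the compatibility axiom for $M\otimes N$ rather than a one-sided action axiom. The remainder is routine diagram bookkeeping, essentially tracking associators and which tensor functor preserves which coequalizer.
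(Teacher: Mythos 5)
Your argument is correct, but it takes a genuinely different route from the paper's. The paper constructs the actions indirectly: it starts from the bimodule map $\ell_M\colon A\otimes M\to M$ (\cref{ActionIsmap}), applies $(-)\otimes_B N$ to obtain $\ell_M\otimes_B N\colon (A\otimes M)\otimes_B N\to M\otimes_B N$ via \cref{TensorFunctorial}, and then precomposes with the isomorphism $\overline\alpha_{A,M|N}^{-1}$ from \cref{Axioms} to land on $A\otimes(M\otimes_B N)$; the dual applies for $r_{M\otimes_B N}$. The bimodule axioms are then verified by first transporting each defining diagram along these $\overline\alpha$-isomorphisms (via \cref{NaturalityAssociatorTensor}, \cref{PentagonTensor}, and \cref{AssociatorAndUnitorTensor}) so that the statement becomes one about maps of the form $(\cdot)\otimes_B N$, and then reducing via \cref{RmkEpi} or \cref{CompatibilityTensorOnMaps} to the corresponding axiom for the original bimodule $M$ (or bifunctoriality of $\otimes$), not for $M\otimes N$. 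You instead construct $\ell_{M\otimes_B N}$ and $r_{M\otimes_B N}$ directly from the coequalizer universal property applied to $A\otimes\pi_{M,N}$ and $\pi_{M,N}\otimes C$ (which are coequalizers by Axioms $(1')$ and $(2')$), and verify all five axioms by precomposing with epimorphisms of the form $X\otimes\pi_{M,N}$ or $\pi_{M,N}\otimes X$ and reducing to the already-known axioms for the $(A,C)$-bimodule $M\otimes N$ of \cref{FreeModule}. Your route is more self-contained in the sense that it sidesteps the auxiliary $\overline\alpha$-lemmas for everything except the epimorphism factorization of $(A\otimes\pi_{M,N})\otimes C$ (which you correctly flag as the delicate step, since $A\otimes(-)$ need not preserve epimorphisms); on the other hand it leans more heavily on \cref{FreeModule}, which the paper records as a remark without proof, whereas the paper's reductions land on the more primitive axioms of $M$ itself. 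Both styles are sound, and you identified the right place where care is required.
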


\begin{proof}
\begin{enumerate}[leftmargin=*]
\item[(0)] We show that the maps $\ell_{M\otimes_BN}$ and $r_{M\otimes_BN}$ exist in $\cC$.
By \cref{TensorFunctorial,ActionIsmap}
we obtain maps in $\cC$
    \[
    \ell_M\otimes_BN\colon (A\otimes M)\otimes_B N\longrightarrow M\otimes_BN
\quad \text{ and }\quad
    M\otimes_B r_M\colon M\otimes_B(N\otimes C)\longrightarrow M\otimes_BN.
    \]
After precomposing with the isomorphisms $\overline{\alpha^{-1}}_{A,M|N}$
and $\overline\alpha_{M|N,C}$ from \cref{Axioms}, we obtain maps in $\cC$
    \[
    \ell_{M\otimes_BN}\colon A\otimes (M\otimes_BN)\longrightarrow M\otimes_BN
\quad \text{ and }\quad
    r_{M\otimes_BN}\colon(M\otimes_BN)\otimes C\longrightarrow M\otimes_BN.
    \]
\item We show associativity of the left action $\ell_{M\otimes_BN}$. To this end, we have to show that the following diagram commutes in $\cC$:
\[
\begin{tikzcd}
(A\otimes A)\otimes(M\otimes_B N)\arrow[rr,"m_A\otimes(M\otimes_BN)"]\arrow[d,"\alpha_{A,A,M\otimes_BN}\bullet (A\otimes\ell_{M\otimes_BN})" swap]&&A\otimes (M\otimes_B N)\arrow[d,"\ell_{M\otimes_BN}"]\\
A\otimes (M\otimes_BN)\arrow[rr,"\ell_{M\otimes_BN}" swap]&&M\otimes_B N 
\end{tikzcd}
\]
Using \cref{NaturalityAssociatorTensor,PentagonTensor},
this can be rewritten as
\[
\begin{tikzcd}
((A\otimes A)\otimes M)\otimes_B N\arrow[rr, "(m_A \otimes M) \otimes_B N"]\arrow[d,"(\alpha_{A,A,M}\bullet A\otimes\ell_M)\otimes_B N" swap]&&(A\otimes M)\otimes_B N\arrow[d, "\ell_M \otimes_B N"]\\
(A\otimes M)\otimes_B N\arrow[rr,"\ell_M\otimes_B N" swap]&&M\otimes_B N 
\end{tikzcd}
\]
By \cref{RmkEpi}, it is enough to check that the following diagram commutes in $\cC$:
\[
\begin{tikzcd}
(A\otimes A)\otimes M\arrow[rr, "m_A \otimes M"]\arrow[d,"\alpha_{A,A,M}\bullet\ell_M" swap]&&A\otimes M\arrow[d, "\ell_M "]\\
A\otimes M\arrow[rr,"\ell_M" swap]&&M
\end{tikzcd}
\]
This diagram does commute in $\cC$ because $M$ is an $(A,B)$-bimodule.
Similarly, we obtain that the following diagram commutes in $\cC$:
\[
\begin{tikzcd}
(M\otimes_B N)\otimes (C\otimes C)\arrow[rr,"(M\otimes_BN)\otimes m_C"]\arrow[d,"\alpha^{-1}_{M\otimes_BN,C,C}\bullet((M\otimes_BN)\otimes m_C)" swap]&& (M\otimes_B N)\otimes C\arrow[d,"r_{M\otimes_BN}"]\\
(M\otimes_B N)\otimes C\arrow[rr,"r_{M\otimes_BN}" swap]&&M\otimes_B N 
\end{tikzcd}
\]
so the right action $r_{M\otimes_BN}$ is associative, too.
\item
We show that the actions $\ell_{M\otimes_BN}$ and $r_{M\otimes_BN}$ are compatible. To this end, we need to show that the following diagram commutes in $\cC$:
\[
\begin{tikzcd}
(A\otimes (M\otimes_B N))\otimes C\arrow[rr,"\ell_{M\otimes_BN}\otimes C"]\arrow[d,"\alpha_{A,M\otimes_BN,C}\bullet(A\otimes r_{M\otimes_BN})"swap]&& (M\otimes_B N)\otimes C\arrow[d,"r_{M\otimes_BN}"]\\
A\otimes (M\otimes_B N)\arrow[rr,"\ell_{M\otimes_BN}"swap]&&M\otimes_B N 
\end{tikzcd}
\]
By \cref{PentagonTensor,NaturalityAssociatorTensor}, it is enough to check that the following diagram, built using \cref{Axioms}, commutes in $\cC$:
\[\begin{tikzcd}
(A\otimes M)\otimes_B (N\otimes C)\arrow[rr,"\ell_M\otimes_B(N\otimes C)"]\arrow[d,"(A\otimes M)\otimes_Br_M"swap]&& M\otimes_B (N\otimes C)\arrow[d,"M\otimes_Br_N"]\\
(A\otimes M)\otimes_B N\arrow[rr,"\ell_M\otimes_BN"swap]&&M\otimes_B N 
\end{tikzcd}
\]
By \cref{CompatibilityTensorOnMaps}, it is enough to check that the following diagram commutes in $\cC$:
\[
\begin{tikzcd}
(A\otimes M)\otimes (N\otimes C)\arrow[rr,"\ell_M\otimes(N\otimes C)"]\arrow[d,"(A\otimes M)\otimes r_M"swap]&& M\otimes (N\otimes C)\arrow[d,"M\otimes r_N"]\\
(A\otimes M)\otimes N\arrow[rr,"\ell_M\otimes N"swap]&&M\otimes N 
\end{tikzcd}
\]
and this commutes because $\otimes$ is a bifunctor,
so the actions are compatible.
\item
We show that the left action $\ell_{M\otimes_BN}$ is unital.
To this end, we need to show that the following diagram commutes in $\cC$:
\[
\begin{tikzcd}
I\otimes(M\otimes_BN)\arrow[rr,"e_A\otimes(M\otimes_BN)"]\arrow[d,"\lambda_{M\otimes_BN}" swap]&&A\otimes(M\otimes_BN)\arrow[lld,"\ell_{M\otimes_BN}"]\\
M\otimes_BN
\end{tikzcd}
\]
By \cref{AssociatorAndUnitorTensor,NaturalityAssociatorTensor}, it is enough to check that the following diagram, built using \cref{Axioms}, commutes in $\cC$:
\[
\begin{tikzcd}
(I\otimes M)\otimes_BN\arrow[rr,"(e_A\otimes_B M)\otimes_B N"]\arrow[d,"\lambda_M\otimes_B N" swap]&&(A\otimes M)\otimes_BN\arrow[lld,"\ell_M\otimes_B N"]\\
M\otimes_BN
\end{tikzcd}
\]
By \cref{RmkEpi}, it is enough to check that the following diagram commutes in $\cC$:
\[
\begin{tikzcd}
I\otimes M\arrow[rr,"e_A\otimes M"]\arrow[d,"\lambda_M" swap]&&A\otimes M\arrow[lld, "\ell_M "]\\
M
\end{tikzcd}
\]
and this commutes because $M$ is an $A$-module, so the left action $\ell_{M\otimes_BN}$ is unital. One can similarly show that the right action $r_{M\otimes_BN}$ is also unital, concluding the proof.\qedhere
\end{enumerate}
\end{proof}

\begin{prop}
\label{UnitalityForBimodules}
Let $\cC^{\otimes}$ be a monoidal category which admits a calculus of balanced tensor products.
Given monoids $A$ and $B$ and $M$ an $(A,B)$-bimodule, the left and right actions $\ell_M$ and $r_M$
induce isomorphisms of $(A,B)$-bimodules
in $\cC$:
    \[
    \overline{\ell}_M\colon A\otimes_A M\xrightarrow{\cong}M\quad\text{ and }\quad
    \overline r_M\colon M\otimes_BB\xrightarrow{\cong}M.
    \]
\end{prop}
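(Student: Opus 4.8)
The two claimed isomorphisms are formally symmetric, so the plan is to build and analyze $\overline{\ell}_M$ and obtain $\overline{r}_M$ by reflecting every diagram (swapping $\lambda\leftrightarrow\rho$, $e_A\leftrightarrow e_B$, and inserting a unit in the last rather than the first tensor factor). First I would check that $\overline{\ell}_M$ exists: unwinding \cref{BalancedTensor} with the intermediate monoid taken to be $A$, and using that the right $A$-action on the $(A,A)$-bimodule $A$ is $m_A$ while the relevant left action on $M$ is $\ell_M$, the object $A\otimes_A M$ is the coequalizer of $m_A\otimes M$ and $\alpha_{A,A,M}\bullet(A\otimes\ell_M)$ out of $(A\otimes A)\otimes M$. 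The associativity axiom for the left action of $A$ on $M$ says precisely that $\ell_M$ coequalizes this pair, so the universal property produces a unique $\overline{\ell}_M\colon A\otimes_A M\to M$ with $\pi_{A,M}\bullet\overline{\ell}_M=\ell_M$; dually one gets $\overline{r}_M$ with $\pi_{M,B}\bullet\overline{r}_M=r_M$.

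Next I would write down an explicit inverse
\[
\sigma_M\colon M\xrightarrow{\lambda_M^{-1}}I\otimes M\xrightarrow{e_A\otimes M}A\otimes M\xrightarrow{\pi_{A,M}}A\otimes_A M.
\]
One composite is immediate from unitality of the action: $\sigma_M\bullet\overline{\ell}_M=\lambda_M^{-1}\bullet(e_A\otimes M)\bullet\pi_{A,M}\bullet\overline{\ell}_M=\lambda_M^{-1}\bullet(e_A\otimes M)\bullet\ell_M=\lambda_M^{-1}\bullet\lambda_M=\id_M$. For the other composite, since $\pi_{A,M}$ is an epimorphism in $\cC$ (by \cite[Proposition~1.3]{EH2}, as used repeatedly above), it suffices to prove $\ell_M\bullet\sigma_M=\pi_{A,M}$. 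The key device is the ``extra degeneracy'' $t\colon A\otimes M\xrightarrow{\lambda_A^{-1}\otimes M}(I\otimes A)\otimes M\xrightarrow{(e_A\otimes A)\otimes M}(A\otimes A)\otimes M$: the unitality of the monoid $A$ gives $t\bullet(m_A\otimes M)=\id_{A\otimes M}$, while naturality of $\alpha$ and $\lambda$, the first diagram of \cref{AssociatorAndUnitor}, and unitality of $\ell_M$ give $t\bullet\alpha_{A,A,M}\bullet(A\otimes\ell_M)=\ell_M\bullet\lambda_M^{-1}\bullet(e_A\otimes M)$. Feeding both identities through the defining coequalizer relation $\alpha_{A,A,M}\bullet(A\otimes\ell_M)\bullet\pi_{A,M}=(m_A\otimes M)\bullet\pi_{A,M}$ yields
\[
\ell_M\bullet\sigma_M=t\bullet\alpha_{A,A,M}\bullet(A\otimes\ell_M)\bullet\pi_{A,M}=t\bullet(m_A\otimes M)\bullet\pi_{A,M}=\pi_{A,M},
\]
so $\overline{\ell}_M$ is an isomorphism in $\cC$. (Conceptually, $t$ together with the section underlying $\sigma_M$ exhibit $\ell_M$ as a split, hence absolute, coequalizer of the same pair that defines $A\otimes_A M$, which is another route to the same conclusion.)

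Finally I would upgrade $\overline{\ell}_M$ to an isomorphism of $(A,B)$-bimodules. By \cref{ActionIsmap} the map $\ell_M$ is an $(A,B)$-bimodule map for the free bimodule structure on $A\otimes M$ of \cref{FreeModule}, and unwinding the construction of the bimodule structure on $A\otimes_A M$ in \cref{CompositeBimodule} together with \cref{CompatibilityTensorOnMaps} and \cref{CompatibilityOverlineAlpha} shows $\pi_{A,M}$ is an $(A,B)$-bimodule map as well. Since $\pi_{A,M}$ is an epimorphism which remains one after applying $A\otimes(-)$ and $(-)\otimes B$ (these preserve the relevant coequalizer by axioms $(1')$ and $(2')$), the identity $\overline{\ell}_M\bullet\pi_{A,M}=\ell_M$ forces the two action-compatibility squares for $\overline{\ell}_M$ to commute after a short diagram chase, so $\overline{\ell}_M$ is a bimodule isomorphism. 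The symmetric argument — with $\sigma'_M\colon M\xrightarrow{\rho_M^{-1}}M\otimes I\xrightarrow{M\otimes e_B}M\otimes B\xrightarrow{\pi_{M,B}}M\otimes_B B$, the degeneracy $M\otimes B\xrightarrow{\rho_{M\otimes B}^{-1}}(M\otimes B)\otimes I\xrightarrow{(M\otimes B)\otimes e_B}(M\otimes B)\otimes B$, and the second diagram of \cref{AssociatorAndUnitor} — handles $\overline{r}_M$.

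The step I expect to be the genuine obstacle is the coherence bookkeeping inside the second paragraph: choosing $t$ in the correct tensor slot and chaining the associator/unitor naturalities so that $t\bullet\alpha_{A,A,M}\bullet(A\otimes\ell_M)=\ell_M\bullet\lambda_M^{-1}\bullet(e_A\otimes M)$ holds on the nose. Everything else — the epimorphism arguments, the verification that $\pi_{A,M}$ is a bimodule map, and the dualization to $\overline{r}_M$ — is routine once that identity is in hand.
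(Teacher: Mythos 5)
Your proof is correct and follows the same strategy as the paper's: both exhibit $\ell_M\colon A\otimes M\to M$ as a split coequalizer for the pair $(m_A\otimes M,\ \alpha_{A,A,M}\bullet(A\otimes\ell_M))$ defining $A\otimes_A M$ (using unitality of the monoid and of the action together with the triangle coherence of \cref{AssociatorAndUnitor}), conclude that $\overline{\ell}_M$ is an isomorphism in $\cC$, and then transport the bimodule-map conditions along the epimorphism $\pi_{A,M}$ exactly as you sketch. The one concrete difference is the choice of contracting map $A\otimes M\to(A\otimes A)\otimes M$: you insert $e_A$ into the first tensor slot via $(\lambda_A^{-1}\otimes M)\bullet((e_A\otimes A)\otimes M)$, whereas the paper's $s'_M$ inserts it into the middle slot via $(A\otimes\lambda_M^{-1})\bullet(A\otimes(e_A\otimes M))\bullet\alpha^{-1}_{A,A,M}$ --- a cosmetic variant entailing slightly different coherence bookkeeping but the same conclusion.
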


\begin{proof}
We first check that the maps exist and are isomorphisms in $\cC$. For this, we claim that the diagram
\[
 \begin{tikzcd}
    {(A \otimes A) \otimes M} && {A \otimes M}
    \arrow["{\alpha_{A,A,M}\bullet (A\otimes\ell_M)}"', shift right=2, from=1-1, to=1-3]
    \arrow["{m_A\otimes M}", shift left=2, from=1-1, to=1-3]\arrow[r,"\ell_M"]&M
    \end{tikzcd}
\]
can be completed to a split coequalizer (in the sense of \cite[\textsection IV.6]{MacLane}) with the maps
\[
s_M\colon M\xrightarrow{\lambda_M^{-1}}I\otimes M\xrightarrow{e_A\otimes M}A\otimes M\quad\text{ and }
\quad
\]
\[
s'_M\colon A\otimes M\xrightarrow{A\otimes \lambda_M^{-1}}A\otimes (I\otimes M)\xrightarrow{A\otimes(e_A\otimes M)}A\otimes (A\otimes M)\xrightarrow{\alpha^{-1}_{A,A,M}}(A\otimes A)\otimes M.
\]
The fact that this is indeed a split coequalizer amounts to arguing that we have the following four commutative diagrams:
\begin{enumerate}[leftmargin=*]
    \item[(0)]
This diagram (showing that $\ell_M$ coequalizes the two parallel maps)
\[
\begin{tikzcd}
(A\otimes A)\otimes M\arrow[r,"m_A\otimes M"]\arrow[d,"\alpha_{A,A,M}\bullet (A\otimes\ell_M)" swap]&A\otimes M\arrow[d,"\ell_M"]\\
A\otimes M\arrow[r,"\ell_M" swap]&M
\end{tikzcd}
\]
commutes because the left action of $M$ is associative.
\item This diagram (showing $s_M$ is a section of $\ell_M$)
\[
\begin{tikzcd}
M\arrow[r,"\lambda_M^{-1}"]\arrow[d,"\id_M" swap]
&I\otimes M\arrow[d,"e_A\otimes M"]\\
M&A\otimes M\arrow[l,"\ell_M"]
\end{tikzcd}
\]
commutes because the left action of $M$ is unital.
\item This diagram (showing $s'_M$ is a section of $\alpha_{A,A,M}\bullet (A\otimes\ell_M)$)
\[
\begin{tikzcd}
A\otimes M\arrow[rr,"A\otimes \lambda_M^{-1}"]\arrow[d,"A\otimes\id_{M}" swap]
&&A\otimes (I\otimes M)\arrow[d,"(A\otimes (e_A\otimes M))\bullet\alpha^{-1}_{A,A,M}"]\\
A\otimes M&&(A\otimes A)\otimes M\arrow[ll,"(A\otimes\ell_M)\bullet \alpha_{A,A,M}"]
\end{tikzcd}\]
can be replaced with the diagram
\[
\begin{tikzcd}
A\otimes M\arrow[rr,"A\otimes \lambda_M^{-1}"]\arrow[d,"A\otimes\id_{M}" swap]
&&A\otimes (I\otimes M)\arrow[d,"A\otimes (e_A\otimes M)"]\\
A\otimes M&&A\otimes (A\otimes M)\arrow[ll,"A\otimes\ell_M"]
\end{tikzcd}\]
which commutes because the left action $\ell_M$ is unital.
\item This diagram
(showing $s'_M$ is a section of $m_A\otimes M$)
\[
\begin{tikzcd}
A\otimes M\arrow[rr,"A\otimes \lambda_M^{-1}"]\arrow[d,"A\otimes\id_{M}" swap]
&&A\otimes (I\otimes M)\arrow[d,"(A\otimes (e_A\otimes M))\bullet\alpha^{-1}_{A,A,M}"]\\
A\otimes M&&(A\otimes A)\otimes M\arrow[ll,"m_A\otimes_M"]
\end{tikzcd}\]
can be replaced by \cref{AssociatorAndUnitorTensor} with the diagram
\[
\begin{tikzcd}
A\otimes M\arrow[rr,"\lambda_A^{-1}\otimes M"]\arrow[d,"\id_{A\otimes M}" swap]
&&(A\otimes I)\otimes M\arrow[d,"(A\otimes e_A)\otimes M"]\\
A\otimes M&&(A\otimes A)\otimes M\arrow[ll,"m_A\otimes M"]
\end{tikzcd}
\]
which commutes because the multiplication of $A$ is unital.
\end{enumerate}
So $\ell_M$ gives a split coequalizer.
It now follows by \cite[\textsection IV.6]{MacLane} that $\ell_M$ induces an isomorphism in $\cC$
\[
\overline \ell_M\colon A\otimes_AM\xrightarrow{\cong} M.
\]
We now show that $\overline\ell_M$ is a map of $(A,B)$-bimodules. In order to show that $\overline\ell_M$ is compatible with the left actions we have to prove that the following diagram commutes in $\cC$:
\[
\begin{tikzcd}
    A\otimes(A\otimes_AM)\arrow[r,"A\otimes\overline \ell_{M}"]\arrow[d,"\ell_{A\otimes_AM}"swap]&A\otimes M\arrow[d,"\ell_M"]\\
    A\otimes_AM\arrow[r,"\overline\ell_M" swap]&M
\end{tikzcd}
\]
By \cref{Axioms},
it suffices to prove that the following diagram commutes:
\[\begin{tikzcd}
    (A\otimes A)\otimes_AM\arrow[rrr,"\overline\alpha_{A,A|M}\bullet(A\otimes\overline \ell_{M})"]\arrow[d,"\overline\alpha_{A,A|M}\bullet(\ell_{A\otimes_AM})"swap]&&&A\otimes M\arrow[d,"\ell_M"]\\
    A\otimes_AM\arrow[rrr,"\overline\ell_M"swap]&&&M
\end{tikzcd}
\]
By \cref{CompatibilityTensorOnMaps,ActionIsmap},
it suffices to prove that the following diagram commutes:
\[
\begin{tikzcd}
    (A\otimes A)\otimes M\arrow[rrr,"\alpha_{A,A,M}\bullet (A\otimes\ell_M)"]\arrow[d,"\alpha_{A,A,M}\bullet(\ell_{A\otimes M})=m_A\otimes M"swap]&&&A\otimes M\arrow[d,"\ell_M"]\\
    A\otimes M\arrow[rrr,"\ell_M"swap]&&&M
\end{tikzcd}
\]
This diagram commutes because the left action of $A$ on $M$ is associative.

 In order to show that $\overline\ell_M$ is compatible with the right actions we have to prove that the following diagram commutes in $\cC$:
\[
\begin{tikzcd}
    (A\otimes_AM)\otimes B\arrow[r,"\overline \ell_{M}\otimes B"]\arrow[d,"r_{A\otimes_AM}"swap]&M\otimes B\arrow[d,"r_M"]\\
    A\otimes_AM\arrow[r,"\overline\ell_M" swap]&M
\end{tikzcd}
\]
By \cref{CompositeBimodule,Axioms},
it suffices to prove that the following diagram commutes:
\[
\begin{tikzcd}
    A\otimes_A(M\otimes B)\arrow[rrr,"\overline{\alpha^{-1}}_{A|M,B}\bullet(\overline \ell_{M}\otimes B)"]\arrow[d,"A\otimes_Ar_M"swap]&&&M\otimes B\arrow[d,"r_M"]\\
    A\otimes_AM\arrow[rrr,"\overline\ell_M" swap]&&&M
\end{tikzcd}
\]
By \cref{CompatibilityTensorOnMaps,ActionIsmap},
it suffices to prove that the following diagram commutes:
\[
\begin{tikzcd}
    A\otimes(M\otimes B)\arrow[rrr,"\alpha^{-1}_{A,M,B}\bullet( \ell_{M}\otimes B)"]\arrow[d,"A\otimes r_M"swap]&&&M\otimes B\arrow[d,"r_M"]\\
    A\otimes M\arrow[rrr,"\ell_M" swap]&&&M
\end{tikzcd}
\]
This diagram commutes because $M$ is an $(A,B)$-bimodule.
\end{proof}

\begin{prop}
\label{AssociativityForBimodules}
Let $\cC^{\otimes}$ be a monoidal category which admits a calculus of balanced tensor products. Given monoids $A$, $B$, $C$, $D$, and $M$ an $(A,B)$-bimodule, $N$ a $(B,C)$-bimodule, and $P$ a $(C,D)$-bimodule, the map $\alpha_{M,N,P}$
induces a bimodule isomorphism in $\cC$
\[
\overline\alpha_{M|N|P}\colon(M \otimes_BN) \otimes_CP\xrightarrow{\cong}M \otimes_B (N \otimes_CP).
\]  
\end{prop}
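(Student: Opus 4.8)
The map $\overline\alpha_{M|N|P}$ is already an isomorphism in $\cC$ by \cref{InducedAlpha2}, so the only thing that remains is to check that it is a map of $(A,D)$-bimodules; its inverse $\overline{\alpha^{-1}}_{M|N|P}$ is then automatically a bimodule map as well, being the inverse of a bimodule isomorphism. Writing $\ell$ and $r$ for the left and right actions produced by iterating \cref{CompositeBimodule}, I must show that the square
\[
\begin{tikzcd}
A\otimes\bigl((M\otimes_BN)\otimes_CP\bigr)\arrow[r,"A\otimes\overline\alpha_{M|N|P}"]\arrow[d,"\ell_{(M\otimes_BN)\otimes_CP}"swap]&A\otimes\bigl(M\otimes_B(N\otimes_CP)\bigr)\arrow[d,"\ell_{M\otimes_B(N\otimes_CP)}"]\\
(M\otimes_BN)\otimes_CP\arrow[r,"\overline\alpha_{M|N|P}"swap]&M\otimes_B(N\otimes_CP)
\end{tikzcd}
\]
commutes, together with the analogous square for the right $D$-actions.

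The plan is to follow the reduction strategy already used in the proofs of \cref{CompositeBimodule} and \cref{UnitalityForBimodules}. First I would expand both left actions using the defining commuting squares of \cref{CompositeBimodule}, which present $\ell_{(M\otimes_BN)\otimes_CP}$ and $\ell_{M\otimes_B(N\otimes_CP)}$ in terms of $\ell_M$, iterated copies of $\overline{\alpha^{-1}}$ from \cref{Axioms} and \cref{InducedAlpha2}, and balanced tensor products. The square above then becomes an identity between two composites built out of $\overline\alpha$'s and balanced tensors of $\ell_M$ with identities, which I would rebracket into a single common shape using \cref{NaturalityAssociatorTensor} to slide $\overline\alpha$ past the various tensor factors, and \cref{PentagonTensor} applied with $A$ in the role of the leftmost bimodule and an ordinary tensor in its slot. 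Next, using \cref{CompatibilityOverlineAlpha} and \cref{CompatibilityTensorOnMaps} to replace the $\overline\alpha$'s and the maps $\ell_M\otimes_B(-)$ by $\alpha$ and $\ell_M\otimes(-)$ along the canonical projections $\pi$, together with \cref{RmkEpi}, the verification descends to a square living entirely at the level of the ordinary tensor product $\otimes$. Finally, by \cite[Proposition~1.3]{EH2} the relevant composite of projections onto the fully balanced object is an epimorphism in $\cC$, so it is enough to check the descended square after precomposing with it; that square involves only associators and $\ell_M$, and commutes by the pentagon axiom of \cref{MonoidalCategory}(2), the naturality of $\alpha$, the bifunctoriality of $\otimes$, and the free-module structure of \cref{FreeModule}. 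Equivalently, this last step records the fact that $\alpha_{M,N,P}$ is a map of bimodules for the iterated free-module structures.

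The square for the right $D$-actions is handled by the mirror-image argument, using the right-handed variants of \cref{CompositeBimodule}, \cref{NaturalityAssociatorTensor}, \cref{PentagonTensor} and part (2) of \cref{Axioms}, and the epimorphism coming from the projections on the opposite side. I expect the only real obstacle to be organizational: assembling the necessarily large three-dimensional diagram that simultaneously carries the unravelling of both composite left actions, the rebracketing via \cref{PentagonTensor}, and the descent along the $\pi$'s, and then checking that each of its faces commutes either by one of the cited results or by coherence of $\alpha$. Once both squares commute, $\overline\alpha_{M|N|P}$ is a bimodule isomorphism, as claimed.
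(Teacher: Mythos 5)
Your plan is correct and takes essentially the same route as the paper's proof: both reduce, via compatibility of $\overline\alpha$ and the balanced-tensor constructions with the projections $\pi$ (followed by an epimorphism argument), to a square at the level of the ordinary tensor product $\otimes$, which then commutes by naturality of $\alpha$ and the pentagon axiom of \cref{MonoidalCategory}. The only superficial difference is that you start with the left $A$-action whereas the paper verifies the right $D$-action first and declares the left analogous, and you spell out a couple of intermediate rebracketing steps (\cref{NaturalityAssociatorTensor}, \cref{PentagonTensor}) that the paper leaves implicit in its chain of \cref{CompatibilityTensorOnMaps} and \cref{FreeModule} reductions.
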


\begin{proof}
The existence was shown in \cref{InducedAlpha2}.
We need to show it's a map of bimodules. Let's look at the right action. Need
\[
\begin{tikzcd}
    ((M\otimes_BN)\otimes_C P)\otimes D\arrow[rr,"\overline\alpha_{M|N|P}\otimes D"]\arrow[d,"r_{(M\otimes_BN)\otimes_CP}"swap]&&(M\otimes_B(N\otimes_C P))\otimes D\arrow[d,"r_{M\otimes_B(N\otimes_CP)}"]\\
    (M\otimes_BN)\otimes_CP\arrow[rr,"\overline\alpha_{M|N|P}"swap]&& M\otimes_B(N\otimes_CP)
\end{tikzcd}
\]
By \cref{CompatibilityTensorOnMaps},
it suffices to prove that the following diagram commutes:
\[
\begin{tikzcd}
    ((M\otimes_BN)\otimes P)\otimes D\arrow[rr,"\overline\alpha_{M|N,P}\otimes D"]\arrow[d,"r_{M\otimes_B(N\otimes P)}"swap]&&(M\otimes_B(N\otimes P))\otimes D\arrow[d,"r_{(M\otimes_BN)\otimes P}"]\\
    (M\otimes_BN)\otimes P\arrow[rr,"\overline\alpha_{M|N,P}"swap]&& M\otimes_B(N\otimes P)
\end{tikzcd}
\]
By \cref{CompatibilityTensorOnMaps}
it suffices to prove that the following diagram commutes:
\[
\begin{tikzcd}
    ((M\otimes N)\otimes P)\otimes D\arrow[rr,"\alpha_{M,N,P}\otimes D"]\arrow[d,"r_{M\otimes (N\otimes P)}"swap]&&(M\otimes(N\otimes P))\otimes D\arrow[d,"r_{(M\otimes N)\otimes P}"]\\
    (M\otimes N)\otimes P\arrow[rr,"\alpha_{M,N,P}"swap]&& M\otimes(N\otimes P)
\end{tikzcd}
\]
By \cref{FreeModule},
it suffices to prove that the following diagram commutes:
\[\begin{tikzcd}
{(M \otimes N) \otimes (P \otimes D)} &&&&& {M \otimes ((N \otimes P) \otimes D)} \\
{(M \otimes N) \otimes P} &&&&& {M \otimes (N \otimes P)}
\arrow["{\alpha_{M\otimes N, P, D} \bullet (\alpha_{M,N,P} \otimes D) \bullet \alpha_{M, N \otimes P,D} }", from=1-1, to=1-6]
\arrow["{(M \otimes N) \otimes r_P}"', from=1-1, to=2-1]
\arrow["{M \otimes r_{N \otimes P}}", from=1-6, to=2-6]
\arrow["{\alpha_{M,N,P}}"', from=2-1, to=2-6]
\end{tikzcd}\]
By \cref{FreeModule} and \cref{MonoidalCategory}(2),
it suffices to prove that the following diagram commutes:
\[
\begin{tikzcd}
    (M\otimes N)\otimes (P\otimes D)\arrow[rr,"\alpha_{M,N,P\otimes D}"]\arrow[d,"(M\otimes N)\otimes r_P"swap]&&M\otimes(N\otimes (P\otimes D))\arrow[d,"M\otimes (N\otimes r_P)"]\\
    (M\otimes N)\otimes P\arrow[rr,"\alpha_{M,N,P}"swap]&& M\otimes(N\otimes P)
\end{tikzcd}
\]
And this commutes by naturality of the associator on $r_P$.
The left action can be treated similarly.
\end{proof}

\subsection{Relevant bimodule maps}

In presence of well-behaved balanced tensor products, we can perform various constructions on bimodules.

\begin{rmk}
We'll often evoke a graphical representation in which:
\begin{enumerate}[leftmargin=*]
    \item monoids are represented as nodes,
    \[
    A
    \]
    \item bimodules are represented as $1$-dimensional arrows
    \[
    A\xrightarrow{M}B
    \]
    \item and bimodule maps are represented as $2$-dimensional arrows
\[\begin{tikzcd}
	A && B
	\arrow[""{name=0, anchor=center, inner sep=0}, "M", curve={height=-24pt}, from=1-1, to=1-3]
	\arrow[""{name=1, anchor=center, inner sep=0}, "N"', curve={height=24pt}, from=1-1, to=1-3]
	\arrow["\varphi", shorten <=6pt, shorten >=6pt, Rightarrow, from=0, to=1]
\end{tikzcd}\]
\end{enumerate}
This is of course a reminiscence of the fact that these should form a bicategory. While this is likely true, we do not prove it nor make use of anything else other than what we prove in this section.
\end{rmk}

The following construction essentially makes sense of configuration of the form
\[\begin{tikzcd}
	A && B && C \\
	&&& {}
	\arrow["{{M}}", from=1-1, to=1-3]
	\arrow[""{name=0, anchor=center, inner sep=0}, "N", curve={height=-24pt}, from=1-3, to=1-5]
	\arrow[""{name=1, anchor=center, inner sep=0}, "{{N'}}"', curve={height=24pt}, from=1-3, to=1-5]
	\arrow["\psi", shorten <=6pt, shorten >=6pt, Rightarrow, from=0, to=1]
\end{tikzcd}\]

\begin{prop}
Given monoids $A$, $B$, $C$, and $M$ and $(A,B)$-bimodule, $N$, $N'$ two $(B,C)$-bimodules, and $\psi\colon N\to N'$ a $(B,C)$-bimodule homomophism, the map
\[
M\otimes_B\psi\colon M\otimes_BN\to M\otimes_BN'
\]
from \cref{TensorFunctorial} is an $(A,C)$-bimodule map.
\end{prop}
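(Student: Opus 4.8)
The plan is to verify that $M\otimes_B\psi$ is compatible with the left $A$-action and with the right $C$-action on the composite bimodules $M\otimes_BN$ and $M\otimes_BN'$ of \cref{CompositeBimodule}; the two checks are formally parallel, so I would carry out the left one in full and only indicate the modifications needed for the right one. Recalling the construction in \cref{CompositeBimodule}, the left action is $\ell_{M\otimes_BN}=(\ell_M\otimes_BN)\circ\overline{\alpha^{-1}}_{A,M|N}$, where $\overline{\alpha^{-1}}_{A,M|N}\colon A\otimes(M\otimes_BN)\xrightarrow{\cong}(A\otimes M)\otimes_BN$ is the isomorphism used to build $\ell_{M\otimes_BN}$ in \cref{CompositeBimodule} (coming from \cref{Axioms}), and $\ell_M\colon A\otimes M\to M$ is regarded as a map of $(A,B)$-bimodules via \cref{ActionIsmap}. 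Thus the square to be checked is
\[
\begin{tikzcd}
A\otimes(M\otimes_BN) \arrow[rr,"A\otimes(M\otimes_B\psi)"] \arrow[d,"\ell_{M\otimes_BN}"'] && A\otimes(M\otimes_BN') \arrow[d,"\ell_{M\otimes_BN'}"] \\
M\otimes_BN \arrow[rr,"M\otimes_B\psi"'] && M\otimes_BN'
\end{tikzcd}
\]
together with the analogous square for the right $C$-action.

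The first step is to strip off the associativity isomorphisms. Applying \cref{NaturalityAssociatorTensor} in its mixed variant, with the outer balanced tensor replaced by an ordinary $\otimes$, to the bimodule maps $\id_A$, $\id_M$ and $\psi$ shows that $\overline{\alpha^{-1}}_{A,M|-}$ is natural in its last argument, i.e. $\overline{\alpha^{-1}}_{A,M|N'}\circ(A\otimes(M\otimes_B\psi))=((A\otimes M)\otimes_B\psi)\circ\overline{\alpha^{-1}}_{A,M|N}$. Substituting this and the definition of $\ell_{M\otimes_BN}$ into the square reduces the problem to the commutativity of
\[
\begin{tikzcd}
(A\otimes M)\otimes_BN \arrow[rr,"(A\otimes M)\otimes_B\psi"] \arrow[d,"\ell_M\otimes_BN"'] && (A\otimes M)\otimes_BN' \arrow[d,"\ell_M\otimes_BN'"] \\
M\otimes_BN \arrow[rr,"M\otimes_B\psi"'] && M\otimes_BN'.
\end{tikzcd}
\]

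The second step is to collapse the balanced tensor products to ordinary ones. I would precompose this square with the coequalizer projection $\pi_{A\otimes M,N}\colon(A\otimes M)\otimes N\to(A\otimes M)\otimes_BN$, which is an epimorphism, and then apply \cref{CompatibilityTensorOnMaps} to each of the four maps $\ell_M\otimes_BN$, $\ell_M\otimes_BN'$, $(A\otimes M)\otimes_B\psi$, $M\otimes_B\psi$. Both composites thereby become $\pi_{M,N'}$ postcomposed with a morphism built out of $\ell_M\otimes N$, $(A\otimes M)\otimes\psi$ and $M\otimes\psi$, and the identity $(M\otimes\psi)\circ(\ell_M\otimes N)=\ell_M\otimes\psi=(\ell_M\otimes N')\circ((A\otimes M)\otimes\psi)$ holds by bifunctoriality of $\otimes$. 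Since $\pi_{A\otimes M,N}$ is epic, the reduced square commutes, hence so does the original one. (Equivalently, one may precompose the original square directly with the epimorphism $\overline{\alpha}_{A,M|N}\circ\pi_{A\otimes M,N}$, rewriting it via \cref{CompatibilityOverlineAlpha} as $(A\otimes\pi_{M,N})\circ\alpha_{A,M,N}$, and run the same computation.) For the right $C$-action the argument is identical with $\ell_M$, $\overline{\alpha^{-1}}_{A,M|N}$, $\pi_{A\otimes M,N}$ replaced respectively by $r_N$, $\overline{\alpha}_{M|N,C}$, $\pi_{M,N\otimes C}$; the sole difference is that at the last step one invokes, instead of pure bifunctoriality, the identity $r_{N'}\circ(\psi\otimes C)=\psi\circ r_N$, i.e. the compatibility of $\psi$ with the right $C$-actions — this is where the hypothesis that $\psi$ is a bimodule map genuinely enters on the $N$-side.

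I expect the only real obstacle to be bookkeeping: correctly matching the needed instance of \cref{NaturalityAssociatorTensor} in its ``$\otimes$/$\otimes_B$'' mixed form (with the right choice of which balanced tensor becomes ordinary) and keeping the several copies of \cref{CompatibilityTensorOnMaps} and the various projections $\pi$ straight. There is no conceptual difficulty: once the actions on $M\otimes_BN$ are unwound as in \cref{CompositeBimodule} and the relevant $\pi$'s are recognized as epimorphisms, everything reduces to bifunctoriality of $\otimes$ together with the fact that $\psi$ is a bimodule map.
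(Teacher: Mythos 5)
Your proof is correct and follows essentially the same route as the paper's: unwind the action on $M\otimes_BN$ using \cref{CompositeBimodule}, strip the associator via naturality, reduce the balanced tensors to ordinary ones via \cref{CompatibilityTensorOnMaps} and the epimorphism $\pi$, and conclude by bifunctoriality (resp.\ the compatibility of $\psi$ with the $C$-action). If anything you are slightly more precise than the paper in citing \cref{NaturalityAssociatorTensor} for the first reduction and in flagging that the last step of the right-action check uses $r_{N'}\circ(\psi\otimes C)=\psi\circ r_N$, not just functoriality of $M\otimes(-)$.
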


\begin{proof}
In order to show that the proposed left action is associative, we need to show that the following diagram commutes:
\[
\begin{tikzcd}
    A\otimes (M\otimes_BN)\arrow[rr,"A\otimes (M\otimes_B\psi)"]\arrow[d,"\ell_{M\otimes_BN}"swap]&&A\otimes (M\otimes_BN')\arrow[d,"\ell_{M\otimes_BN'}"]\\
    M\otimes_BN\arrow[rr,"M\otimes_B\psi"swap]&&M\otimes_BN'
\end{tikzcd}
\]
By \cref{CompositeBimodule,Axioms},
it suffices to prove that the following diagram commutes:
\[
\begin{tikzcd}
    (A\otimes M)\otimes_BN
    \arrow[rr,"(A\otimes M)\otimes_B\psi"]
    \arrow[d,"\ell_M\otimes_BN"swap]
    &&
    (A\otimes M)\otimes_BN'
    \arrow[d,"\ell_M\otimes_BN'"]
    \\
M\otimes_BN
\arrow[rr,"M\otimes_B\psi"swap]
&&M\otimes_BN'
\end{tikzcd}
\]
By \cref{CompatibilityTensorOnMaps},
it suffices to prove that the following diagram commutes:
\[
\begin{tikzcd}
    (A\otimes M)\otimes N\arrow[rr,"(A\otimes M)\otimes\psi"]\arrow[d,"\ell_M\otimes N"swap]&&
    (A\otimes M)\otimes N'\arrow[d,"\ell_M\otimes N'"]\\
    M\otimes N\arrow[rr,"M\otimes\psi" swap]&&M\otimes N'
\end{tikzcd}
\]
This diagram commutes because $\otimes$ is a bifunctor.

In order to show that the proposed right action is associative, we need to show that the following diagram commutes:
\[
\begin{tikzcd} (M\otimes_BN)\otimes C\arrow[rr,"(M\otimes_B\psi)\otimes C"]\arrow[d,"r_{M\otimes_BN}"swap]&&(M\otimes_BN')\otimes C\arrow[d,"r_{M\otimes_BN"}"]\\
    M\otimes_BN\arrow[rr,"M\otimes_B\psi"swap]&&M\otimes_BN'
\end{tikzcd}
\]
By \cref{CompositeBimodule,Axioms},
it suffices to prove that the following diagram commutes:
\[
\begin{tikzcd} M\otimes_B(N\otimes C)\arrow[rr,"M\otimes_B(\psi\otimes C)"]\arrow[d,"M\otimes_Br_N"swap]&&M\otimes_B(N'\otimes C)\arrow[d,"M\otimes_B r_{N'}"]\\
    M\otimes_BN\arrow[rr,"M\otimes_B\psi"swap]&&M\otimes_BN'
\end{tikzcd}
\]
By \cref{CompatibilityTensorOnMaps},
it suffices to prove that the following diagram commutes:
\[
\begin{tikzcd} M\otimes(N\otimes C)\arrow[rr,"M\otimes(\psi\otimes C)"]\arrow[d,"M\otimes r_N"swap]&&M\otimes(N'\otimes C)\arrow[d,"M\otimes r_{N'}"]\\
    M\otimes N\arrow[rr,"M\otimes\psi" swap]&&M\otimes N'
\end{tikzcd}
\]
This diagram commutes because $M\otimes(-)$ is a functor.
\end{proof}

The following proposition essentially guarantees that, given a configuration of the form
\[\begin{tikzcd}
	A && B && C
	\arrow["{{M}}", from=1-1, to=1-3]
	\arrow[""{name=0, anchor=center, inner sep=0}, "{{N'}}"{description}, from=1-3, to=1-5]
	\arrow[""{name=1, anchor=center, inner sep=0}, "N", curve={height=-24pt}, from=1-3, to=1-5]
	\arrow[""{name=2, anchor=center, inner sep=0}, "{{N''}}"', curve={height=24pt}, from=1-3, to=1-5]
	\arrow["\gamma", shorten <=3pt, shorten >=3pt, Rightarrow, from=1, to=0]
	\arrow["{\gamma'}", shorten <=3pt, shorten >=3pt, Rightarrow, from=0, to=2]
\end{tikzcd}\]
there is a well defined composite.

\begin{rmk}
\label{TensorWithIso}
Let $\cC^{\otimes}$ be a monoidal category which admits a calculus of balanced tensor products. Given monoids $A$, $B$, $C$, an $(A,B)$-bimodule $M$ and a map of $(B,C)$-bimodules $\gamma\colon N\to N'$, we have
\[
M\otimes_B\id_{N}=\id_{M\otimes_BN}.
\]
By \cref{RmkEpi}, if $\gamma$ is an isomorphism then $M\otimes_B\gamma$ is an isomorphism and
\[
(M\otimes_B\gamma)^{-1}=M\otimes_B\gamma^{-1}.
\]
\end{rmk}

The following proposition essentially guarantees that, given a configuration of the form
\[\begin{tikzcd}[cramped]
	{A} & {B} & {C}
	\arrow[""{name=0, anchor=center, inner sep=0}, "{M'}"', curve={height=12pt}, from=1-1, to=1-2]
	\arrow[""{name=1, anchor=center, inner sep=0}, "M", curve={height=-12pt}, from=1-1, to=1-2]
	\arrow[""{name=2, anchor=center, inner sep=0}, "{N'}"', curve={height=12pt}, from=1-2, to=1-3]
	\arrow[""{name=3, anchor=center, inner sep=0}, "N", curve={height=-12pt}, from=1-2, to=1-3]
	\arrow["\varphi"', shorten <=3pt, shorten >=3pt, Rightarrow, from=1, to=0]
	\arrow["\psi"', shorten <=3pt, shorten >=3pt, Rightarrow, from=3, to=2]
\end{tikzcd}\]
there is a well defined composite.

\begin{prop}
\label{3d-interchange}
\label{pastingII}
Let $\cC^{\otimes}$ be a monoidal category which admits a calculus of balanced tensor products. Given monoids $A,B,C$, bimodules $M$, $M'$ two $(A,B)$-bimodule, $N$, $N'$, two $(B,C)$-bimodules, and bimodule maps nd $(A,B)$-bimodule map $\varphi\colon M\to M'$ and $\psi\colon N\to N'$, there is an equality of $(A,C)$-bimodule maps from $M\otimes_BN$ to $M'\otimes_BN'$:
\[(\varphi\otimes_{B} N)\bullet(M'\otimes_B\psi)= \varphi\otimes_B\psi=(M\otimes_B\psi)\bullet(\varphi\otimes_BN').
\]
\end{prop}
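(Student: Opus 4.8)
The plan is to deduce all of this from a single input, namely the interchange (bifunctoriality) law for the ordinary tensor product $\otimes$ on $\cC$, and then to transport the resulting equalities down to balanced tensor products along the canonical maps $\pi_{M,N}\colon M\otimes N\to M\otimes_BN$, which are epimorphisms by \cite[Proposition~1.3]{EH2} (this is exactly the fact already exploited in the proof of \cref{RmkEpi}). First I would record the two factorizations of $\varphi\otimes\psi$ that hold already at the level of $\cC$: since $\otimes\colon\cC\times\cC\to\cC$ is a bifunctor,
\[
\varphi\otimes\psi=(\varphi\otimes N)\bullet(M'\otimes\psi)=(M\otimes\psi)\bullet(\varphi\otimes N'),
\]
where, as throughout, $f\bullet g=g\circ f$. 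Here $\varphi\otimes_BN$, $M'\otimes_B\psi$, etc.\ are the instances of \cref{TensorFunctorial} in which one of the two bimodule maps is an identity.

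Next I would stack two of the naturality squares of \cref{CompatibilityTensorOnMaps}: applying that remark to the pair $(\varphi,\id_N)$ and then to the pair $(\id_{M'},\psi)$ produces the commutative diagram
\[
\begin{tikzcd}
M\otimes N\arrow[r,"\varphi\otimes N"]\arrow[d,"\pi_{M,N}"swap]&M'\otimes N\arrow[r,"M'\otimes\psi"]\arrow[d,"\pi_{M',N}"]&M'\otimes N'\arrow[d,"\pi_{M',N'}"]\\
M\otimes_B N\arrow[r,"\varphi\otimes_B N"swap]&M'\otimes_B N\arrow[r,"M'\otimes_B\psi"swap]&M'\otimes_B N'
\end{tikzcd}
\]
Chasing the outer rectangle and using the first factorization above gives $\pi_{M,N}\bullet\bigl((\varphi\otimes_BN)\bullet(M'\otimes_B\psi)\bigr)=(\varphi\otimes\psi)\bullet\pi_{M',N'}$. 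On the other hand, \cref{CompatibilityTensorOnMaps} applied directly to the pair $(\varphi,\psi)$ reads $\pi_{M,N}\bullet(\varphi\otimes_B\psi)=(\varphi\otimes\psi)\bullet\pi_{M',N'}$. Comparing the two and cancelling the epimorphism $\pi_{M,N}$ on the left yields $(\varphi\otimes_BN)\bullet(M'\otimes_B\psi)=\varphi\otimes_B\psi$. The same argument, run with the second factorization $\varphi\otimes\psi=(M\otimes\psi)\bullet(\varphi\otimes N')$ and the squares for $(\id_M,\psi)$ followed by $(\varphi,\id_{N'})$, gives $(M\otimes_B\psi)\bullet(\varphi\otimes_BN')=\varphi\otimes_B\psi$, completing the chain of equalities.

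Finally I would note that all three maps are $(A,C)$-bimodule maps: the two outer composites are composites of bimodule maps (by the preceding proposition, which handles $M\otimes_B\psi$, together with its evident mirror statement handling $\varphi\otimes_B N$), hence bimodule maps, and $\varphi\otimes_B\psi$ is then a bimodule map because it is equal to such a composite. There is no real obstacle in this argument; the only points demanding care are the bookkeeping of the $\bullet$/$\circ$ convention when chasing the diagram and remembering to invoke the epimorphism property of $\pi_{M,N}$ to cancel it — everything substantive is just the interchange law for $\otimes$ pushed through the defining coequalizers.
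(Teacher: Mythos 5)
Your proof is correct and follows exactly the same route as the paper's: deduce the equality at the level of $\otimes$ from bifunctoriality, transport it across the coequalizer via the compatibility squares of \cref{CompatibilityTensorOnMaps}, and cancel the epimorphism $\pi_{M,N}$. The paper compresses all of this into a single $\rightsquigarrow$ step (the same pattern used in \cref{RmkEpi}), so you have merely spelled out what that arrow denotes; your extra remark that the three maps are genuinely $(A,C)$-bimodule maps fills in a point the paper leaves implicit.
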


\begin{proof}
There are commutative diagrams in $\cC$
\[
\begin{tikzcd}
    M\otimes N\arrow[rd,"\varphi\otimes N" swap]\arrow[rr,"\varphi\otimes\psi"]&&M'\otimes N'\\
    &M'\otimes N\arrow[ru,"M'\otimes\psi" swap]&
\end{tikzcd}
\rightsquigarrow
\begin{tikzcd}
    M\otimes_B N\arrow[rd,"\varphi\otimes_B N" swap]\arrow[rr,"\varphi\otimes_B\psi"]&&M'\otimes_B N'\\
    &M'\otimes_B N\arrow[ru,"M'\otimes_B\psi" swap]&
\end{tikzcd}
\]
so first equality follows. The second equality can be treated similarly.
\end{proof}

We will use these two lemmas later:

\begin{lem}[Reduction I]
\label{ReductionI}
Let $\cC^{\otimes}$ be a monoidal category which admits a calculus of balanced tensor products.
Given monoids $A,B$, and $(A,B)$-bimodules $M, M'$ and a bimodule map $\varphi\colon M\to M'$,
we have an equality of $(A,B)$-bimodule maps
\[ \varphi = \overline r_{M}^{-1} \bullet (\varphi \otimes_B B) \bullet \overline r_{M'}\]
In particular, $\varphi$ is determined by $\varphi \otimes_B B$ and $\varphi$ is an isomorphism if and only if $\varphi \otimes_B B$ is.
\end{lem}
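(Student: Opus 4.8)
The plan is to prove the identity $\varphi = \overline{r}_M^{-1} \bullet (\varphi \otimes_B B) \bullet \overline{r}_{M'}$ by reducing it, as in the proofs throughout the preceding section, to a statement that holds at the level of ordinary tensor products in $\cC$ and then transporting it across the relevant epimorphisms and isomorphisms. The key observation is that $\overline{r}_M \colon M \otimes_B B \xrightarrow{\cong} M$ from \cref{UnitalityForBimodules} is, by construction, the map induced by the right action $r_M \colon M \otimes B \to M$ on the coequalizer, so $\overline{r}_M \circ \pi_{M,B} = r_M$; and similarly for $M'$. Meanwhile $\varphi \otimes_B B$ fits, by \cref{CompatibilityTensorOnMaps}, into the commutative square relating $\varphi \otimes B$ and $\varphi \otimes_B B$ along the canonical projections $\pi_{M,B}$ and $\pi_{M',B}$.

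First I would rewrite the desired equality in the equivalent form $\overline{r}_M \bullet \varphi = (\varphi \otimes_B B) \bullet \overline{r}_{M'}$, that is $\varphi \circ \overline{r}_M = \overline{r}_{M'} \circ (\varphi \otimes_B B)$, using that $\overline{r}_M$ is invertible. Then I would precompose both sides with the epimorphism $\pi_{M,B} \colon M \otimes B \to M \otimes_B B$. On the left we get $\varphi \circ \overline{r}_M \circ \pi_{M,B} = \varphi \circ r_M$. On the right we get $\overline{r}_{M'} \circ (\varphi \otimes_B B) \circ \pi_{M,B} = \overline{r}_{M'} \circ \pi_{M',B} \circ (\varphi \otimes B) = r_{M'} \circ (\varphi \otimes B)$, where the first step is \cref{CompatibilityTensorOnMaps} and the second is the defining property of $\overline{r}_{M'}$. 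So it suffices to check that $\varphi \circ r_M = r_{M'} \circ (\varphi \otimes B)$, which is exactly the commutative square expressing that $\varphi$ is a bimodule map (compatibility with the right actions). Since $\pi_{M,B}$ is an epimorphism — by \cite[Proposition~1.3]{EH2} — the original equality follows. The two consequences are then immediate: $\varphi$ is recovered from $\varphi \otimes_B B$ by composing with the fixed isomorphisms $\overline{r}_M^{-1}$ and $\overline{r}_{M'}$, and a composite of isomorphisms is an isomorphism (with the converse using that $\overline{r}_M, \overline{r}_{M'}$ are isomorphisms and that $\varphi \otimes_B B = \overline{r}_M \bullet \varphi \bullet \overline{r}_{M'}^{-1}$).

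I do not anticipate a serious obstacle here; the only point requiring care is bookkeeping with the $\bullet$ versus $\circ$ convention (recall $\varphi \bullet \psi = \psi \circ \varphi$) so that the composites are threaded in the correct order, and making sure that the naturality square of \cref{CompatibilityTensorOnMaps} is invoked with $\psi = \id_B$, for which $M \otimes_B \id_B = \id_{M \otimes_B B}$ by \cref{TensorWithIso}, so that $\varphi \otimes_B \id_B = \varphi \otimes_B B$ in the notation of the statement. A clean way to present the argument is via a single commutative diagram in $\cC$ with vertices $M \otimes B$, $M' \otimes B$, $M \otimes_B B$, $M' \otimes_B B$, $M$, $M'$, in which the top square is \cref{CompatibilityTensorOnMaps}, the two slanted triangles are the defining properties of $\overline{r}_M$ and $\overline{r}_{M'}$, the outer square is the bimodule-map condition for $\varphi$, and the front square — the one we want — then commutes because $\pi_{M,B}$ is epi, exactly in the style of the proofs of \cref{RmkEpi}, \cref{NaturalityAssociatorTensor}, and \cref{AssociatorAndUnitorTensor}.
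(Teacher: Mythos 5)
Your argument is correct and is essentially the paper's own proof, which records the same commutative square $\varphi\circ r_M = r_{M'}\circ(\varphi\otimes B)$ and passes to balanced tensor products via the ``$\rightsquigarrow$'' shorthand (the epimorphism argument you spell out, in the style of \cref{RmkEpi}), then uses that $\overline{r}_{M'}$ is invertible by \cref{UnitalityForBimodules}. You simply make explicit the mechanism behind the paper's terse one-line derivation, including the identities $\overline{r}_M\circ\pi_{M,B}=r_M$ and the instance of \cref{CompatibilityTensorOnMaps} with $\psi=\id_B$.
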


\begin{proof}
Since $\varphi$ is compatible with the $B$ action we have
\[
\begin{tikzcd}
M\otimes B\arrow[r,"r_{M}"]\arrow[d,"\varphi\otimes B" swap]&M\arrow[d,"\varphi"]\\
    M'\otimes B\arrow[r,"r_{M'}" swap]&M'
\end{tikzcd}
\quad\quad
\rightsquigarrow
\quad\quad
\begin{tikzcd}
 M\otimes_BB\arrow[r,"\overline r_{M}"]\arrow[d,"\varphi \otimes_B B" swap]&M\arrow[d,"\varphi"]\\
    M'\otimes_B B\arrow[r,"\overline r_{M'}" swap]&M'
\end{tikzcd}
\]
We are using that $\overline r_{M'}$ is an isomorphism by \cref{UnitalityForBimodules}.
\end{proof}

\begin{lem}[Reduction II]
\label{ReductionII}
Given monoids $A,B$, and $(A,B)$-bimodules $M, M'$ and bimodule maps $\varphi\colon M\to M'$, and $P$ an $(A,A)$-bimodule and $\epsilon\colon P\cong A$ an $(A,A)$-bimodule isomorphism
have equality of $(A,B)$-bimodule maps from $A\otimes_A M$ to $A\otimes_A M'$
\[
A\otimes_A\varphi=(\epsilon\otimes_AM)^{-1}\bullet(P\otimes_A\varphi)\bullet(\epsilon\otimes_AM').
\]
In particular, $A\otimes_A\varphi$ is determined by $P\otimes_A\varphi$ and $A\otimes_A\varphi$ is an isomorphism if and only if $P\otimes_A\varphi$ is.
\end{lem}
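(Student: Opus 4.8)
The plan is to deduce the identity from the interchange law of \cref{3d-interchange}, applied with the bimodule isomorphism $\epsilon$ in place of one of the two maps, together with the fact that $(-)\otimes_A M$ carries isomorphisms to isomorphisms.

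First I would record that $\epsilon\colon P\to A$ is a map of $(A,A)$-bimodules and that $\varphi\colon M\to M'$ is a map of $(A,B)$-bimodules, so that \cref{3d-interchange} applies with the roles of $M,M',\varphi$ there played by $P,A,\epsilon$ and the roles of $N,N',\psi$ there played by $M,M',\varphi$. It yields the equality of $(A,B)$-bimodule maps from $P\otimes_A M$ to $A\otimes_A M'$
\[
(\epsilon\otimes_A M)\bullet(A\otimes_A\varphi)=\epsilon\otimes_A\varphi=(P\otimes_A\varphi)\bullet(\epsilon\otimes_A M');
\]
equivalently, the square
\[
\begin{tikzcd}
    P\otimes_A M\arrow[r,"P\otimes_A\varphi"]\arrow[d,"\epsilon\otimes_A M" swap]&P\otimes_A M'\arrow[d,"\epsilon\otimes_A M'"]\\
    A\otimes_A M\arrow[r,"A\otimes_A\varphi" swap]&A\otimes_A M'
\end{tikzcd}
\]
commutes.

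Next, since $\epsilon$ is an isomorphism, so are $\epsilon\otimes_A M$ and $\epsilon\otimes_A M'$ --- this is the left-slot analogue of \cref{TensorWithIso}, obtained from the mirror of \cref{RmkEpi} --- with inverses $\epsilon^{-1}\otimes_A M$ and $\epsilon^{-1}\otimes_A M'$ respectively. Precomposing the displayed equality with $(\epsilon\otimes_A M)^{-1}$ and using the convention $f\bullet g=g\circ f$ then gives
\[
A\otimes_A\varphi=(\epsilon\otimes_A M)^{-1}\bullet(P\otimes_A\varphi)\bullet(\epsilon\otimes_A M'),
\]
which is the asserted formula. For the ``in particular'' clause, this formula expresses $A\otimes_A\varphi$ in terms of $P\otimes_A\varphi$, while conversely $P\otimes_A\varphi=(\epsilon\otimes_A M)\bullet(A\otimes_A\varphi)\bullet(\epsilon\otimes_A M')^{-1}$; since $\epsilon\otimes_A M$ and $\epsilon\otimes_A M'$ are isomorphisms, it follows that $A\otimes_A\varphi$ is an isomorphism if and only if $P\otimes_A\varphi$ is.

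I do not anticipate a genuine obstacle here: the points needing care are matching the hypotheses of \cref{3d-interchange} --- in particular using that $\epsilon$ is an honest bimodule map, which is what makes $\epsilon\otimes_A(-)$ available --- tracking the source and target of each map, and staying consistent with the order of composition under $\varphi\bullet\psi=\psi\circ\varphi$ when rearranging the equality.
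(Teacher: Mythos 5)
Your proof is correct and follows essentially the same route as the paper's: a single application of the interchange law (\cref{pastingII}) with $\epsilon$ in one slot and $\varphi$ in the other, followed by precomposition with $(\epsilon\otimes_A M)^{-1}$. The paper's own proof is actually sketchier---it says ``\cref{pastingII} (twice) and some associator stuff,'' neither of which turns out to be necessary---and it cites \cref{TensorWithIso} directly for the invertibility of $\epsilon\otimes_A M$ even though that remark is stated for the mirror slot; your version is cleaner and correctly flags both of these points.
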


\begin{proof}
As an instance of \cref{pastingII} (twice) and some associator stuff should get equality of $(A,B)$-bimodule maps from $P\otimes_AM$ to $A\otimes_AM'$
\[
(\epsilon\otimes_AM)\bullet(A\otimes_A\varphi)=\epsilon\otimes_A\varphi=(P\otimes_A\varphi)\bullet(\epsilon\otimes_AM').
\]
By \cref{TensorWithIso},
$\epsilon\otimes_AM$ is an isomorphism, so the claim follows.
\end{proof}

\section{Nerve of a monoidal category}

\subsection{Nerve of a monoidal category}

Recall from that a simplicial set $X$ is \emph{$3$-coskeletal} if it is isomorphic it its $3$-skeleton; that is, for every integer $m\geq4$, every morphism of simplicial sets $\partial\Delta[m]\to X$ extends uniquely to an $m$-simplex of $X$. Every $3$-truncated simplicial set extends uniquely to a $3$-coskeletal simplicial set; see e.g.~\cite[\textsection3.2]{GJ}.

\begin{const}
\label{MonoidalNerve}
Let $\cC^{\otimes}$ be a monoidal category. The \emph{monoidal nerve} $\mrt(\cC^{\otimes})$ of $\cC^{\otimes}$ is the $3$-coskeletal simplicial set in which:
\begin{enumerate}[leftmargin=*]
\item[(0)] The set $\mrt_0(\cC^{\otimes})$ of $0$-simplices is given by the set of monoids in $\cC^{\otimes}$:
\[
\mrt_0(\cC^{\otimes})\coloneqq\{A\ |\ A\text{ monoid in }\cC^{\otimes}\}.
\]
\item The set $\mrt_1(\cC^{\otimes})$ of $1$-simplices is given by the set of bimodules in $\cC^{\otimes}$:
\[
\mrt_1(\cC^{\otimes})\coloneqq\{(A_0,A_1|M_{01})\ |\ A,B\text{ monoids in }\cC^{\otimes}\text{ and }M_{01}\text{ an $(A_0,A_1)$-bimodule in $\cC^{\otimes}$}\}.
\]
A $1$-simplex $(A_0,A_1|M_{01})$ will be depicted as
\[\begin{tikzcd}
	A_0 \arrow[r,"M_{01}"]& A_1,
\end{tikzcd}\]
indicating that its $0$-th and $1$-st faces are given, respectively, by $B$ and $A$.
Given a $0$-simplex $A_0$, its $0$-th degeneracy is the $1$-simplex
\[\begin{tikzcd}
	A_0 \arrow[r,"A_{0}"]& A_0
\end{tikzcd}\]
given by $A_0$ with the $(A_0,A_0)$-bimodule structure from \cref{IdentityBimodule}.
\item The set $\mrt_2(\cC^{\otimes})$ of $2$-simplices is given by bimodule maps with a decomposition of the domain bimodule as a balanced tensor product:
\[
\mrt_2(\cC^{\otimes})\coloneqq\{(A_0, A_1,A_2|M_{01},M_{12},M_{23}|\varphi)\ 
|\ \text{$A_i$ is monoid, $M_{ij}$ is $(A_i,A_j)$-bimodule},\]
\[
\quad\quad\quad\quad\quad\quad\quad\quad\quad\quad\quad\quad\quad\quad\quad\quad\quad\quad\text{$\varphi\colon M_{01}\otimes_{A_1}M_{12}\to M_{02}$ is $(A_0,A_2)$-bimodule map}\}.
\]
A $2$-simplex $(A_0, A_1,A_2|M_{01},M_{12},M_{23}|\varphi)$ will be depicted as
\[\begin{tikzcd}
	& {A_1} \\
	{A_0} && {A_2}
	\arrow["{M_{12}}", from=1-2, to=2-3]
	\arrow["{M_{01}}", from=2-1, to=1-2]
	\arrow[""{name=0, anchor=center, inner sep=0}, "{M_{02}}"', from=2-1, to=2-3]
	\arrow["\varphi", shorten <=5pt, shorten >=5pt, Rightarrow, from=1-2, to=0]
\end{tikzcd}\]
indicating that its $0$-th, $1$-st and $2$-nd faces are given, respectively, by\[
\begin{tikzcd}
	A_1 \arrow[r,"M_{12}"]& A_2,
\end{tikzcd}
\quad
\begin{tikzcd}
	A_0 \arrow[r,"M_{02}"]& A_2,
\end{tikzcd}
\quad
\begin{tikzcd}
	A_0 \arrow[r,"M_{01}"]& A_1.
\end{tikzcd}
\]
Given a $1$-simplex $(A_0,A_1|M_{01})$, its $0$-th and $1$-st degeneracies are given by the $2$-simplices
\[\begin{tikzcd}
	& {A_0} \\
	{A_0} && {A_1,}
	\arrow["{M_{01}}", from=1-2, to=2-3]
	\arrow["{A_0}", from=2-1, to=1-2]
	\arrow[""{name=0, anchor=center, inner sep=0}, "{M_{01}}"', from=2-1, to=2-3]
	\arrow["\overline\ell_{M_{01}}", shorten <=5pt, shorten >=5pt, Rightarrow, from=1-2, to=0]
\end{tikzcd}
\quad
\begin{tikzcd}
	& {A_1} \\
	{A_0} && {A_1}
	\arrow["{A_1}", from=1-2, to=2-3]
	\arrow["{M_{01}}", from=2-1, to=1-2]
	\arrow[""{name=0, anchor=center, inner sep=0}, "{M_{01}}"', from=2-1, to=2-3]
	\arrow["\overline r_{M_{01}}", shorten <=5pt, shorten >=5pt, Rightarrow, from=1-2, to=0]
\end{tikzcd}\]
where $\overline\ell_{M_{01}}\colon A\otimes_{A_0}M_{01}\to M_{01}$ and $\overline r_{M_{01}}\colon M_{01}\otimes_{A_1}A_1\to M_{01}$ are the bimodule homomorphisms from \cref{UnitalityForBimodules}.
\item The set $\mrt_3(\cC^{\otimes})$ of $3$-simplices is given by
\[
\mrt_3(\cC^{\otimes})\coloneqq\{(A_i,M_{ij},\varphi_{ijk})_{0\leq i\leq j\leq k\leq 3}\ |\ \text{$A_i$ is monoid, $M_{ij}$ is $(A_i,A_j)$-bimodule, }\]
\[
\quad\quad\quad\text{$\varphi_{ijk}\colon M_{ij}\otimes_{A_j}M_{ij}\to M_{jk}$ is $(A_i,A_k)$-bimodule map}\]
\[\quad\quad\quad\quad\quad\quad\text{such that }(\varphi_{012} \otimes_{A_{2}} M_{23}) \bullet \varphi_{023} =\overline{\alpha}_{M_{01}|M_{12}|M_{23}} \bullet (M_{01} \otimes_{A_{1}} \varphi_{123}) \bullet \varphi_{013}\}.
\]
A $3$-simplex $(A_i,M_{ij},\varphi_{ijk})_{0\leq i\leq j\leq k\leq 3}$ will be depicted as
\[\begin{tikzcd} [sep=large]
	{A_1} & {A_2} && {A_1} & {A_2} \\
	{A_0} & {A_3} && {A_0} & {A_3}
	\arrow["{M_{12}}", from=1-1, to=1-2]
	\arrow[""{name=0, anchor=center, inner sep=0}, "{M_{23}}", from=1-2, to=2-2]
	\arrow["{M_{12}}", from=1-4, to=1-5]
	\arrow[""{name=1, anchor=center, inner sep=0}, "{A_{13}}"{pos=0.2}, from=1-4, to=2-5]
	\arrow["{M_{23}}", from=1-5, to=2-5]
	\arrow["{M_{01}}", from=2-1, to=1-1]
	\arrow[""{name=2, anchor=center, inner sep=0}, "{A_{02}}"{pos=0.8}, from=2-1, to=1-2]
	\arrow[""{name=3, anchor=center, inner sep=0}, "M_{01}"', from=2-1, to=2-2]
	\arrow[""{name=4, anchor=center, inner sep=0}, "{M_{01}}", from=2-4, to=1-4]
	\arrow[""{name=5, anchor=center, inner sep=0}, "M_{03}"', from=2-4, to=2-5]
	\arrow["{\varphi_{012}}"'{pos=0.4}, shorten >=3pt, Rightarrow, from=1-1, to=2]
	\arrow[shorten <=26pt, shorten >=26pt, Rightarrow, scaling nfold=3, from=0, to=4]
	\arrow["{\varphi_{023}}"{pos=0.6}, shift right, shorten <=8pt, shorten >=4pt, Rightarrow, from=1-2, to=3]
	\arrow["{\varphi_{013}}"'{pos=0.6}, shift left, shorten <=8pt, shorten >=4pt, Rightarrow, from=1-4, to=5]
	\arrow["{\varphi_{123}}", shorten >=2pt, Rightarrow, from=1-5, to=1]
\end{tikzcd}\]
indicating that its $0$-th, $1$-st, $2$-nd and $3$-rd faces are given, respectively, by
\[\begin{tikzcd}
	& {A_2} \\
	{A_1} && {A_3}
	\arrow["{M_{23}}", from=1-2, to=2-3]
	\arrow["{M_{12}}", from=2-1, to=1-2]
	\arrow[""{name=0, anchor=center, inner sep=0}, "{M_{13}}"', from=2-1, to=2-3]
	\arrow["\varphi_{123}", shorten <=5pt, shorten >=5pt, Rightarrow, from=1-2, to=0]
\end{tikzcd}
\quad
\begin{tikzcd}
	& {A_2} \\
	{A_0} && {A_3}
	\arrow["{M_{23}}", from=1-2, to=2-3]
	\arrow["{M_{02}}", from=2-1, to=1-2]
	\arrow[""{name=0, anchor=center, inner sep=0}, "{M_{03}}"', from=2-1, to=2-3]
	\arrow["\varphi_{023}", shorten <=5pt, shorten >=5pt, Rightarrow, from=1-2, to=0]
\end{tikzcd}
\]
\[
\begin{tikzcd}
	& {A_1} \\
	{A_0} && {A_3}
	\arrow["{M_{13}}", from=1-2, to=2-3]
	\arrow["{M_{01}}", from=2-1, to=1-2]
	\arrow[""{name=0, anchor=center, inner sep=0}, "{M_{03}}"', from=2-1, to=2-3]
	\arrow["\varphi_{013}", shorten <=5pt, shorten >=5pt, Rightarrow, from=1-2, to=0]
\end{tikzcd}
\quad
\begin{tikzcd}
	& {A_1} \\
	{A_0} && {A_2}
	\arrow["{M_{12}}", from=1-2, to=2-3]
	\arrow["{M_{01}}", from=2-1, to=1-2]
	\arrow[""{name=0, anchor=center, inner sep=0}, "{M_{02}}"', from=2-1, to=2-3]
	\arrow["\varphi_{012}", shorten <=5pt, shorten >=5pt, Rightarrow, from=1-2, to=0]
\end{tikzcd}
\]
Given a $2$-simplex corresponding to the bimodule homomorphism $\varphi\colon M_{01}\otimes_{A_1} M_{12}\to M_{02}$, its $0$-th, $1$-st and $2$-nd degeneracies are given by the $3$-simplices depicted respectively by
\[
\begin{tikzcd} [sep=large]
	{A_0} & {A_1} && {A_0} & {A_1} \\
	{A_0} & {A_2} && {A_0} & {A_2}
	\arrow["{M_{01}}", from=1-1, to=1-2]
	\arrow[""{name=0, anchor=center, inner sep=0}, "{M_{12}}", from=1-2, to=2-2]
	\arrow["{M_{01}}", from=1-4, to=1-5]
	\arrow[""{name=1, anchor=center, inner sep=0}, "{M_{02}}"{pos=0.2}, from=1-4, to=2-5]
	\arrow["{M_{12}}", from=1-5, to=2-5]
	\arrow["{A_0}", from=2-1, to=1-1]
	\arrow[""{name=2, anchor=center, inner sep=0}, "{M_{01}}"{pos=0.8}, from=2-1, to=1-2]
	\arrow[""{name=3, anchor=center, inner sep=0}, "M_{02}"', from=2-1, to=2-2]
	\arrow[""{name=4, anchor=center, inner sep=0}, "{A_0}", from=2-4, to=1-4]
	\arrow[""{name=5, anchor=center, inner sep=0}, "M_{02}"', from=2-4, to=2-5]
	\arrow["{\overline\ell_{M_{01}}}"'{pos=0.4}, shorten >=3pt, Rightarrow, from=1-1, to=2]
	\arrow[shorten <=26pt, shorten >=26pt, Rightarrow, scaling nfold=3, from=0, to=4]
	\arrow["{\varphi}"{pos=0.6}, shift right, shorten <=8pt, shorten >=4pt, Rightarrow, from=1-2, to=3]
	\arrow["{\overline\ell_{M_{02}}}"'{pos=0.6}, shift left, shorten <=8pt, shorten >=4pt, Rightarrow, from=1-4, to=5]
	\arrow["{\varphi}", shorten >=2pt, Rightarrow, from=1-5, to=1]
\end{tikzcd}\]
\[\begin{tikzcd} [sep=large]
	{A_1} & {A_1} && {A_1} & {A_1} \\
	{A_0} & {A_2} && {A_0} & {A_2}
	\arrow["{A_1}", from=1-1, to=1-2]
	\arrow[""{name=0, anchor=center, inner sep=0}, "{M_{12}}", from=1-2, to=2-2]
	\arrow["{A_1}", from=1-4, to=1-5]
	\arrow[""{name=1, anchor=center, inner sep=0}, "{M_{12}}"{pos=0.2}, from=1-4, to=2-5]
	\arrow["{M_{12}}", from=1-5, to=2-5]
	\arrow["{M_{01}}", from=2-1, to=1-1]
	\arrow[""{name=2, anchor=center, inner sep=0}, "{M_{01}}"{pos=0.8}, from=2-1, to=1-2]
	\arrow[""{name=3, anchor=center, inner sep=0}, "M_{01}"', from=2-1, to=2-2]
	\arrow[""{name=4, anchor=center, inner sep=0}, "{M_{01}}", from=2-4, to=1-4]
	\arrow[""{name=5, anchor=center, inner sep=0}, "M_{02}"', from=2-4, to=2-5]
	\arrow["{\overline r_{M_{01}}}"'{pos=0.4}, shorten >=3pt, Rightarrow, from=1-1, to=2]
	\arrow[shorten <=26pt, shorten >=26pt, Rightarrow, scaling nfold=3, from=0, to=4]
	\arrow["{\varphi}"{pos=0.6}, shift right, shorten <=8pt, shorten >=4pt, Rightarrow, from=1-2, to=3]
	\arrow["{\varphi}"'{pos=0.6}, shift left, shorten <=8pt, shorten >=4pt, Rightarrow, from=1-4, to=5]
	\arrow["{\overline\ell_{M_{12}}}", shorten >=2pt, Rightarrow, from=1-5, to=1]
\end{tikzcd}\]
\[\begin{tikzcd} [sep=large]
	{A_1} & {A_2} && {A_1} & {A_2} \\
	{A_0} & {A_2} && {A_0} & {A_2}
	\arrow["{M_{12}}", from=1-1, to=1-2]
	\arrow[""{name=0, anchor=center, inner sep=0}, "{A_2}", from=1-2, to=2-2]
	\arrow["{M_{12}}", from=1-4, to=1-5]
	\arrow[""{name=1, anchor=center, inner sep=0}, "{M_{12}}"{pos=0.2}, from=1-4, to=2-5]
	\arrow["{A_2}", from=1-5, to=2-5]
	\arrow["{M_{01}}", from=2-1, to=1-1]
	\arrow[""{name=2, anchor=center, inner sep=0}, "{M_{02}}"{pos=0.8}, from=2-1, to=1-2]
	\arrow[""{name=3, anchor=center, inner sep=0}, "M_{02}"', from=2-1, to=2-2]
	\arrow[""{name=4, anchor=center, inner sep=0}, "{M_{01}}", from=2-4, to=1-4]
	\arrow[""{name=5, anchor=center, inner sep=0}, "M_{02}"', from=2-4, to=2-5]
	\arrow["{\varphi}"'{pos=0.4}, shorten >=3pt, Rightarrow, from=1-1, to=2]
	\arrow[shorten <=26pt, shorten >=26pt, Rightarrow, scaling nfold=3, from=0, to=4]
	\arrow["{\overline r_{M_{02}}}"{pos=0.6}, shift right, shorten <=8pt, shorten >=4pt, Rightarrow, from=1-2, to=3]
	\arrow["{\varphi}"'{pos=0.6}, shift left, shorten <=8pt, shorten >=4pt, Rightarrow, from=1-4, to=5]
	\arrow["{\overline r_{M_{02}}}", shorten >=2pt, Rightarrow, from=1-5, to=1]
\end{tikzcd}
\]
The fact that these indeed define $3$-simplices will be checked as \cref{Degenerate3Simplices}.
\end{enumerate}
\end{const}

\begin{prop}
\label{Degenerate3Simplices}
Let $\cC^{\otimes}$ be a monoidal category which admits a calculus of balanced tensor products. Suppose we are given monoids $A_0$, $A_1$, $A_2$, an $(A_0,A_1)$-bimodule $M_{01}$, an $(A_1,A_2)$-bimodule $M_{12}$ and an $(A_0,A_2)$-bimodule $M_{02}$.
Given a homomorphism $\varphi\colon M_{01}\otimes_{A_1}M_{12}\to M_{02}$ of $(A_0,A_2)$-bimodules (with respect to the structure from \cref{CompositeBimodule}), we have equalities of $(A_0,A_2)$-bimodule maps
\[
\left\{
\begin{array}{rclll}
(\overline\ell_{M_{01}}\otimes_{A_1}M_{12})\bullet\varphi&=&\overline\alpha_{A_0|M_{01}|M_{12}}\bullet(A_0\otimes_{A_0}\varphi)\bullet\overline\ell_{M_{02}}&(0)\\
(\overline r_{M_{01}}\otimes_{A_1}M_{12})\bullet\varphi&=&\overline\alpha_{M_{01}|A_1|M_{12}}\bullet(M_{01}\otimes_{A_1}\overline\ell_{M_{12}})\bullet\varphi&(1)\\
(\varphi\otimes_{A_2}A_2)\bullet\overline r_{M_{02}}&=&\overline\alpha_{M_{01}|M_{12}|A_2}\bullet(M_{01}\otimes_{A_1}\overline r_{M_{12}})\bullet\varphi.&(2)
\end{array}
\right.
\]
\end{prop}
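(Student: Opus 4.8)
The three identities are precisely the instances of the $3$-simplex condition from \cref{MonoidalNerve} needed to make the $0$-th, $1$-st and $2$-nd degenerations of the $2$-simplex $\varphi$ into $3$-simplices, and the plan is to prove all three by the single device used throughout this section. Each side of each identity is a map out of a (possibly twice-iterated) balanced tensor product, and the canonical comparison map to it from the corresponding iterated \emph{ordinary} tensor product is an epimorphism in $\cC$ by \cite[Proposition~1.3]{EH2} (the same fact used in \cref{RmkEpi}, \cref{NaturalityAssociatorTensor} and \cref{PentagonTensor}). So it suffices to check each equality after precomposing with such an epimorphism: for (0) with $(\pi_{A_0,M_{01}}\otimes M_{12})\bullet\pi_{A_0\otimes_{A_0}M_{01},M_{12}}$, for (1) with $(\pi_{M_{01},A_1}\otimes M_{12})\bullet\pi_{M_{01}\otimes_{A_1}A_1,M_{12}}$, and for (2) with $(\pi_{M_{01},M_{12}}\otimes A_2)\bullet\pi_{M_{01}\otimes_{A_1}M_{12},A_2}$. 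The first step is then to \emph{unwind} both sides: using \cref{CompatibilityTensorOnMaps} (naturality of $\pi$ against bimodule maps), \cref{CompatibilityOverlineAlpha} together with the fact that $\overline\alpha_{M|N|P}$ from \cref{InducedAlpha2} is compatible with the projections $\pi$, the defining relations $\pi_{A,M}\bullet\overline\ell_M=\ell_M$ and $\pi_{M,B}\bullet\overline r_M=r_M$ from \cref{UnitalityForBimodules}, and bifunctoriality of $\otimes$, one rewrites each side as a map assembled only from $\alpha$, the actions $\ell$ and $r$, the projections $\pi$, and $\varphi$.

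Once everything is unwound, each identity collapses to a fact already available. For (1) the map $\varphi$ is a common right-hand factor, so it is enough to prove the ``triangle'' identity $\overline r_{M_{01}}\otimes_{A_1}M_{12}=\overline\alpha_{M_{01}|A_1|M_{12}}\bullet(M_{01}\otimes_{A_1}\overline\ell_{M_{12}})$; after the precomposition its left side becomes $(r_{M_{01}}\otimes M_{12})\bullet\pi_{M_{01},M_{12}}$ and its right side becomes $\bigl(\alpha_{M_{01},A_1,M_{12}}\bullet(M_{01}\otimes\ell_{M_{12}})\bigr)\bullet\pi_{M_{01},M_{12}}$, and these agree because $\pi_{M_{01},M_{12}}$ coequalizes exactly these two parallel arrows by the definition of $M_{01}\otimes_{A_1}M_{12}$ in \cref{BalancedTensor}. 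For (0) and (2) I would set $\widetilde\varphi\coloneqq\pi_{M_{01},M_{12}}\bullet\varphi\colon M_{01}\otimes M_{12}\to M_{02}$, which is an $(A_0,A_2)$-bimodule map for the free bimodule structure of \cref{FreeModule} on $M_{01}\otimes M_{12}$: $\varphi$ is a bimodule map by hypothesis, and $\pi_{M_{01},M_{12}}$ is a bimodule map since its compatibility with the actions reduces, via \cref{CompatibilityTensorOnMaps} and \cref{ActionIsmap}, to the squares of \cref{CompatibilityOverlineAlpha}. Unwinding (0) and using $\ell_{M_{01}\otimes M_{12}}=\alpha^{-1}_{A_0,M_{01},M_{12}}\bullet(\ell_{M_{01}}\otimes M_{12})$ from \cref{FreeModule}, it becomes $\ell_{M_{01}\otimes M_{12}}\bullet\widetilde\varphi=(A_0\otimes\widetilde\varphi)\bullet\ell_{M_{02}}$, i.e.\ compatibility of $\widetilde\varphi$ with the left $A_0$-action; unwinding (2) and using $r_{M_{01}\otimes M_{12}}=\alpha_{M_{01},M_{12},A_2}\bullet(M_{01}\otimes r_{M_{12}})$, it becomes $r_{M_{01}\otimes M_{12}}\bullet\widetilde\varphi=(\widetilde\varphi\otimes A_2)\bullet r_{M_{02}}$, i.e.\ compatibility with the right $A_2$-action. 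Both hold since $\widetilde\varphi$ is a bimodule map, which finishes the argument.

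The hard part will be the unwinding step rather than the final identification: one must keep track of which flavor of $\overline\alpha$ occurs — the one-sided isomorphisms of \cref{InducedAlpha} that are baked into the definitions of $\ell_{M\otimes_BN}$ and $r_{M\otimes_BN}$ in \cref{CompositeBimodule}, versus the two-sided $\overline\alpha_{M|N|P}$ of \cref{InducedAlpha2} that actually appears in the three identities — pair each occurrence with the correct naturality or compatibility square, and make sure the iterated composites of $\pi$'s one precomposes with are genuine epimorphisms (again \cite[Proposition~1.3]{EH2}). With the diagrams set up correctly, (0), (1) and (2) reduce mechanically to, respectively, the left-action compatibility of $\widetilde\varphi$, the coequalizer relation of \cref{BalancedTensor}, and the right-action compatibility of $\widetilde\varphi$.
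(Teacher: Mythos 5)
Your proposal is correct, and the core device---precomposing with the iterated $\pi$'s, which are epimorphisms by \cite[Proposition~1.3]{EH2} together with Axiom~$(2')$, and then unwinding $\overline r$, $\overline\ell$, $\overline\alpha$ via \cref{CompatibilityTensorOnMaps,CompatibilityOverlineAlpha}---is the same machine the paper uses throughout this section. The interesting difference is in organization. The paper's own proof treats (2) (and ``(0) analogously'') by first invoking \cref{PentagonTensor} to absorb $\overline\alpha$ into a single $\overline r_{M_{01}\otimes_{A_1}M_{12}}$, then peeling off balanced tensors until only the compatibility of $\varphi$ with the $A_2$-action remains; and for (1) it assembles a rather large cubical diagram and cites Fubini's theorem for colimits. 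You instead give a \emph{uniform} recipe. Two genuine streamlinings are worth noting: for (1), you observe that $\varphi$ is a common right-hand factor (in the $\bullet$-order) so the identity reduces to $\overline r_{M_{01}}\otimes_{A_1}M_{12}=\overline\alpha_{M_{01}|A_1|M_{12}}\bullet(M_{01}\otimes_{A_1}\overline\ell_{M_{12}})$, and after precomposing this is \emph{literally} the coequalizer relation of \cref{BalancedTensor}---this is substantially lighter than the paper's Fubini argument. For (0) and (2), packaging everything as ``$\widetilde\varphi=\pi_{M_{01},M_{12}}\bullet\varphi$ is a bimodule map for the \cref{FreeModule} structure'' is a tidy way to say what the paper says via the chain of reductions through \cref{CompatibilityTensorOnMaps,ActionIsmap}; the fact that $\pi_{M,N}$ is itself a bimodule map does follow, as you claim, by pairing the $\overline\alpha$-baked definition of $\ell_{M\otimes_BN}$ in \cref{CompositeBimodule} with the squares of \cref{CompatibilityOverlineAlpha} and \cref{CompatibilityTensorOnMaps}, though since the paper never isolates that fact you would need to spell it out. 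In short: same underlying strategy, but you get a single mechanism for all three identities, and your route through (1) is cleaner.
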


\begin{proof}
We prove (2), and (0) is analogous. To show (2), we have to show that the following diagram commutes in $\cC$:
\[
\begin{tikzcd}
    M_{01}\otimes_{A_1}(M_{12}\otimes_{A_2}A_2)\arrow[rr,"M_{01}\otimes_{A_1}\overline r_{M_{12}}"]\arrow[d,"\overline\alpha^{-1}_{M_{01}|M_{12}|A_2}\bullet(\varphi\otimes_{A_2}A_2)"swap]&&M_{01}\otimes_{A_1}M_{12}\arrow[d,"\varphi"]\\
    M_{02}\otimes_{A_2}A_2\arrow[rr,"\overline r_{M_{02}}"swap]&&M_{12}
\end{tikzcd}
\]
By \cref{PentagonTensor} and definition of $\overline{r}$,
it suffices to check that the following diagram commutes in $\cC$:
\[
\begin{tikzcd}
(M_{01}\otimes_{A_1}M_{12})\otimes_{A_2}A_2\arrow[rr,"\overline r_{M_{01}\otimes_{A_1}M_{12}}"]\arrow[d,"\varphi\otimes_{A_2}A_2"swap]&&M_{01}\otimes_{A_1}M_{12}\arrow[d,"\varphi"]\\
    M_{02}\otimes_{A_2}A_2\arrow[rr,"\overline r_{M_{02}}"swap]&&M_{12}
\end{tikzcd}
\]
By \cref{CompatibilityTensorOnMaps,ActionIsmap}, it suffices to check that the following diagram commutes in $\cC$:
\[
\begin{tikzcd}
    (M_{01}\otimes_{A_1}M_{12})\otimes A_2\arrow[rr,"r_{M_{01}\otimes_{A_1}M_{12}}"]\arrow[d,"\varphi\otimes A_2"swap]&&M_{01}\otimes_{A_1}M_{12}\arrow[d,"\varphi"]\\
    M_{02}\otimes A_2\arrow[rr,"r_{M_{02}}"swap]&&M_{12}
\end{tikzcd}
\]
This diagram commutes in $\cC$ because $\varphi$ is compatible with the right action of $A_2$.

We now prove (1). Consider the diagram in $\cC$:
\[
\adjustbox{width=\textwidth,center}{
\begin{tikzcd}[ampersand replacement=\&,cramped]
	{(M_{01} \otimes_{A_1} A_1) \otimes_{A_1} M_{12}} \&\& {M_{01} \otimes_{A_1} (A_1 \otimes_{A_1} M_{12})} \\
	\\
	\&\& {(M_{01} \otimes A_1) \otimes M_{12}} \&\& {M_{01} \otimes (A_1 \otimes M_{12})} \\
	\&\& {M_{01} \otimes M_{12}} \&\&\& {M_{01} \otimes M_{12}} \& {M_{01} \otimes_{A_1} M_{12}} \\
	\&\& {M_{01} \otimes_{A_1} M_{12}} \&\&\&\& {M_{02}}
	\arrow["{\alpha_{M_{01}|A_{1}|M_{12}}}", from=1-1, to=1-3]
	\arrow["{ \overline r_{M_{01}} \otimes_{A_1} M_{12}}"'{pos=0.4}, curve={height=30pt}, from=1-1, to=5-3]
	\arrow["{M_{01} \otimes_{A_1} \overline \ell_{M_{12}}}"{pos=0.6}, curve={height=-30pt}, from=1-3, to=4-7]
	\arrow["{\pi_{M_{01},A_{1}} \otimes M_{12} \bullet \pi_{M_{01} \otimes_{A_1} A_1, M_{12}}}"{description}, from=3-3, to=1-1]
	\arrow["{\alpha_{M_{01},A_{1},M_{12}}}", from=3-3, to=3-5]
	\arrow["{r_{M_{01}} \otimes M_{12}}"', from=3-3, to=4-3]
	\arrow["{M_{01} \otimes \pi_{A_1 , M_{12}} \bullet \pi_{M_{01},A_1 \otimes_{A_1} M_{12} }}"{description}, from=3-5, to=1-3]
	\arrow["{M_{01} \otimes \ell_{M_{12}}}"', from=3-5, to=4-6]
	\arrow["{\pi_{M_{01},M_{12}}}"', from=4-3, to=5-3]
	\arrow["\pi_{M_{01},M_{12}}"', from=4-6, to=4-7]
	\arrow["\varphi", from=4-7, to=5-7]
	\arrow["\varphi"', from=5-3, to=5-7]
\end{tikzcd}
}
\]
Here, the lower diagram commutes by \cref{BalancedTensor}, and the commutativity of the other remaining diagrams is given by \cref{InducedAlpha2,UnitalityForBimodules} and the Fubini's theorem for colimits. In particular the outer rectangle shows the desired unitality, concluding the proof.
\end{proof}

\begin{rmk}
Unpacking the definition of $\mrt^{\natural}(\cC^{\otimes})$, we see that a $4$-simplex $\mrt(\cC^{\otimes})$ can be pictured as follows:
\[\begin{tikzcd}
	&&&& {A_2} \\
	&&& {A_1} && {A_3} \\
	& {A_2} && {A_0} && {A_4} && {A_2} \\
	{A_1} && {A_3} &&&& {A_1} && {A_3} \\
	{A_0} && {A_4} &&&& {A_0} && {A_4} \\
	&& {A_2} &&&& {A_2} \\
	& {A_1} && {A_3} && {A_1} && {A_3} \\
	& {A_0} && {A_4} && {A_0} && {A_4}
	\arrow["{M_{23}}", from=1-5, to=2-6]
	\arrow[""{name=0, anchor=center, inner sep=0}, "{M_{24}}"'{pos=0.4}, from=1-5, to=3-6]
	\arrow["{M_{12}}", from=2-4, to=1-5]
	\arrow[""{name=1, anchor=center, inner sep=0}, "{M_{34}}", from=2-6, to=3-6]
	\arrow[""{name=2, anchor=center, inner sep=0}, "{M_{23}}", from=3-2, to=4-3]
	\arrow[""{name=3, anchor=center, inner sep=0}, "{M_{02}}"'{pos=0.6}, from=3-4, to=1-5]
	\arrow[""{name=4, anchor=center, inner sep=0}, "{M_{01}}", from=3-4, to=2-4]
	\arrow[""{name=5, anchor=center, inner sep=0}, "{M_{04}}"', from=3-4, to=3-6]
	\arrow["{M_{23}}", from=3-8, to=4-9]
	\arrow[""{name=6, anchor=center, inner sep=0}, "{M_{24}}"'{pos=0.6}, from=3-8, to=5-9]
	\arrow["{M_{12}}", from=4-1, to=3-2]
	\arrow["{M_{34}}", from=4-3, to=5-3]
	\arrow[""{name=7, anchor=center, inner sep=0}, "{M_{12}}", from=4-7, to=3-8]
	\arrow[""{name=8, anchor=center, inner sep=0}, "{M_{14}}"'{pos=0.6}, from=4-7, to=5-9]
	\arrow["{M_{34}}", from=4-9, to=5-9]
	\arrow[""{name=9, anchor=center, inner sep=0}, "{M_{02}}"', from=5-1, to=3-2]
	\arrow["{M_{01}}", from=5-1, to=4-1]
	\arrow[""{name=10, anchor=center, inner sep=0}, "{M_{03}}"'{pos=0.4}, from=5-1, to=4-3]
	\arrow[""{name=11, anchor=center, inner sep=0}, "{M_{04}}"', from=5-1, to=5-3]
	\arrow["{M_{01}}", from=5-7, to=4-7]
	\arrow[""{name=12, anchor=center, inner sep=0}, "{M_{04}}"', from=5-7, to=5-9]
	\arrow["{M_{23}}", from=6-3, to=7-4]
	\arrow[""{name=13, anchor=center, inner sep=0}, "{M_{23}}", from=6-7, to=7-8]
	\arrow[""{name=14, anchor=center, inner sep=0}, "{M_{12}}", from=7-2, to=6-3]
	\arrow[""{name=15, anchor=center, inner sep=0}, "{M_{13}}", from=7-2, to=7-4]
	\arrow[""{name=16, anchor=center, inner sep=0}, "{M_{34}}", from=7-4, to=8-4]
	\arrow["{M_{12}}", from=7-6, to=6-7]
	\arrow[""{name=17, anchor=center, inner sep=0}, "{M_{13}}", from=7-6, to=7-8]
	\arrow[""{name=18, anchor=center, inner sep=0}, "{M_{14}}"{pos=0.4}, from=7-6, to=8-8]
	\arrow["{M_{34}}", from=7-8, to=8-8]
	\arrow["{M_{01}}", from=8-2, to=7-2]
	\arrow[""{name=19, anchor=center, inner sep=0}, "{M_{03}}"{pos=0.6}, from=8-2, to=7-4]
	\arrow[""{name=20, anchor=center, inner sep=0}, "{M_{04}}"', from=8-2, to=8-4]
	\arrow[""{name=21, anchor=center, inner sep=0}, "{M_{01}}", from=8-6, to=7-6]
	\arrow[""{name=22, anchor=center, inner sep=0}, "{M_{04}}"', from=8-6, to=8-8]
	\arrow["{\varphi_{024}}"'{pos=0.7}, shorten <=19pt, shorten >=4pt, Rightarrow, from=1-5, to=5]
	\arrow["{\varphi_{012}}"{pos=0.7}, shorten >=2pt, Rightarrow, from=2-4, to=3]
	\arrow["{\varphi_{234}}"'{pos=0.7}, shorten >=2pt, Rightarrow, from=2-6, to=0]
	\arrow["{(3)}", shorten <=15pt, shorten >=15pt, Rightarrow, scaling nfold=3, from=1, to=7]
	\arrow["{(1)}", shorten <=15pt, shorten >=15pt, Rightarrow, scaling nfold=3, from=2, to=4]
	\arrow["{\varphi_{023}}"{pos=0.6}, shorten <=8pt, shorten >=5pt, Rightarrow, from=3-2, to=10]
	\arrow["{\varphi_{124}}"', shorten <=8pt, shorten >=8pt, Rightarrow, from=3-8, to=8]
	\arrow["{\varphi_{012}}"{pos=0.7}, shorten >=2pt, Rightarrow, from=4-1, to=9]
	\arrow["{\varphi_{034}}"{pos=0.4}, shorten <=8pt, shorten >=8pt, Rightarrow, from=4-3, to=11]
	\arrow["{\varphi_{014}}"'{pos=0.4}, shorten <=8pt, shorten >=8pt, Rightarrow, from=4-7, to=12]
	\arrow["{\varphi_{234}}"'{pos=0.6}, shorten >=2pt, Rightarrow, from=4-9, to=6]
	\arrow["{(4)}"', shorten <=10pt, shorten >=10pt, Rightarrow, scaling nfold=3, from=11, to=14]
	\arrow["{\varphi_{123}}", shorten <=2pt, shorten >=8pt, Rightarrow, from=6-3, to=15]
	\arrow["{(0)}"', shorten <=10pt, shorten >=10pt, Rightarrow, scaling nfold=3, from=13, to=12]
	\arrow["{\varphi_{123}}", shorten <=2pt, shorten >=8pt, Rightarrow, from=6-7, to=17]
	\arrow["{\varphi_{013}}"'{pos=0.3}, shorten <=2pt, shorten >=7pt, Rightarrow, from=7-2, to=19]
	\arrow["{(2)}"', shorten <=19pt, shorten >=19pt, Rightarrow, scaling nfold=3, from=16, to=21]
	\arrow["{\varphi_{034}}"{pos=0.4}, shorten <=8pt, shorten >=8pt, Rightarrow, from=7-4, to=20]
	\arrow["{\varphi_{014}}"'{pos=0.4}, shorten <=8pt, shorten >=8pt, Rightarrow, from=7-6, to=22]
	\arrow["{\varphi_{134}}"{pos=0.3}, shorten <=2pt, shorten >=7pt, Rightarrow, from=7-8, to=18]
\end{tikzcd}\]
This amounts to having all displayed monoids $A_i$ for $i=0,\dots4$, all displayed $(A_i,A_j)$-bimodules $M_{ij}$ for $0\leq i< j\leq 4$, all displayed bimodule maps $\varphi_{ijk}\colon M_{ij}\otimes_{A_j}M_{jk}\to M_{ij}$ for $0\leq i<j<k\leq4$, together satisfying the conditions
\[
\left\{
\begin{array}{rcllll}
(\varphi_{123} \otimes_{A_3} M_{34}) \bullet \varphi_{134} &=& \overline{\alpha}_{M_{12}|M_{23}|M_{34}} \bullet (M_{12} \otimes_{A_2} \varphi_{234}) \bullet \varphi_{124}&(0)\\
(\varphi_{023} \otimes_{A_3} M_{34}) \bullet \varphi_{034}&=&\overline{\alpha}_{M_{02}|M_{23}|M_{34}} \bullet (M_{02} \otimes_{A_2} \varphi_{234}) \bullet \varphi_{024}&(1)\\
(\varphi_{013} \otimes_{A_3} M_{34}) \bullet \varphi_{034}&=&\overline{\alpha}_{M_{01}|M_{13}|M_{34}} \bullet (M_{01} \otimes_{A_1} \varphi_{134}) \bullet \varphi_{014}&(2)\\
(\varphi_{012} \otimes_{A_2} M_{24}) \bullet \varphi_{024}&=&\overline{\alpha}_{M_{01}|M_{12}|M_{24}} \bullet (M_{01} \otimes_{A_1} \varphi_{124}) \bullet \varphi_{014}&(3)\\
(\varphi_{012} \otimes_{A_{2}} M_{23}) \bullet \varphi_{023}&=&\overline{\alpha}_{M_{01}|M_{12}|M_{23}} \bullet (M_{01} \otimes_{A_{1}} \varphi_{123}) \bullet \varphi_{013}.&(4)\\
\end{array}\right.
\]
\end{rmk}

\begin{ex}
The construction $\mrt(\cC^{\otimes})$ recovers some known instances.
    \begin{itemize}[leftmargin=*]
        \item $\mrt(\cA b^{\otimes})$ is isomorphic to the Duskin nerve of a variant of the bicategory $\cA lg^{bi}$ of abelian groups, bimodules and bimodule maps considered e.g.~in \cite[\textsection3]{SPclassification}.
        \item $\mrt(\set^{\amalg})$ is isomorphic to the Duskin nerve of the bicategory of sets, spans and maps of spans
    \end{itemize}
\end{ex}

\subsection{Marked nerve of a monoidal category}

We briefly recall terminology and notation about marked simplicial sets and $2$-complicial sets. See e.g.~\cite{VerityComplicial,RiehlComplicial,ORms} for more details.

\begin{defn} 
A \emph{simplicial set with marking} is a simplicial set $X$, and for $m>0$ a subset $X^{\mathrm{th}}_m\subseteq X_m$ of simplices of $X$, called \emph{marked} simplices, which contain all degenerate simplices of $X$.
\end{defn}

\begin{notn}
\label{preliminarynotation}
We denote
\begin{itemize}[leftmargin=*]
    \item by $\Delta^k[m]$, for $0\leq k \leq m$, the simplicial set given by the standard $m$-simplex in which a non-degenerate simplex is marked if and only if it contains the vertices $\{k-1,k,k+1\}\cap [m]$;
    \item by $\Delta^k[m]'$, for $0\leq k \leq m$, the simplicial set given by the standard $m$-simplex with marking obtained from $\Delta^k[m]$ by additionally marking the $(k-1)$-st and $(k+1)$-st $(m-1)$-dimensional face of $\Delta[m]$, whenever defined;
    \item by $\Delta^k[m]''$, for $0\leq k \leq m$, the simplicial set given by the standard $m$-simplex with marking obtained from $\Delta^k[m]'$ by additionally marking the $k$-th face of $\Delta[m]$;
    \item by $\Lambda^k[m]$, for $0\leq k \leq m$, the simplicial set given by the usual $k$-horn $\Lambda^k[m]$ with marking inherited from $\Delta^k[m]$.
    \item by $\Delta[m]^{\sharp}$ the simplicial set given by the standard $m$-simplex with the maximal marking.
    \item by $\Delta[m]_{t}$ the standard $m$-simplex with the in which the top $m$-dimensional simplex is marked, as well as all degenerate simplices.
    \item by $\Delta[3]^{\mathrm{eq}}$ the standard $3$-simplex in which the $1$-simplices $[0,2]$ and $[1,3]$ are marked, as well as all degenerate $1$-simplices and all simplices in dimension $2$ or higher.
    \item by $\Delta[3]^{\mathrm{eq}}\star\Delta[\ell]$ the simplicial set given by the join of a standard $3$-simplex with a standard $\ell$-simplex, which is isomorphic to the standard $(3+1+\ell)$-simplex, in which a simplex $\sigma\star\tau$ is marked if and only if $\sigma$ is marked in $\Delta[3]^{\mathrm{eq}}$ or $\tau$ is a degenerate simplex of $\Delta[\ell]$.
    \item by $\Delta[3]^{\sharp}\star\Delta[\ell]$ the simplicial set given by the join of a standard $3$-simplex with a standard $\ell$-simplex, which is isomorphic to the standard $(3+1+\ell)$-simplex, in which a simplex $\sigma\star\tau$ is marked if and only if $\sigma$ is marked in $\Delta[3]^{\sharp}$ or $\tau$ is a degenerate simplex of $\Delta[\ell]$.
\end{itemize}
\end{notn}

In order to describe the intended marking on $\mrt(\cC^{\otimes})$, we consider the following notion.

\begin{defn}
\label{complicialextensions}
A marked simplicial set $X$ is a \emph{$2$-complicial set} if it has the right lifting property in the category of marked simplicial sets and marking preserving maps with respect to the following classes of maps:
\begin{enumerate}[leftmargin=*]
\item For $m> 1$ and $0< k< m$, the \emph{complicial inner horn extension} is the inclusion
\[\Lambda^k[m]\to \Delta^k[m].\]
 \item For $m\geq 2$ and $0< k < m$, the \emph{complicial thinness extension} is the inclusion
\[\Delta^k[m]' \to \Delta^k[m]''.\]
\item For $m>n$, the \emph{triviality extension} is the inclusion
\[\Delta[m]\to\Delta[m]_t.\]
\item For $m\ge-1$, the \emph{complicial saturation extension} is the inclusion
\[\Delta[3]^{\mathrm{eq}}\star\Delta[m]\to\Delta[3]^{\sharp}\star\Delta[m].\]
\end{enumerate}
\end{defn}

\begin{defn}
\label{Equivalence}
Let $\cC^{\otimes}$ be a monoidal category.
Given monoids $A,B$, an $(A,B)$-bimodule is said to be an \emph{$(A,B)$-equivalence} if there exist a $(B,A)$-bimodule $M'$, as well as an isomorphism of $(A,A)$-bimodules and an isomorphism of $(B,B)$-bimodules
\[
M\otimes_B M'\cong A\quad \text{ and }\quad M'\otimes_AM\cong B,
\]
with respect to the bimodule structures from \cref{CompositeBimodule,IdentityBimodule}.
\end{defn}

\begin{const}
Let $\cC^{\otimes}$ be a monoidal category. The \emph{naturally marked monoidal nerve} $ \mrt^{\natural}(\cC^{\otimes})$ of $\cC^{\otimes}$ is the marked simplicial set whose underlying simplicial set is the simplicial set $\mrt(\cC^{\otimes})$ from \cref{MonoidalNerve} and in which:
\begin{enumerate}[leftmargin=*]
\item The set $\mrt_1^{\mathrm{th}}(\cC^{\otimes})$ of marked $1$-simplices is given by the bimodules which are equivalences in the sense of \cref{Equivalence}:
\[
\mrt_1^{\mathrm{th}}(\cC^{\otimes})\coloneqq\{(A,B|M)\ |\ \text{$M$ is an $(A,B)$-equivalence}\}
\]
A marked $1$-simplex will be depicted in blue, as follows:
\[\begin{tikzcd}[ampersand replacement=\&]
	A \& A
	\arrow["M", color={rgb,255:red,65;green,51;blue,255}, from=1-1, to=1-2]
\end{tikzcd}\]
\item The set $\mrt_2^{\mathrm{th}}(\cC^{\otimes})$ of marked $2$-simplices is given by the $2$-simplices inhabited by an isomorphism of bimodules:
\[
\mrt_2^{\mathrm{th}}(\cC^{\otimes})\coloneqq\{(A_0, A_1,A_2|M_{01},M_{12},M_{23}|\varphi)\ 
|\ \text{$A_i$ is monoid, $M_{ij}$ is $(A_i,A_j)$-bimodule},\]
\[
\quad\quad\quad\quad\quad\quad\quad\quad\quad\quad\quad\quad\text{$\varphi\colon M_{01}\otimes_{A_1}M_{12}\to M_{02}$ is $(A_0\otimes{A_1},A_2)$-bimodule isomorphism}\}.
\]
A marked $2$-simplex will be depicted in blue, as follows:
\[\begin{tikzcd}
	& {A_1} \\
	{A_0} && {A_2}
	\arrow["{M_{12}}", from=1-2, to=2-3]
	\arrow["{M_{01}}", from=2-1, to=1-2]
	\arrow[""{name=0, anchor=center, inner sep=0}, "{M_{02}}"', from=2-1, to=2-3]
	\arrow["\varphi", color={rgb,255:red,65;green,51;blue,255}, shorten <=5pt, shorten >=5pt, Rightarrow, from=1-2, to=0]
\end{tikzcd}\]
\item For $k>2$, every $k$-simplex is marked; that is
\[
\mrt^{\mathrm{th}}_k(\cC^{\otimes})=\mrt_k(\cC^{\otimes}).
\]
\end{enumerate}
\end{const}

\begin{thm}
\label{maintheorem}
Given $\cC^{\otimes}$ a monoidal category which admits a calculus of balanced tensor products, the monoidal nerve $\mrt^{\natural}(\cC^{\otimes})$ is a $2$-complicial set.
\end{thm}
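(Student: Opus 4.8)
\section*{Proof proposal}

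The plan is to verify the four families of complicial extensions of \cref{complicialextensions} by hand, using $3$-coskeletality of $\mrt(\cC^{\otimes})$ to cut everything down to low dimensions. First I would record the cheap reductions: since every simplex of $\mrt^{\natural}(\cC^{\otimes})$ of dimension $>2$ is marked, the triviality extensions hold automatically; the thinness extensions for $m\ge 4$ hold for free since the newly marked face is $(m-1)$-dimensional; a horn $\Lambda^k[m]$ has the same $3$-skeleton as $\Delta[m]$ once $m\ge 5$, so $3$-coskeletality makes the inner-horn and thinness extensions automatic for $m\ge 5$ (a quick inspection of the markings of $\Delta^k[m]$, $\Delta^k[m]'$, $\Delta^k[m]''$ confirms the unique filler is marking-compatible); and the saturation extensions for $m\ge 0$ reduce formally to the case $m=-1$ together with the previously established extensions, which is standard for complicial sets. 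So the genuine content is: inner horns in dimensions $2,3,4$; thinness in dimensions $2,3$; and the saturation extension $\Delta[3]^{\mathrm{eq}}\to\Delta[3]^{\sharp}$.

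For the inner-horn extension $\Lambda^1[2]\to\Delta^1[2]$, given composable bimodules $M_{01},M_{12}$ I would take the filler $(A_0,A_1,A_2\mid M_{01},M_{12},M_{01}\otimes_{A_1}M_{12}\mid\id)$, which is marked because $\id$ is an isomorphism. For $\Lambda^k[3]\to\Delta^k[3]$ with $k\in\{1,2\}$ the inherited marking forces $\varphi_{012}$ (if $k=1$) or $\varphi_{123}$ (if $k=2$) to be an isomorphism; by \cref{TensorWithIso} (and its evident mirror statement) so is $\varphi_{012}\otimes_{A_2}M_{23}$, respectively $M_{01}\otimes_{A_1}\varphi_{123}$, and one solves the single cocycle relation defining $3$-simplices of $\mrt(\cC^{\otimes})$ for the unique missing face $\varphi_{023}$, respectively $\varphi_{013}$; being a composite of bimodule maps it is a bimodule map, and the resulting $3$-simplex is marked with the required $2$-face marked. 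For $\Lambda^k[4]\to\Delta^k[4]$ with $k\in\{1,2,3\}$, all the $\varphi_{ijk}$ already appear in the horn, so by $3$-coskeletality it suffices to check that the one cocycle relation attached to the missing $3$-face follows from the other four, using the invertibility of $\varphi_{012}$ ($k=1$), $\varphi_{123}$ ($k=2$), or $\varphi_{234}$ ($k=3$) forced by the inherited marking. This I would do by a finite diagram chase that takes place entirely among the maps $\overline\alpha$ and the tensored-up $\varphi_{ijk}$'s, applying repeatedly the pentagon identity for $\overline\alpha$ (\cref{PentagonTensor}), naturality of $\overline\alpha$ (\cref{NaturalityAssociatorTensor}), bifunctoriality of $\otimes$, and \cref{TensorWithIso} to cancel the distinguished isomorphism; the Reduction Lemmas \cref{ReductionI,ReductionII} help keep the bookkeeping under control.

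For the thinness extension in dimension $2$, the marking forces $\varphi\colon M_{01}\otimes_{A_1}M_{12}\xrightarrow{\cong}M_{02}$ to be an isomorphism and $M_{01},M_{12}$ to be equivalences, and I would conclude that $M_{02}$ is an equivalence by first establishing two small facts: a balanced tensor product of equivalences is an equivalence (if $M\otimes_BM'\cong A$, $M'\otimes_AM\cong B$, $N\otimes_CN'\cong B$, $N'\otimes_BN\cong C$, then $N'\otimes_BM'$ exhibits $M\otimes_BN$ as an equivalence, via \cref{AssociativityForBimodules,UnitalityForBimodules,TensorFunctorial}), and a bimodule isomorphic to an equivalence is an equivalence. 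For thinness in dimension $3$, three of the four $\varphi_{ijk}$ are forced to be isomorphisms by the marking, and the cocycle relation, together with \cref{TensorWithIso} and invertibility of $\overline\alpha$, forces the fourth.

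Finally, a marking-preserving map out of $\Delta[3]^{\mathrm{eq}}$ amounts to a $3$-simplex in which $M_{02}$ and $M_{13}$ are equivalences and all four $\varphi_{ijk}$ are isomorphisms, and for the saturation extension $\Delta[3]^{\mathrm{eq}}\to\Delta[3]^{\sharp}$ I must show every edge $M_{ij}$ is an equivalence. From $\varphi_{012}$ and $\varphi_{123}$ the products $M_{01}\otimes_{A_1}M_{12}\cong M_{02}$ and $M_{12}\otimes_{A_2}M_{23}\cong M_{13}$ are equivalences; extracting quasi-inverses and re-associating (via \cref{AssociativityForBimodules,UnitalityForBimodules,TensorFunctorial}) exhibits a one-sided inverse of $M_{12}$ on each side, and the usual ``left inverse $\cong$ right inverse'' argument promotes $M_{12}$ to an equivalence with quasi-inverse $Q$; then $M_{01}\cong M_{02}\otimes_{A_2}Q$, $M_{23}\cong Q\otimes_{A_1}M_{13}$, and $M_{03}\cong M_{01}\otimes_{A_1}M_{13}$ are equivalences as composites of equivalences. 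I expect the dimension-$4$ inner-horn chase to be the most laborious step and the saturation argument to be the one needing the most genuine idea; the remaining verifications are straightforward unwindings of the simplicial combinatorics against the structural lemmas already in place.
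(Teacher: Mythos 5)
Your overall strategy is the same as the paper's: grind out the low-dimensional complicial extensions by hand using the structural lemmas on $\overline\alpha$, balanced tensors, and the Reduction Lemmas, and use $3$-coskeletality and $2$-triviality to dispose of everything in higher dimensions. The reductions you record (triviality automatic from $2$-triviality; thinness automatic for $m\geq 4$; inner horns automatic for $m\geq 5$ by $3$-coskeletality) are correct, and your treatments of inner horns and thinness in dimensions $2$--$4$ and of the saturation extension $\Delta[3]^{\mathrm{eq}}\to\Delta[3]^{\sharp}$ are essentially those of \cref{horn21,horn31,horn40,horn41,horn42,thin21,thin31,saturation-1}. Your direct ``left inverse $\cong$ right inverse'' argument for the $m=-1$ saturation is in fact a slight streamlining of \cref{saturation-1}, since it gets $M_{01}$, $M_{23}$, $M_{03}$ without invoking the outer thinness extension \cref{thin20}.

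The gap is the claim that the saturation extensions for $m\geq 0$ ``reduce formally to the case $m=-1$ together with the previously established extensions,'' which I do not believe is true, and which is in any case unjustified. Consider $m=0$: the datum is a $4$-simplex with $M_{02}$, $M_{13}$ and the triangles $\varphi_{012},\varphi_{013},\varphi_{023},\varphi_{123},\varphi_{024},\varphi_{134}$ marked, and one must mark $\varphi_{014},\varphi_{124},\varphi_{234},\varphi_{034}$. Applying the $m=-1$ case to the face $[0,1,2,3]$ marks $M_{01},M_{12},M_{23},M_{03}$, but after that no thinness extension (inner or outer) on any $3$-face of the $4$-simplex has its hypotheses satisfied: for each $3$-face $d_i$ with $i<4$, at least two of its $2$-faces meeting vertex $4$ are among the unknowns, so each thinness lift is blocked by precisely what it is meant to produce. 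The paper's \cref{saturation0} is not a formal consequence of the other extensions: it uses cocycle relations $(3)$ and $(0)$ together with the algebra of bimodule maps to show that $M_{01}\otimes_{A_1}\varphi_{124}$ has a one-sided inverse from each, hence is an isomorphism, deduces $\varphi_{124}$ invertible via \cref{ReductionI}, and only then bootstraps $\varphi_{014},\varphi_{234},\varphi_{034}$ via \cref{thin30,thin31} (so outer thinness is genuinely used here). You need to supply this $m=0$ argument separately; once $m=-1$ and $m=0$ are both in hand, the cases $m\geq 1$ do reduce, by applying them to the sub-$3$- and sub-$4$-simplices spanned by $\{0,1,2,3\}$ and at most one further vertex and then invoking $2$-triviality for the rest (cf.\ \cref{saturationHigh}).
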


\begin{proof}
We will show as \cref{horn,thin,saturation} that the marked simplicial set $\mrt^{\natural}(\cC^{\otimes})$ has the right lifting properties with respect to all the anodyne extensions from \cref{complicialextensions}.
\end{proof}

\begin{rmk}
The construction $\mrt^\natural(\cC^{\otimes})$ most likely recovers some known instances.

For instance, the marked simplicial set $\mrt(\cA b^{\otimes})$ is likely equal to the Duskin nerve of a variant of the bicategory $\cA lg^{bi}$ of abelian groups, bimodules and bimodule maps considered e.g.~in \cite[\textsection3]{SPclassification} with the marking described in \cite{Gurski}. We believe the two are in agreement, although the marking is described using different terms, and we have not verified their equivalence.

Further, if the monoidal category $\cC^{\otimes}$ is strict, then the marked simplicial set $\mrt(\cC^{\otimes})$ is isomorphic to the natural nerve from \cite{ORnerve} of the Morita $2$-category of $\cC^{\otimes}$.
\end{rmk}

\subsection{Complicial horns}

In this subsection we verify that $\mrt^\natural(\cC^{\otimes})$ lifts against all complicial horn inclusions:

\begin{thm}
\label{horn}
The marked simplicial set $\mrt^{\natural}(\cC^{\otimes})$ has the right lifting property with respect to the complicial horn inclusion $\Lambda^k[m]\to\Delta^k[m]$ for $m\geq0$ and $0\leq k\leq m$. 
\end{thm}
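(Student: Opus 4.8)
The plan is to verify the right lifting property against each complicial horn inclusion $\Lambda^k[m]\to\Delta^k[m]$ by cases on the dimension $m$, using the $3$-coskeletality of $\mrt(\cC^\otimes)$ to reduce immediately to $m\le 4$ (for $m\ge 5$ a horn $\Lambda^k[m]\to\mrt(\cC^\otimes)$ determines the boundary $\partial\Delta[m]\to\mrt(\cC^\otimes)$ since an inner horn contains all faces except one but the remaining face is itself determined by its own boundary, which lies in the horn; then coskeletality gives the unique filler, which is automatically marked since every simplex above dimension $2$ is). So the real content is in dimensions $m=2,3,4$, and for $m\le 1$ there are no inner horns to check.

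For $m=2$, an inner horn $\Lambda^1[2]\to\mrt(\cC^\otimes)$ is the datum of monoids $A_0,A_1,A_2$, an $(A_0,A_1)$-bimodule $M_{01}$ and an $(A_1,A_2)$-bimodule $M_{12}$; I would fill it by taking $M_{02}\coloneqq M_{01}\otimes_{A_1}M_{12}$ (which exists by assumption) together with $\varphi\coloneqq\id_{M_{01}\otimes_{A_1}M_{12}}$, which is a bimodule map by \cref{CompositeBimodule}. This produces a $2$-simplex whose faces $d_0,d_2$ are the given edges, as required. For $m=3$, an inner horn $\Lambda^k[3]\to\mrt(\cC^\otimes)$ for $k=1$ or $k=2$ gives three of the four $2$-simplices, hence three of the four bimodule maps $\varphi_{ijk}$ among $\varphi_{012},\varphi_{013},\varphi_{023},\varphi_{123}$; the defining cocycle condition of a $3$-simplex then has the shape of a single equation linear in the missing $\varphi$, and the key point is that in each of the two cases the missing map can be solved for uniquely because the equation reads $(\varphi_{012}\otimes_{A_2}M_{23})\bullet\varphi_{023}=\overline\alpha_{M_{01}|M_{12}|M_{23}}\bullet(M_{01}\otimes_{A_1}\varphi_{123})\bullet\varphi_{013}$ and each of $\varphi_{023}$ (case $k=2$, missing face $d_2$) and $\varphi_{013}$ (case $k=1$, missing face $d_1$) appears post-composed or pre-composed by no non-invertible map — indeed $\varphi_{013}$ appears alone on the right and $\varphi_{023}$ appears with $\varphi_{012}\otimes_{A_2}M_{23}$ in front of it, but it is $\varphi_{023}$ that is being solved for, so one simply sets it equal to the appropriate composite; one must also check the resulting $2$-simplex is marked when the horn is marked, i.e.\ when all given $\varphi$'s are isomorphisms, which holds because a composite of isomorphisms is an isomorphism and, in case $k=2$, because $(\varphi_{012}\otimes_{A_2}M_{23})$ is then an isomorphism by \cref{TensorWithIso} so the solved-for $\varphi_{023}$ is too. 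For $m=4$, an inner horn gives four of the five $3$-simplices, i.e.\ four of the five cocycle conditions $(0)$--$(4)$ displayed after \cref{MonoidalNerve}; the filler is forced to be the coskeletal one, so the only thing to prove is that the fifth cocycle condition follows from the other four. This is where I would do the one genuine calculation: one chases the pentagon-shaped diagram of the five conditions, using \cref{PentagonTensor} (the pentagon for $\overline\alpha$) together with \cref{NaturalityAssociatorTensor} and \cref{RmkEpi} (to cancel the epimorphism $\pi$) to deduce the missing identity from the remaining four.

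The main obstacle is the $m=4$ step: showing that four of the five cocycle identities $(0)$--$(4)$ imply the fifth. This is precisely the associativity-pentagon coherence repackaged simplicially, and it requires carefully composing the four given equations in the right order and invoking \cref{PentagonTensor} and \cref{NaturalityAssociatorTensor} to match up the various $\overline\alpha$'s, then using that the relevant iterated projection map is epic (\cref{RmkEpi}, or \cite[Proposition~1.3]{EH2}) to conclude. The remaining marking bookkeeping — checking that filled low-dimensional simplices are marked whenever the horn data is marked — is routine given \cref{TensorWithIso} and the fact that isomorphisms compose.
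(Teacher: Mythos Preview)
Your plan addresses only the inner horns ($0<k<m$), but the theorem as stated covers all $0\le k\le m$, including the outer complicial horns $\Lambda^0[m]$ and $\Lambda^m[m]$. The paper treats these separately and explicitly: \cref{horn10} for $m=1$, \cref{horn20} for $m=2$, \cref{horn30} for $m=3$, and \cref{horn40} for $m=4$. These cases are not vacuous and require a genuinely different mechanism: in the complicial horn $\Lambda^0[m]$ the edge $[0,1]$ is marked, so $M_{01}$ is an $(A_0,A_1)$-equivalence in the sense of \cref{Equivalence}, and the filler is built (or the missing identity is deduced) by choosing an inverse bimodule $M_{01}'$ and using \cref{ReductionI,ReductionII} to cancel $M_{01}\otimes_{A_1}(-)$. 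Your remark that ``for $m\le1$ there are no inner horns to check'' is correct but simply omits these outer cases; for instance, the filler for $\Lambda^0[2]$ takes $N\coloneqq M_{01}'\otimes_{A_0}P$ rather than a balanced tensor of the two given edges.

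There is also a smaller issue in your $m=4$ sketch for the inner cases. The way the missing cocycle identity is recovered from the other four is not by cancelling the epimorphism $\pi$ via \cref{RmkEpi}; rather, after the chain of equalities one reaches an equation of the form $\theta\bullet X=\theta\bullet Y$ where $\theta$ is $(\varphi_{012}\otimes_{A_2}M_{23})\otimes_{A_3}M_{34}$ (for $k=1$) or $(M_{01}\otimes_{A_1}\varphi_{123})\otimes_{A_3}M_{34}$ (for $k=2$), and this is cancelled because it is an \emph{isomorphism}: the $2$-simplex $\varphi_{012}$ (resp.\ $\varphi_{123}$) is marked in $\Lambda^k[4]$ since it contains the vertices $\{k-1,k,k+1\}$, hence is a bimodule isomorphism, and then \cref{TensorWithIso} applies. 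So the complicial marking is essential to the cancellation step, not just the epimorphicity of the projection.
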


\begin{proof}
We treat the various cases as \cref{horn10,horn20,horn21,horn30,horn31,horn40,horn41,horn42,hornHigh}.
\end{proof}

We now treat the $1$-dimensional horns:

\begin{prop}
\label{horn10}
The marked simplicial set $\mrt^{\natural}(\cC^{\otimes})$ has the right lifting property with respect to the complicial horn inclusion $\Lambda^k[1]\to\Delta^k[1]$ for $k=0,1$.
\end{prop}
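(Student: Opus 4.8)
The plan is to unwind what the lifting problem says in the two base cases $k=0,1$. The horn $\Lambda^k[1]$ is a single vertex, so a map $\Lambda^k[1]\to\mrt^{\natural}(\cC^{\otimes})$ is precisely the choice of a monoid $A$ in $\cC^{\otimes}$, and extending it along $\Lambda^k[1]\to\Delta^k[1]$ amounts to producing a $1$-simplex of $\mrt(\cC^{\otimes})$ — that is, a bimodule — which restricts to $A$ on the vertex of the horn, subject to a marking constraint. By \cref{preliminarynotation}, in both $\Delta^0[1]$ and $\Delta^1[1]$ the unique non-degenerate edge is marked, since $\{k-1,k,k+1\}\cap[1]=\{0,1\}$ for $k=0,1$; hence the $1$-simplex we produce must lie in $\mrt^{\mathrm{th}}_1(\cC^{\otimes})$.

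First I would take the lift to be the $0$-th degeneracy of $A$, namely the $(A,A)$-bimodule $A$ with the bimodule structure of \cref{IdentityBimodule}. Its $0$-th and $1$-st faces are both equal to $A$, so it restricts correctly along the horn inclusion in either case $k=0$ or $k=1$, and no choice is involved beyond this.

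Then I would verify the marking condition, which is the only remaining point. Since the chosen $1$-simplex is degenerate, it is marked by the very definition of a marked simplicial set; alternatively, one may invoke \cref{UnitalityForBimodules}, whose isomorphisms $\overline\ell_A,\overline{r}_A\colon A\otimes_A A\xrightarrow{\cong}A$ exhibit $A$ as an $(A,A)$-equivalence in the sense of \cref{Equivalence} with inverse $A$. Either way the lift is marked, and this completes the argument. There is no genuine obstacle in this base case: the content is entirely the bookkeeping of identifying the horn with a monoid, the target edge with a bimodule, and checking that the relevant marking is forced and is satisfied by a degenerate simplex.
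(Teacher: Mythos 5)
Your proof is correct and follows essentially the same route as the paper's: identify the horn with a monoid $A$ and lift via the degenerate edge $s_0(A)$, the $(A,A)$-bimodule of \cref{IdentityBimodule}. You are a bit more careful than the paper in checking the marking condition; of your two justifications, the one via \cref{UnitalityForBimodules} is the operative one, since one must verify that the explicitly defined marking $\mrt^{\mathrm{th}}_1(\cC^{\otimes})$ actually contains the degenerate edges, rather than merely invoke the axiom that a marking must do so.
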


\begin{proof}
We treat the case $k=0$, the case $k=1$ being analogous. Given a map of marked simplicial sets
$\Lambda^0[1]\to \mrt^{\natural}(\cC^{\otimes})$, which amounts to a monoid $A$ in $\cC^{\otimes}$, a lift
$\Delta^0[1]\to \mrt^{\natural}(\cC^{\otimes})$ is given by the degenerate $1$-simplex
\[\begin{tikzcd}[ampersand replacement=\&]
	A \& A
	\arrow["A", color={rgb,255:red,65;green,51;blue,255}, from=1-1, to=1-2]
\end{tikzcd}\]
obtained by considering the bimodule structure from \cref{IdentityBimodule}.
\end{proof}

We now treat the $2$-dimensional horns:

\begin{prop}
\label{horn21}
The marked simplicial set $\mrt^{\natural}(\cC^{\otimes})$ has the right lifting property with respect to the complicial horn inclusion $\Lambda^1[2]\to\Delta^1[2]$.
\end{prop}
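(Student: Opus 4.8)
The plan is to produce an explicit filler using the balanced tensor product of the two given bimodules. A map of marked simplicial sets $\Lambda^1[2]\to\mrt^{\natural}(\cC^{\otimes})$ is the datum of two $1$-simplices meeting at the vertex $1$, that is, monoids $A_0$, $A_1$, $A_2$, an $(A_0,A_1)$-bimodule $M_{01}$, and an $(A_1,A_2)$-bimodule $M_{12}$; since neither edge of $\Lambda^1[2]$ is marked, the horn map imposes no marking constraint.

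First I would set $M_{02}\coloneqq M_{01}\otimes_{A_1}M_{12}$, which exists in $\cC$ and carries an $(A_0,A_2)$-bimodule structure by \cref{CompositeBimodule}, and take $\varphi\coloneqq\id_{M_{01}\otimes_{A_1}M_{12}}$, which is visibly an $(A_0,A_2)$-bimodule map. The tuple $(A_0,A_1,A_2|M_{01},M_{12},M_{02}|\varphi)$ is then a $2$-simplex of $\mrt(\cC^{\otimes})$ whose $0$th and $2$nd faces are exactly $M_{12}$ and $M_{01}$, so it restricts along $\Lambda^1[2]\hookrightarrow\Delta^1[2]$ to the given horn, yielding an extension $\Delta^1[2]\to\mrt(\cC^{\otimes})$.

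Finally I would check that this extension respects the marking. The only non-degenerate marked simplex of $\Delta^1[2]$ is the top $2$-simplex, and the $2$-simplex we produced is marked precisely because $\varphi=\id$ is an isomorphism of bimodules. Hence the extension defines a map of marked simplicial sets $\Delta^1[2]\to\mrt^{\natural}(\cC^{\otimes})$, which is the required lift. There is no genuine obstacle here; the only point requiring care is bookkeeping the face conventions of \cref{MonoidalNerve} so that the constructed $2$-simplex restricts to the prescribed horn.
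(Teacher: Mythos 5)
Your proposal is correct and follows the paper's own proof essentially verbatim: take $M_{02}=M_{01}\otimes_{A_1}M_{12}$ with the $(A_0,A_2)$-bimodule structure from \cref{CompositeBimodule} and fill with $\varphi=\id$, which is automatically a bimodule isomorphism and hence gives a marked $2$-simplex. The extra remarks about marking constraints on $\Lambda^1[2]$ and face-index bookkeeping are sound but add nothing beyond what the paper implicitly handles.
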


\begin{proof} 
Given a map of marked simplicial sets $\Lambda^1[2]\to \mrt^{\natural}(\cC^{\otimes})$, which can be depicted as
\[\begin{tikzcd}[ampersand replacement=\&]
	\& B \\
	A \&\& C
	\arrow["N", from=1-2, to=2-3]
	\arrow["M", from=2-1, to=1-2]
\end{tikzcd}\]
a lift $\Delta^1[2]\to \mrt^{\natural}(\cC^{\otimes})$ is the map of marked simplicial sets depicted as
\[\begin{tikzcd}[ampersand replacement=\&]
	\& B \\
	A \&\& C
	\arrow["N", from=1-2, to=2-3]
	\arrow["M", from=2-1, to=1-2]
	\arrow[""{name=0, anchor=center, inner sep=0}, "{M \otimes_B N}"', from=2-1, to=2-3]
	\arrow["\text{id}"', color={rgb,255:red,65;green,51;blue,255}, shorten <=5pt, shorten >=5pt, Rightarrow, from=1-2, to=0]
\end{tikzcd}\]
Here, the object $M\otimes_BN$ has an $(A,C)$-bimodule structure by \cref{CompositeBimodule}. Further, we know that the identity morphism
\[\id_{M\otimes_BN}\colon M\otimes_BN\to M\otimes_BN\]
is a bimodule isomorphism.
\end{proof}

\begin{prop}
\label{horn20}
The marked simplicial set $\mrt^{\natural}(\cC^{\otimes})$ has the right lifting property with respect to the complicial horn inclusion $\Lambda^k[2]\to\Delta^k[2]$ for $k=0,2$.
\end{prop}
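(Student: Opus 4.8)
The plan is to construct the filler explicitly, treating the case $k=2$ in full; the case $k=0$ is entirely analogous, with the roles of the left and right tensor factors (and of $\overline{\ell}$ and $\overline{r}$) interchanged. Unwinding \cref{MonoidalNerve}, a map of marked simplicial sets $\Lambda^2[2]\to\mrt^{\natural}(\cC^{\otimes})$ amounts to monoids $A_0,A_1,A_2$, an $(A_0,A_2)$-bimodule $M_{02}$ (the $1$-st face), and an $(A_1,A_2)$-bimodule $M_{12}$ (the $0$-th face) which, since the edge $[1,2]$ is marked in $\Delta^2[2]$, must be an $(A_1,A_2)$-equivalence. A lift $\Delta^2[2]\to\mrt^{\natural}(\cC^{\otimes})$ requires an $(A_0,A_1)$-bimodule $M_{01}$ together with an $(A_0,A_2)$-bimodule map $\varphi\colon M_{01}\otimes_{A_1}M_{12}\to M_{02}$; and because the $2$-simplex $[0,1,2]$ is marked in $\Delta^2[2]$, this $\varphi$ is additionally required to be an isomorphism.

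First I would invoke \cref{Equivalence}: since $M_{12}$ is an $(A_1,A_2)$-equivalence there is a $(A_2,A_1)$-bimodule $M_{12}'$ equipped with a bimodule isomorphism $\epsilon\colon M_{12}'\otimes_{A_1}M_{12}\xrightarrow{\cong}A_2$ (the other isomorphism $M_{12}\otimes_{A_2}M_{12}'\cong A_1$ will not be needed). I then set $M_{01}\coloneqq M_{02}\otimes_{A_2}M_{12}'$, with the $(A_0,A_1)$-bimodule structure of \cref{CompositeBimodule}, and take $\varphi$ to be the composite
\[
(M_{02}\otimes_{A_2}M_{12}')\otimes_{A_1}M_{12}
\xrightarrow{\ \overline{\alpha}_{M_{02}|M_{12}'|M_{12}}\ }
M_{02}\otimes_{A_2}(M_{12}'\otimes_{A_1}M_{12})
\xrightarrow{\ M_{02}\otimes_{A_2}\epsilon\ }
M_{02}\otimes_{A_2}A_2
\xrightarrow{\ \overline{r}_{M_{02}}\ }
M_{02}.
\]
The first arrow is the associator isomorphism of \cref{AssociativityForBimodules}; the middle arrow is a bimodule map by functoriality of $M_{02}\otimes_{A_2}(-)$ and is an isomorphism by \cref{TensorWithIso} since $\epsilon$ is; the last arrow is the unitor isomorphism of \cref{UnitalityForBimodules}. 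As a composite of isomorphisms of $(A_0,A_2)$-bimodules, $\varphi$ is such an isomorphism, so $(A_0,A_1,A_2|M_{01},M_{12},M_{02}|\varphi)$ is a marked $2$-simplex of $\mrt^{\natural}(\cC^{\otimes})$; by construction its $0$-th and $1$-st faces are the prescribed $M_{12}$ and $M_{02}$, so it is the required lift.

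For $k=0$ the marked edge is $[0,1]$, so the given data is an $(A_0,A_1)$-equivalence $M_{01}$ and an $(A_0,A_2)$-bimodule $M_{02}$; one chooses a weak inverse $M_{01}'$ with an isomorphism $\epsilon\colon M_{01}\otimes_{A_1}M_{01}'\xrightarrow{\cong}A_0$, sets $M_{12}\coloneqq M_{01}'\otimes_{A_0}M_{02}$, and defines $\varphi$ as the analogous composite
\[
M_{01}\otimes_{A_1}(M_{01}'\otimes_{A_0}M_{02})
\xrightarrow{\ \overline{\alpha^{-1}}_{M_{01}|M_{01}'|M_{02}}\ }
(M_{01}\otimes_{A_1}M_{01}')\otimes_{A_0}M_{02}
\xrightarrow{\ \epsilon\otimes_{A_0}M_{02}\ }
A_0\otimes_{A_0}M_{02}
\xrightarrow{\ \overline{\ell}_{M_{02}}\ }
M_{02},
\]
again a composite of bimodule isomorphisms by \cref{InducedAlpha2}, \cref{TensorWithIso} and \cref{UnitalityForBimodules}. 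I do not anticipate any genuine obstacle: the filler is essentially forced and every constituent map has already been proven to be an isomorphism of bimodules in the preceding sections. The only point requiring care is the bookkeeping, namely tensoring with the weak inverse on the correct side and orienting the associator of \cref{AssociativityForBimodules} so that $\varphi$ indeed has domain $M_{01}\otimes_{A_1}M_{12}$ and codomain $M_{02}$.
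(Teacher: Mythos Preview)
Your proposal is correct and follows essentially the same approach as the paper: you choose a weak inverse of the marked edge, tensor it with the given long edge to produce the missing edge, and build the filler $2$-cell as a composite of the associator of \cref{AssociativityForBimodules}, the tensor of the equivalence isomorphism (via \cref{TensorWithIso}), and the unitor of \cref{UnitalityForBimodules}. The only cosmetic difference is that the paper treats $k=0$ first and you treat $k=2$ first, with the expected left/right swap.
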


\begin{proof} 
We treat the case $k=0$, the case $k=2$ being analogous. Given a map of marked simplicial sets $\Lambda^0[2]\to \mrt^{\natural}(\cC^{\otimes})$, which can be depicted as
\[\begin{tikzcd}
	& B \\
	A && C
	\arrow["{ M}", color={rgb,255:red,65;green,51;blue,255}, from=2-1, to=1-2]
	\arrow["N"', from=2-1, to=2-3]
\end{tikzcd}\]
we construct a lift $\Delta^0[2]\to \mrt^{\natural}(\cC^{\otimes})$ depicted as
\[\begin{tikzcd}[ampersand replacement=\&]
	\& B \\
	A \&\& C
	\arrow["N", from=1-2, to=2-3]
	\arrow["M", color={rgb,255:red,65;green,51;blue,255}, from=2-1, to=1-2]
	\arrow[""{name=0, anchor=center, inner sep=0}, "P"', from=2-1, to=2-3]
	\arrow["\varphi", color={rgb,255:red,65;green,51;blue,255}, shorten <=5pt, shorten >=5pt, Rightarrow, from=1-2, to=0]
\end{tikzcd}\]
Since $M$ is by assumption an equivalence, there exists an $(B,A)$-bimodule $M'$ and an isomorphism of $(A,A)$-bimodules
\[
M \otimes_B M'\cong A\]
with respect to the bimodule structures from \cref{IdentityBimodule,CompositeBimodule}.
We define $N\coloneqq M'\otimes_AP$, with the $(B,C)$-bimodule structure from \cref{CompositeBimodule}. Further, we set $\alpha$
as the composite $(A,C)$-bimodule isomorphism
\[\begin{array}{lll}
    \alpha\colon  M \otimes_B (M' \otimes_A P) &\cong (M \otimes_B M') \otimes_A P&\text{\cref{AssociativityForBimodules}}\\
     &\cong A \otimes_A P&\text{\cref{TensorWithIso}}\\
     &\cong P&\text{\cref{UnitalityForBimodules}}
\end{array}
\]
which concludes the proof.
\end{proof}

We now treat the $3$-dimensional horns:

\begin{prop}
\label{horn31}
The marked simplicial set $\mrt^{\natural}(\cC^{\otimes})$ has the right lifting property with respect to the complicial horn inclusion $\Lambda^k[3]\to\Delta^k[3]$ for $k=1,2$.
\end{prop}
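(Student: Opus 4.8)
The plan is to handle $k=1$ and $k=2$ by the same device. Since $\mrt(\cC^{\otimes})$ is $3$-coskeletal and $\mrt_3(\cC^{\otimes})$ is precisely the set of boundary data satisfying the cocycle identity
\[
(\varphi_{012}\otimes_{A_2}M_{23})\bullet\varphi_{023}=\overline{\alpha}_{M_{01}|M_{12}|M_{23}}\bullet(M_{01}\otimes_{A_1}\varphi_{123})\bullet\varphi_{013}
\]
from \cref{MonoidalNerve}(3), lifting a $3$-horn amounts to producing the single missing bimodule map $\varphi_{ijk}$ making that identity hold, and then checking that the resulting $3$-simplex respects the marking on $\Delta^k[3]$. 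First I would unwind the horn data: a map $\Lambda^k[3]\to\mrt^{\natural}(\cC^{\otimes})$ records monoids $A_0,\dots,A_3$, an $(A_i,A_j)$-bimodule $M_{ij}$ for every edge $\{i,j\}$ (each edge lies in one of the three present faces), and the bimodule maps $\varphi_{ijk}$ for the three triples other than the missing one; moreover the unique non-degenerate marked $2$-face of $\Lambda^k[3]$ must land in $\mrt^{\mathrm{th}}_2(\cC^{\otimes})$, which forces $\varphi_{012}$ to be a bimodule isomorphism when $k=1$ and $\varphi_{123}$ to be a bimodule isomorphism when $k=2$.

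For $k=1$ the missing face is $d_1$, i.e.\ the datum of a bimodule map $\varphi_{023}\colon M_{02}\otimes_{A_2}M_{23}\to M_{03}$. Since $\varphi_{012}$ is an isomorphism, $\varphi_{012}\otimes_{A_2}M_{23}$ is an isomorphism of bimodules (by \cref{TensorWithIso} and the evident left-handed analogue for $(-)\otimes_{A_2}M_{23}$), so I would set
\[
\varphi_{023}\coloneqq (\varphi_{012}\otimes_{A_2}M_{23})^{-1}\bullet\overline{\alpha}_{M_{01}|M_{12}|M_{23}}\bullet(M_{01}\otimes_{A_1}\varphi_{123})\bullet\varphi_{013},
\]
which is a composite of bimodule maps and makes the cocycle identity hold by construction. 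For $k=2$ the missing face is $d_2$, i.e.\ a bimodule map $\varphi_{013}\colon M_{01}\otimes_{A_1}M_{13}\to M_{03}$; here $\overline{\alpha}_{M_{01}|M_{12}|M_{23}}$ is an isomorphism by \cref{InducedAlpha2} and $M_{01}\otimes_{A_1}\varphi_{123}$ is an isomorphism by \cref{TensorWithIso}, and I would set
\[
\varphi_{013}\coloneqq (M_{01}\otimes_{A_1}\varphi_{123}^{-1})\bullet\overline{\alpha^{-1}}_{M_{01}|M_{12}|M_{23}}\bullet(\varphi_{012}\otimes_{A_2}M_{23})\bullet\varphi_{023},
\]
again a composite of bimodule maps solving the cocycle identity, where I use $M_{01}\otimes_{A_1}\varphi_{123}^{-1}=(M_{01}\otimes_{A_1}\varphi_{123})^{-1}$ from \cref{TensorWithIso}.

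It then remains to observe that in each case the chosen $2$-simplex glues with the three given faces along their shared edges --- it is built out of precisely the bimodules sitting on those edges --- so the resulting tuple is a genuine element of $\mrt_3(\cC^{\otimes})$, and hence extends the horn to a map $\Delta^k[3]\to\mrt(\cC^{\otimes})$. This $3$-simplex is automatically marked because $\mrt^{\mathrm{th}}_3(\cC^{\otimes})=\mrt_3(\cC^{\otimes})$; besides it, the only non-degenerate marked simplex of $\Delta^k[3]$ is the $2$-face $\varphi_{012}$ (for $k=1$) or $\varphi_{123}$ (for $k=2$), which is marked by hypothesis, so the lift is marking-preserving. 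The only real content here is the observation that the isomorphism one must invert to solve the cocycle identity is exactly the one rendered invertible by the marked face of the horn; everything else is bookkeeping with the convention $\varphi\bullet\psi=\psi\circ\varphi$, and I do not expect a genuine obstacle.
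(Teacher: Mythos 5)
Your proposal is correct and takes essentially the same route as the paper. For $k=1$, your formula for $\varphi_{023}$ is exactly the one the paper uses (there $\beta\coloneqq(\alpha\otimes_{A_2}M_{23})^{-1}\bullet\overline\alpha_{M_{01}|M_{12}|M_{23}}\bullet(M_{01}\otimes_{A_1}\delta)\bullet\gamma$); the paper then dispatches $k=2$ as ``analogous,'' and your explicit formula for $\varphi_{013}$ is precisely that analogue. You are slightly more careful than the paper in one spot: you note that \cref{TensorWithIso} as stated only covers $M\otimes_B(-)$ and that for $\varphi_{012}\otimes_{A_2}M_{23}$ one needs the evident $(-)\otimes_{A_2}M_{23}$ variant, whereas the paper cites \cref{TensorWithIso} directly for both. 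Your final paragraph verifying marking-compatibility (the filler $3$-simplex is automatically marked, and the only marked non-degenerate proper face of $\Delta^k[3]$ already lies in the horn) makes explicit a step the paper leaves implicit. No gap.
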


\begin{proof}
We treat the case $k=1$, the case $k=2$ being analogous. Given a map of marked simplicial sets $\Lambda^1[3]\to \mrt^{\natural}(\cC^{\otimes})$, which can be depicted as
\[\begin{tikzcd}[sep=large, ampersand replacement=\&]
	A_{1} \& A_{2} \& A_{1} \& A_{2} \\
	A_0 \& A_{3} \& A_{0} \& A_{3}
	\arrow["M_{12}", from=1-1, to=1-2]
	\arrow["M_{23}", from=1-2, to=2-2]
	\arrow["M_{12}", from=1-3, to=1-4]
	\arrow[""{name=0, anchor=center, inner sep=0}, "M_{13}"{pos=0.2}, from=1-3, to=2-4]
	\arrow["M_{23}", from=1-4, to=2-4]
	\arrow["M_{01}", from=2-1, to=1-1]
	\arrow[""{name=1, anchor=center, inner sep=0}, "M_{02}"{pos=0.8}, from=2-1, to=1-2]
	\arrow["M_{03}"', from=2-1, to=2-2]
	\arrow["M_{01}", from=2-3, to=1-3]
	\arrow[""{name=2, anchor=center, inner sep=0}, "M_{03}"', from=2-3, to=2-4]
	\arrow["\alpha"'{pos=0.4}, color={rgb,255:red,65;green,51;blue,255}, shorten >=3pt, Rightarrow, from=1-1, to=1]
	\arrow["\gamma"'{pos=0.6}, shift left, shorten <=8pt, shorten >=4pt, Rightarrow, from=1-3, to=2]
	\arrow["\delta", shorten >=3pt, Rightarrow, from=1-4, to=0]
\end{tikzcd}\]
we construct a lift $\Delta^1[3]\to \mrt^{\natural}(\cC^{\otimes})$, which can be depicted as
\[\begin{tikzcd}[sep=large, ampersand replacement=\&]
	A_{1} \& A_{2} \& A_{1} \& A_{2} \\
	A_{0} \& A_{3} \& A_{0} \& A_{3}
	\arrow["M_{12}", from=1-1, to=1-2]
	\arrow[""{name=0, anchor=center, inner sep=0}, "M_{23}", from=1-2, to=2-2]
	\arrow["M_{12}", from=1-3, to=1-4]
	\arrow[""{name=1, anchor=center, inner sep=0}, "M_{13}"{pos=0.2}, from=1-3, to=2-4]
	\arrow["M_{23}", from=1-4, to=2-4]
	\arrow["M_{01}", color={rgb,255:red,65;green,51;blue,255}, from=2-1, to=1-1]
	\arrow[""{name=2, anchor=center, inner sep=0}, "M_{02}"{pos=0.8}, from=2-1, to=1-2]
	\arrow[""{name=3, anchor=center, inner sep=0}, "M_{03}"', from=2-1, to=2-2]
	\arrow[""{name=4, anchor=center, inner sep=0}, "M_{01}", color={rgb,255:red,65;green,51;blue,255}, from=2-3, to=1-3]
	\arrow[""{name=5, anchor=center, inner sep=0}, "M_{03}"', from=2-3, to=2-4]
	\arrow["\alpha"'{pos=0.4}, color={rgb,255:red,65;green,51;blue,255}, shorten >=3pt, Rightarrow, from=1-1, to=2]
	\arrow[color={rgb,255:red,65;green,51;blue,255}, shorten <=19pt, shorten >=19pt, Rightarrow, scaling nfold=3, from=0, to=4]
	\arrow["\beta"{pos=0.6}, shift right, shorten <=8pt, shorten >=4pt, Rightarrow, from=1-2, to=3]
	\arrow["\gamma"'{pos=0.6}, shift left, color={rgb,255:red,65;green,51;blue,255}, shorten <=8pt, shorten >=4pt, Rightarrow, from=1-3, to=5]
	\arrow["\delta", shorten >=3pt, Rightarrow, from=1-4, to=1]
\end{tikzcd}\]
Since $\alpha$ is an isomorphism, by \cref{TensorWithIso} we have that $\alpha\otimes_{A_{2}} M_{23}$ is an isomorphism. We can then define $\beta$ to be the map of $(A_{0},A_{3})$-bimodules from $M_{02} \otimes_{A_{2}} M_{23}$ to $M_{03}$ given by 
\[ \beta\coloneqq (\alpha \otimes_{A_{2}} M_{23})^{-1} \bullet \overline{\alpha}_{M_{01}|M_{12}|M_{23}} \bullet (M_{01} \otimes_{A_{1}} \delta) \bullet \gamma.\]
Hence, we obtain the equality map of $(A_{0},A_{3})$-bimodules from $ (M_{01} \otimes_{A_{1}} M_{12})\otimes_{A_{2}} M_{23}$ to $M_{03}$
\[\overline{\alpha}_{M_{01}|M_{12}|M_{23}}\bullet (M_{01} \otimes_{A_{1}} \delta) \bullet \gamma= (\alpha \otimes_{A_{2}} M_{23}) \bullet \beta,\]
concluding the proof.
\end{proof}

\begin{prop}
\label{horn30}
The marked simplicial set $\mrt^{\natural}(\cC^{\otimes})$ has the right lifting property with respect to the complicial horn inclusion $\Lambda^k[3]\to\Delta^k[3]$ for $k=0,3$.
\end{prop}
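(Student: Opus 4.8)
The plan is to treat the horn $\Lambda^0[3]\to\mrt^{\natural}(\cC^{\otimes})$ in detail, the case $\Lambda^3[3]\to\mrt^{\natural}(\cC^{\otimes})$ being symmetric. Such a map amounts to a $3$-simplex with its $0$-th face omitted, i.e.\ monoids $A_0,A_1,A_2,A_3$, bimodules $M_{ij}$ for $0\le i<j\le 3$, and bimodule maps $\varphi_{012}\colon M_{01}\otimes_{A_1}M_{12}\to M_{02}$, $\varphi_{013}\colon M_{01}\otimes_{A_1}M_{13}\to M_{03}$ and $\varphi_{023}\colon M_{02}\otimes_{A_2}M_{23}\to M_{03}$, with no compatibility imposed yet. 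Since the map preserves markings and the edge $[0,1]$ and the triangles $[0,1,2]$ and $[0,1,3]$ are marked in $\Delta^0[3]$, the bimodule $M_{01}$ is an $(A_0,A_1)$-equivalence and the maps $\varphi_{012}$, $\varphi_{013}$ are bimodule isomorphisms. A lift $\Delta^0[3]\to\mrt^{\natural}(\cC^{\otimes})$ is exactly an $(A_1,A_3)$-bimodule map $\varphi_{123}\colon M_{12}\otimes_{A_2}M_{23}\to M_{13}$ satisfying the cocycle identity of \cref{MonoidalNerve}(3); as $\varphi_{013}$ and $\overline\alpha_{M_{01}|M_{12}|M_{23}}$ are invertible, that identity is equivalent to the single equation
\[
M_{01}\otimes_{A_1}\varphi_{123}=\overline\alpha_{M_{01}|M_{12}|M_{23}}^{-1}\bullet(\varphi_{012}\otimes_{A_2}M_{23})\bullet\varphi_{023}\bullet\varphi_{013}^{-1}=:\psi ,
\]
where $\psi$ is the indicated $(A_0,A_3)$-bimodule map $M_{01}\otimes_{A_1}(M_{12}\otimes_{A_2}M_{23})\to M_{01}\otimes_{A_1}M_{13}$. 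The face $[1,2,3]$ carries no marking condition in $\Delta^0[3]$, so no further requirement on $\varphi_{123}$ is needed.

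The heart of the argument is that the functor $M_{01}\otimes_{A_1}(-)$ from $(A_1,A_3)$-bimodules to $(A_0,A_3)$-bimodules is an equivalence of categories. Choosing an $(A_1,A_0)$-bimodule $M_{01}'$ together with bimodule isomorphisms $\epsilon\colon M_{01}'\otimes_{A_0}M_{01}\cong A_1$ and $\delta\colon M_{01}\otimes_{A_1}M_{01}'\cong A_0$ as in \cref{Equivalence}, a quasi-inverse is $M_{01}'\otimes_{A_0}(-)$: the two round-trip composites are naturally isomorphic to the identity by pasting the associativity isomorphisms $\overline\alpha$ of \cref{AssociativityForBimodules}, the isomorphisms induced by $\epsilon$ and $\delta$ (still bimodule isomorphisms after tensoring, by \cref{TensorWithIso}), and the unitors $\overline\ell$ of \cref{UnitalityForBimodules}. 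Naturality of these comparisons follows from \cref{NaturalityAssociatorTensor} (for the $\overline\alpha$'s), from the interchange law \cref{3d-interchange} (for the maps induced by $\epsilon$ and $\delta$), and from the naturality of $\overline\ell$, which is routine using \cref{CompatibilityTensorOnMaps} together with the fact that the canonical maps $\pi_{-,-}$ are epimorphisms. Being an equivalence, $M_{01}\otimes_{A_1}(-)$ is fully faithful, so there is a unique $(A_1,A_3)$-bimodule map $\varphi_{123}$ with $M_{01}\otimes_{A_1}\varphi_{123}=\psi$. Together with the horn data this $\varphi_{123}$ assembles into a $3$-simplex of $\mrt^{\natural}(\cC^{\otimes})$ whose faces $d_1,d_2,d_3$ recover the original horn, giving the desired lift. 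For $k=3$ one argues symmetrically: the edge $[2,3]$ and the triangles $[0,2,3]$ and $[1,2,3]$ are now marked, so $M_{23}$ is an $(A_2,A_3)$-equivalence and $\varphi_{023},\varphi_{123}$ are isomorphisms; one solves the cocycle identity for $\varphi_{012}$ and uses that $(-)\otimes_{A_2}M_{23}$ is an equivalence of categories by the mirror image of the construction above.

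The main obstacle I anticipate is the bookkeeping needed to verify that the comparison transformations assembling the quasi-inverse are genuinely natural in the bimodule argument — chiefly the naturality of $\overline\ell$ and of the isomorphisms induced by $\epsilon$ and $\delta$ — since it is precisely this that promotes the raw equivalence data of \cref{Equivalence} (objectwise invertibility) to an equivalence of categories, whence full faithfulness of $M_{01}\otimes_{A_1}(-)$, and therefore the existence (and uniqueness) of the required $\varphi_{123}$, follow formally.
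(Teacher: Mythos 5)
Your approach is correct and in essence the same as the paper's, but packaged at a higher level of abstraction, so it is worth comparing the two. The paper does not claim that $M_{01}\otimes_{A_1}(-)$ is an equivalence of bimodule categories. Instead, it writes down $\varphi_{123}=\delta$ explicitly as the visible composite (what you would call the image of $\psi$ under the quasi-inverse $M_{01}'\otimes_{A_0}(-)$, conjugated by the unitor and the unit of the adjunction-like data $\omega$), and then verifies the cocycle identity by a chain of applications of \cref{ReductionI}, \cref{ReductionII}, \cref{NaturalityAssociatorTensor}, and finally the invertibility of $M_{01}'$. Those reduction lemmas are precisely a localized form of the faithfulness you invoke for $M_{01}\otimes_{A_1}(-)$, so the underlying mechanism is the same; but the paper only proves the particular naturality statements it needs, while you need the full package — naturality of $\overline\alpha$, of $\overline\ell$, and of the maps induced by $\epsilon,\delta$ — to turn the objectwise isomorphisms of \cref{Equivalence} into a genuine natural isomorphism of functors. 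You flag this bookkeeping yourself, and it is indeed the only nontrivial obligation your argument leaves open: in particular naturality of $\overline\ell$ and the interchange for $\epsilon\otimes_{A_1}(-)$ and $(-)\otimes_{A_0}\delta$ must be proved, not merely asserted as routine. (Naturality of $\overline r$ is established inside the proof of \cref{ReductionI}; the left-handed analogue for $\overline\ell$ is used without statement in the paper too, so this is a shared small debt.) In exchange, your framing gives slightly more — uniqueness of the lift — and isolates the conceptual content: $M_{01}\otimes_{A_1}(-)$ is fully faithful because $M_{01}$ is invertible. Both routes are valid; the paper's is more self-contained given the lemmas it has already set up, yours is more transparent at the cost of a few extra naturality checks.
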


\begin{proof}
We treat the case $k=0$, the case $k=3$ being analogous. Given a map of marked simplicial sets $\Lambda^0[3]\to \mrt^{\natural}(\cC^{\otimes})$, which can be depicted as
\[\begin{tikzcd}[sep=large, ampersand replacement=\&]
	A_{1} \& A_{2} \& A_{1} \& A_{2} \\
	A_{0} \& A_{3} \& A_{0} \& A_{3}
	\arrow["M_{12}", from=1-1, to=1-2]
	\arrow["M_{23}", from=1-2, to=2-2]
	\arrow["M_{12}", from=1-3, to=1-4]
	\arrow["M_{13}"{pos=0.2}, from=1-3, to=2-4]
	\arrow["M_{23}", from=1-4, to=2-4]
	\arrow["M_{01}", color={rgb,255:red,65;green,51;blue,255}, from=2-1, to=1-1]
	\arrow[""{name=0, anchor=center, inner sep=0}, "M_{02}"{pos=0.8}, from=2-1, to=1-2]
	\arrow[""{name=1, anchor=center, inner sep=0}, "M_{03}"', from=2-1, to=2-2]
	\arrow["M_{01}", color={rgb,255:red,65;green,51;blue,255}, from=2-3, to=1-3]
	\arrow[""{name=2, anchor=center, inner sep=0}, "M_{03}"', from=2-3, to=2-4]
	\arrow["\alpha"'{pos=0.4}, color={rgb,255:red,65;green,51;blue,255}, shorten >=3pt, Rightarrow, from=1-1, to=0]
	\arrow["\beta"{pos=0.6}, shift right, shorten <=8pt, shorten >=4pt, Rightarrow, from=1-2, to=1]
	\arrow["\gamma"'{pos=0.6}, shift left, color={rgb,255:red,65;green,51;blue,255}, shorten <=8pt, shorten >=4pt, Rightarrow, from=1-3, to=2]
\end{tikzcd}\]
we construct a lift $\Delta^0[3]\to \mrt^{\natural}(\cC^{\otimes})$, which can be depicted as
\[\begin{tikzcd}[sep=large, ampersand replacement=\&]
	A_{1} \& A_{2} \& A_{1} \& A_{2} \\
	A_{0} \& A_{3} \& A_{0} \& A_{3}
	\arrow["M_{12}", from=1-1, to=1-2]
	\arrow[""{name=0, anchor=center, inner sep=0}, "M_{23}", from=1-2, to=2-2]
	\arrow["M_{12}", from=1-3, to=1-4]
	\arrow[""{name=1, anchor=center, inner sep=0}, "M_{13}"{pos=0.2}, from=1-3, to=2-4]
	\arrow["M_{23}", from=1-4, to=2-4]
	\arrow["M_{01}", color={rgb,255:red,65;green,51;blue,255}, from=2-1, to=1-1]
	\arrow[""{name=2, anchor=center, inner sep=0}, "M_{02}"{pos=0.8}, from=2-1, to=1-2]
	\arrow[""{name=3, anchor=center, inner sep=0}, "M_{03}"', from=2-1, to=2-2]
	\arrow[""{name=4, anchor=center, inner sep=0}, "M_{01}", color={rgb,255:red,65;green,51;blue,255}, from=2-3, to=1-3]
	\arrow[""{name=5, anchor=center, inner sep=0}, "M_{03}"', from=2-3, to=2-4]
	\arrow["\alpha"'{pos=0.4}, color={rgb,255:red,65;green,51;blue,255}, shorten >=3pt, Rightarrow, from=1-1, to=2]
	\arrow[color={rgb,255:red,65;green,51;blue,255}, shorten <=19pt, shorten >=19pt, Rightarrow, scaling nfold=3, from=0, to=4]
	\arrow["\beta"{pos=0.6}, shift right, shorten <=8pt, shorten >=4pt, Rightarrow, from=1-2, to=3]
	\arrow["\gamma"'{pos=0.6}, shift left, color={rgb,255:red,65;green,51;blue,255}, shorten <=8pt, shorten >=4pt, Rightarrow, from=1-3, to=5]
	\arrow["\delta", shorten >=3pt, Rightarrow, from=1-4, to=1]
\end{tikzcd}\]
Since $M_{01}$ is an equivalence and there exists a bimodule isomorphism
\[\omega\colon M_{01}'\otimes_{A_{0}} M_{01} \cong A_{1}.\]
We can then define $\delta$ to be the map of $(A_{1},A_{3})$-bimodules from $M_{12} \otimes_{A_{2}} M_{23}$ to $M_{13}$ given by
\begin{align*}\delta &\coloneqq \overline\ell^{-1}_{M_{12}\otimes_{A_{2}} M_{23}}\bullet (\omega\otimes_{A_{1}}(M_{12}\otimes_{A_{2}} M_{23}))^{-1}\bullet \overline\alpha_{M_{01}'|M_{01}|M_{12}\otimes_{A_2}M_{23}} \\&\qquad\bullet (M_{01}' \otimes_{A_{0}} (\overline\alpha^{-1}_{M_{01}|M_{12}|M_{23}} \bullet (\alpha \otimes_{A_{2}} M_{23}) \bullet \beta \bullet \gamma^{-1}))\bullet (\omega\otimes_{A_{1}} M_{13})\bullet \overline\ell_{M_{13}}.\end{align*}
By \cref{ReductionI}, we deduce
\begin{align*}
    A_{1} \otimes_{A_{1}} \delta &= (\omega\otimes_{A_{1}} (M_{12} \otimes_{A_{2}} M_{23}))^{-1}\bullet \overline\alpha_{M_{01}'|M_{01}|M_{12}\otimes_{A_2}M_{23}} \\
    &\qquad\bullet (M_{01}' \otimes_{A_{0}} (\overline\alpha_{M_{01}|M_{12}|M_{23}}^{-1} \bullet (\alpha \otimes_{A_{2}} M_{23}) \bullet \beta\bullet \gamma^{-1}))\bullet (\omega\otimes_{A_{1}} M_{13}).
\end{align*}
By \cref{ReductionII}, we deduce
\[
(M_{01}'\otimes_{A_0}M_{01})\otimes_{A_1}\delta= \overline\alpha_{M_{01}'|M_{01}|M_{12}\otimes_{A_2}M_{23}}\bullet (M_{01}' \otimes_{A_{0}} (\overline\alpha_{M_{01}|M_{12}|M_{23}}^{-1} \bullet (\alpha \otimes_{A_{2}} M_{23}) \bullet \beta\bullet \gamma^{-1})),
\]
and then by \cref{NaturalityAssociatorTensor}, we have
\[
M_{01}'\otimes_{A_0}(M_{01}\otimes_{A_1}\delta)= M_{01}' \otimes_{A_{0}} (\overline\alpha_{M_{01}|M_{12}|M_{23}}^{-1} \bullet (\alpha \otimes_{A_{2}} M_{23}) \bullet \beta\bullet \gamma^{-1}).
\]
Since $M_{01}'$ is an equivalence, we have that
\[\overline\alpha_{M_{01}|M_{12}|M_{23}}\bullet (M_{01} \otimes_{A_{1}} \delta) \bullet \gamma= (\alpha \otimes_{A_{2}} M_{23}) \bullet \beta,\]
concluding the proof.
\end{proof}

\begin{prop}
\label{horn40}
The marked simplicial set $ \mrt^\natural(\cC^{\otimes})$ has the right lifting property with respect to the complicial horn inclusion $\Lambda^k[4]\to\Delta^k[4]$ for $k=0,4$.
\end{prop}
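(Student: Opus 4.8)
The plan is to prove the case $k=0$ in detail; the case $k=4$ is analogous, exchanging the roles of the first and last vertices so that the equivalence $M_{34}$ plays the role of $M_{01}$.

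First I would unpack a marking-preserving map $f\colon\Lambda^{0}[4]\to\mrt^{\natural}(\cC^{\otimes})$. Its restrictions to the four present faces $d_{1},d_{2},d_{3},d_{4}$ already supply all of the monoids $A_{0},\dots,A_{4}$, all of the bimodules $M_{ij}$ for $0\le i<j\le 4$, and all of the bimodule maps $\varphi_{ijk}$ for $0\le i<j<k\le 4$ — in particular every piece of data appearing in the missing face $d_{0}=[1,2,3,4]$, whose $2$-dimensional faces $[1,2,3],[1,2,4],[1,3,4],[2,3,4]$ all lie in $\Lambda^{0}[4]$ — together with equations $(1)$, $(2)$, $(3)$, $(4)$ of the Remark following \cref{Degenerate3Simplices}. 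Moreover, since the edge $[0,1]$ and the triangles $[0,1,2]$, $[0,1,3]$, $[0,1,4]$ are marked in $\Lambda^{0}[4]$, the bimodule $M_{01}$ is an $(A_{0},A_{1})$-equivalence and the bimodule maps $\varphi_{012},\varphi_{013},\varphi_{014}$ are bimodule isomorphisms. Since $\mrt(\cC^{\otimes})$ is $3$-coskeletal, the forced data on $d_{0}$ defines a genuine $3$-simplex (equivalently, $f$ extends to $\partial\Delta[4]\to\mrt(\cC^{\otimes})$, which then extends uniquely to $\Delta[4]\to\mrt(\cC^{\otimes})$) precisely when that data satisfies the $3$-simplex relation, namely equation $(0)$ of that Remark:
\[
(\varphi_{123}\otimes_{A_{3}}M_{34})\bullet\varphi_{134}=\overline{\alpha}_{M_{12}|M_{23}|M_{34}}\bullet(M_{12}\otimes_{A_{2}}\varphi_{234})\bullet\varphi_{124}.
\]
The resulting lift is a simplex of dimension $\ge 3$, hence automatically marked, and the remaining marked simplices of $\Delta^{0}[4]$ — those containing $\{0,1\}$ — are carried to marked simplices: the ones of dimension $>2$ because $\mrt^{\natural}(\cC^{\otimes})$ is maximally marked above dimension $2$, and the edge $[0,1]$ and the triangles $[0,1,j]$ by the facts just noted. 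So \cref{horn40} for $k=0$ reduces to establishing equation $(0)$.

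To establish equation $(0)$, I would first observe that, since $M_{01}$ is an $(A_{0},A_{1})$-equivalence, the operation $M_{01}\otimes_{A_{1}}(-)$ reflects equalities of $(A_{1},A_{4})$-bimodule maps: if $M_{01}\otimes_{A_{1}}f=M_{01}\otimes_{A_{1}}g$, then applying $M_{01}'\otimes_{A_{0}}(-)$ for a weak inverse $M_{01}'$ and using the naturality of $\overline{\alpha}$ (\cref{NaturalityAssociatorTensor}), the isomorphism $M_{01}'\otimes_{A_{0}}M_{01}\cong A_{1}$ together with \cref{ReductionII}, and the unitor $\overline{\ell}$ of \cref{UnitalityForBimodules} (as in \cref{ReductionI}), one deduces $f=g$. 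Hence it suffices to verify equation $(0)$ after applying $M_{01}\otimes_{A_{1}}(-)$. I would carry this out by a diagram chase of the shape of the picture in the Remark following \cref{Degenerate3Simplices}: using \cref{NaturalityAssociatorTensor} to commute $M_{01}\otimes_{A_{1}}(-)$ past the tensor factors and the associators $\overline{\alpha}$; rewriting $M_{01}\otimes_{A_{1}}\varphi_{123}$, $M_{01}\otimes_{A_{1}}\varphi_{124}$ and $M_{01}\otimes_{A_{1}}\varphi_{134}$ by means of equations $(4)$, $(3)$, $(2)$ respectively, where the invertibility of $\varphi_{013}$ and $\varphi_{014}$ enters; substituting equation $(1)$; and finally invoking the pentagon \cref{PentagonTensor} for $M_{01},M_{12},M_{23},M_{34}$ together with the bifunctoriality of $\otimes$. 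In the language of that picture, equations $(1)$--$(4)$ are four of the square faces, \cref{PentagonTensor} is the innermost face, and the instances of \cref{NaturalityAssociatorTensor} fill in the rest, so that commutativity of those faces forces the $(0)$-face to commute.

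The main obstacle is the bookkeeping of this last diagram chase: one must fit together the instances of \cref{PentagonTensor} and \cref{NaturalityAssociatorTensor} with the four given equations so that every associator $\overline{\alpha}$ cancels and one is left with precisely equation $(0)$ tensored up by $M_{01}$ — in effect, re-deriving the relation expressing one face of the associahedron $K_{5}$ in terms of its other faces. By comparison, the reduction via $3$-coskeletality, the check that the lift is marking-preserving, and the final cancellation of $M_{01}$ (through \cref{ReductionI} and \cref{ReductionII}) are all routine.
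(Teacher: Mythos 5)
Your proposal is correct and follows essentially the same strategy as the paper's own proof: reduce the lifting problem to establishing equation $(0)$, verify that equation after tensoring with $M_{01}$ by chaining together equations $(1)$--$(4)$, \cref{PentagonTensor}, \cref{NaturalityAssociatorTensor} and \cref{pastingII}, and then cancel the $M_{01}$ factor via \cref{ReductionI,ReductionII} using the hypothesis that $M_{01}$ is an equivalence. The paper's computation composes everything with $\varphi_{014}$ before simplifying and substitutes the full products from $(2)$, $(4)$, $(1)$, $(3)$ rather than isolating each $M_{01}\otimes_{A_1}\varphi_{ijk}$ (so it happens not to invoke $\varphi_{013}^{-1}$), but this is only a bookkeeping variant of the argument you sketch.
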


\begin{proof}
We treat the case $k=0$, the case $k=4$ being analogous. Given a map of marked simplicial sets $\Lambda^0[4]\to \mrt^{\natural}(\cC^{\otimes})$, which can be depicted as
\[\begin{tikzcd}[ampersand replacement=\&]
	\&\&\&\& {A_2} \\
	\&\&\& {A_1} \&\& {A_3} \\
	\& {A_2} \&\& {A_0} \&\& {A_4} \&\& {A_2} \\
	{A_1} \&\& {A_3} \&\&\&\& {A_1} \&\& {A_3} \\
	{A_0} \&\& {A_4} \&\&\&\& {A_0} \&\& {A_4} \\
	\&\& {A_2} \&\&\&\& {A_2} \\
	\& {A_1} \&\& {A_3} \&\& {A_1} \&\& {A_3} \\
	\& {A_0} \&\& {A_4} \&\& {A_0} \&\& {A_4}
	\arrow["{M_{23}}", from=1-5, to=2-6]
	\arrow[""{name=0, anchor=center, inner sep=0}, "{M_{24}}"'{pos=0.4}, from=1-5, to=3-6]
	\arrow["{M_{12}}", from=2-4, to=1-5]
	\arrow[""{name=1, anchor=center, inner sep=0}, "{M_{34}}", from=2-6, to=3-6]
	\arrow[""{name=2, anchor=center, inner sep=0}, "{M_{23}}", from=3-2, to=4-3]
	\arrow[""{name=3, anchor=center, inner sep=0}, "{M_{02}}"'{pos=0.6}, from=3-4, to=1-5]
	\arrow[""{name=4, anchor=center, inner sep=0}, "{M_{01}}", color={rgb,255:red,65;green,51;blue,255}, from=3-4, to=2-4]
	\arrow[""{name=5, anchor=center, inner sep=0}, "{M_{04}}"', from=3-4, to=3-6]
	\arrow["{M_{23}}", from=3-8, to=4-9]
	\arrow[""{name=6, anchor=center, inner sep=0}, "{M_{24}}"'{pos=0.6}, from=3-8, to=5-9]
	\arrow["{M_{12}}", from=4-1, to=3-2]
	\arrow["{M_{34}}", from=4-3, to=5-3]
	\arrow[""{name=7, anchor=center, inner sep=0}, "{M_{12}}", from=4-7, to=3-8]
	\arrow[""{name=8, anchor=center, inner sep=0}, "{M_{14}}"'{pos=0.6}, from=4-7, to=5-9]
	\arrow["{M_{34}}", from=4-9, to=5-9]
	\arrow[""{name=9, anchor=center, inner sep=0}, "{M_{02}}"', from=5-1, to=3-2]
	\arrow["{M_{01}}", color={rgb,255:red,65;green,51;blue,255}, from=5-1, to=4-1]
	\arrow[""{name=10, anchor=center, inner sep=0}, "{M_{03}}"'{pos=0.4}, from=5-1, to=4-3]
	\arrow[""{name=11, anchor=center, inner sep=0}, "{M_{04}}"', from=5-1, to=5-3]
	\arrow["{M_{01}}", color={rgb,255:red,65;green,51;blue,255}, from=5-7, to=4-7]
	\arrow[""{name=12, anchor=center, inner sep=0}, "{M_{04}}"', from=5-7, to=5-9]
	\arrow["{M_{23}}", from=6-3, to=7-4]
	\arrow["{M_{23}}", from=6-7, to=7-8]
	\arrow[""{name=13, anchor=center, inner sep=0}, "{M_{12}}", from=7-2, to=6-3]
	\arrow[""{name=14, anchor=center, inner sep=0}, "{M_{13}}", from=7-2, to=7-4]
	\arrow[""{name=15, anchor=center, inner sep=0}, "{M_{34}}", from=7-4, to=8-4]
	\arrow["{M_{12}}", from=7-6, to=6-7]
	\arrow[""{name=16, anchor=center, inner sep=0}, "{M_{13}}", from=7-6, to=7-8]
	\arrow[""{name=17, anchor=center, inner sep=0}, "{M_{14}}"{pos=0.4}, from=7-6, to=8-8]
	\arrow["{M_{34}}", from=7-8, to=8-8]
	\arrow["{M_{01}}", color={rgb,255:red,65;green,51;blue,255}, from=8-2, to=7-2]
	\arrow[""{name=18, anchor=center, inner sep=0}, "{M_{03}}"{pos=0.6}, from=8-2, to=7-4]
	\arrow[""{name=19, anchor=center, inner sep=0}, "{M_{04}}"', from=8-2, to=8-4]
	\arrow[""{name=20, anchor=center, inner sep=0}, "{M_{01}}", color={rgb,255:red,65;green,51;blue,255}, from=8-6, to=7-6]
	\arrow[""{name=21, anchor=center, inner sep=0}, "{M_{04}}"', from=8-6, to=8-8]
	\arrow["{\varphi_{024}}"'{pos=0.7}, shorten <=19pt, shorten >=4pt, Rightarrow, from=1-5, to=5]
	\arrow["{\varphi_{012}}"{pos=0.7}, color={rgb,255:red,65;green,51;blue,255}, shorten >=2pt, Rightarrow, from=2-4, to=3]
	\arrow["{\varphi_{234}}"'{pos=0.7}, shorten >=2pt, Rightarrow, from=2-6, to=0]
	\arrow["(3)", color={rgb,255:red,65;green,51;blue,255}, shorten <=15pt, shorten >=15pt, Rightarrow, scaling nfold=3, from=1, to=7]
	\arrow["(1)", color={rgb,255:red,65;green,51;blue,255}, shorten <=15pt, shorten >=15pt, Rightarrow, scaling nfold=3, from=2, to=4]
	\arrow["{\varphi_{023}}"{pos=0.6}, shorten <=8pt, shorten >=5pt, Rightarrow, from=3-2, to=10]
	\arrow["{\varphi_{124}}"', shorten <=8pt, shorten >=8pt, Rightarrow, from=3-8, to=8]
	\arrow["{\varphi_{012}}"{pos=0.7}, color={rgb,255:red,65;green,51;blue,255}, shorten >=2pt, Rightarrow, from=4-1, to=9]
	\arrow["{\varphi_{034}}"{pos=0.4}, shorten <=8pt, shorten >=8pt, Rightarrow, from=4-3, to=11]
	\arrow["{\varphi_{014}}"'{pos=0.4}, color={rgb,255:red,65;green,51;blue,255}, shorten <=8pt, shorten >=8pt, Rightarrow, from=4-7, to=12]
	\arrow["{\varphi_{234}}"'{pos=0.6}, shorten >=2pt, Rightarrow, from=4-9, to=6]
	\arrow["(4)"', color={rgb,255:red,65;green,51;blue,255}, shorten <=10pt, shorten >=10pt, Rightarrow, scaling nfold=3, from=11, to=13]
	\arrow["{\varphi_{123}}", shorten <=2pt, shorten >=8pt, Rightarrow, from=6-3, to=14]
	\arrow["{\varphi_{123}}", shorten <=2pt, shorten >=8pt, Rightarrow, from=6-7, to=16]
	\arrow["{\varphi_{013}}"'{pos=0.3}, color={rgb,255:red,65;green,51;blue,255}, shorten <=2pt, shorten >=7pt, Rightarrow, from=7-2, to=18]
	\arrow["(2)"', color={rgb,255:red,65;green,51;blue,255}, shorten <=19pt, shorten >=19pt, Rightarrow, scaling nfold=3, from=15, to=20]
	\arrow["{\varphi_{034}}"{pos=0.4}, shorten <=8pt, shorten >=8pt, Rightarrow, from=7-4, to=19]
	\arrow["{\varphi_{014}}"'{pos=0.4}, color={rgb,255:red,65;green,51;blue,255}, shorten <=8pt, shorten >=8pt, Rightarrow, from=7-6, to=21]
	\arrow["{\varphi_{134}}"{pos=0.3}, shorten <=2pt, shorten >=7pt, Rightarrow, from=7-8, to=17]
\end{tikzcd}\]
we show that there is an equality of $(A_1,A_4)$-bimodule maps from $(M_{12}\otimes_{A_{2}}M_{23})\otimes_{A_3}M_{34}$ to $M_{14}$
\[(\varphi_{123} \otimes_{A_{3}} M_{34}) \bullet \varphi_{134} =\overline{\alpha}_{M_{12}|M_{23}|M_{34}} \bullet (M_{12} \otimes_{A_{2}} \varphi_{234}) \bullet \varphi_{124},\]
obtaining the desired lift $\Delta^0[4]\to \mrt^{\natural}(\cC^{\otimes})$.

Through various instances of \cref{NaturalityAssociatorTensor,pastingII,PentagonTensor}, we obtain
\[
\begin{array}{lllll}
(M_{01} \otimes_{A_{1}} (\varphi_{123} \otimes_{A_{3}} M_{34})) \bullet (M_{01} \otimes_{A_{1}} \varphi_{134}) \bullet \varphi_{014}\quad\quad\quad\quad\quad\quad\quad\quad \quad\quad\quad\quad\quad\quad\quad\quad \\
\quad\quad=\overline{\alpha}_{M_{01}|M_{12}\otimes_{A_2} M_{23}|M_{34}}^{-1} \bullet ((M_{01} \otimes_{A_{1}} \varphi_{123}) \otimes_{A_{3}} M_{34}) \bullet (\varphi_{013} \otimes_{A_{3}} M_{34}) \bullet \varphi_{034}&(2)\\
\quad\quad=\overline{\alpha}_{M_{01}|M_{12}\otimes_{A_2} M_{23}|M_{34}}^{-1} \bullet (\overline{\alpha}_{M_{01}|M_{12}|M_{23}} \otimes_{A_{3}} M_{34})^{-1} \bullet\\
\quad\quad\quad\quad\bullet((\varphi_{012} \otimes_{A_{2}} M_{23}) \otimes_{A_{3}} M_{34}) \bullet (\varphi_{023} \otimes_{A_{3}} M_{34}) \bullet \varphi_{034}&(4)\\
\quad\quad=\overline{\alpha}_{M_{01}|M_{12}\otimes_{A_2} M_{23}|M_{34}}^{-1} \bullet (\overline{\alpha}_{M_{01}|M_{12}|M_{23}} \otimes_{A_{3}} M_{34})^{-1} \bullet \overline{\alpha}_{M_{01}\otimes_{A_1} M_{12}|M_{23}|M_{34}} \bullet \\
\quad\quad\quad\quad\bullet(\varphi_{012} \otimes_{A_{2}} (M_{23} \otimes_{A_{3}} M_{34})) \bullet (M_{02} \otimes_{A_{2}} \varphi_{234}) \bullet \varphi_{024}&(1)\\
\quad\quad=\overline{\alpha}_{M_{01}|M_{12}\otimes_{A_2} M_{23}|M_{34}}^{-1} \bullet (\overline{\alpha}_{M_{01}|M_{12}|M_{23}} \otimes_{A_{3}} M_{34})^{-1}\bullet\overline{\alpha}_{M_{01}\otimes_{A_1} M_{12}|M_{23}|M_{34}}  \bullet \\
\quad\quad\quad\quad\bullet ((M_{01} \otimes_{A_{1}} M_{12}) \otimes_{A_{2}} \varphi_{234}) \bullet (\varphi_{012} \otimes_{A_{2}} M_{24}) \bullet \varphi_{024}&\text{Prop.~\ref{pastingII}}\\
\quad\quad=\overline{\alpha}_{M_{01}|M_{12}\otimes_{A_2} M_{23}|M_{34}}^{-1} \bullet (\overline{\alpha}_{M_{01}|M_{12}|M_{23}} \otimes_{A_{3}} M_{34})^{-1} \bullet \overline{\alpha}_{M_{01}\otimes_{A_1} M_{12}| M_{23}|M_{34}} \bullet \\
\quad\quad\quad\quad\bullet\overline{\alpha}_{M_{01}|M_{12}|M_{23} \otimes_{A_3} M_{34}}\bullet (M_{01} \otimes_{A_{1}} (M_{12} \otimes_{A_{2}} \varphi_{234})) \bullet (\varphi_{012} \otimes_{A_{2}} M_{24}) \bullet \varphi_{024}&(3)\\
\quad\quad=(M_{01} \otimes_{A_{1}} \overline\alpha_{M_{12}|M_{23}|M_{34}}) \bullet (M_{01} \otimes_{A_{1}} (M_{12} \otimes_{A_{2}} \varphi_{234})) \bullet (M_{01} \otimes_{A_{1}} \varphi_{124}) \bullet \varphi_{014}. &\text{Prop.~\ref{PentagonTensor}}
\end{array}
\]

Since $\varphi_{014}$ is an isomorphism, we obtain
\[(M_{01} \otimes_{A_{1}} (\varphi_{123} \otimes_{A_{3}} M_{34})) \bullet (M_{01} \otimes_{A_{1}} \varphi_{134}) = (M_{01} \otimes_{A_{1}} \overline\alpha_{M_{12}|M_{23}|M_{34}}) \bullet (M_{01} \otimes_{A_{1}} (M_{12} \otimes_{A_{2}} \varphi_{234})) \bullet (M_{01} \otimes_{A_{1}} \varphi_{124}).\]
By \cref{pastingII} we obtain
\[M_{01} \otimes_{A_{1}} ((\varphi_{123} \otimes_{A_{3}} M_{34}) \bullet \varphi_{134}) = M_{01} \otimes_{A_{1}} (\overline\alpha_{M_{12}|M_{23}|M_{34}} \bullet (M_{12} \otimes_{A_{2}} \varphi_{234}) \bullet \varphi_{124}).\]
By \cref{ReductionII} we obtain
\[A_1 \otimes_{A_{1}} ((\varphi_{123} \otimes_{A_{3}} M_{34}) \bullet \varphi_{134}) = A_1 \otimes_{A_{1}} (\overline\alpha_{M_{12}|M_{23}|M_{34}} \bullet (M_{12} \otimes_{A_{2}} \varphi_{234}) \bullet \varphi_{124}).\]
By \cref{ReductionI} we obtain
\[(\varphi_{123} \otimes_{A_{3}} M_{34}) \bullet \varphi_{134} =\overline\alpha_{M_{12}|M_{23}|M_{34}} \bullet (M_{12} \otimes_{A_{2}} \varphi_{234}) \bullet \varphi_{124},\]
as desired.
\end{proof}

\begin{prop}
\label{horn41}
The marked simplicial set $\mrt^{\natural}(\cC^{\otimes})$ has the right lifting property with respect to the complicial horn inclusion $\Lambda^k[4]\to\Delta^k[4]$ for $k=1,3$.
\end{prop}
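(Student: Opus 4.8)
The plan is to leverage that $\mrt(\cC^{\otimes})$ is $3$-coskeletal, which turns the lifting problem into a single algebraic identity. For $k=1$: a map of marked simplicial sets $\Lambda^1[4]\to\mrt^{\natural}(\cC^{\otimes})$ restricts to a map on the full $2$-skeleton of $\Delta[4]$ and on the four $3$-faces $d_0,d_2,d_3,d_4$, and hence already specifies all the monoids $A_i$, all the bimodules $M_{ij}$, and all the bimodule maps $\varphi_{ijl}$ with $\{i,j,l\}\subseteq\{0,2,3,4\}$; as these are precisely the data of the missing face on the vertices $\{0,2,3,4\}$, $3$-coskeletality shows that the desired lift $\Delta[4]\to\mrt^{\natural}(\cC^{\otimes})$ exists and is unique as soon as this data assembles into an actual $3$-simplex, i.e.\ as soon as identity $(1)$ among the five identities $(0)$--$(4)$ characterizing $4$-simplices of $\mrt(\cC^{\otimes})$ is satisfied. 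The lift is automatically marking-preserving: every simplex of $\mrt^{\natural}(\cC^{\otimes})$ of dimension at least $3$ is marked, and the only non-degenerate marked simplex of $\Delta^1[4]$ of dimension less than $3$ is the $2$-face on $\{0,1,2\}$, which already lies in $\Lambda^1[4]$; in particular the hypothesis that the horn map preserves markings forces $\varphi_{012}$ to be a bimodule isomorphism. Thus the whole problem reduces to deriving $(1)$ from $(0)$, $(2)$, $(3)$, $(4)$ under the assumption that $\varphi_{012}$ is invertible.

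For this I would proceed as in the proof of \cref{horn40}, but more directly. Since $\varphi_{012}$ is an isomorphism, so is the map $(\varphi_{012}\otimes_{A_2}M_{23})\otimes_{A_3}M_{34}$ (by \cref{TensorWithIso} and its evident left-handed analogue), so it is enough to check identity $(1)$ after precomposition with it. On the left-hand side $(\varphi_{023}\otimes_{A_3}M_{34})\bullet\varphi_{034}$ one rewrites $(\varphi_{012}\otimes_{A_2}M_{23})\bullet\varphi_{023}$ using identity $(4)$ and then $(\varphi_{013}\otimes_{A_3}M_{34})\bullet\varphi_{034}$ using identity $(2)$; on the right-hand side $\overline{\alpha}_{M_{02}|M_{23}|M_{34}}\bullet(M_{02}\otimes_{A_2}\varphi_{234})\bullet\varphi_{024}$ one uses \cref{NaturalityAssociatorTensor} and the interchange law \cref{pastingII} to move $\varphi_{012}$ through the associator, and then rewrites $(\varphi_{012}\otimes_{A_2}M_{24})\bullet\varphi_{024}$ using identity $(3)$ and $(M_{12}\otimes_{A_2}\varphi_{234})\bullet\varphi_{124}$ using identity $(0)$. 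After further applications of \cref{NaturalityAssociatorTensor} and functoriality of the balanced tensor products, both composites take the shape $(\text{a composite of four }\overline{\alpha}\text{'s})\bullet\bigl((M_{01}\otimes_{A_1}(\varphi_{123}\otimes_{A_3}M_{34}))\bullet(M_{01}\otimes_{A_1}\varphi_{134})\bullet\varphi_{014}\bigr)$ with the same trailing factor, and the two composites of $\overline{\alpha}$'s agree precisely by the pentagon identity of \cref{PentagonTensor} applied to $M_{01},M_{12},M_{23},M_{34}$ over $A_1,A_2,A_3$. Cancelling the initial isomorphism yields $(1)$. In contrast to \cref{horn40}, no appeal to \cref{ReductionI} or \cref{ReductionII} is needed, because here the map to be cancelled is already invertible.

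The case $k=3$ is handled in the same way, with the marked $2$-face $[2,3,4]$ (hence the isomorphism $\varphi_{234}$) in place of $[0,1,2]$ (hence $\varphi_{012}$) and identity $(3)$ derived from the identities $(0)$, $(1)$, $(2)$, $(4)$ carried by the faces $d_0,d_1,d_2,d_4$. I expect the only genuine difficulty to be bookkeeping: keeping track of the exact subscripts on the many occurrences of $\overline{\alpha}$, and recording at each step whether the rewriting being used is an instance of \cref{NaturalityAssociatorTensor}, of \cref{pastingII}, of functoriality of the balanced tensor product, or of the pentagon \cref{PentagonTensor}, so that the two expanded composites are recognized as literally equal. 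Conceptually nothing new enters beyond what already appears in the proofs of \cref{horn40} and \cref{horn31}.
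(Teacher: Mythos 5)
Your proposal is correct and matches the paper's argument: after reducing, via $3$-coskeletality, to deriving identity $(1)$ from the identities $(0),(2),(3),(4)$ carried by the horn together with the invertibility of $\varphi_{012}$, you precompose both sides with the isomorphism $(\varphi_{012}\otimes_{A_2}M_{23})\otimes_{A_3}M_{34}$, rewrite using $(4),(2)$ on the left and $(3),(0)$ on the right, and close the gap with \cref{NaturalityAssociatorTensor}, \cref{pastingII}, and \cref{PentagonTensor}. The paper organizes this as a single chain of equalities from the precomposed left-hand side to the precomposed right-hand side while you expand the two sides separately and meet in the middle, but this is the same computation, and your observation that \cref{ReductionI,ReductionII} are unnecessary here (unlike in \cref{horn40}) agrees with the paper's proof.
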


\begin{proof}
We treat the case $k=1$, the case $k=3$ being analogous. 
Given a map of marked simplicial sets $\Lambda^1[4]\to \mrt^{\natural}(\cC^{\otimes})$, which can be depicted as
\[\begin{tikzcd}[ampersand replacement=\&]
	\&\&\&\& {A_2} \\
	\&\&\& {A_1} \&\& {A_3} \\
	\& {A_2} \&\& {A_0} \&\& {A_4} \&\& {A_2} \\
	{A_1} \&\& {A_3} \&\&\&\& {A_1} \&\& {A_3} \\
	{A_0} \&\& {A_4} \&\&\&\& {A_0} \&\& {A_4} \\
	\&\& {A_2} \&\&\&\& {A_2} \\
	\& {A_1} \&\& {A_3} \&\& {A_1} \&\& {A_3} \\
	\& {A_0} \&\& {A_4} \&\& {A_0} \&\& {A_4}
	\arrow["{M_{23}}", from=1-5, to=2-6]
	\arrow[""{name=0, anchor=center, inner sep=0}, "{M_{24}}"'{pos=0.4}, from=1-5, to=3-6]
	\arrow["{M_{12}}", from=2-4, to=1-5]
	\arrow[""{name=1, anchor=center, inner sep=0}, "{M_{34}}", from=2-6, to=3-6]
	\arrow["{M_{23}}", from=3-2, to=4-3]
	\arrow[""{name=2, anchor=center, inner sep=0}, "{M_{02}}"'{pos=0.6}, from=3-4, to=1-5]
	\arrow["{M_{01}}", from=3-4, to=2-4]
	\arrow[""{name=3, anchor=center, inner sep=0}, "{M_{04}}"', from=3-4, to=3-6]
	\arrow["{M_{23}}", from=3-8, to=4-9]
	\arrow[""{name=4, anchor=center, inner sep=0}, "{M_{24}}"'{pos=0.6}, from=3-8, to=5-9]
	\arrow["{M_{12}}", from=4-1, to=3-2]
	\arrow["{M_{34}}", from=4-3, to=5-3]
	\arrow[""{name=5, anchor=center, inner sep=0}, "{M_{12}}", from=4-7, to=3-8]
	\arrow[""{name=6, anchor=center, inner sep=0}, "{M_{14}}"'{pos=0.6}, from=4-7, to=5-9]
	\arrow["{M_{34}}", from=4-9, to=5-9]
	\arrow[""{name=7, anchor=center, inner sep=0}, "{M_{02}}"', from=5-1, to=3-2]
	\arrow["{M_{01}}", from=5-1, to=4-1]
	\arrow[""{name=8, anchor=center, inner sep=0}, "{M_{03}}"'{pos=0.4}, from=5-1, to=4-3]
	\arrow[""{name=9, anchor=center, inner sep=0}, "{M_{04}}"', from=5-1, to=5-3]
	\arrow["{M_{01}}", from=5-7, to=4-7]
	\arrow[""{name=10, anchor=center, inner sep=0}, "{M_{04}}"', from=5-7, to=5-9]
	\arrow["{M_{23}}", from=6-3, to=7-4]
	\arrow[""{name=11, anchor=center, inner sep=0}, "{M_{23}}", from=6-7, to=7-8]
	\arrow[""{name=12, anchor=center, inner sep=0}, "{M_{12}}", from=7-2, to=6-3]
	\arrow[""{name=13, anchor=center, inner sep=0}, "{M_{13}}", from=7-2, to=7-4]
	\arrow[""{name=14, anchor=center, inner sep=0}, "{M_{34}}", from=7-4, to=8-4]
	\arrow["{M_{12}}", from=7-6, to=6-7]
	\arrow[""{name=15, anchor=center, inner sep=0}, "{M_{13}}", from=7-6, to=7-8]
	\arrow[""{name=16, anchor=center, inner sep=0}, "{M_{14}}"{pos=0.4}, from=7-6, to=8-8]
	\arrow["{M_{34}}", from=7-8, to=8-8]
	\arrow["{M_{01}}", from=8-2, to=7-2]
	\arrow[""{name=17, anchor=center, inner sep=0}, "{M_{03}}"{pos=0.6}, from=8-2, to=7-4]
	\arrow[""{name=18, anchor=center, inner sep=0}, "{M_{04}}"', from=8-2, to=8-4]
	\arrow[""{name=19, anchor=center, inner sep=0}, "{M_{01}}", from=8-6, to=7-6]
	\arrow[""{name=20, anchor=center, inner sep=0}, "{M_{04}}"', from=8-6, to=8-8]
	\arrow["{\varphi_{024}}"'{pos=0.7}, shorten <=19pt, shorten >=4pt, Rightarrow, from=1-5, to=3]
	\arrow["{\varphi_{012}}"{pos=0.7}, color={rgb,255:red,65;green,51;blue,255}, shorten >=2pt, Rightarrow, from=2-4, to=2]
	\arrow["{\varphi_{234}}"'{pos=0.7}, shorten >=2pt, Rightarrow, from=2-6, to=0]
	\arrow["(3)", color={rgb,255:red,65;green,51;blue,255}, shorten <=15pt, shorten >=15pt, Rightarrow, scaling nfold=3, from=1, to=5]
	\arrow["{\varphi_{023}}"{pos=0.6}, shorten <=8pt, shorten >=5pt, Rightarrow, from=3-2, to=8]
	\arrow["{\varphi_{124}}"', shorten <=8pt, shorten >=8pt, Rightarrow, from=3-8, to=6]
	\arrow["{\varphi_{012}}"{pos=0.7}, color={rgb,255:red,65;green,51;blue,255}, shorten >=2pt, Rightarrow, from=4-1, to=7]
	\arrow["{\varphi_{034}}"{pos=0.4}, shorten <=8pt, shorten >=8pt, Rightarrow, from=4-3, to=9]
	\arrow["{\varphi_{014}}"'{pos=0.4}, shorten <=8pt, shorten >=8pt, Rightarrow, from=4-7, to=10]
	\arrow["{\varphi_{234}}"'{pos=0.6}, shorten >=2pt, Rightarrow, from=4-9, to=4]
	\arrow["(4)"', color={rgb,255:red,65;green,51;blue,255}, shorten <=10pt, shorten >=10pt, Rightarrow, scaling nfold=3, from=9, to=12]
	\arrow["{\varphi_{123}}", shorten <=2pt, shorten >=8pt, Rightarrow, from=6-3, to=13]
	\arrow["(0)"', color={rgb,255:red,65;green,51;blue,255}, shorten <=10pt, shorten >=10pt, Rightarrow, scaling nfold=3, from=11, to=10]
	\arrow["{\varphi_{123}}", shorten <=2pt, shorten >=8pt, Rightarrow, from=6-7, to=15]
	\arrow["{\varphi_{013}}"'{pos=0.3}, shorten <=2pt, shorten >=7pt, Rightarrow, from=7-2, to=17]
	\arrow["(2)"', color={rgb,255:red,65;green,51;blue,255}, shorten <=19pt, shorten >=19pt, Rightarrow, scaling nfold=3, from=14, to=19]
	\arrow["{\varphi_{034}}"{pos=0.4}, shorten <=8pt, shorten >=8pt, Rightarrow, from=7-4, to=18]
	\arrow["{\varphi_{014}}"'{pos=0.4}, shorten <=8pt, shorten >=8pt, Rightarrow, from=7-6, to=20]
	\arrow["{\varphi_{134}}"{pos=0.3}, shorten <=2pt, shorten >=7pt, Rightarrow, from=7-8, to=16]
\end{tikzcd}\]
we show that there is an equality of $(A_0,A_4)$-bimodule maps from $(M_{02}\otimes_{A_{23}}M_{23})\otimes_{A_{34}}M_{34}$ to $M_{04}$
\begin{equation}
\label{eq41}
(\varphi_{023} \otimes_{A_{3}} M_{34}) \bullet \varphi_{034} = \overline\alpha_{M_{02}|M_{23}|M_{34}} \bullet (M_{02} \otimes_{A_{2}} \varphi_{234}) \bullet \varphi_{024},    
\end{equation}
obtaining the desired lift $\Delta^1[4]\to \mrt^{\natural}(\cC^{\otimes})$.

Through various instances of \cref{NaturalityAssociatorTensor,RmkEpi,pastingII,PentagonTensor}, we obtain
\[
\begin{array}{lllll}
((\varphi_{012} \otimes_{A_{2}} M_{23}) \otimes_{A_{3}} M_{34}) \bullet (\varphi_{023} \otimes_{A_{3}} M_{34}) \bullet \varphi_{034}\quad\quad\quad\quad\quad\quad\quad\quad \quad\quad\quad\quad\quad\quad\quad\quad \\
\quad\quad=(\overline{\alpha}_{M_{01}|M_{12}| M_{23}} \otimes_{A_{3}} M_{34}) \bullet ((M_{01} \otimes_{A_{1}} \varphi_{123}) \otimes_{A_{3}} M_{34}) \bullet (\varphi_{013} \otimes_{A_{3}} M_{34}) \bullet \varphi_{034}&(4)\\
\quad\quad=(\overline{\alpha}_{M_{01}|M_{12}|M_{23}} \otimes_{A_{3}} M_{34}) \bullet\overline{\alpha}_{M_{01}|M_{12} \otimes_{A_2} M_{23}|M_{34}} \bullet\\
\quad\quad\quad\quad\bullet (M_{01} \otimes_{A_{1}} (\varphi_{123} \otimes_{A_{3}} M_{34})) \bullet (M_{01} \otimes_{A_{1}} \varphi_{134}) \bullet \varphi_{014}&(2)\\
\quad\quad=(\overline{\alpha}_{M_{01}|M_{12}|M_{23}} \otimes_{A_{3}} M_{34}) \bullet \overline{\alpha}_{M_{01}| M_{12} \otimes_{A_2} M_{23}| M_{34}}\bullet (M_{01} \otimes_{A_{1}} \overline{\alpha}_{M_{12}|M_{23}|M_{34}})  \bullet\\
\quad\quad\quad\quad\bullet (M_{01} \otimes_{A_{1}} (M_{12} \otimes_{A_{2}} \varphi_{234})) \bullet (M_{01} \otimes_{A_{1}} \varphi_{124}) \bullet \varphi_{014}&(0)\\
\quad\quad=(\overline{\alpha}_{M_{01}|M_{12}|M_{23}} \otimes_{A_{3}} M_{34}) \bullet \overline{\alpha}_{M_{01}|M_{12} \otimes_{A_2} M_{23}|M_{34}} \bullet (M_{01} \otimes_{A_{1}} \overline{\alpha}_{M_{12}|M_{23}|M_{34}}) \bullet \\
\quad\quad\quad\quad\bullet\overline{\alpha}_{M_{01}|M_{12}| M_{23} \otimes_{A_3} M_{34}}^{-1} \bullet ((M_{01} \otimes_{A_{1}} M_{12}) \otimes_{A_{2}} \varphi_{234}) \bullet (\varphi_{012} \otimes_{A_{2}} M_{24}) \bullet \varphi_{024}&(3)\\
\quad\quad=(\overline{\alpha}_{M_{01}|M_{12}|M_{23}} \otimes_{A_{3}} M_{34}) \bullet \overline{\alpha}_{M_{01}|M_{12} \otimes_{A_2} M_{23}|M_{34}} \bullet (M_{01} \otimes_{A_{1}} \overline{\alpha}_{M_{12}|M_{23}|M_{34}}) \bullet\\
\quad\quad\quad\quad\bullet \overline{\alpha}_{M_{01}|M_{12}|M_{23} \otimes_{A_3} M_{34}}^{-1} \bullet (\varphi_{012} \otimes_{A_{2}} (M_{23} \otimes_{A_{3}} M_{34})) \bullet (M_{02} \otimes_{A_{2}} \varphi_{234}) \bullet \varphi_{024}&\text{Prop.~\ref{pastingII}}\\
\quad\quad=\overline\alpha_{M_{01} \otimes M_{12}|M_{23}|M_{34}} \bullet (\varphi_{012} \otimes_{A_{2}} (M_{23} \otimes_{A_{3}} M_{34})) \bullet (M_{02} \otimes_{A_{2}} \varphi_{234}) \bullet \varphi_{024} &\text{Prop.~\ref{NaturalityAssociatorTensor}}\\
\quad\quad= ((\varphi_{012} \otimes_{A_{2}} M_{23}) \otimes_{A_{3}} M_{34}) \bullet \overline\alpha_{M_{02}|M_{23}|M_{34}} \bullet (M_{02} \otimes_{A_{2}} \varphi_{234}) \bullet \varphi_{024}.
\end{array}
\]
Since $(\varphi_{012} \otimes_{A_{2}} M_{23}) \otimes_{A_{3}} M_{34}$ is an isomorphism by \cref{RmkEpi},
we obtain \eqref{eq41}, as desired.
\end{proof}

\begin{prop}
\label{horn42}
The marked simplicial set $\mrt^{\natural}(\cC^{\otimes})$ has the right lifting property with respect to the complicial horn inclusion $\Lambda^2[4]\to\Delta^2[4]$.
\end{prop}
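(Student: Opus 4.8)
The plan is to argue exactly as in \cref{horn40,horn41}. A marking-preserving map $\Lambda^2[4]\to\mrt^{\natural}(\cC^{\otimes})$ records monoids $A_0,\dots,A_4$, bimodules $M_{ij}$ for $0\leq i<j\leq 4$, and bimodule maps $\varphi_{ijk}\colon M_{ij}\otimes_{A_j}M_{jk}\to M_{ik}$ for $0\leq i<j<k\leq 4$, subject to the four compatibility equations $(0)$, $(1)$, $(3)$, $(4)$ carried by the faces $d_0$, $d_1$, $d_3$, $d_4$; moreover, since the $2$-simplex $\{1,2,3\}$ is marked in $\Delta^2[4]$ and the map preserves marking, $\varphi_{123}$ is a bimodule isomorphism. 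Because $\mrt(\cC^{\otimes})$ is $3$-coskeletal and every face of $\Delta[4]$ except $d_2$ is already prescribed, the lift $\Delta^2[4]\to\mrt^{\natural}(\cC^{\otimes})$ exists (uniquely, and automatically compatibly with the markings, since $d_2=\{0,1,3,4\}$ is not required to be marked and the top $4$-simplex is marked in $\mrt^{\natural}(\cC^{\otimes})$) precisely when the remaining compatibility equation $(2)$ holds, namely
\[(\varphi_{013}\otimes_{A_3}M_{34})\bullet\varphi_{034}=\overline{\alpha}_{M_{01}|M_{13}|M_{34}}\bullet(M_{01}\otimes_{A_1}\varphi_{134})\bullet\varphi_{014}.\]

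To verify $(2)$, I would precompose both of its sides with the map $(M_{01}\otimes_{A_1}\varphi_{123})\otimes_{A_3}M_{34}$, which is an isomorphism by \cref{TensorWithIso,RmkEpi} because $\varphi_{123}$ is; cancelling it afterwards recovers $(2)$. Starting from the precomposed left-hand side, the chain of rewrites I expect to use is: fold $((M_{01}\otimes_{A_1}\varphi_{123})\otimes_{A_3}M_{34})\bullet(\varphi_{013}\otimes_{A_3}M_{34})$ into $((M_{01}\otimes_{A_1}\varphi_{123})\bullet\varphi_{013})\otimes_{A_3}M_{34}$ by functoriality and use $(4)$ to replace $(M_{01}\otimes_{A_1}\varphi_{123})\bullet\varphi_{013}$ by $\overline{\alpha}^{-1}_{M_{01}|M_{12}|M_{23}}\bullet(\varphi_{012}\otimes_{A_2}M_{23})\bullet\varphi_{023}$; apply $(1)$ to rewrite $(\varphi_{023}\otimes_{A_3}M_{34})\bullet\varphi_{034}$ as $\overline{\alpha}_{M_{02}|M_{23}|M_{34}}\bullet(M_{02}\otimes_{A_2}\varphi_{234})\bullet\varphi_{024}$; use naturality of $\overline{\alpha}$ (\cref{NaturalityAssociatorTensor}) on the triple $(\varphi_{012},M_{23},M_{34})$ together with the interchange law (\cref{pastingII}) to slide $\varphi_{012}$ past $\varphi_{234}$; apply $(3)$ to rewrite $(\varphi_{012}\otimes_{A_2}M_{24})\bullet\varphi_{024}$ as $\overline{\alpha}_{M_{01}|M_{12}|M_{24}}\bullet(M_{01}\otimes_{A_1}\varphi_{124})\bullet\varphi_{014}$; use naturality of $\overline{\alpha}$ once more, on $(M_{01},M_{12},\varphi_{234})$, together with the pentagon identity (\cref{PentagonTensor}) for $(M_{01},M_{12},M_{23},M_{34})$, to collapse the accumulated string of associators, leaving $M_{01}\otimes_{A_1}(-)$ applied to $\overline{\alpha}_{M_{12}|M_{23}|M_{34}}\bullet(M_{12}\otimes_{A_2}\varphi_{234})\bullet\varphi_{124}$; rewrite this by $(0)$ as $M_{01}\otimes_{A_1}((\varphi_{123}\otimes_{A_3}M_{34})\bullet\varphi_{134})$; and finally apply naturality of $\overline{\alpha}$ on $(M_{01},\varphi_{123},M_{34})$ to pull $(M_{01}\otimes_{A_1}\varphi_{123})\otimes_{A_3}M_{34}$ back out in front, exhibiting the result as $((M_{01}\otimes_{A_1}\varphi_{123})\otimes_{A_3}M_{34})\bullet\overline{\alpha}_{M_{01}|M_{13}|M_{34}}\bullet(M_{01}\otimes_{A_1}\varphi_{134})\bullet\varphi_{014}$, i.e.\ the precomposed right-hand side of $(2)$.

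As in \cref{horn40,horn41}, the content is pure bookkeeping, and the only real difficulty is orchestrating it: one must arrange the string of associator isomorphisms so that at each stage exactly one of the four given equations $(0)$, $(1)$, $(3)$, $(4)$ becomes applicable, and then invoke the pentagon identity and the naturality of $\overline{\alpha}$ at precisely the right places to re-collect the associators. The geometry guiding these choices is the subdivision of the $4$-simplex into its five tetrahedral faces pictured in the remark following \cref{Degenerate3Simplices}. Verifying that the handful of two-variable coherence squares invoked along the way really do commute — each being an instance of \cref{NaturalityAssociatorTensor,PentagonTensor,pastingII} — would then complete the argument, and the case $k=2$ of \cref{horn} follows.
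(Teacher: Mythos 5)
Your proposal is correct and takes essentially the same approach as the paper's proof: precompose both sides of the missing equation $(2)$ with the isomorphism $(M_{01}\otimes_{A_1}\varphi_{123})\otimes_{A_3}M_{34}$ (which is invertible because the marked $2$-simplex $\{1,2,3\}$ forces $\varphi_{123}$ to be an isomorphism), then rewrite via $(4)$, $(1)$, interchange, $(3)$, $(0)$, the pentagon, and naturality. The only difference is a cosmetic reordering of the last two manipulations — you apply the pentagon identity and then $(0)$, whereas the paper applies $(0)$ and then the pentagon — which is harmless.
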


\begin{proof}
Given a map of marked simplicial sets $\Lambda^2[4]\to \mrt^{\natural}(\cC^{\otimes})$, which can be depicted as
\[\begin{tikzcd}[ampersand replacement=\&]
	\&\&\&\& {A_2} \\
	\&\&\& {A_1} \&\& {A_3} \\
	\& {A_2} \&\& {A_0} \&\& {A_4} \&\& {A_2} \\
	{A_1} \&\& {A_3} \&\&\&\& {A_1} \&\& {A_3} \\
	{A_0} \&\& {A_4} \&\&\&\& {A_0} \&\& {A_4} \\
	\&\& {A_2} \&\&\&\& {A_2} \\
	\& {A_1} \&\& {A_3} \&\& {A_1} \&\& {A_3} \\
	\& {A_0} \&\& {A_4} \&\& {A_0} \&\& {A_4}
	\arrow["{M_{23}}", from=1-5, to=2-6]
	\arrow[""{name=0, anchor=center, inner sep=0}, "{M_{24}}"'{pos=0.4}, from=1-5, to=3-6]
	\arrow["{M_{12}}", from=2-4, to=1-5]
	\arrow[""{name=1, anchor=center, inner sep=0}, "{M_{34}}", from=2-6, to=3-6]
	\arrow[""{name=2, anchor=center, inner sep=0}, "{M_{23}}", from=3-2, to=4-3]
	\arrow[""{name=3, anchor=center, inner sep=0}, "{M_{02}}"'{pos=0.6}, from=3-4, to=1-5]
	\arrow[""{name=4, anchor=center, inner sep=0}, "{M_{01}}", from=3-4, to=2-4]
	\arrow[""{name=5, anchor=center, inner sep=0}, "{M_{04}}"', from=3-4, to=3-6]
	\arrow["{M_{23}}", from=3-8, to=4-9]
	\arrow[""{name=6, anchor=center, inner sep=0}, "{M_{24}}"'{pos=0.6}, from=3-8, to=5-9]
	\arrow["{M_{12}}", from=4-1, to=3-2]
	\arrow["{M_{34}}", from=4-3, to=5-3]
	\arrow[""{name=7, anchor=center, inner sep=0}, "{M_{12}}", from=4-7, to=3-8]
	\arrow[""{name=8, anchor=center, inner sep=0}, "{M_{14}}"'{pos=0.6}, from=4-7, to=5-9]
	\arrow["{M_{34}}", from=4-9, to=5-9]
	\arrow[""{name=9, anchor=center, inner sep=0}, "{M_{02}}"', from=5-1, to=3-2]
	\arrow["{M_{01}}", from=5-1, to=4-1]
	\arrow[""{name=10, anchor=center, inner sep=0}, "{M_{03}}"'{pos=0.4}, from=5-1, to=4-3]
	\arrow[""{name=11, anchor=center, inner sep=0}, "{M_{04}}"', from=5-1, to=5-3]
	\arrow["{M_{01}}", from=5-7, to=4-7]
	\arrow[""{name=12, anchor=center, inner sep=0}, "{M_{04}}"', from=5-7, to=5-9]
	\arrow["{M_{23}}", from=6-3, to=7-4]
	\arrow[""{name=13, anchor=center, inner sep=0}, "{M_{23}}", from=6-7, to=7-8]
	\arrow[""{name=14, anchor=center, inner sep=0}, "{M_{12}}", from=7-2, to=6-3]
	\arrow[""{name=15, anchor=center, inner sep=0}, "{M_{13}}", from=7-2, to=7-4]
	\arrow["{M_{34}}", from=7-4, to=8-4]
	\arrow["{M_{12}}", from=7-6, to=6-7]
	\arrow[""{name=16, anchor=center, inner sep=0}, "{M_{13}}", from=7-6, to=7-8]
	\arrow[""{name=17, anchor=center, inner sep=0}, "{M_{14}}"{pos=0.4}, from=7-6, to=8-8]
	\arrow["{M_{34}}", from=7-8, to=8-8]
	\arrow["{M_{01}}", from=8-2, to=7-2]
	\arrow[""{name=18, anchor=center, inner sep=0}, "{M_{03}}"{pos=0.6}, from=8-2, to=7-4]
	\arrow[""{name=19, anchor=center, inner sep=0}, "{M_{04}}"', from=8-2, to=8-4]
	\arrow["{M_{01}}", from=8-6, to=7-6]
	\arrow[""{name=20, anchor=center, inner sep=0}, "{M_{04}}"', from=8-6, to=8-8]
	\arrow["{\varphi_{024}}"'{pos=0.7}, shorten <=19pt, shorten >=4pt, Rightarrow, from=1-5, to=5]
	\arrow["{\varphi_{012}}"{pos=0.7}, shorten >=2pt, Rightarrow, from=2-4, to=3]
	\arrow["{\varphi_{234}}"'{pos=0.7}, shorten >=2pt, Rightarrow, from=2-6, to=0]
	\arrow["(3)", color={rgb,255:red,65;green,51;blue,255}, shorten <=15pt, shorten >=15pt, Rightarrow, scaling nfold=3, from=1, to=7]
	\arrow["(1)", color={rgb,255:red,65;green,51;blue,255}, shorten <=15pt, shorten >=15pt, Rightarrow, scaling nfold=3, from=2, to=4]
	\arrow["{\varphi_{023}}"{pos=0.6}, shorten <=8pt, shorten >=5pt, Rightarrow, from=3-2, to=10]
	\arrow["{\varphi_{124}}"', shorten <=8pt, shorten >=8pt, Rightarrow, from=3-8, to=8]
	\arrow["{\varphi_{012}}"{pos=0.7}, shorten >=2pt, Rightarrow, from=4-1, to=9]
	\arrow["{\varphi_{034}}"{pos=0.4}, shorten <=8pt, shorten >=8pt, Rightarrow, from=4-3, to=11]
	\arrow["{\varphi_{014}}"'{pos=0.4}, shorten <=8pt, shorten >=8pt, Rightarrow, from=4-7, to=12]
	\arrow["{\varphi_{234}}"'{pos=0.6}, shorten >=2pt, Rightarrow, from=4-9, to=6]
	\arrow["(4)"', color={rgb,255:red,65;green,51;blue,255}, shorten <=10pt, shorten >=10pt, Rightarrow, scaling nfold=3, from=11, to=14]
	\arrow["{\varphi_{123}}", color={rgb,255:red,65;green,51;blue,255}, shorten <=2pt, shorten >=8pt, Rightarrow, from=6-3, to=15]
	\arrow["(0)"', color={rgb,255:red,65;green,51;blue,255}, shorten <=10pt, shorten >=10pt, Rightarrow, scaling nfold=3, from=13, to=12]
	\arrow["{\varphi_{123}}", color={rgb,255:red,65;green,51;blue,255}, shorten <=2pt, shorten >=8pt, Rightarrow, from=6-7, to=16]
	\arrow["{\varphi_{013}}"'{pos=0.3}, shorten <=2pt, shorten >=7pt, Rightarrow, from=7-2, to=18]
	\arrow["{\varphi_{034}}"{pos=0.4}, shorten <=8pt, shorten >=8pt, Rightarrow, from=7-4, to=19]
	\arrow["{\varphi_{014}}"'{pos=0.4}, shorten <=8pt, shorten >=8pt, Rightarrow, from=7-6, to=20]
	\arrow["{\varphi_{134}}"{pos=0.3}, shorten <=2pt, shorten >=7pt, Rightarrow, from=7-8, to=17]
\end{tikzcd}\]
we show that there is an equality of $(A_0,A_4)$-bimodule maps from $(M_{01}\otimes_{A_1}M_{13})\otimes_{A_3}M_{3,4}$ to $M_{0,4}$
\begin{equation}
\label{eq42}
(\varphi_{013} \otimes_{A_{3}} M_{34}) \bullet \varphi_{034} = \overline\alpha_{M_{01}|M_{13}|M_{34}}\bullet (M_{01} \otimes_{A_{1}} \varphi_{134}) \bullet \varphi_{014},
\end{equation}
obtaining the desired lift $\Delta^2[4]\to \mrt^{\natural}(\cC^{\otimes})$.

Through various instances of \cref{NaturalityAssociatorTensor,RmkEpi,pastingII,PentagonTensor}, we obtain
we have the equality of $(A_0,A_4)$-bimodules from $M_{01}\otimes_{A_1}M_{13}\otimes_{A_3}M_{34}$ to $M_{04}$:
\[
\begin{array}{llll}
((M_{01} \otimes_{A_{1}} \varphi_{123}) \otimes_{A_{3}} M_{34}) \bullet (\varphi_{013} \otimes_{A_{3}} M_{34}) \bullet \varphi_{034}\quad\quad\quad\quad\quad\quad\quad\quad\\
\quad\quad=(\overline{\alpha}_{M_{01}|M_{12}|M_{23}} \otimes_{A_{3}} M_{34})^{-1} \bullet ((\varphi_{012} \otimes_{A_{2}} M_{23}) \otimes_{A_{3}} M_{34}) \bullet (\varphi_{023} \otimes_{A_{3}} M_{34}) \bullet \varphi_{034}&(4)\\
\quad\quad=(\overline{\alpha}_{M_{01}|M_{12}|M_{23}} \otimes_{A_{3}} M_{34})^{-1} \bullet \overline{\alpha}_{M_{01} \otimes_{A_1} M_{12}|M_{23}|M_{34}} \bullet \\
\quad\quad\quad\quad\bullet(\varphi_{012} \otimes_{A_{2}} (M_{23} \otimes_{A_{3}} M_{34})) \bullet (M_{02} \otimes_{A_{2}} \varphi_{234}) \bullet \varphi_{024}&(1)\\
\quad\quad=(\overline{\alpha}_{M_{01}|M_{12}|M_{23}} \otimes_{A_{3}} M_{34})^{-1} \bullet
\overline{\alpha}_{M_{01} \otimes_{A_1} M_{12}|M_{23}|M_{34}} \bullet \\
\quad\quad\quad\quad\bullet((M_{01} \otimes_{A_{1}} M_{12}) \otimes_{A_{2}} \varphi_{234}) \bullet (\varphi_{012} \otimes_{A_{2}} M_{24}) \bullet \varphi_{024}&\text{Prop.~\ref{pastingII}}\\
\quad\quad=(\overline{\alpha}_{M_{01}|M_{12}|M_{23}} \otimes_{A_{3}} M_{34})^{-1} \bullet \overline{\alpha}_{M_{01} \otimes_{A_1} M_{12}|M_{23}|M_{34}} \bullet \overline{\alpha}_{M_{01}|M_{12}|M_{23} \otimes_{A_3} M_{34}} \bullet\\
\quad\quad\quad\quad\bullet (M_{01} \otimes_{A_{1}} (M_{12} \otimes_{A_{2}} \varphi_{234})) \bullet (M_{01} \otimes_{A_{1}} \varphi_{124}) \bullet \varphi_{014}&(3)\\
\quad\quad=(\overline{\alpha}_{M_{01}|M_{12}|M_{23}} \otimes_{A_{3}} M_{34})^{-1} \bullet \overline{\alpha}_{M_{01} \otimes_{A_1} M_{12}|M_{23}|M_{34}} \bullet  \overline{\alpha}_{M_{01}|M_{12}|M_{23} \otimes_{A_3} M_{34}}\bullet \\
   \quad\quad\quad\quad\bullet (M_{01} \otimes_{A_{1}} \overline{\alpha}_{M_{12}|M_{23}|M_{34}})^{-1} \bullet (M_{01} \otimes_{A_{1}} (\varphi_{123} \otimes_{A_{3}} M_{34})) \bullet (M_{01} \otimes_{A_{1}} \varphi_{134}) \bullet \varphi_{014}&(0)\\
\quad\quad= \overline\alpha_{M_{01}|M_{12} \otimes M_{23}|M_{34}} \bullet (M_{01} \otimes_{A_{1}} (\varphi_{123} \otimes_{A_{3}} M_{34})) \bullet (M_{01} \otimes_{A_{1}} \varphi_{134}) \bullet \varphi_{014} &\text{Prop.~\ref{PentagonTensor}}\\
\quad\quad= ((M_{01} \otimes_{A_{1}} \varphi_{123}) \otimes_{A_{3}} M_{34}) \bullet \overline\alpha_{M_{01}|M_{13}|M_{34}} \bullet (M_{01} \otimes_{A_{1}} \varphi_{134}) \bullet \varphi_{014}.
\end{array}
\]
Since $(M_{01} \otimes_{A_{1}} \varphi_{123}) \otimes_{A_{3}} M_{34}$ is an isomorphism by \cref{RmkEpi},
we obtain
\eqref{eq42}, as desired.
\end{proof}

\begin{prop}
\label{hornHigh}
The marked simplicial set $\mrt^{\natural}(\cC^{\otimes})$ has the right lifting property with respect to the map $\Lambda^k[m]\to\Delta^k[m]$ for $m>4$ and $0\leq k\leq m$.
\end{prop}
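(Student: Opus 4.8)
The plan is to deduce this purely formally from the fact that, by construction (\cref{MonoidalNerve}), the underlying simplicial set $\mrt(\cC^{\otimes})$ is $3$-coskeletal: for every $\ell\geq4$, any map $\partial\Delta[\ell]\to\mrt(\cC^{\otimes})$ extends uniquely to a map $\Delta[\ell]\to\mrt(\cC^{\otimes})$. The key point, in contrast to the case $m=4$ treated in \cref{horn40,horn41,horn42}, is that when $m\geq5$ the horn $\Lambda^k[m]$ already contains the entire $3$-skeleton of $\Delta[m]$ -- indeed every simplex of dimension $\leq3<m-1$ lies in at least two facets -- so the filler is forced and no further hypothesis on $\cC^{\otimes}$ is needed.

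Concretely, given a map of marked simplicial sets $\Lambda^k[m]\to\mrt^{\natural}(\cC^{\otimes})$ with $m\geq5$ and $0\leq k\leq m$, I would first produce the underlying unmarked lift by a two-step bootstrap. The unique facet of $\Delta[m]$ missing from the horn is the $k$-th one, namely a copy of $\Delta[m-1]$; its boundary $\partial\Delta[m-1]$ is already mapped into $\mrt(\cC^{\otimes})$ through the horn, and since $m-1\geq4$, $3$-coskeletality fills it uniquely, yielding a map $\partial\Delta[m]\to\mrt(\cC^{\otimes})$ that restricts to the given map on $\Lambda^k[m]$. Since $m\geq5>4$, a second application of $3$-coskeletality extends this uniquely to $f\colon\Delta[m]\to\mrt(\cC^{\otimes})$. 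Uniqueness at each stage gives both existence and uniqueness of the lift on underlying simplicial sets (the latter not strictly required, but it comes for free).

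The remaining task is to check that $f$ is marking-preserving as a map $\Delta^k[m]\to\mrt^{\natural}(\cC^{\otimes})$. Here I would split the marked simplices of $\Delta^k[m]$ into three families: degenerate simplices, whose images are degenerate and hence marked; nondegenerate simplices of dimension $\geq3$, whose images lie in $\mrt_j(\cC^{\otimes})$ for some $j\geq3$ and are therefore marked by construction of $\mrt^{\natural}(\cC^{\otimes})$; and the finitely many nondegenerate marked simplices of dimension $\leq2$, which are $[k-1,k,k+1]$ when $0<k<m$, and $[0,1]$ together with the triangles $[0,1,j]$ for $2\leq j\leq m$ when $k=0$ (symmetrically when $k=m$). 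Each simplex in the last family has dimension $\leq2\leq m-3$, so it lies in some facet of $\Delta[m]$ other than the $k$-th and hence already belongs to $\Lambda^k[m]$; since the given horn map is marking-preserving and $f$ restricts to it, the image is marked. This exhausts all marked simplices of $\Delta^k[m]$ and completes the verification.

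I do not expect a genuine obstacle: the argument is entirely formal once $3$-coskeletality is in hand. The only place requiring a little care is the bookkeeping that makes it work for $m\geq5$ but not for $m=4$ -- namely that for $m\geq5$ both the boundary of the missing facet and every low-dimensional marked simplex of $\Delta^k[m]$ already live inside the horn -- which is precisely what fails when $m=4$ and necessitates the separate treatment in \cref{horn40,horn41,horn42}.
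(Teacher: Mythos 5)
Your proof is correct and rests on the same key fact as the paper's: $\mrt(\cC^{\otimes})$ is $3$-coskeletal and $\Lambda^k[m]$ contains the full $3$-skeleton of $\Delta[m]$ (with matching marking) once $m>4$. The paper packages this more compactly via the adjunction with $\mathrm{sk}_3$, observing that $\mathrm{sk}_3\Lambda^k[m]\to\mathrm{sk}_3\Delta^k[m]$ is an isomorphism and transporting the lifting problem across; your two-step bootstrap (fill the missing facet, then the interior) together with the explicit marking check is the same argument unpacked.
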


\begin{proof}
Since $\mrt^{\natural}(\cC^{\otimes})$ is $3$-coskeletal, the two lifting problems below are equivalent:
\[
\begin{tikzcd}
    \Lambda^k[m]\arrow[r]\arrow[d]&\mrt^{\natural}(\cC^{\otimes})\\
    \Delta^k[m]\arrow[ru,dashed]&
\end{tikzcd}
\quad\quad
\leftrightsquigarrow
\quad\quad
\begin{tikzcd}
\mathrm{sk}_3\Lambda^k[m]\arrow[r]\arrow[d]&\mrt^{\natural}(\cC^{\otimes})\\
\mathrm{sk}_3\Delta^k[m]\arrow[ru,dashed]&
\end{tikzcd}
\]
Given that the vertical map in the right hand lifting problem is an isomorphism, this lifting problem has a solution. Hence, so does the left hand lifting problem, as desired.
\end{proof}

\subsection{Complicial thinness}

In this subsection we verify that $\mrt^\natural(\cC^{\otimes})$ lifts against all complicial thinness extensions:

\begin{thm}
\label{thin}
The marked simplicial set $ \mrt^\natural(\cC^{\otimes})$ has the right lifting property with respect to the map $\Delta^k[m]'\to\Delta^k[m]''$ for $m\geq0$ and $0\leq k\leq m$. 
\end{thm}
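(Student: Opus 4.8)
The plan is to dispatch by the simplicial dimension $m$, mirroring the treatment of the complicial horn extensions in \cref{horn}. First I would dispose of the cases that carry no content: for $m\geq 4$ the $k$-th face of an $m$-simplex of $\mrt^{\natural}(\cC^{\otimes})$ has dimension $m-1\geq 3$, hence is automatically marked, so any map $\Delta^{k}[m]'\to\mrt^{\natural}(\cC^{\otimes})$ already extends to $\Delta^{k}[m]''$; and for $m\leq 1$ the simplex that $\Delta^{k}[m]''$ marks in addition to $\Delta^{k}[m]'$ has dimension at most $0$ and therefore cannot be marked, so the inclusion $\Delta^{k}[m]'\to\Delta^{k}[m]''$ is an isomorphism and the lift is trivial. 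This leaves the two genuine cases $m=3$ and $m=2$, which I would isolate as two propositions.

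For $m=3$: every $3$-simplex of $\mrt^{\natural}(\cC^{\otimes})$ is marked, so a lifting problem against $\Delta^{k}[3]'\to\Delta^{k}[3]''$ unwinds to the following claim about a $3$-simplex $(A_i,M_{ij},\varphi_{ijk})$: if the three $2$-faces marked in $\Delta^{k}[3]'$ are bimodule isomorphisms — and, when $k\in\{0,3\}$, the edge $M_{01}$ (resp.\ $M_{23}$) is an equivalence in the sense of \cref{Equivalence}, as the marking of $\Delta^{k}[3]$ requires — then the $k$-th $2$-face is a bimodule isomorphism as well. I would prove this by solving the defining relation
\[
(\varphi_{012} \otimes_{A_{2}} M_{23}) \bullet \varphi_{023} =\overline{\alpha}_{M_{01}|M_{12}|M_{23}} \bullet (M_{01} \otimes_{A_{1}} \varphi_{123}) \bullet \varphi_{013}
\]
for the $k$-th face, using that $\overline{\alpha}_{M_{01}|M_{12}|M_{23}}$ is an isomorphism by \cref{AssociativityForBimodules}, that $M_{01}\otimes_{A_{1}}\varphi_{123}$ is an isomorphism whenever $\varphi_{123}$ is by \cref{TensorWithIso}, and that $\varphi_{012}\otimes_{A_{2}}M_{23}$ is an isomorphism whenever $\varphi_{012}$ is by \cref{RmkEpi}. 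For $k=1$ (solving for $\varphi_{023}$) and $k=2$ (solving for $\varphi_{013}$) the conclusion is then immediate. For $k=3$ one first obtains that $\varphi_{012}\otimes_{A_{2}}M_{23}$ is an isomorphism and then descends to $\varphi_{012}$ using that $M_{23}$ is an equivalence, via \cref{ReductionI} together with the unit isomorphism of \cref{UnitalityForBimodules}; for $k=0$ one symmetrically obtains that $M_{01}\otimes_{A_{1}}\varphi_{123}$ is an isomorphism and descends to $\varphi_{123}$ using that $M_{01}$ is an equivalence, via \cref{ReductionII}, \cref{NaturalityAssociatorTensor} and \cref{UnitalityForBimodules}.

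For $m=2$: a marked $2$-simplex of $\mrt^{\natural}(\cC^{\otimes})$ is exactly one whose inhabiting map $\varphi\colon M_{01}\otimes_{A_{1}}M_{12}\to M_{02}$ is a bimodule isomorphism, so a lifting problem against $\Delta^{k}[2]'\to\Delta^{k}[2]''$ unwinds to: given a $2$-simplex with $\varphi$ a bimodule isomorphism and with the two edges marked in $\Delta^{k}[2]'$ equivalences, the remaining ($k$-th) edge is an equivalence. The key input, which I would record as a short lemma, is that equivalences in the sense of \cref{Equivalence} are closed under balanced tensor product — if $M'$ is an inverse of $M$ and $N'$ an inverse of $N$, then $N'\otimes_{B}M'$ is an inverse of $M\otimes_{B}N$, manipulating the balanced tensor products by \cref{PentagonTensor}, \cref{AssociativityForBimodules}, \cref{UnitalityForBimodules} and \cref{TensorWithIso} — as well as under passing to an inverse (immediate from the definition) and under bimodule isomorphism (immediate from \cref{TensorFunctorial}). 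Granting this, the case $k=1$ follows because $M_{02}\cong M_{01}\otimes_{A_{1}}M_{12}$ via $\varphi$; and the cases $k=0$ and $k=2$ follow after rewriting $M_{12}\cong M_{01}'\otimes_{A_{0}}M_{02}$, respectively $M_{01}\cong M_{02}\otimes_{A_{2}}M_{12}'$, and again invoking closure under balanced tensor product.

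The case analysis is the bulk of the work, but all the substantive input has already been assembled in the preceding sections. I expect the main obstacle to be the descent along an equivalence in the $m=3$, $k\in\{0,3\}$ sub-cases: having shown that a map of the form $P\otimes_{A}\varphi$ is an isomorphism, one must combine \cref{NaturalityAssociatorTensor} with the unit isomorphism $\overline{\ell}$ (resp.\ $\overline{r}$) of \cref{UnitalityForBimodules} to conclude that $\varphi$ itself is an isomorphism. Everything else is a direct unwinding of \cref{complicialextensions} together with the closure properties listed above.
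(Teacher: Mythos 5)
Your proof follows essentially the same approach as the paper: dispatch by dimension, dispose of $m\leq 1$ and $m\geq 4$ trivially (the paper handles $m>3$ via the $\mathrm{th}_3\dashv\mathrm{sp}_3$ adjunction in \cref{thinhigh}, but your direct observation that the $k$-th face is automatically marked is equivalent), and for $m\in\{2,3\}$ unwind the lifting problem into the same algebra the paper carries out in \cref{thin20,thin21,thin30,thin31} — solving the $3$-simplex coherence equation for the $k$-th face, descending along the marked edge via the Reduction lemmas when $k\in\{0,3\}$, and for $m=2$ exhibiting the $k$-th edge as a balanced tensor of two equivalences. Your closure-under-tensor lemma usefully packages what the paper does inline; the one item worth flagging is that the $m=3$, $k=0$ descent also needs the left-handed analogue of \cref{ReductionI} (naturality of $\overline\ell$ from \cref{UnitalityForBimodules}), which the paper never states but implicitly relies on in \cref{thin30}, so you should record that variant alongside your other preliminaries.
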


\begin{proof}
We treat the various cases as \cref{thin20,thin21,thin30,thin31,thinhigh}.
\end{proof}

\begin{prop}
\label{thin21}
The marked simplicial set $\mrt^{\natural}(\cC^{\otimes})$ has the right lifting property with respect to the map $\Delta^1[2]'\to\Delta^1[2]''$
\end{prop}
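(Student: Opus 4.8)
The map $\Delta^1[2]'\to\Delta^1[2]''$ is the identity on underlying simplicial sets; the only difference between source and target is that $\Delta^1[2]''$ additionally marks the $1$-st face $[0,2]$. So a marking-preserving lift against it exists precisely when, for every marking-preserving map $\Delta^1[2]'\to\mrt^{\natural}(\cC^{\otimes})$, the edge $[0,2]$ is sent to a marked $1$-simplex. Unpacking \cref{MonoidalNerve}, such a map is the datum of a $2$-simplex $(A_0,A_1,A_2|M_{01},M_{12},M_{02}|\varphi)$ in which $\varphi\colon M_{01}\otimes_{A_1}M_{12}\to M_{02}$ is a bimodule \emph{isomorphism} (because the top $2$-simplex is marked in $\Delta^1[2]'$), and in which $M_{01}$ is an $(A_0,A_1)$-equivalence and $M_{12}$ is an $(A_1,A_2)$-equivalence (because the faces $[0,1]$ and $[1,2]$ are marked in $\Delta^1[2]'$). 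The plan is therefore to show that under these hypotheses $M_{02}$ is itself an $(A_0,A_2)$-equivalence; granting this, the required lift $\Delta^1[2]''\to\mrt^{\natural}(\cC^{\otimes})$ is simply the same $2$-simplex, which is now marking-preserving.

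To produce a witness, I would choose an $(A_1,A_0)$-bimodule $M_{01}'$ and an $(A_2,A_1)$-bimodule $M_{12}'$ together with bimodule isomorphisms $M_{01}\otimes_{A_1}M_{01}'\cong A_0$, $M_{01}'\otimes_{A_0}M_{01}\cong A_1$, $M_{12}\otimes_{A_2}M_{12}'\cong A_1$, $M_{12}'\otimes_{A_1}M_{12}\cong A_2$, as provided by \cref{Equivalence} (with the bimodule structures of \cref{CompositeBimodule,IdentityBimodule}), and set $M_{02}'\coloneqq M_{12}'\otimes_{A_1}M_{01}'$, an $(A_2,A_0)$-bimodule via \cref{CompositeBimodule}. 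Using $\varphi$, the associativity isomorphisms $\overline\alpha$ of \cref{AssociativityForBimodules}, the compatibility of the balanced tensor product with isomorphisms from \cref{TensorWithIso}, and the unitality isomorphisms of \cref{UnitalityForBimodules}, one assembles the chain of $(A_0,A_0)$-bimodule isomorphisms
\[
\begin{array}{lll}
M_{02}\otimes_{A_2}M_{02}'
 &\cong(M_{01}\otimes_{A_1}M_{12})\otimes_{A_2}(M_{12}'\otimes_{A_1}M_{01}')&\text{(\cref{TensorWithIso})}\\
 &\cong M_{01}\otimes_{A_1}\bigl((M_{12}\otimes_{A_2}M_{12}')\otimes_{A_1}M_{01}'\bigr)&\text{(\cref{AssociativityForBimodules,TensorWithIso})}\\
 &\cong M_{01}\otimes_{A_1}(A_1\otimes_{A_1}M_{01}')\cong M_{01}\otimes_{A_1}M_{01}'\cong A_0&\text{(\cref{TensorWithIso,UnitalityForBimodules})}
\end{array}
\]
and, symmetrically, a chain of $(A_2,A_2)$-bimodule isomorphisms $M_{02}'\otimes_{A_0}M_{02}\cong A_2$ obtained by reassociating and cancelling $M_{01}'\otimes_{A_0}M_{01}\cong A_1$ in the middle. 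This exhibits $M_{02}$ as an $(A_0,A_2)$-equivalence in the sense of \cref{Equivalence}, which is all that was needed.

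In essence the proposition asserts that a composite of equivalences — here realized as the balanced tensor product $M_{01}\otimes_{A_1}M_{12}$, identified with $M_{02}$ along the isomorphism $\varphi$ — is again an equivalence. I do not expect a genuine obstacle: every arrow occurring in the chains above is a bimodule isomorphism, so each composite is one, and the only care required is in tracking which associator and unitor isomorphisms (from \cref{AssociativityForBimodules,UnitalityForBimodules}, possibly together with \cref{NaturalityAssociatorTensor}) are invoked at each step.
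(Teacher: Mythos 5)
Your proof is correct and follows essentially the same route as the paper: the key step is to define $M_{02}'\coloneqq M_{12}'\otimes_{A_1}M_{01}'$ and then cancel successively using \cref{AssociativityForBimodules}, \cref{TensorWithIso}, and \cref{UnitalityForBimodules}. The only cosmetic difference is that you spell out the chain for $M_{02}\otimes_{A_2}M_{02}'\cong A_0$ while the paper spells out $M_{02}'\otimes_{A_0}M_{02}\cong A_2$; both treat the remaining side as symmetric.
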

\begin{proof}
Given a map of marked simplicial sets $\Delta^1[2]\to \mrt^{\natural}(\cC^{\otimes})$, which can be depicted as
\[\begin{tikzcd}[ampersand replacement=\&]
	{} \& A_1 \\
	A_0 \&\& A_2
	\arrow["M_{12}", color={rgb,255:red,65;green,51;blue,255}, from=1-2, to=2-3]
	\arrow["M_{01}", color={rgb,255:red,65;green,51;blue,255}, from=2-1, to=1-2]
	\arrow[""{name=0, anchor=center, inner sep=0}, "M_{02}"', from=2-1, to=2-3]
	\arrow["\varphi", color={rgb,255:red,65;green,51;blue,255}, shorten <=5pt, shorten >=5pt, Rightarrow, from=1-2, to=0]
\end{tikzcd}\]
we show that $M_{02}$ is an $(A_0,A_1)$-equivalence, obtaining the desired lift $\Delta^1[2]''\to \mrt^{\natural}(\cC^{\otimes})$.
By assumption there exist $M_{01}'$ a $(A_1,A_0)$-bimodule, $M_{12}'$ a $(A_2,A_1)$-bimodule, as well as an isomorphism of $(A_1,A_1)$-bimodules and an isomorphism of $(A_2,A_2)$-bimodules
\[M_{01}'\otimes_{A_0}M_{01}\cong A_1\quad\text{ and }\quad
M_{12}'\otimes_{A_1}M_{12}\cong A_2.
\]
Define $M_{02}' \coloneqq M_{12}' \otimes_B M_{01}'$, with the $(A_2,A_0)$-bimodule structure from \cref{CompositeBimodule}. On the one hand there is an isomorphism of $(A_2,A_2)$-bimodules
\[\begin{array}{lll}
  M_{02}'\otimes_{A_0}M_{02} &=(M_{12}' \otimes_{A_1} M_{01}') \otimes_{A_0} M_{02} &\text{definition}\\
  & \cong M_{12}' \otimes_{A_1} (M_{01}' \otimes_{A_0} M_{02}) &\text{\cref{AssociativityForBimodules}}\\
   & \cong M_{12}' \otimes_{A_1} (M_{01}' \otimes_{A_0} (M_{01}\otimes_{A_1}M_{12})) &\text{$\varphi$ marked}\\
   & \cong M_{12}' \otimes_{A_1} ((M_{01}' \otimes_{A_0} M_{01})\otimes_{A_1}M_{12}) &\text{\cref{AssociativityForBimodules}}\\
   & \cong M_{12}' \otimes_{A_1} (A_1\otimes_{A_1}M_{12}) &\text{}\\
   & \cong M_{12}' \otimes_{A_1} M_{12}\cong A_2&\text{\cref{TensorWithIso}}. 
\end{array}
\]
Similarly, there is an isomorphism of $(A_0,A_0)$-bimodules
\[M_{02} \otimes_{A_2} M_{02}' \cong A_0.\]
So $M_{02}$ is an $(A_0,A_2)$-equivalence, as desired.
\end{proof}

\begin{prop}
\label{thin20}
The marked simplicial set $\mrt^{\natural}(\cC^{\otimes})$ has the right lifting property with respect to the map $\Delta^k[2]'\to\Delta^k[2]''$ for $k=0,1$.
\end{prop}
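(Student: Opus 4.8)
The plan is to run the same reduction as for the analogous horn statement \cref{horn20}: treat the case $k=0$ in detail, the case $k=2$ being entirely symmetric (and the case $k=1$, should it be read into the statement, being \cref{thin21}). For $k=0$, a map of marked simplicial sets $\Delta^0[2]'\to\mrt^{\natural}(\cC^{\otimes})$ is a $2$-simplex

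\[\begin{tikzcd}[ampersand replacement=\&]
	{} \& {A_1} \\
	{A_0} \&\& {A_2}
	\arrow["M_{12}", from=1-2, to=2-3]
	\arrow["M_{01}", color={rgb,255:red,65;green,51;blue,255}, from=2-1, to=1-2]
	\arrow[""{name=0, anchor=center, inner sep=0}, "M_{02}"', color={rgb,255:red,65;green,51;blue,255}, from=2-1, to=2-3]
	\arrow["\varphi", color={rgb,255:red,65;green,51;blue,255}, shorten <=5pt, shorten >=5pt, Rightarrow, from=1-2, to=0]
\end{tikzcd}\]

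for which the marking of $\Delta^0[2]'$ forces $\varphi\colon M_{01}\otimes_{A_1}M_{12}\to M_{02}$ to be a bimodule isomorphism and $M_{01}$, $M_{02}$ to be an $(A_0,A_1)$- and an $(A_0,A_2)$-equivalence, respectively. Since the underlying simplicial set is unchanged, a lift $\Delta^0[2]''\to\mrt^{\natural}(\cC^{\otimes})$ exists if and only if in addition $M_{12}$ is marked, i.e.\ is an $(A_1,A_2)$-equivalence; so the whole statement reduces to proving that last assertion.

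To prove it, I would unwind \cref{Equivalence} and fix a $(A_1,A_0)$-bimodule $M_{01}'$ and a $(A_2,A_0)$-bimodule $M_{02}'$ with bimodule isomorphisms $M_{01}'\otimes_{A_0}M_{01}\cong A_1$, $M_{01}\otimes_{A_1}M_{01}'\cong A_0$, $M_{02}'\otimes_{A_0}M_{02}\cong A_2$ and $M_{02}\otimes_{A_2}M_{02}'\cong A_0$, and then take $M_{12}'\coloneqq M_{02}'\otimes_{A_0}M_{01}$, with the $(A_2,A_1)$-bimodule structure from \cref{CompositeBimodule}, as candidate inverse of $M_{12}$ (this is the bimodule incarnation of the fact that if $M_{01}$ and $M_{02}\simeq M_{01}\otimes_{A_1}M_{12}$ are equivalences then so is $M_{12}$, with inverse $M_{02}^{-1}\otimes_{A_0}M_{01}$). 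One side is a chain of $(A_2,A_2)$-bimodule isomorphisms
\[M_{12}'\otimes_{A_1}M_{12}\;\cong\;M_{02}'\otimes_{A_0}(M_{01}\otimes_{A_1}M_{12})\;\cong\;M_{02}'\otimes_{A_0}M_{02}\;\cong\;A_2,\]
using \cref{AssociativityForBimodules}, then $M_{02}'\otimes_{A_0}\varphi$ (an isomorphism by \cref{TensorWithIso}), then the given isomorphism. For the other side I would first record, by the same kind of chain (using \cref{UnitalityForBimodules}, the isomorphism $M_{01}'\otimes_{A_0}M_{01}\cong A_1$, \cref{AssociativityForBimodules} and $\varphi$), the auxiliary $(A_1,A_2)$-bimodule isomorphism $M_{12}\cong M_{01}'\otimes_{A_0}M_{02}$, and then substitute it to obtain a chain of $(A_1,A_1)$-bimodule isomorphisms
\[M_{12}\otimes_{A_2}M_{12}'\;\cong\;M_{01}'\otimes_{A_0}\bigl(M_{02}\otimes_{A_2}M_{02}'\bigr)\otimes_{A_0}M_{01}\;\cong\;M_{01}'\otimes_{A_0}M_{01}\;\cong\;A_1,\]
again via \cref{AssociativityForBimodules}, \cref{UnitalityForBimodules}, \cref{TensorWithIso} and the given isomorphisms. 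Together these exhibit $M_{12}$ as an $(A_1,A_2)$-equivalence and hence produce the desired lift.

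As in \cref{thin21}, I do not expect any genuine obstacle; the only point needing care is the bookkeeping of the left and right actions, namely checking that every arrow in the two displayed chains is an isomorphism of the indicated $(A_i,A_j)$-bimodules rather than merely of underlying objects of $\cC$. This is precisely what \cref{AssociativityForBimodules}, \cref{UnitalityForBimodules} and \cref{TensorWithIso} are designed to deliver, together with the functoriality of the balanced tensor product on bimodule maps (\cref{TensorFunctorial}) and its compatibility with the quotient projections (\cref{CompatibilityTensorOnMaps}).
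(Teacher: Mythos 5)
Your proof is correct and follows essentially the same route as the paper's: you reduce to showing $M_{12}$ is an equivalence, take the same candidate inverse $M_{12}' = M_{02}'\otimes_{A_0}M_{01}$, and verify both sides via the same associativity/unitality/tensor-with-iso chains; the only (cosmetic) difference is that the paper establishes the auxiliary isomorphism $M_{12}\otimes_{A_2}M_{02}'\cong M_{01}'$ before computing $M_{12}\otimes_{A_2}M_{12}'$, whereas you first derive $M_{12}\cong M_{01}'\otimes_{A_0}M_{02}$ and substitute — a trivially equivalent rearrangement. You are also right to flag the apparent typo in the statement: the intended range is $k=0,2$ (matching \cref{horn20}), with $k=1$ handled separately as \cref{thin21}.
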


\begin{proof}
We treat the case $k=0$, the case $k=2$ being analogous.
Given a map of marked simplicial sets $\Delta^0[2]'\to \mrt^{\natural}(\cC^{\otimes})$, which can be depicted as   
\[\begin{tikzcd}[ampersand replacement=\&]
	\& A_1 \\
	A_0 \&\& A_2
	\arrow["M_{12}", from=1-2, to=2-3]
	\arrow["M_{01}", color={rgb,255:red,65;green,51;blue,255}, from=2-1, to=1-2]
	\arrow[""{name=0, anchor=center, inner sep=0}, "M_{02}"', color={rgb,255:red,65;green,51;blue,255}, from=2-1, to=2-3]
	\arrow["\varphi", color={rgb,255:red,65;green,51;blue,255}, shorten <=5pt, shorten >=5pt, Rightarrow, from=1-2, to=0]
\end{tikzcd}\]
we show that $M_{12}$ is an equivalence, obtaining a lift $\Delta^0[2]''\to \mrt^{\natural}(\cC^{\otimes})$.
By assumption there exist $M_{01}'$ a $(A_1,A_0)$-bimodule, $M_{02}'$ a $(A_2,A_0)$-bimodule, as well as an isomorphism of $(A_1,A_1)$-bimodules and an isomorphism of $(A_2,A_2)$-bimodules
\[M_{01}'\otimes_{A_0}M_{01}\cong A_1\quad\text{ and }\quad
M_{02}'\otimes_{A_0}M_{02}\cong A_2,
\]
and
an $(A_0,A_2)$-bimodule isomorphism
\[\varphi\colon M_{01}\otimes_{A_1}M_{12}\cong M_{02}.\]
Define $M_{12}' \coloneqq M_{02}' \otimes_{A_0} M_{01}$ with the $(A_2,A_1)$-bimodule structure from \cref{CompositeBimodule}.
On the one hand, there is an isomorphism of $(A_2,A_2)$-bimodules
\[
\begin{array}{llllll}
M_{12}'\otimes_{A_1}M_{12}&=(M_{02}' \otimes_{A_0} M_{01}) \otimes_{A_1} M_{12}&\text{definition}\\
&\cong M_{02}' \otimes_{A_0} (M_{01} \otimes_{A_1} M_{12}) &\text{\cref{AssociativityForBimodules}}\\
&\cong M_{02}' \otimes_{A_0} M_{02}\cong A_2.&\text{\cref{TensorWithIso}}
\end{array}
\]
We obtain a $(A_1,A_0)$-bimodule isomorphism in $\cC$:
\[
\begin{array}{llll}
   M_{12}\otimes_{A_2}M_{02}'  & \cong A_1\otimes_{A_1} (M_{12}\otimes_{A_2}M_{02}')&\text{\cref{UnitalityForBimodules}}\\
& \cong  (M_{01}'\otimes_{A_0}M_{01})\otimes_{A_1} (M_{12}\otimes_{A_2}M_{02}') \\
& \cong M_{01}'\otimes_{A_0}((M_{01}\otimes_{A_1} M_{12})\otimes_{A_2}M_{02}') &\text{\cref{AssociativityForBimodules}}\\
& \cong M_{01}'\otimes_{A_0} (M_{02}\otimes_{A_2}M_{02}') \\
& \cong M_{01}'\otimes_{A_0} A_0 \cong M_{01}'&\text{\cref{UnitalityForBimodules}}\\
\end{array}
\]
On the other hand, there is an isomorphism of $(A_1,A_1)$-bimodules:
\[
\begin{array}{llll}
M_{12}\otimes_{A_2}M_{12}'&=M_{12} \otimes_{A_2} (M_{02}' \otimes_{A_0} M_{01})&\text{definition}\\
     & \cong (M_{12} \otimes_{A_2} M_{02}') \otimes_{A_0} M_{01}&\text{\cref{AssociativityForBimodules}}\\
     &\cong  M_{01}' \otimes_{A_0} M_{01}\cong A_2.&\text{\cref{TensorWithIso}}
\end{array}
\]
So $M_{12}$ is a $(A_1,A_2)$-equivalence, as desired.
\end{proof}

\begin{prop}
\label{thin30}
The marked simplicial set $\mrt^{\natural}(\cC^{\otimes})$ has the right lifting property with respect to the map $\Delta^k[3]'\to\Delta^k[3]''$ for $k=0,3$.
\end{prop}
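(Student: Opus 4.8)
The plan is to convert this lifting problem into a concrete statement about bimodule maps and then settle it by ``cancelling'' the equivalence that sits on the marked edge. I will describe $k=0$ in detail; the case $k=3$ is its mirror image, obtained by exchanging the roles of the two outer monoids and of left with right. First I would unwind the problem: since $\Delta^0[3]'$ and $\Delta^0[3]''$ have the same underlying simplicial set $\Delta[3]$, a map $\Delta^0[3]'\to\mrt^{\natural}(\cC^{\otimes})$ is precisely a $3$-simplex $(A_i,M_{ij},\varphi_{ijk})$ in which the edge $[0,1]$ is marked, i.e.\ $M_{01}$ is an $(A_0,A_1)$-equivalence in the sense of \cref{Equivalence}, and the faces $[0,2,3]$, $[0,1,3]$, $[0,1,2]$ are marked, i.e.\ $\varphi_{023}$, $\varphi_{013}$, $\varphi_{012}$ are bimodule isomorphisms. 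Producing the lift along $\Delta^0[3]'\to\Delta^0[3]''$ is the same $3$-simplex together with the assertion that the remaining face $[1,2,3]$ is marked; so all that must be checked is that $\varphi_{123}$ is a bimodule isomorphism.

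Next I would extract $\varphi_{123}$ from the defining relation of a $3$-simplex,
\[
(\varphi_{012} \otimes_{A_{2}} M_{23}) \bullet \varphi_{023} =\overline{\alpha}_{M_{01}|M_{12}|M_{23}} \bullet (M_{01} \otimes_{A_{1}} \varphi_{123}) \bullet \varphi_{013},
\]
which, after pre- and post-composing with inverses, gives
\[
M_{01} \otimes_{A_{1}} \varphi_{123}=\overline{\alpha}_{M_{01}|M_{12}|M_{23}}^{-1}\bullet(\varphi_{012} \otimes_{A_{2}} M_{23})\bullet\varphi_{023}\bullet\varphi_{013}^{-1}.
\]
On the right-hand side $\overline{\alpha}_{M_{01}|M_{12}|M_{23}}$ is invertible by \cref{Axioms} (or \cref{InducedAlpha2}), $\varphi_{012}\otimes_{A_2}M_{23}$ is invertible because $\varphi_{012}$ is (by \cref{RmkEpi}), and $\varphi_{023},\varphi_{013}$ are invertible by hypothesis; hence $M_{01}\otimes_{A_1}\varphi_{123}$ is a bimodule isomorphism.

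Then I would cancel the equivalence $M_{01}$. Choosing a $(A_1,A_0)$-bimodule $M_{01}'$ and an isomorphism $\epsilon\colon M_{01}'\otimes_{A_0}M_{01}\cong A_1$ witnessing that $M_{01}$ is an equivalence, the map $M_{01}'\otimes_{A_0}(M_{01}\otimes_{A_1}\varphi_{123})$ is invertible by \cref{TensorWithIso}; by \cref{NaturalityAssociatorTensor} it is conjugate, via the invertible maps $\overline{\alpha}$, to $(M_{01}'\otimes_{A_0}M_{01})\otimes_{A_1}\varphi_{123}$, which is therefore invertible; by \cref{ReductionII} (with $P=M_{01}'\otimes_{A_0}M_{01}$) it follows that $A_1\otimes_{A_1}\varphi_{123}$ is invertible; and finally, using $\varphi_{123}=\overline{\ell}_{M_{12}\otimes_{A_2}M_{23}}^{-1}\bullet(A_1\otimes_{A_1}\varphi_{123})\bullet\overline{\ell}_{M_{13}}$ with the unitor isomorphisms of \cref{UnitalityForBimodules} — the left-hand counterpart of \cref{ReductionI} — one concludes that $\varphi_{123}$ is a bimodule isomorphism. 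For $k=3$ one instead solves the same identity for $\varphi_{012}\otimes_{A_2}M_{23}$ (using that $\varphi_{123},\varphi_{023},\varphi_{013}$ and $\overline{\alpha}$ are invertible) and cancels the right-hand equivalence $M_{23}$ by the right-hand counterparts of \cref{ReductionII,ReductionI}.

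I expect the main obstacle to be this last cancellation step: \cref{ReductionI,ReductionII} are recorded only for the monoid unit on the right (respectively for a bimodule isomorphic to it), so one must invoke their evident left/right mirror versions — whose proofs are identical — and keep careful track of which tensor factor the equivalence is being cancelled from, as well as of the associator coherences needed to pass between $M_{01}'\otimes_{A_0}(M_{01}\otimes_{A_1}\varphi_{123})$ and $(M_{01}'\otimes_{A_0}M_{01})\otimes_{A_1}\varphi_{123}$. Everything else is a direct rearrangement of the $3$-simplex relation.
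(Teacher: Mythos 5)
Your proposal is correct and takes essentially the same route as the paper: unpack the marking, solve the $3$-simplex identity for $M_{01}\otimes_{A_1}\varphi_{123}$, observe it is an isomorphism, and then strip off the equivalence $M_{01}$ to conclude $\varphi_{123}$ is one. You spell out the cancellation step more carefully than the paper does — explicitly tensoring with $M_{01}'$, invoking \cref{NaturalityAssociatorTensor} to pass to $(M_{01}'\otimes_{A_0}M_{01})\otimes_{A_1}\varphi_{123}$, then \cref{ReductionII} and the left mirror of \cref{ReductionI} — whereas the paper compresses this to a single citation of \cref{ReductionI,ReductionII}; your account correctly identifies that the mirror versions of those lemmas are what is actually being used.
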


\begin{proof}
We treat the case $k=0$, the case $k=3$ being analogous.
Given a map of marked simplicial sets $\Delta^0[3]'\to \mrt^{\natural}(\cC^{\otimes})$, which can be depicted as
\[\begin{tikzcd}[sep=large, ampersand replacement=\&]
	A_1 \& A_2 \& A_1 \& A_2 \\
	A_0 \& A_3 \& A_0 \& A_3
	\arrow["M_{12}", from=1-1, to=1-2]
	\arrow[""{name=0, anchor=center, inner sep=0}, "M_{23}", from=1-2, to=2-2]
	\arrow["M_{12}", from=1-3, to=1-4]
	\arrow[""{name=1, anchor=center, inner sep=0}, "M_{13}"{pos=0.2}, from=1-3, to=2-4]
	\arrow["M_{23}", from=1-4, to=2-4]
	\arrow["M_{01}", color={rgb,255:red,65;green,51;blue,255}, from=2-1, to=1-1]
	\arrow[""{name=2, anchor=center, inner sep=0}, "M_{02}"{pos=0.8}, from=2-1, to=1-2]
	\arrow[""{name=3, anchor=center, inner sep=0}, "M_{03}"', from=2-1, to=2-2]
	\arrow[""{name=4, anchor=center, inner sep=0}, "M_{01}", color={rgb,255:red,65;green,51;blue,255}, from=2-3, to=1-3]
	\arrow[""{name=5, anchor=center, inner sep=0}, "M_{03}"', from=2-3, to=2-4]
	\arrow["\varphi_{012}"'{pos=0.4}, color={rgb,255:red,65;green,51;blue,255}, shorten >=3pt, Rightarrow, from=1-1, to=2]
	\arrow[color={rgb,255:red,65;green,51;blue,255}, shorten <=19pt, shorten >=19pt, Rightarrow, scaling nfold=3, from=0, to=4]
	\arrow["\varphi_{023}"{pos=0.6}, shift right, color={rgb,255:red,65;green,51;blue,255}, shorten <=8pt, shorten >=4pt, Rightarrow, from=1-2, to=3]
	\arrow["\varphi_{013}"'{pos=0.6}, shift left, color={rgb,255:red,65;green,51;blue,255}, shorten <=8pt, shorten >=4pt, Rightarrow, from=1-3, to=5]
	\arrow["\varphi_{123}", shorten >=3pt, Rightarrow, from=1-4, to=1]
\end{tikzcd}\]
we show that $\varphi_{123}$ is an isomorphism, obtaining the desired lift $\Delta^0[3]''\to \mrt^{\natural}(\cC^{\otimes})$.
By assumption we have the equality of $(A_0,A_3)$-bimodules from $M_{01}\otimes_{A_1}M_{12}\otimes_{A_2}M_{23}$ to $M_{03}$
\[\overline{\alpha}_{M_{01},M_{12},M_{23}} \bullet (M_{01} \otimes_{A_1} \varphi_{123}) \bullet \varphi_{013}= (\varphi_{012} \otimes_{A_{2}} M_{23}) \bullet \varphi_{023}.\]
Since $\varphi_{013}$ is an isomorphism we obtain the equality of $(A_0,A_3)$-bimodules from $M_{01}\otimes_{A_1} (M_{12}\otimes_{A_2}M_{23})$ to $M_{01}\otimes_{A_1}M_{13}$
\[M_{01} \otimes_{A_1} \varphi_{123} = \overline{\alpha}^{-1}_{M_{01}|M_{12}|M_{23}} \bullet (\varphi_{012} \otimes_{A_{2}} M_{23}) \bullet \varphi_{023}\bullet \varphi_{013}^{-1}.\]

It then follows that $M_{01} \otimes_{A_1} \varphi_{123}$ is an isomorphism. By \cref{ReductionI}, it follows that $A_{1} \otimes_{A_1} \varphi_{123}$ is an isomorphism. By \cref{ReductionII}, it follows that $ \varphi_{123}$ is an isomorphism, as desired.
\end{proof}

\begin{prop}
\label{thin31}
The marked simplicial set $\mrt^{\natural}(\cC^{\otimes})$ has the right lifting property with respect to the map $\Delta^k[3]'\to\Delta^k[3]''$ for $k=1,2$.
\end{prop}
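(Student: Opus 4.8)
The plan is to argue exactly as in \cref{thin30}: a lift against $\Delta^k[3]'\to\Delta^k[3]''$ amounts to checking that one prescribed $2$-face of a given $3$-simplex is a marked $2$-simplex, i.e.\ that the bimodule map inhabiting it is an isomorphism, and this will be extracted by solving the $3$-simplex cocycle relation of \cref{MonoidalNerve} for that map.

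First I would unwind the marking data. A map $\Delta^1[3]'\to\mrt^{\natural}(\cC^{\otimes})$ is a $3$-simplex $(A_i,M_{ij},\varphi_{ijk})$ in which the $2$-simplices carrying $\varphi_{012}$ (the $3$rd face, already marked in $\Delta^1[3]$ since $[012]$ contains $\{0,1,2\}$), $\varphi_{123}$ (the $0$th face) and $\varphi_{013}$ (the $2$nd face) are marked, i.e.\ $\varphi_{012}$, $\varphi_{123}$, $\varphi_{013}$ are bimodule isomorphisms; producing a lift $\Delta^1[3]''\to\mrt^{\natural}(\cC^{\otimes})$ means precisely showing that the $1$st face, carrying $\varphi_{023}$, is marked, i.e.\ that $\varphi_{023}$ is a bimodule isomorphism. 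For $k=2$ the roles permute: one is given that $\varphi_{123}$, $\varphi_{023}$, $\varphi_{012}$ are isomorphisms (the $0$th, $1$st and $3$rd faces) and must show that $\varphi_{013}$ (the $2$nd face) is.

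The key step is the defining relation of a $3$-simplex,
\[
(\varphi_{012}\otimes_{A_2}M_{23})\bullet\varphi_{023}=\overline{\alpha}_{M_{01}|M_{12}|M_{23}}\bullet(M_{01}\otimes_{A_1}\varphi_{123})\bullet\varphi_{013}.
\]
Here $\varphi_{012}\otimes_{A_2}M_{23}$ is an isomorphism by \cref{RmkEpi} since $\varphi_{012}$ is, $M_{01}\otimes_{A_1}\varphi_{123}$ is an isomorphism by \cref{TensorWithIso} since $\varphi_{123}$ is, $\overline{\alpha}_{M_{01}|M_{12}|M_{23}}$ is an isomorphism by \cref{AssociativityForBimodules}, and $\varphi_{013}$ is an isomorphism by hypothesis. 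Solving for the unknown then exhibits it as a composite of bimodule isomorphisms: for $k=1$,
\[
\varphi_{023}=(\varphi_{012}\otimes_{A_2}M_{23})^{-1}\bullet\overline{\alpha}_{M_{01}|M_{12}|M_{23}}\bullet(M_{01}\otimes_{A_1}\varphi_{123})\bullet\varphi_{013},
\]
and for $k=2$,
\[
\varphi_{013}=(M_{01}\otimes_{A_1}\varphi_{123})^{-1}\bullet\overline{\alpha}_{M_{01}|M_{12}|M_{23}}^{-1}\bullet(\varphi_{012}\otimes_{A_2}M_{23})\bullet\varphi_{023},
\]
so the relevant map is a bimodule isomorphism, yielding the desired lift.

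I expect no serious obstacle here: unlike the complicial horn propositions, nothing new (no bimodule, no bimodule map) must be constructed, and the only inputs are that the inhabiting maps of the given faces are isomorphisms together with the cocycle identity. The only points requiring a little care are the bookkeeping of which $2$-face of $\Delta[3]$ carries which $\varphi_{ijk}$, so as to identify correctly, for each $k$, which map plays the role of the unknown, and invoking \cref{RmkEpi} and \cref{TensorWithIso} to transport isomorphisms through the balanced tensor products $(-)\otimes_{A_2}M_{23}$ and $M_{01}\otimes_{A_1}(-)$.
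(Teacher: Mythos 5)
Your proof is correct and follows essentially the same route as the paper: solve the defining $3$-simplex cocycle relation for the one unknown $\varphi_{ijk}$ and observe that the resulting expression is a composite of bimodule isomorphisms (using \cref{TensorWithIso}, its right-tensoring analogue, and \cref{AssociativityForBimodules}). If anything yours is slightly cleaner — the paper's printed proof has a typo (writing $\varphi_{023}$ where $\varphi_{123}$ is meant in the cocycle relation) and ends by invoking \cref{ReductionI,ReductionII}, which, as your formula makes explicit, are not actually needed here.
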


\begin{proof}
We treat the case $k=1$, the case $k=2$ being analogous.
Given a map of marked simplicial sets $\Delta^1[3]'\to \mrt^{\natural}(\cC^{\otimes})$, which can be depicted as
\[\begin{tikzcd}[sep=large, ampersand replacement=\&]
	A_1 \& A_2 \& A_1 \& A_2 \\
	A_0 \& A_3 \& A_0 \& A_3
	\arrow["M_{12}", from=1-1, to=1-2]
	\arrow[""{name=0, anchor=center, inner sep=0}, "M_{23}", from=1-2, to=2-2]
	\arrow["M_{12}", from=1-3, to=1-4]
	\arrow[""{name=1, anchor=center, inner sep=0}, "M_{13}"{pos=0.2}, from=1-3, to=2-4]
	\arrow["M_{23}", from=1-4, to=2-4]
	\arrow["M_{01}", from=2-1, to=1-1]
	\arrow[""{name=2, anchor=center, inner sep=0}, "M_{02}"{pos=0.8}, from=2-1, to=1-2]
	\arrow[""{name=3, anchor=center, inner sep=0}, "M_{03}"', from=2-1, to=2-2]
	\arrow[""{name=4, anchor=center, inner sep=0}, "M_{01}", from=2-3, to=1-3]
	\arrow[""{name=5, anchor=center, inner sep=0}, "M_{03}"', from=2-3, to=2-4]
	\arrow["\varphi_{012}"'{pos=0.4}, color={rgb,255:red,65;green,51;blue,255}, shorten >=3pt, Rightarrow, from=1-1, to=2]
	\arrow[color={rgb,255:red,65;green,51;blue,255}, shorten <=19pt, shorten >=19pt, Rightarrow, scaling nfold=3, from=0, to=4]
	\arrow["\varphi_{023}"{pos=0.6}, shift right, shorten <=8pt, shorten >=4pt, Rightarrow, from=1-2, to=3]
	\arrow["\varphi_{013}"'{pos=0.6}, shift left, color={rgb,255:red,65;green,51;blue,255}, shorten <=8pt, shorten >=4pt, Rightarrow, from=1-3, to=5]
	\arrow["\varphi_{123}", color={rgb,255:red,65;green,51;blue,255}, shorten >=3pt, Rightarrow, from=1-4, to=1]
\end{tikzcd}\]
we show that $\varphi_{023}$ is an isomorphism, obtaining the desired lift $\Delta^1[3]''\to \mrt^{\natural}(\cC^{\otimes})$.
By assumption we have an equality of $(A_0,A_3)$-bimodule maps from $(M_{01}\otimes_{A_1} M_{12})\otimes_{A_2} M_{23}$ to $M_{03}$
\[\overline\alpha_{M_{01}|M_{12}|M_{23}}\bullet (M_{01} \otimes_{A_1} \varphi_{023}) \bullet \varphi_{013}= (\varphi_{012} \otimes_{A_2} M_{23}) \bullet \varphi_{023}.\]
We then deduce the equality of bimodule maps from $M_{02}\otimes_{A_2} M_{23}$ to $M_{03}$
\[(\varphi_{012} \otimes_{A_2} M_{23})^{-1} \bullet \overline\alpha_{M_{01}|M_{12}|M_{23}}\bullet (M_{01} \otimes_{A_1} \varphi_{023}) \bullet \varphi_{013}=  \varphi_{023}.
\]
By \cref{ReductionII,ReductionI}, follows that $\varphi_{023}$ is an isomorphism, as desired.
\end{proof}

\begin{prop}
\label{thinhigh}
The marked simplicial set $\mrt^{\natural}(\cC^{\otimes})$ has the right lifting property with respect to the map $\Delta^k[m]'\to\Delta^k[m]''$ for $m>3$ and $0\leq k\leq m$.
\end{prop}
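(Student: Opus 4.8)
The plan is to exploit the fact that $\mrt^{\natural}(\cC^{\otimes})$ carries the maximal marking in every dimension strictly above $2$, so that in the range $m>3$ the thinness extension becomes vacuous. First I would unwind \cref{preliminarynotation}: the inclusion $\Delta^k[m]'\to\Delta^k[m]''$ is the identity on underlying simplicial sets, both of which are $\Delta[m]$, and it only enlarges the collection of marked simplices by adjoining the $k$-th $(m-1)$-dimensional face of $\Delta[m]$.

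Next I would observe that, since $m>3$ forces $m-1\ge 3$, the image of that $k$-th face under any simplicial map $\Delta[m]\to\mrt(\cC^{\otimes})$ is an $(m-1)$-simplex, hence automatically a marked simplex of $\mrt^{\natural}(\cC^{\otimes})$ by the construction of the naturally marked monoidal nerve (every simplex of dimension greater than $2$ is marked). Therefore any marking-preserving map $\Delta^k[m]'\to\mrt^{\natural}(\cC^{\otimes})$ already is a marking-preserving map $\Delta^k[m]''\to\mrt^{\natural}(\cC^{\otimes})$, and taking this same map as the lift solves the lifting problem against $\Delta^k[m]'\to\Delta^k[m]''$ (uniquely, in fact). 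Equivalently, one may note that $\mathrm{sk}_3(\Delta^k[m]')\to\mathrm{sk}_3(\Delta^k[m]'')$ is an isomorphism of marked simplicial sets in this range and invoke $3$-coskeletality exactly as in the proof of \cref{hornHigh}, but this is not even needed.

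I do not anticipate any genuine obstacle here; the entire content is the bookkeeping of which faces acquire a marking in passing from $\Delta^k[m]'$ to $\Delta^k[m]''$, together with the remark that $m>3$ places every simplex whose marking status could possibly matter in dimension at least $3$, where the marking on $\mrt^{\natural}(\cC^{\otimes})$ is maximal.
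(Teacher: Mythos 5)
Your argument is correct and is essentially the paper's proof, which phrases the same observation via the $\mathrm{th}_3\dashv\mathrm{sp}_3$ adjunction: since $\mrt^{\natural}(\cC^{\otimes})$ is $2$-trivial, the extra marking on the $(m-1)$-dimensional $k$-th face (with $m-1\geq 3$) is automatic. Your aside about $\mathrm{sk}_3(\Delta^k[m]')\to\mathrm{sk}_3(\Delta^k[m]'')$ being an isomorphism fails for $m=4$ (the $k$-th face is then $3$-dimensional and lies in the $3$-skeleton), but as you correctly flag, that aside is not used in the main argument.
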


\begin{proof}
Recall from \cite[Notation 13]{VerityComplicialI} that there is an adjunction
\[\mathrm{th}_3\colon m\sset\rightleftarrows\colon m\sset\colon \mathrm{sp}_3.\]
Here, $m\sset$ denotes the category of marked simplicial sets and marking preserving simplicial maps, the functor $\mathrm{th}_3$ adds to the existing marking of a marked simplicial sets all the simplices in dimension bigger or equal than $3$, and the functor $\mathrm{sp}_3$ selects the largest subsimplicial set with the property that all the simplices in dimension bigger or equal than $3$ are marked. Since $\mrt^{\natural}(\cC^{\otimes})$ is $2$-trivial, by construction we have
$\mrt^{\natural}(\cC^{\otimes})=\mathrm{sp}_3(\mrt^{\natural}(\cC^{\otimes}))$.
Hence, the two lifting problems below are equivalent:
\[
\begin{tikzcd}
    \Delta^k[m]'\arrow[r]\arrow[d]&\mrt^{\natural}(\cC^{\otimes})\\
    \Delta^k[m]''\arrow[ru,dashed]&
\end{tikzcd}
\quad\quad
\leftrightsquigarrow
\quad\quad
\begin{tikzcd}
\mathrm{th}_3\Delta^k[m]'\arrow[r]\arrow[d]&\mrt^{\natural}(\cC^{\otimes})\\
\mathrm{th}_3\Delta^k[m]''\arrow[ru,dashed]&
\end{tikzcd}
\]
Given that the vertical map in the right hand lifting problem is an isomorphism, this lifting problem has a solution. Hence, so does the left hand lifting problem, as desired.
\end{proof}

\subsection{Saturation}

In this subsection we verify that $\mrt^\natural(\cC^{\otimes})$ lifts against all saturation extensions:

\begin{thm}
\label{saturation}
The marked simplicial set $\mrt^{\natural}(\cC^{\otimes})$ has the right lifting property with respect to the map $\Delta[3]^{\mathrm{eq}}\star\Delta[\ell]\to\Delta[3]^{\sharp}\star\Delta[\ell]$ for $\ell\geq-1$. 
\end{thm}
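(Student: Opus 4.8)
The plan is to exploit that the two joins $\Delta[3]^{\mathrm{eq}}\star\Delta[\ell]$ and $\Delta[3]^{\sharp}\star\Delta[\ell]$ have the same underlying simplicial set, namely the standard $(\ell+4)$-simplex. Given a marking-preserving map $f\colon\Delta[3]^{\mathrm{eq}}\star\Delta[\ell]\to\mrt^{\natural}(\cC^{\otimes})$, the underlying simplicial map of any lift is forced, so the only content is that $f$ remains marking-preserving for the larger marking of $\Delta[3]^{\sharp}\star\Delta[\ell]$. Writing a nondegenerate simplex of the $(\ell+4)$-simplex as $\sigma\star\tau$ with $\sigma$ supported on $\{0,1,2,3\}$ and $\tau$ supported on $\{4,\dots,\ell+4\}$, a direct comparison of the two markings shows that the simplices marked in $\Delta[3]^{\sharp}\star\Delta[\ell]$ but not in $\Delta[3]^{\mathrm{eq}}\star\Delta[\ell]$ are exactly those with $\sigma$ one of the edges $[0,1],[1,2],[2,3],[0,3]$ and $\tau$ either empty or a nondegenerate simplex of $\Delta[\ell]$. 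Since $\mrt^{\natural}(\cC^{\otimes})$ is $2$-trivial, such a simplex automatically maps to a marked simplex once its dimension is at least $3$, i.e.\ once $\dim\tau\ge 1$. This leaves two cases: $\tau=\emptyset$, where $\sigma\star\tau$ is one of the edges $M_{01},M_{12},M_{23},M_{03}$ of the $3$-simplex spanned by $\{0,1,2,3\}$; and $\dim\tau=0$, where $\sigma\star\tau$ is a triangle spanned by such an edge and one further vertex $e\in\{4,\dots,\ell+4\}$. Restricting $f$ along the face inclusions $\{0,1,2,3\}\hookrightarrow\{0,\dots,\ell+4\}$ and $\{0,1,2,3,e\}\hookrightarrow\{0,\dots,\ell+4\}$, whose induced markings are precisely those of $\Delta[3]^{\mathrm{eq}}$ and of $\Delta[3]^{\mathrm{eq}}\star\Delta[0]$, reduces the whole statement to the cases $\ell=-1$ and $\ell=0$.

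For $\ell=-1$: a marking-preserving map $\Delta[3]^{\mathrm{eq}}\to\mrt^{\natural}(\cC^{\otimes})$ is a $3$-simplex of $\mrt(\cC^{\otimes})$ in which $M_{02}$ and $M_{13}$ are equivalences and, since every triangular face of $\Delta[3]^{\mathrm{eq}}$ is marked, $\varphi_{012},\varphi_{013},\varphi_{023},\varphi_{123}$ are bimodule isomorphisms; a lift to $\Delta[3]^{\sharp}$ amounts exactly to showing that $M_{01},M_{12},M_{23},M_{03}$ are equivalences. Using $\varphi_{012}$ and \cref{AssociativityForBimodules} to identify $M_{02}$ with $M_{01}\otimes_{A_1}M_{12}$, an inverse $K$ of $M_{02}$ gives $L:=K\otimes_{A_0}M_{01}$ with $L\otimes_{A_1}M_{12}\cong A_2$; dually, using $\varphi_{123}$ to identify $M_{13}$ with $M_{12}\otimes_{A_2}M_{23}$, an inverse $K'$ of $M_{13}$ gives $W:=M_{23}\otimes_{A_3}K'$ with $M_{12}\otimes_{A_2}W\cong A_1$. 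A routine manipulation with \cref{AssociativityForBimodules,UnitalityForBimodules} yields $L\cong L\otimes_{A_1}(M_{12}\otimes_{A_2}W)\cong (L\otimes_{A_1}M_{12})\otimes_{A_2}W\cong W$, exhibiting $M_{12}$ as an $(A_1,A_2)$-equivalence with inverse $M_{12}'$; then $M_{01}\cong M_{02}\otimes_{A_2}M_{12}'$, $M_{23}\cong M_{12}'\otimes_{A_1}M_{13}$ and $M_{03}\cong M_{01}\otimes_{A_1}M_{13}$ are equivalences, being composites of equivalences. (Closure of equivalences under composition and under passage to inverses is immediate from \cref{AssociativityForBimodules,UnitalityForBimodules,TensorWithIso}.)

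For $\ell=0$: here $f$ is a $4$-simplex of $\mrt(\cC^{\otimes})$ on vertices $\{0,1,2,3,e\}$ in which $\varphi_{012},\varphi_{023},\varphi_{123}$ (triangular faces of $\Delta[3]$) and $\varphi_{02e},\varphi_{13e}$ (images of $[0,2]\star\{e\}$ and $[1,3]\star\{e\}$) are isomorphisms and, by the case $\ell=-1$ applied to the $\{0,1,2,3\}$-face, $M_{01}$ and $M_{12}$ are equivalences; we must show $\varphi_{01e},\varphi_{12e},\varphi_{23e},\varphi_{03e}$ are isomorphisms. Applying the defining relation of the $3$-simplices in \cref{MonoidalNerve} to the $3$-faces on $\{0,1,2,e\}$ and on $\{1,2,3,e\}$, and using \cref{Axioms} to know the relevant $\overline\alpha$'s are isomorphisms together with \cref{TensorWithIso,RmkEpi} to tensor the known isomorphisms, I would solve each relation for its right-hand side to obtain that both $(M_{01}\otimes_{A_1}\varphi_{12e})\bullet\varphi_{01e}$ and $(M_{12}\otimes_{A_2}\varphi_{23e})\bullet\varphi_{12e}$ are isomorphisms. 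The first of these makes $M_{01}\otimes_{A_1}\varphi_{12e}$ a split monomorphism and the second makes $\varphi_{12e}$ a split epimorphism, hence makes $M_{01}\otimes_{A_1}\varphi_{12e}$ a split epimorphism as well, since $M_{01}\otimes_{A_1}(-)$ is a functor (\cref{TensorFunctorial}) and split epimorphisms are absolute. A map that is both a split monomorphism and a split epimorphism is an isomorphism, so $M_{01}\otimes_{A_1}\varphi_{12e}$ is an isomorphism; and since $M_{01}$ is an equivalence, $M_{01}\otimes_{A_1}(-)$ reflects isomorphisms, so $\varphi_{12e}$ is an isomorphism. Feeding this back: the $\{0,1,2,e\}$-relation now forces $\varphi_{01e}$ to be an isomorphism; the $\{1,2,3,e\}$-relation forces $M_{12}\otimes_{A_2}\varphi_{23e}$, hence (reflecting along the equivalence $M_{12}$) $\varphi_{23e}$, to be an isomorphism; and finally the $\{0,2,3,e\}$-relation forces $\varphi_{03e}$ to be an isomorphism.

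The main obstacle is this last case: naively bookkeeping the $3$-simplex relations only forces $\varphi_{01e},\varphi_{12e},\varphi_{23e},\varphi_{03e}$ to be invertible or not \emph{simultaneously}, and it is the split-monomorphism/split-epimorphism argument applied to the middle map $\varphi_{12e}$ — using that $M_{01}\otimes_{A_1}(-)$ both preserves split epimorphisms and reflects isomorphisms — that breaks the symmetry and pins them all down. A small auxiliary fact used repeatedly, and worth isolating beforehand, is that $M\otimes_B(-)$ (and $(-)\otimes_B M$) reflects isomorphisms when $M$ is an equivalence; this follows from the naturality of the associativity isomorphisms (\cref{NaturalityAssociatorTensor}) together with \cref{ReductionI,ReductionII}.
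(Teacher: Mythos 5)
Your proposal is correct, and the core of it (the $\ell=-1$ and $\ell=0$ cases) follows essentially the same strategy as the paper: for $\ell=-1$ you first show $M_{12}$ is an equivalence and deduce the other three edges, exactly as in \cref{saturation-1}; for $\ell=0$ you pick the middle triangle $\varphi_{12e}$, show $M_{01}\otimes_{A_1}\varphi_{12e}$ has both a retraction (from the $\{0,1,2,e\}$-relation) and a section (from the $\{1,2,3,e\}$-relation, pushed forward by the functor $M_{01}\otimes_{A_1}(-)$), conclude it is an isomorphism and reflect along the equivalence $M_{01}$, then feed back to the remaining triangles — this is the same ``left inverse / right inverse'' argument as in \cref{saturation0}, with the split-mono/split-epi language making the two halves a bit more symmetric. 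The place where you genuinely deviate is the reduction of $\ell>0$ to $\ell\in\{-1,0\}$: you do it by restricting to the faces $\{0,1,2,3\}$ and $\{0,1,2,3,e\}$ and observing that every excess marked simplex of dimension $\le 2$ lives in one of these faces with the induced marking of $\Delta[3]^{\mathrm{eq}}$ or $\Delta[3]^{\mathrm{eq}}\star\Delta[0]$, while everything in dimension $\ge 3$ is automatically marked in $\mrt^\natural(\cC^\otimes)$. The paper instead invokes the $\mathrm{th}_3\dashv\mathrm{sp}_3$ adjunction in \cref{saturationHigh} and asserts the two markings coincide after applying $\mathrm{th}_3$; as stated this is not quite accurate, since for $\ell>0$ the edges $[0,1],[1,2],[2,3],[0,3]$ and the triangles they span with a further vertex still distinguish the two markings in dimensions $1$ and $2$. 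Your face-restriction reduction sidesteps this and is the more robust of the two; the conclusion is of course the same, since the content in dimensions $\le 2$ is exactly the $\ell=-1$ and $\ell=0$ cases, which both arguments then handle identically.
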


\begin{proof}
We treat the various cases as \cref{saturation-1,saturation0,saturationHigh}.
\end{proof}

\begin{prop}
\label{saturation-1}
The marked simplicial set $\mrt^{\natural}(\cC^{\otimes})$ has the right lifting property with respect to the map $\Delta[3]^{\mathrm{eq}}\to\Delta[3]^{\sharp}$.
\end{prop}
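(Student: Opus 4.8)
The plan is to begin by unwinding the lifting problem. The map $\Delta[3]^{\mathrm{eq}}\to\Delta[3]^{\sharp}$ is the identity on underlying simplicial sets, so a solution amounts to the assertion that every marking-preserving map $\Delta[3]^{\mathrm{eq}}\to\mrt^{\natural}(\cC^{\otimes})$ is already marking-preserving as a map out of $\Delta[3]^{\sharp}$. Such a map is a $3$-simplex of $\mrt(\cC^{\otimes})$, namely monoids $A_0,\dots,A_3$, bimodules $M_{ij}$ for $0\leq i<j\leq 3$, and bimodule maps $\varphi_{ijk}\colon M_{ij}\otimes_{A_j}M_{jk}\to M_{ik}$ satisfying the cocycle identity; marking-preservation forces all four $\varphi_{ijk}$ to be bimodule isomorphisms and forces $M_{02}$ and $M_{13}$ to be equivalences in the sense of \cref{Equivalence}. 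Since $\mrt^{\natural}(\cC^{\otimes})$ is $2$-trivial, extending to $\Delta[3]^{\sharp}$ reduces to proving that the four remaining edges $M_{01}$, $M_{12}$, $M_{23}$, $M_{03}$ are equivalences as well. (The cocycle identity plays no role in this argument.)

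Next I would record two elementary closure properties of equivalences, both immediate from \cref{AssociativityForBimodules,UnitalityForBimodules,TensorWithIso}: a quasi-inverse of an equivalence is an equivalence (by the symmetry of \cref{Equivalence}), and a balanced tensor product of equivalences is an equivalence (if $M'$, $N'$ are quasi-inverses of $M$, $N$ then $N'\otimes M'$ is a quasi-inverse of $M\otimes N$, the necessary isomorphisms being assembled from $\overline\alpha$, $\overline\ell$, $\overline r$). I would also establish a ``two-sided inverse'' lemma: if an $(A_1,A_2)$-bimodule $M$ admits an $(A_2,A_1)$-bimodule $L$ with $L\otimes_{A_1}M\cong A_2$ and an $(A_2,A_1)$-bimodule $R$ with $M\otimes_{A_2}R\cong A_1$, then the chain $L\cong L\otimes_{A_1}A_1\cong L\otimes_{A_1}(M\otimes_{A_2}R)\cong (L\otimes_{A_1}M)\otimes_{A_2}R\cong A_2\otimes_{A_2}R\cong R$ exhibits $R$ as a quasi-inverse of $M$, so $M$ is an equivalence.

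The main step is then a short bootstrap using the isomorphisms $\varphi_{012}\colon M_{01}\otimes_{A_1}M_{12}\cong M_{02}$, $\varphi_{123}\colon M_{12}\otimes_{A_2}M_{23}\cong M_{13}$, and $\varphi_{023}\colon M_{02}\otimes_{A_2}M_{23}\cong M_{03}$. From the equivalence data for $M_{02}$ we obtain, after applying associativity, that the $(A_2,A_1)$-bimodule $M_{02}'\otimes_{A_0}M_{01}$ is a left inverse of $M_{12}$; from that for $M_{13}$ we obtain that the $(A_2,A_1)$-bimodule $M_{23}\otimes_{A_3}M_{13}'$ is a right inverse of $M_{12}$; so the lemma above makes $M_{12}$ an equivalence, say with quasi-inverse $P$. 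Then $M_{01}\cong M_{01}\otimes_{A_1}(M_{12}\otimes_{A_2}P)\cong M_{02}\otimes_{A_2}P$ is a balanced tensor product of the equivalences $M_{02}$ and $P$, hence an equivalence; symmetrically $M_{23}\cong(P\otimes_{A_1}M_{12})\otimes_{A_2}M_{23}\cong P\otimes_{A_1}M_{13}$ is an equivalence; and $M_{03}\cong M_{02}\otimes_{A_2}M_{23}$ is a balanced tensor product of equivalences, hence an equivalence. This supplies the required lift $\Delta[3]^{\sharp}\to\mrt^{\natural}(\cC^{\otimes})$.

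I expect the only delicate point to be the bookkeeping: keeping track of which balanced tensor product is formed over which monoid, and confirming that each displayed isomorphism respects the ambient bimodule structures --- but this is exactly what \cref{CompositeBimodule,AssociativityForBimodules,UnitalityForBimodules,TensorWithIso} are designed to guarantee, so no genuinely new obstacle arises.
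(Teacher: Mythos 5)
Your argument is correct, and the core mechanism is the same as the paper's: bootstrap from $M_{12}$, which sits at the intersection of the two marked diagonals $M_{02}$ and $M_{13}$, by producing a left inverse from the $M_{02}$-data via $\varphi_{012}$ and a right inverse from the $M_{13}$-data via $\varphi_{123}$, then propagate to the other three edges. The ``two-sided inverse'' chain you write out, $L\cong L\otimes_{A_1}A_1\cong L\otimes_{A_1}(M\otimes_{A_2}R)\cong(L\otimes_{A_1}M)\otimes_{A_2}R\cong A_2\otimes_{A_2}R\cong R$, is exactly the calculation the paper performs inline to show $M_{02}'\otimes_{A_0}M_{01}\cong M_{23}\otimes_{A_3}M_{13}'$ before setting $M_{12}'$ equal to one of them; you've just factored it out as an abstract lemma, which is a clean way to present it.

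The one genuine organizational difference: once $M_{12}$ is an equivalence, the paper simply cites \cref{thin20} twice (to mark $M_{01}$ and $M_{23}$) and then \cref{thin21} (to mark $M_{03}$), since those propositions have already established exactly the needed ``two marked edges plus a marked filler forces the third edge marked'' statements. You instead reprove the underlying closure properties (quasi-inverse of an equivalence is an equivalence, balanced tensor of equivalences is an equivalence) ad hoc. That is self-contained but duplicates work the paper has already done; in a write-up you should appeal to \cref{thin20} and \cref{thin21} directly, which also makes transparent that saturation for $\Delta[3]$ reduces to a single new computation (that $M_{12}$ is an equivalence) plus three applications of the thinness extensions.

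Minor bookkeeping points worth double-checking if you flesh this out: the chains of isomorphisms need to be isomorphisms of bimodules, not merely of objects of $\cC$, so each step should invoke \cref{AssociativityForBimodules}, \cref{UnitalityForBimodules}, or \cref{TensorWithIso} rather than the un-barred $\alpha$, $\lambda$, $\rho$; and the identification $M_{01}\cong M_{02}\otimes_{A_2}P$ uses $\varphi_{012}$ being a bimodule isomorphism together with $\overline\alpha$ and $\overline r$, so it should be spelled out in the same style as your chain for $L\cong R$.
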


\begin{proof}
Given a map of marked simplicial sets $\Delta[3]^{\mathrm{eq}}\to \mrt^{\natural}(\cC^{\otimes})$, which can be depicted as
\[\begin{tikzcd}[sep=large, ampersand replacement=\&]
	A_{1} \& A_{2} \& A_{1} \& A_{2} \\
	A_{0} \& A_{3} \& A_{0} \& A_{3}
	\arrow["M_{12}", from=1-1, to=1-2]
	\arrow[""{name=0, anchor=center, inner sep=0}, "M_{23}", from=1-2, to=2-2]
	\arrow["M_{12}", from=1-3, to=1-4]
	\arrow[""{name=1, anchor=center, inner sep=0}, "M_{13}"{pos=0.2}, color={rgb,255:red,65;green,51;blue,255}, from=1-3, to=2-4]
	\arrow["M_{23}", from=1-4, to=2-4]
	\arrow["M_{01}", from=2-1, to=1-1]
	\arrow[""{name=2, anchor=center, inner sep=0}, "M_{02}"{pos=0.8}, color={rgb,255:red,65;green,51;blue,255}, from=2-1, to=1-2]
	\arrow[""{name=3, anchor=center, inner sep=0}, "M_{03}"', from=2-1, to=2-2]
	\arrow[""{name=4, anchor=center, inner sep=0}, "M_{01}", from=2-3, to=1-3]
	\arrow[""{name=5, anchor=center, inner sep=0}, "M_{03}"', from=2-3, to=2-4]
	\arrow["\alpha"'{pos=0.4}, color={rgb,255:red,65;green,51;blue,255}, shorten >=3pt, Rightarrow, from=1-1, to=2]
	\arrow[color={rgb,255:red,65;green,51;blue,255}, shorten <=19pt, shorten >=19pt, Rightarrow, scaling nfold=3, from=0, to=4]
	\arrow["\beta"{pos=0.6}, shift right, color={rgb,255:red,65;green,51;blue,255}, shorten <=8pt, shorten >=4pt, Rightarrow, from=1-2, to=3]
	\arrow["\gamma"'{pos=0.6}, shift left, color={rgb,255:red,65;green,51;blue,255}, shorten <=8pt, shorten >=4pt, Rightarrow, from=1-3, to=5]
	\arrow["\delta", color={rgb,255:red,65;green,51;blue,255}, shorten >=3pt, Rightarrow, from=1-4, to=1]
\end{tikzcd}\]
we show that $M_{01}$, $M_{12}$, $M_{23}$ and $M_{03}$ are equivalences, obtaining the desired lift $\Delta[3]^\sharp\to \mrt^{\natural}(\cC^{\otimes})$.

Since $M_{02}$ and $M_{13}$ are equivalences, there exists $M_{02}'$ a $(A_{2},A_{0})$-bimodule, $M_{13}'$ a $(A_{3},A_{1})$-bimodule, as well as 
isomorphisms of bimodules
\[
M_{02}\otimes_{A_{2}} M_{02}'\cong A_{0} \quad M_{02}'\otimes_{A_{0}} M_{02}\cong A_{2} \quad M_{13} \otimes_{A_{3}} M_{13}'\cong A_{1} \quad M_{13}'\otimes_{A_{1}} M_{13} \cong A_{3}.
\]
We obtain a $(A_{2},A_{1})$-bimodule isomorphism in $\cC$:
\[
\begin{array}{lll}
    M_{02}'\otimes_{A_{0}} M_{01} &\cong (M_{02}'\otimes_{A_{0}} M_{01})\otimes_{A_{1}} A_{1} &\text{\cref{UnitalityForBimodules}}\\
    &\cong (M_{02}'\otimes_{A_{0}} M_{01})\otimes_{A_{1}} (M_{13}\otimes_{A_{3}} M_{13}')& \\
    &\cong (M_{02}'\otimes_{A_{0}}(M_{01} \otimes_{A_{1}} M_{13}))\otimes_{A_{3}} M_{13}'&\text{\cref{AssociativityForBimodules}} \\
    &\cong (M_{02}'\otimes_{A_{0}}(M_{02}\otimes_{A_{2}}M_{23}))\otimes_{A_{3}}M_{13}'&\text{\cref{TensorWithIso}}\\
    &\cong (M_{02}'\otimes_{A_{0}}M_{02})\otimes_{A_{2}}(M_{23}\otimes_{A_{3}}M_{13}')\\
    &\cong A_{2}\otimes_{A_{2}}(M_{23}\otimes_{A_{3}}M_{13}')\\
    &\cong M_{23}\otimes_{A_{3}}M_{13}'&\text{\cref{UnitalityForBimodules}}.
\end{array}
\]
We then define $M_{12}' \coloneqq M_{02}' \otimes_{A_{0}} M_{01}$.
On one hand, we have an isomorphism of $(A_{1},A_{1})$-bimodules
\[
\begin{array}{lllll}
M_{12}\otimes_{A_{2}} M_{12}' &= M_{12} \otimes_{A_{2}} (M_{02}' \otimes_{A_{0}} M_{01})&\text{definition}\\
&\cong M_{12} \otimes_{A_{2}} (M_{23} \otimes_{A_{3}} M_{13}')&\text{\cref{TensorWithIso}}\\
&\cong (M_{12} \otimes_{A_{2}} M_{23}) \otimes_{A_{3}} M_{13}'&\text{\cref{AssociativityForBimodules}}\\
&\cong M_{13} \otimes_{A_{3}} M_{13}'\cong A_{1}&\text{\cref{TensorWithIso}}
\end{array}\]
On the other hand, we have an isomorphism of $(A_{2},A_{2})$-bimodules
\[\begin{array}{llllll}
M_{12}'\otimes M_{12}&=(M_{02}' \otimes_{A_{0}} M_{01}) \otimes_{A_{1}} M_{12} &\text{definition}\\
&\cong M_{02}' \otimes_{A_{0}} (M_{01} \otimes_{A_{1}} M_{12})&\text{\cref{AssociativityForBimodules}}\\
&\cong M_{02}' \otimes_{A_{0}} M_{02}\cong A_{2}&\text{\cref{TensorWithIso}}
\end{array}\]
So $M_{12}$ is an equivalence. Further, by \cref{thin20} it follows that $M_{12}$ and $M_{23}$ are equivalences. Finally, by \cref{thin21}, it follows that $M_{03}$ is an equivalence, concluding the proof.
\end{proof}

\begin{prop}
\label{saturation0}
The marked simplicial set $\mrt^{\natural}(\cC^{\otimes})$ has the right lifting property with respect to the map $\Delta[3]^{\mathrm{eq}}\star\Delta[0]\to\Delta[3]^{\sharp}\star\Delta[0]$.
\end{prop}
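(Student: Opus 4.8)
The plan is to use that $\Delta[3]^{\mathrm{eq}}\star\Delta[0]$ and $\Delta[3]^{\sharp}\star\Delta[0]$ have the same underlying simplicial set, namely $\Delta[4]$; a lift is therefore automatically unique, and exists if and only if the given map $f\colon\Delta[3]^{\mathrm{eq}}\star\Delta[0]\to\mrt^{\natural}(\cC^{\otimes})$ already carries every simplex marked in $\Delta[3]^{\sharp}\star\Delta[0]$ to a marked simplex. Unwinding \cref{preliminarynotation} and using that $\mrt^{\natural}(\cC^{\otimes})$ is $2$-trivial, I would first observe that the simplices marked in $\Delta[3]^{\sharp}\star\Delta[0]$ but not in $\Delta[3]^{\mathrm{eq}}\star\Delta[0]$ are exactly the edges $[0,1]$, $[0,3]$, $[1,2]$, $[2,3]$ and the triangles $[0,1,4]$, $[0,3,4]$, $[1,2,4]$, $[2,3,4]$ of $\Delta[4]$. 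Writing $A_i$, $M_{ij}$, $\varphi_{ijk}$ for the images under $f$ of the vertices, edges and triangles of $\Delta[4]$, the proposition thus reduces to showing that $M_{01},M_{03},M_{12},M_{23}$ are equivalences and that $\varphi_{014},\varphi_{034},\varphi_{124},\varphi_{234}$ are bimodule isomorphisms.

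For the edges I would restrict $f$ along the inclusion of the face of $\Delta[4]$ opposite the last vertex; with the marking it inherits, that face is precisely $\Delta[3]^{\mathrm{eq}}$, so \cref{saturation-1} applies and yields at once that $M_{01},M_{03},M_{12},M_{23}$ (and also $M_{02},M_{13}$) are equivalences; moreover the four triangles in that face, together with $[0,2,4]$ and $[1,3,4]$, are already marked in $\Delta[3]^{\mathrm{eq}}\star\Delta[0]$, so $\varphi_{012},\varphi_{013},\varphi_{023},\varphi_{123},\varphi_{024},\varphi_{134}$ are bimodule isomorphisms by hypothesis on $f$. The heart of the proof is then to propagate thinness to the remaining four triangles using the five cocycle identities among the $\varphi_{ijk}$ recording that $f$ restricts to a genuine $3$-simplex of $\mrt(\cC^{\otimes})$ on each tetrahedral face of $\Delta[4]$ (the identities built into \cref{MonoidalNerve}). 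Concretely: from the identity attached to $[1,2,3,4]$ and \cref{RmkEpi}, the composite $(M_{12}\otimes_{A_2}\varphi_{234})\bullet\varphi_{124}$ is invertible because $(\varphi_{123}\otimes_{A_3}M_{34})\bullet\varphi_{134}$ is, so $\varphi_{124}$ is a split epimorphism and hence so is $M_{01}\otimes_{A_1}\varphi_{124}$; from the identity attached to $[0,1,2,4]$, after discarding the isomorphism $\overline{\alpha}_{M_{01}|M_{12}|M_{24}}$ of \cref{InducedAlpha2}, the composite $(M_{01}\otimes_{A_1}\varphi_{124})\bullet\varphi_{014}$ is invertible because $(\varphi_{012}\otimes_{A_2}M_{24})\bullet\varphi_{024}$ is, so $M_{01}\otimes_{A_1}\varphi_{124}$ is also a split monomorphism, therefore an isomorphism, which forces $\varphi_{014}$ to be one as well. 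Once one new triangle is thin, the other three follow by the $3$-out-of-$4$ principle for tetrahedra already established in \cref{thin30,thin31}: the relevant instance of \cref{thin30} on $[0,1,2,4]$ (using the equivalence $M_{01}$) gives $\varphi_{124}$, then \cref{thin30} on $[1,2,3,4]$ (using $M_{12}$) gives $\varphi_{234}$, and \cref{thin31} on $[0,2,3,4]$ gives $\varphi_{034}$, yielding the desired lift $\Delta[3]^{\sharp}\star\Delta[0]\to\mrt^{\natural}(\cC^{\otimes})$.

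The step I expect to be the real obstacle is this bootstrapping that produces the first new thin triangle: none of the four tetrahedra of $\Delta[4]$ containing the last vertex has three of its triangular faces known to be thin at the outset, so \cref{thin30,thin31} cannot be invoked directly, and one must instead extract split‑epimorphism and split‑monomorphism data from a carefully chosen pair of cocycle identities and then recombine them using that a morphism which is simultaneously a split mono and a split epi is invertible. After that point the argument is a routine cascade, the only remaining care being the bookkeeping of which of the $\varphi_{ijk}$ plays which role in each application of \cref{thin30,thin31}.
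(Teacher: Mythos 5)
Your proof is correct and follows the paper's strategy very closely: identify that the new data required to mark are the four edges $M_{01},M_{12},M_{23},M_{03}$ (obtained by restricting to the $\Delta[3]^{\mathrm{eq}}$ face and invoking \cref{saturation-1}) and the four triangles $\varphi_{014},\varphi_{124},\varphi_{234},\varphi_{034}$; then use the cocycle identities on $[0,1,2,4]$ and $[1,2,3,4]$ to produce a one‑sided inverse on each side for the same bimodule map, combine to get a two‑sided inverse, and cascade the remaining triangles through \cref{thin30,thin31}. The one point where your route genuinely diverges from the paper's is how the first new thin triangle is extracted: you declare $M_{01}\otimes_{A_1}\varphi_{124}$ an isomorphism and then read off $\varphi_{014}$ directly from the $[0,1,2,4]$ identity by two‑out‑of‑three, whereas the paper instead passes from $M_{01}\otimes_{A_1}\varphi_{124}$ to $\varphi_{124}$ via the Reduction lemmas (implicitly cancelling the equivalence $M_{01}$) and recovers $\varphi_{014}$ afterwards by \cref{thin31}. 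Your variant avoids an appeal to \cref{ReductionI,ReductionII} at this spot, at the cost of one extra application of \cref{thin30}; both are equally valid. A tiny bookkeeping slip: the citation of \cref{RmkEpi} when passing from the $[1,2,3,4]$ identity to the invertibility of $(M_{12}\otimes_{A_2}\varphi_{234})\bullet\varphi_{124}$ is misplaced — that step only needs that $\varphi_{123}$, $\varphi_{134}$, and $\overline\alpha_{M_{12}|M_{23}|M_{34}}$ are isomorphisms (the latter by \cref{InducedAlpha2}), not the epimorphism argument of \cref{RmkEpi}.
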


\begin{proof}
Given a map of marked simplicial sets $\Delta[3]^{\mathrm{eq}}\star\Delta[0]\to \mrt^{\natural}(\cC^{\otimes})$, which can be depicted as
\[\begin{tikzcd}[ampersand replacement=\&]
	\&\&\&\& {A_2} \\
	\&\&\& {A_1} \&\& {A_3} \\
	\& {A_2} \&\& {A_0} \&\& {A_4} \&\& {A_2} \\
	{A_1} \&\& {A_3} \&\&\&\& {A_1} \&\& {A_3} \\
	{A_0} \&\& {A_4} \&\&\&\& {A_0} \&\& {A_4} \\
	\&\& {A_2} \&\&\&\& {A_2} \\
	\& {A_1} \&\& {A_3} \&\& {A_1} \&\& {A_3} \\
	\& {A_0} \&\& {A_4} \&\& {A_0} \&\& {A_4}
	\arrow["{M_{23}}", from=1-5, to=2-6]
	\arrow[""{name=0, anchor=center, inner sep=0}, "{M_{24}}"'{pos=0.4}, from=1-5, to=3-6]
	\arrow["{M_{12}}", from=2-4, to=1-5]
	\arrow[""{name=1, anchor=center, inner sep=0}, "{M_{34}}", from=2-6, to=3-6]
	\arrow[""{name=2, anchor=center, inner sep=0}, "{M_{23}}", from=3-2, to=4-3]
	\arrow[""{name=3, anchor=center, inner sep=0}, "{M_{02}}"'{pos=0.6}, color={rgb,255:red,65;green,51;blue,255}, from=3-4, to=1-5]
	\arrow[""{name=4, anchor=center, inner sep=0}, "{M_{01}}", from=3-4, to=2-4]
	\arrow[""{name=5, anchor=center, inner sep=0}, "{M_{04}}"', from=3-4, to=3-6]
	\arrow["{M_{23}}", from=3-8, to=4-9]
	\arrow[""{name=6, anchor=center, inner sep=0}, "{M_{24}}"'{pos=0.6}, from=3-8, to=5-9]
	\arrow["{M_{12}}", from=4-1, to=3-2]
	\arrow["{M_{34}}", from=4-3, to=5-3]
	\arrow[""{name=7, anchor=center, inner sep=0}, "{M_{12}}", from=4-7, to=3-8]
	\arrow[""{name=8, anchor=center, inner sep=0}, "{M_{14}}"'{pos=0.6}, from=4-7, to=5-9]
	\arrow["{M_{34}}", from=4-9, to=5-9]
	\arrow[""{name=9, anchor=center, inner sep=0}, "{M_{02}}"', color={rgb,255:red,65;green,51;blue,255}, from=5-1, to=3-2]
	\arrow["{M_{01}}", from=5-1, to=4-1]
	\arrow[""{name=10, anchor=center, inner sep=0}, "{M_{03}}"'{pos=0.4}, from=5-1, to=4-3]
	\arrow[""{name=11, anchor=center, inner sep=0}, "{M_{04}}"', from=5-1, to=5-3]
	\arrow["{M_{01}}", from=5-7, to=4-7]
	\arrow[""{name=12, anchor=center, inner sep=0}, "{M_{04}}"', from=5-7, to=5-9]
	\arrow["{M_{23}}", from=6-3, to=7-4]
	\arrow[""{name=13, anchor=center, inner sep=0}, "{M_{23}}", from=6-7, to=7-8]
	\arrow[""{name=14, anchor=center, inner sep=0}, "{M_{12}}", from=7-2, to=6-3]
	\arrow[""{name=15, anchor=center, inner sep=0}, "{M_{13}}", color={rgb,255:red,65;green,51;blue,255}, from=7-2, to=7-4]
	\arrow[""{name=16, anchor=center, inner sep=0}, "{M_{34}}", from=7-4, to=8-4]
	\arrow["{M_{12}}", from=7-6, to=6-7]
	\arrow[""{name=17, anchor=center, inner sep=0}, "{M_{13}}", color={rgb,255:red,65;green,51;blue,255}, from=7-6, to=7-8]
	\arrow[""{name=18, anchor=center, inner sep=0}, "{M_{14}}"{pos=0.4}, from=7-6, to=8-8]
	\arrow["{M_{34}}", from=7-8, to=8-8]
	\arrow["{M_{01}}", from=8-2, to=7-2]
	\arrow[""{name=19, anchor=center, inner sep=0}, "{M_{03}}"{pos=0.6}, from=8-2, to=7-4]
	\arrow[""{name=20, anchor=center, inner sep=0}, "{M_{04}}"', from=8-2, to=8-4]
	\arrow[""{name=21, anchor=center, inner sep=0}, "{M_{01}}", from=8-6, to=7-6]
	\arrow[""{name=22, anchor=center, inner sep=0}, "{M_{04}}"', from=8-6, to=8-8]
	\arrow["{\varphi_{024}}"'{pos=0.7}, color={rgb,255:red,65;green,51;blue,255}, shorten <=19pt, shorten >=4pt, Rightarrow, from=1-5, to=5]
	\arrow["{\varphi_{012}}"{pos=0.7}, color={rgb,255:red,65;green,51;blue,255}, shorten >=2pt, Rightarrow, from=2-4, to=3]
	\arrow["{\varphi_{234}}"'{pos=0.7}, shorten >=2pt, Rightarrow, from=2-6, to=0]
	\arrow["(3)", color={rgb,255:red,65;green,51;blue,255}, shorten <=15pt, shorten >=15pt, Rightarrow, scaling nfold=3, from=1, to=7]
	\arrow["(1)", color={rgb,255:red,65;green,51;blue,255}, shorten <=15pt, shorten >=15pt, Rightarrow, scaling nfold=3, from=2, to=4]
	\arrow["{\varphi_{023}}"{pos=0.6}, color={rgb,255:red,65;green,51;blue,255}, shorten <=8pt, shorten >=5pt, Rightarrow, from=3-2, to=10]
	\arrow["{\varphi_{124}}"', shorten <=8pt, shorten >=8pt, Rightarrow, from=3-8, to=8]
	\arrow["{\varphi_{012}}"{pos=0.7}, color={rgb,255:red,65;green,51;blue,255}, shorten >=2pt, Rightarrow, from=4-1, to=9]
	\arrow["{\varphi_{034}}"{pos=0.4}, shorten <=8pt, shorten >=8pt, Rightarrow, from=4-3, to=11]
	\arrow["{\varphi_{014}}"'{pos=0.4}, shorten <=8pt, shorten >=8pt, Rightarrow, from=4-7, to=12]
	\arrow["{\varphi_{234}}"'{pos=0.6}, shorten >=2pt, Rightarrow, from=4-9, to=6]
	\arrow["(4)"', color={rgb,255:red,65;green,51;blue,255}, shorten <=10pt, shorten >=10pt, Rightarrow, scaling nfold=3, from=11, to=14]
	\arrow["{\varphi_{123}}", color={rgb,255:red,65;green,51;blue,255}, shorten <=2pt, shorten >=8pt, Rightarrow, from=6-3, to=15]
	\arrow["(0)"', color={rgb,255:red,65;green,51;blue,255}, shorten <=10pt, shorten >=10pt, Rightarrow, scaling nfold=3, from=13, to=12]
	\arrow["{\varphi_{123}}", color={rgb,255:red,65;green,51;blue,255}, shorten <=2pt, shorten >=8pt, Rightarrow, from=6-7, to=17]
	\arrow["{\varphi_{013}}"'{pos=0.3}, color={rgb,255:red,65;green,51;blue,255}, shorten <=2pt, shorten >=7pt, Rightarrow, from=7-2, to=19]
	\arrow["(2)"', color={rgb,255:red,65;green,51;blue,255}, shorten <=19pt, shorten >=19pt, Rightarrow, scaling nfold=3, from=16, to=21]
	\arrow["{\varphi_{034}}"{pos=0.4}, shorten <=8pt, shorten >=8pt, Rightarrow, from=7-4, to=20]
	\arrow["{\varphi_{014}}"'{pos=0.4}, shorten <=8pt, shorten >=8pt, Rightarrow, from=7-6, to=22]
	\arrow["{\varphi_{134}}"{pos=0.3}, color={rgb,255:red,65;green,51;blue,255}, shorten <=2pt, shorten >=7pt, Rightarrow, from=7-8, to=18]
\end{tikzcd}\]
we show that the maps $\varphi_{124}$, $\varphi_{014}$, $\varphi_{234}$, and $\varphi_{034}$ are isomorphisms, obtaining the desired lift $\Delta[3]^{\sharp}\star\Delta[0]\to \mrt^{\natural}(\cC^{\otimes})$.

By \cref{saturation-1}, we know that $M_{01}$, $M_{12}$, $M_{23}$, and $M_{03}$ are equivalences.
By (3), we have 
\[(\varphi_{012} \otimes_{A_{2}} M_{24}) \bullet \varphi_{024} = \overline{\alpha}_{M_{01}|M_{12}|M_{24}}\bullet (M_{01} \otimes_{A_{1}} \varphi_{124}) \bullet \varphi_{014}.\] 
Then
\[\overline{\alpha}^{-1}_{M_{01}|M_{12}|M_{24}}\bullet(\varphi_{012} \otimes_{A_{2}} M_{24}) \bullet \varphi_{024} = (M_{01} \otimes_{A_{1}} \varphi_{124}) \bullet \varphi_{014}.\] 
Since the left-hand side is an isomorphism, it follows that $M_{01}\otimes_{A_1}\varphi_{124}$ admits a right inverse.
Further, by (0), we have
\[(\varphi_{123} \otimes_{A_{3}} M_{34}) \bullet \varphi_{134} = \overline{\alpha}_{M_{12}|M_{23}|M_{34}}\bullet (M_{12} \otimes_{A_{2}} \varphi_{234}) \bullet \varphi_{124}.\]
Since the left-hand side is an isomorphism, it follows that $M_{01}\otimes_{A_1}\varphi_{124}$ admits a left inverse.
In total, we have that $M_{01}\otimes_{A_1}\varphi_{124}$ is a bimodule isomorphism. Since $M_{01}$ is an equivalence, by \cref{ReductionI} we obtain that $\varphi_{124}$ is a bimodule isomorphism.
By \cref{thin31,thin30}, it follows that $\varphi_{014}$ and $\varphi_{234}$ are isomorphisms. By \cref{thin31}, it follows that $\varphi_{034}$ is an isomorphism, concluding the proof.
\end{proof}

\begin{prop}
\label{saturationHigh}
The marked simplicial set $\mrt^{\natural}(\cC^{\otimes})$ has the right lifting property with respect to the map $\Delta[3]^{\mathrm{eq}}\star\Delta[\ell]\to\Delta[3]^{\sharp}\star\Delta[\ell]$ for $\ell>0$.
\end{prop}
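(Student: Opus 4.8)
The plan is to follow the coskeletal/triviality reduction used in \cref{hornHigh,thinhigh}. Since the inclusion $\Delta[3]^{\mathrm{eq}}\star\Delta[\ell]\to\Delta[3]^{\sharp}\star\Delta[\ell]$ is the identity on underlying simplicial sets, producing a lift of a given map $f\colon\Delta[3]^{\mathrm{eq}}\star\Delta[\ell]\to\mrt^{\natural}(\cC^{\otimes})$ is the same as checking that $f$ carries every simplex marked in $\Delta[3]^{\sharp}\star\Delta[\ell]$ to a marked simplex of $\mrt^{\natural}(\cC^{\otimes})$. So first I would identify the simplices that are marked in $\Delta[3]^{\sharp}\star\Delta[\ell]$ but not in $\Delta[3]^{\mathrm{eq}}\star\Delta[\ell]$: unwinding \cref{preliminarynotation}, these are exactly the simplices of the form $\sigma\star\tau$ where $\sigma$ is one of the four edges $[0,1]$, $[1,2]$, $[2,3]$, $[0,3]$ of the $\Delta[3]$-factor and $\tau$ is a non-degenerate (possibly empty) simplex of the $\Delta[\ell]$-factor.

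Next I would sort these by dimension. Such a simplex has dimension $2+\dim\tau$, so whenever $\dim\tau\geq 1$ it has dimension at least $3$ and is automatically marked in $\mrt^{\natural}(\cC^{\otimes})$ by $2$-triviality. What remains is: the case $\tau=\emptyset$, giving the four edges $[0,1]$, $[1,2]$, $[2,3]$, $[0,3]$ of the face on the first four vertices; and the case where $\tau$ is a single vertex $v$ of the $\Delta[\ell]$-factor, giving the $2$-simplices on $\{i,j,v\}$ with $[i,j]$ one of those four edges. In particular no genuinely new marked simplex involves two or more vertices of the $\Delta[\ell]$-factor, since any such simplex has dimension at least $3$.

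To handle both remaining families uniformly, for each vertex $v$ of the $\Delta[\ell]$-factor I would restrict $f$ along the order-preserving inclusion of the $4$-dimensional face spanned by $\{0,1,2,3,v\}$. A direct comparison of the join markings shows that the marking that $\{0,1,2,3,v\}$ inherits from $\Delta[3]^{\mathrm{eq}}\star\Delta[\ell]$ is precisely $\Delta[3]^{\mathrm{eq}}\star\Delta[0]$, so this restriction is a map $\Delta[3]^{\mathrm{eq}}\star\Delta[0]\to\mrt^{\natural}(\cC^{\otimes})$ to which \cref{saturation0} applies. Its conclusion (together with the invocation of \cref{saturation-1} inside its proof) is exactly that the bimodules attached to the edges $[0,1]$, $[1,2]$, $[2,3]$, $[0,3]$ are equivalences and that the bimodule maps inhabiting the $2$-simplices on $\{i,j,v\}$ are isomorphisms. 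As $\ell>0$ there is at least one vertex $v$, which disposes of the four edges; ranging over all vertices $v$ of the $\Delta[\ell]$-factor disposes of all the new $2$-simplices. Combined with the $2$-triviality observation above, this shows $f$ already preserves the finer marking, yielding the lift.

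The main obstacle is purely one of bookkeeping: pinning down that the genuinely new marked simplices of dimension at most $2$ are exactly the four ``bad'' edges and the $2$-simplices on $\{i,j,v\}$, and verifying that the face on $\{0,1,2,3,v\}$ carries exactly the marking of $\Delta[3]^{\mathrm{eq}}\star\Delta[0]$ so that \cref{saturation0} can be applied verbatim. Once this is in place there is no further computation inside $\cC^{\otimes}$ to perform.
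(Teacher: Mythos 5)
Your proposal is correct, and it departs from the paper's proof in a way that in fact repairs a gap in it. The paper reuses the $\mathrm{th}_3/\mathrm{sp}_3$ reduction of \cref{hornHigh,thinhigh}: transpose the lifting problem across the adjunction using $\mrt^{\natural}(\cC^{\otimes})=\mathrm{sp}_3(\mrt^{\natural}(\cC^{\otimes}))$, and then assert that $\mathrm{th}_3(\Delta[3]^{\mathrm{eq}}\star\Delta[\ell])\to\mathrm{th}_3(\Delta[3]^{\sharp}\star\Delta[\ell])$ is an isomorphism on the grounds that the two markings ``only differ in dimension higher than $3$.'' Your inventory in the first two paragraphs shows that this assertion does not hold: the markings already disagree on the edges $[0,1]$, $[1,2]$, $[2,3]$, $[0,3]$ (dimension $1$, the case $\tau=\emptyset$) and on the triangles $\{i,j,v\}$ for $v$ a vertex of the $\Delta[\ell]$-factor (dimension $2$), and $\mathrm{th}_3$ does nothing in dimensions $\leq 2$. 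The $\mathrm{th}_3$ step therefore only removes the discrepancies in dimension $\geq 3$, which your $2$-triviality observation disposes of directly in any case, and the genuine content of the lifting problem is exactly the low-dimensional part you isolate. Your remaining argument --- for each vertex $v$ restrict $f$ along $\{0,1,2,3,v\}$, check that the induced marking is precisely $\Delta[3]^{\mathrm{eq}}\star\Delta[0]$ (correct: a simplex of $\Delta[\ell]$ supported on $\{v\}$ is degenerate iff its image in $\Delta[0]$ is), and invoke \cref{saturation0}, whose proof marks the four edges via \cref{saturation-1} and the four incident triangles directly --- is exactly what is needed to close the problem. So your proof is not merely an alternative route; it is the argument the paper's final sentence should have given.
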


\begin{proof}
Recall from the proof of \cref{thinhigh} that there is an adjunction
\[\mathrm{th}_3\colon m\sset\rightleftarrows m\sset\colon\mathrm{sp}_3.\]
Since $\mrt^{\natural}(\cC^{\otimes})$ is $2$-trivial, by construction we have
$\mrt^{\natural}(\cC^{\otimes})=\mathrm{sp}_3(\mrt^{\natural}(\cC^{\otimes}))$.
Hence, the two lifting problems below are equivalent:
\[
\begin{tikzcd}
\mathrm{th}_3(\Delta[3]^{\mathrm{eq}}\star\Delta[\ell]))\arrow[r]\arrow[d]&\mrt^{\natural}(\cC^{\otimes})\\
\mathrm{th}_3(\Delta[3]^{\sharp}\star\Delta[\ell])\arrow[ru,dashed]&
\end{tikzcd}
\quad\quad
\leftrightsquigarrow
\quad\quad
\begin{tikzcd}
\Delta[3]^{\mathrm{eq}}\star\Delta[\ell]\arrow[r]\arrow[d]&\mrt^{\natural}(\cC^{\otimes})\\
    \Delta[3]^{\sharp}\star\Delta[\ell]\arrow[ru,dashed]&
\end{tikzcd}
\]
Now, the vertical map in the left hand lifting problem is an isomorphism, because the marking on $\Delta[3]^{\mathrm{eq}}\star\Delta[\ell]$ and $\Delta[3]^{\sharp}\star\Delta[\ell]$ only differs in dimension higher than $3$, this lifting problem has a solution. Hence, so does the right hand lifting problem, as desired.
\end{proof}

\bibliographystyle{amsalpha}
\bibliography{ref}

\end{document}